\definecolor{darkblue}{RGB}{0,0,170}
\definecolor{brickred}{RGB}{200,0,0}
\definecolor{darkblu}{RGB}{100,40,150}
\def\@seccntformat#1{%
  \protect\textup{%
    \protect\@secnumfont
    \expandafter\protect\csname format#1\endcsname 
    \csname the#1\endcsname
    \protect\@secnumpunct
  }%
}
\def\th@plain{%
  \thm@notefont{}
  \slshape 
}
\def\th@definition{%
  \thm@notefont{}
  \normalfont 
}
\theoremstyle{plain}
\newtheorem{theorem}{Theorem}
\numberwithin{theorem}{section}
\newtheorem{lemma}[theorem]{Lemma}
\newtheorem{proposition}[theorem]{Proposition}
\newtheorem{corollary}[theorem]{Corollary}
\theoremstyle{definition}
\newtheorem{definition}[theorem]{Definition}
\newtheorem{remark}[theorem]{Remark}
\newcommand{\Hbb}{\mathbb{H}}
\newcommand{\R}{\mathbb{R}}
\newcommand{\N}{\mathbb{N}}
\newcommand{\T}{\mathbb{T}}
\newcommand{\M}{\mathcal{M}}
\renewcommand{\L}{\mathbb{L}}
\newcommand{\Gbb}{\mathbb{G}}
\newcommand{\dist}{\hbox{dist}}
\newcommand{\va}[1]{\left\{\begin{array}{r@{\text{ }}ll}#1\end{array}\right.}
\newcommand{\inner}[1]{\left \langle #1 \right\rangle}
\newcommand{\norm}[1]{\left\| #1\right\|}
\DeclareMathOperator{\loc}{loc}
\DeclareMathOperator{\dom}{dom}
\DeclareMathOperator{\SFL}{SFL}
\DeclareMathOperator{\CFL}{CFL}
\DeclareMathOperator{\RFL}{RFL}
\DeclareMathOperator{\divv}{div}
\numberwithin{equation}{section}
\begin{document}

\title[Semilinear nonlocal elliptic equations]{\large Semilinear nonlocal elliptic equations with \\ source term and measure data}

\author[P.-T. Huynh]{Phuoc-Truong Huynh}
\address[P.-T. Huynh]{Department of Mathematics and Statistics, Masaryk University, Brno, Czech Republic.}
\email{\tt hphuoctruong.hcmue@gmail.com, xhuynhp@math.muni.cz}

\author[P.-T. Nguyen]{Phuoc-Tai Nguyen}
\address[P.-T. Nguyen]{Department of Mathematics and Statistics, Masaryk University, Brno, Czech Republic.}
\email{\tt ptnguyen@math.muni.cz}


\begin{abstract} Recently, several works have been carried out in attempt to develop a theory for linear or sublinear elliptic equations involving a general class of nonlocal operators characterized by mild assumptions on the associated Green kernel. In this paper, we study the Dirichlet problem for superlinear equation (E) ${\mathbb L} u = u^p +\lambda \mu$ in a bounded domain $\Omega$ with homogeneous boundary or exterior Dirichlet condition, where $p>1$ and $\lambda>0$. The operator ${\mathbb L}$ belongs to a class of nonlocal operators including typical types of fractional Laplacians and the datum $\mu$ is taken in the optimal weighted measure space. The interplay between the operator ${\mathbb L}$, the source term $u^p$ and the datum $\mu$ yields substantial difficulties and reveals the distinctive feature of the problem. We develop a unifying technique based on a fine analysis on the Green kernel, which enables us to construct a theory for semilinear equation (E) in measure frameworks. A main thrust of the paper is to provide a fairly complete description of positive solutions to the Dirichlet problem for (E).
In particular, we show that there exist a critical exponent $p^*$ and a threshold value $\lambda^*$ such that the multiplicity holds for $1<p<p^*$ and $0<\lambda<\lambda^*$, the uniqueness holds for $1<p<p^*$ and $\lambda=\lambda^*$, and the nonexistence holds in other cases. Various types of nonlocal operators are discussed to exemplify the wide applicability of our theory. 

\bigskip
	
\noindent\textit{Keywords: nonlocal elliptic equations, integro-differential operators, weak-dual solutions, measure data, Green function, mountain pass theorem.}
	
\bigskip
	
\noindent\textit{2020 Mathematics Subject Classification: 35J61, 35B33, 35B65, 35R06, 35R11, 35D30, 35J08.}

\end{abstract}

\maketitle

\tableofcontents

\section{Introduction}
In the last decades, the research of nonlocal equations has attracted much attention because of its interest to numerous areas such as probability, harmonic analysis, potential theory and other scientific disciplines. A large number of publications have been devoted to various aspects of elliptic equations driven by different types of operators ranging from typical fractional Laplacians to more general integro-differential operators. Recently, several works have originated in attempt to deal with linear or sublinear elliptic equations involving a wide class of nonlocal operators determined by two-sided estimates on the associated Green kernel (see e.g. \cite{BonVaz_2014, BonVaz_2015, BonSirVaz_2015, BonFigVaz_2018, Aba_2019,GomVaz_2019,KimSonVon_2020}). In the present paper, we are interested in positive solutions of nonlocal elliptic equations with  superlinear source terms of the form 
\begin{equation} \label{eq:E1}
\L u = u^p + \mu \text{ in }\Omega,
\end{equation} 
with homogeneous Dirichlet boundary or exterior condition
\begin{equation} \label{zeroDir}
u = 0 \quad \text{on } \partial \Omega \text{ or in } \Omega^c \text{ if applicable}.
\end{equation}
Here $\Omega \subset \R^N$ is a bounded $C^2$ domain with the complement $\Omega^c = \R^N \setminus \Omega$, $p>1$, $\mu$ is a positive Radon measure on $\Omega$ and $\L$ is a nonlocal operator posed in $\Omega$. Specific assumptions on $\L$ are given in Subsection \ref{sec:pre_assumption}. Basic examples of $\L$ to keep in mind are the restricted fractional Laplacian, spectral fractional Laplacian and censored (or regional) fractional Laplacian, see Subsection \ref{sec:someexamples}; some further illustrative examples are provided in Section \ref{sec:other-examples}. \medskip

\noindent \textbf{A survey on related works.} Let us recall some known results in the literature concerning problem \eqref{eq:E1} and \eqref{zeroDir}. One of the first breakthrough achievements in this topic was due to Lions in \cite{Lio_1980}, in which nonnegative solutions to Lane-Emden equation
\begin{equation} \label{eq:E2} - \Delta u = u^p \quad \text{in } \Omega \setminus \{0\}
\end{equation}
were sharply depicted, where $\Omega$ contains the origin $0$. In particular, when $N > 2$ and $1<p<\frac{N}{N-2}$, every nonnegative solution of \eqref{eq:E2} solves the following equation in the whole domain 
$$ - \Delta u = u^p + k \delta_0 \quad \text{in } \Omega 
$$
for some $k \geq 0$, where $\delta_0$ denotes the Dirac mass concentrated at $0$. A rather full understanding of singular solutions to \eqref{eq:E2} was acquired thanks to the complete characterization result for the more complicated range $\frac{N}{N-2}<p<\frac{N+2}{N-2}$ of Gidas and Spruck in \cite{GidSpr_1981}. Many significant developments have been obtained since the celebrated papers of Lions \cite{Lio_1980} and Gidas and Spruck \cite{GidSpr_1981}, among them we refer to the paper of Bidaut-V\'eron and Yarur \cite{BidYar_2002} where various necessary and sufficient criteria for the existence of nonnegative solutions to the problem with positive measure datum $\mu$
\begin{equation} \label{eq:E3} \left\{ \begin{aligned} 
- \Delta u  &= u^p + \mu &&\text{ in }\Omega,\\ 
u  &= 0 &&\text{ on }\partial \Omega,
\end{aligned} \right. \end{equation}
were obtained for $1< p <\frac{N+1}{N-1}$. Interesting extensions of these results to semilinear equations involving linear second order differential operators in divergence form were established by V\'eron in \cite{Ver-handbook}. Further investigation dealing with the question of uniqueness and multiplicity for \eqref{eq:E3} and variants thereof were carried out by Ferrero and Saccon \cite{FerSac_2006} (see also Naito and Sato \cite{NaiSat_2007}).

Several results regarding isolated singularities were extended by Chen and Quaas \cite{CheQua_2018} to semilinear fractional equations 
\begin{equation} \left\{  \begin{aligned}
(-\Delta)^s u &= u^p + k\delta_0 &&\text{ in }\Omega,\\ 
u &= 0 &&\text{ in } \Omega^c,
\end{aligned} \right. \end{equation}
where $(-\Delta)^s$, $s \in (0,1)$, is the well-known fractional Laplacian defined by
$$ (-\Delta)^s u(x):=c_{N,s} P.V. \int_{\R^N} \frac{u(x)-u(y)}{|x-y|^{N+2s}}dy,\quad x \in \Omega.$$
Here $c_{N,s} > 0$ is a normalized constant and the abbreviation P.V. stands for ``in the principal value sense". Afterwards, Chen, Felmer and V\'eron \cite{CheVerFel_2014} obtained an existence result for a more general source term and measure data by using a different approach which is based on Schauder fixed point theorem and delicate estimates on the Green kernel of $(-\Delta)^s$.

Recently  equations involving general operators characterized by estimates on the associated Green functions have been intensively investigated. One of the first and important contributions in this research direction from the PDE point of view was given by Bonforte and V\'azquez in \cite{BonVaz_2015} where a concept of weak-dual solutions was introduced to investigate a priori estimates for porous medium equations. For the existence and uniqueness results, the reader is referred to Bonforte, Sire and V\'azquez \cite{BonSirVaz_2015}. The notion of weak-dual solutions is convenient to study not only parabolic equations, but also elliptic equations. Later on, Bonforte, Figalli and V\'azquez \cite{BonFigVaz_2018} developed a theory for semilinear elliptic equations with sublinear nonlinearities in the context of weak-dual solutions including a priori estimates, Harnack inequality and regularity results. Further investigations into general nonlocal operators have been treated in recent works, for example Abatangelo, G\'omez-Castro and V\'azquez \cite{Aba_2019} (for the boundary behavior of solutions), Chan, G\'omez-Castro and V\'azquez \cite{ChaGomVaz_2019_2020} (for the study of eigenvalue problems and equivalent types of solutions). Besides, it is worth mentioning a series of papers of Kim, Song and Vondra\v cek \cite{KimSonVon_2020, KimSonVon_2020-1,KimSonVon_2019} where nonlocal operators were studied  from the probabilistic point of view. \medskip

\noindent \textbf{Aim of the paper.} Motivated by the above-mentioned works, we aim to establish the existence, nonexistence,  uniqueness, multiplicity and qualitative properties of solutions to \eqref{eq:E1} and \eqref{zeroDir}. The class of operators $\L$ under investigation is required to satisfy a set of mild conditions and includes quite a few notable models. An underlying assumption on $\L$ is expressed in terms of two-sided estimates on the associated Green kernel. In this regard, we offer a unifying approach which relies mainly on the use of the Green kernel and hence allows to deal with various types of local and nonlocal operators.      \medskip

\noindent \textbf{Acknowledgements.}
The authors were supported by Czech Science Foundation, project GJ19-14413Y.  P.-T. Huynh gratefully acknowledges Prof. Jan Slov\'ak for the kind hospitality and great support during his study at Masaryk University. The authors would like to thank the anonymous referee for the comments which helped to improve the paper.

\section{Preliminaries, basic assumptions and definitions}\label{sec:preliminaries}
Throughout the present paper, we assume that $\Omega \subset \R^N$ ($N > 2s)$ is a bounded domain with $C^2$ boundary. Put $\delta(x)=\dist(x,\partial \Omega)$. We denote by $C,c_1, c_2, \ldots$ positive constants that may vary from line to line. If necessary, we will write $C = C(a,b,\ldots)$ to emphasize the dependence of $C$ on $a,b,\ldots$ For a number $q \in (1,\infty)$, we denote by $q'$ the conjugate exponent of $q$, namely $q' = q/(q-1)$. For two functions $f,g$, we write $f \lesssim g$ ($f \gtrsim g$) if there exists a constant $C > 0$ such that $f \le C g$ ($f \ge C g$). We write $f \sim g$ if $f \lesssim g$ and $g \lesssim f$. We also denote $a\vee b := \max\{ a,b\}$ and $a\wedge b := \min \{ a, b\}$.   Finally, for two Banach spaces $X,Y$ such that $X \subset Y$, we write $X \hookrightarrow Y$ if $X$ is continuously embedded in $Y$ and $X \hookrightarrow \hookrightarrow Y$ if $X$ is compactly embedded in $Y$.

\subsection{Functional spaces}\label{sec:LPspace}\text{}

\textbf{Weighted Lebesgue spaces, Marcinkiewicz spaces and spaces of Radon measures.} Recall that for a nonnegative function $\eta$ and $q \in [1,\infty)$, the weighted Lebesgue space $L^q(\Omega,\eta)$ is defined by
$$L^q(\Omega,\eta ):= \left\{ f \in L^q_{\loc}(\Omega): \int_{\Omega}|f|^q\eta dx < +\infty \right\}.$$ 
We also denote
$$\eta L^{\infty}(\Omega) := \left\{ f :\Omega \to \R:  \text{ there exists } v \in L^{\infty}(\Omega) \text{ such that } f = \eta v \right\}.$$
Next, the weighted Marcinkiewicz space (or weighted weak Lebesgue space) $M^q(\Omega,\eta )$, $1 \le q < \infty$, is defined by
$$M^q(\Omega,\eta ):= \left\{ f \in L^1_{\loc}(\Omega): \sup_{\lambda > 0} \lambda^q \int_{\Omega} \mathbf{1}_{\{ x \in \Omega: |f(x)| \ge \lambda \} } \eta dx < +\infty \right\}.$$
The key role of Marcinkiewicz spaces is to give optimal estimates of the Green kernel, see Section \ref{sec:Green}. When  $\eta \equiv 1$, we recover the usual Lebesgue spaces and Marcinkiewicz spaces. In this case, we simply write $L^q(\Omega)$ and $M^q(\Omega)$ for $L^q(\Omega,\eta )$ and $M^q(\Omega,\eta )$ respectively.

We denote by $\M(\Omega,\eta )$ the space of weighted Radon measures defined as
$$\M(\Omega,\eta ) := \left\{ \mu \text{ is a Radon measure on } \Omega: \int_{\Omega} \eta d|\mu| < +\infty \right\}.$$
 Also, $\M^+(\Omega,\eta )$ is defined as the cone of positive measures on $\Omega$. 
In the case $\eta \equiv 1$, we have the space of bounded Radon measures $\M(\Omega)$. 

The following strict inclusions hold (see for instance \cite[Subsection 2.2]{BidViv_2000} and \cite[Subsection 1.1]{Gra_2009})
$$L^q(\Omega,\eta) \hookrightarrow M^q(\Omega, \eta ) \hookrightarrow L^m(\Omega,\eta ) \hookrightarrow \M(\Omega,\eta) \text{ for all }m,q \text{ such that } 1 \le m < q < \infty.$$
For more details on these functional spaces, the reader is referred to \cite{Gra_2009,MarVer_2014}. \medskip

\textbf{Fractional Sobolev spaces.} For $s \in (0,1) $, the fractional Sobolev space $H^s(\Omega)$ is defined by
$$H^s(\Omega):= \left\{ u \in L^2(\Omega): [u]_{H^s(\Omega)}:= \int_{\Omega}\int_{\Omega} \dfrac{|u(x)-u(y)|^2}{|x-y|^{N+2s}}dx dy < +\infty \right\}.$$
This space is Hilbert space equipped with the inner product
$$\inner{u,v}_{H^s(\Omega)}:= \int_{\Omega} uv dx + \int_{\Omega}\int_{\Omega} \dfrac{(u(x)-u(y))(v(x)-v(y))}{|x-y|^{N+2s}}dxdy$$
and the induced norm
$$ \|u \|_{H^s(\Omega)} := \sqrt{\langle u, u \rangle_{H^s(\Omega)}}.
$$

The space $H^s_0(\Omega)$ is defined as the closure of $C^{\infty}_c(\Omega)$ with respect to the norm $\norm{\cdot}_{H^s(\Omega)}$, i.e.
$$
H^s_0(\Omega) = \overline{C^{\infty}_c(\Omega)}^{\norm{\cdot}_{H^s(\Omega)}}.
$$
 
Let $H^{s}_{00}(\Omega)$ be defined by
$$
H^s_{00}(\Omega) := \left\{ u \in H^s(\Omega): \dfrac{u}{\delta^s} \in L^2(\Omega) \right\}.
$$
It can be seen that $H^s_{00}(\Omega)$ equipped with the norm
$$
\norm{u}^2_{H^s_{00}(\Omega)} := \int_{\Omega} \left(1+\dfrac{1}{\delta^{2s}}\right)|u|^2 dx + \int_{\Omega} \int_{\Omega} \dfrac{|u(x)-u(y)|^2}{|x-y|^{N+2s}}dx dy < +\infty
$$
is a Banach space. Roughly speaking, $H^s_{00}(\Omega)$ is the space of functions in $H^s(\Omega)$ satisfying the Hardy inequality. By \cite[Subsection 8.10]{Bha_2012}, there holds
$$
H^s_{00}(\Omega) = \va{
&H^s(\Omega) = H^s_0(\Omega), &\text{ if }0 < s < \frac{1}{2}, \\[4pt] 
&H^{\frac{1}{2}}_{00}(\Omega) \subsetneq H^{\frac{1}{2}}_{0}(\Omega), &\text{ if }s = \frac{1}{2},\\[4pt]
&H^s_0(\Omega), &\text{ if } \frac{1}{2} < s < 1.}
$$
In fact, $H^{s}_{00}(\Omega)$ is the space of functions in $H^s(\R^N)$ supported in $\Omega$, or equivalently, the trivial extension of functions in $H^s_{00}(\Omega)$ belongs to $H^s(\R^N)$ (see \cite[Lemma 1.3.2.6]{Gris_2011}). Furthermore, $C^{\infty}_c(\Omega)$ is a dense subset of $H^s_{00}(\Omega)$. When $s = \frac{1}{2}$, $H^{\frac{1}{2}}_{00}(\Omega)$ is called the Lions--Magenes space (see \cite[Theorem 7.1]{LioMag_1972}). The strict inclusion $H^{\frac{1}{2}}_{00}(\Omega) \subsetneq H^{\frac{1}{2}}_0(\Omega)$ holds since $1 \in H^{\frac{1}{2}}_0(\Omega)$ but $1 \notin H^{\frac{1}{2}}_{00}(\Omega)$.

Alternatively, fractional Sobolev spaces can be viewed as interpolation spaces due to \cite[Chapter 1]{LioMag_1972}.  For more details on fractional Sobolev spaces, the reader is referred to \cite{Bha_2012,BonSirVaz_2015,Gris_2011, LioMag_1972, NezPalVal_2012} and references therein.

\subsection{Main assumptions}\label{sec:pre_assumption} 
In the present paper, we deal with a class of nonlocal operators $\L$ which covers three typical types of fractional Laplace operators, as well as the classical Laplacian. Assumptions related to $\L$ are listed below.

\textbf{Assumptions on $\L$.} Inspired by \cite{BonSirVaz_2015,BonFigVaz_2018,Aba_2019,ChaGomVaz_2019_2020},  we make the following assumptions on $\L$.
\begin{enumerate}[resume,label=(L\arabic{enumi}),ref=L\arabic{enumi}]
	\item \label{eq_L1} $\L: C^{\infty}_c(\Omega) \subset L^2(\Omega) \to L^2(\Omega)$ is a positive, symmetric operator.
\end{enumerate}

Under assumption \eqref{eq_L1}, we infer from the standard theory (see, for instance, \cite[Theorem 4.14]{Dav_1980}) that $\L$ admits a positive, self-adjoint extension $\widetilde{\L}$, which is the Friedrich extension of $\L$. Furthermore, $\Hbb(\Omega):= \dom(\widetilde{\L}^{\frac{1}{2}})$ is a Hilbert space equipped with the inner product
\begin{equation} \label{inprod-1}
(u,v) \mapsto \inner{u,v}_{L^2(\Omega)} + \mathcal{B}(u,v),\quad u,v \in \Hbb(\Omega),
\end{equation}
where the $\mathcal{B}$ is the bilinear form defined as
$$
\mathcal{B}(u,v):=\inner{\widetilde{\L}^{\frac{1}{2}} u, \widetilde{\L}^{\frac{1}{2}} v}_{L^2(\Omega)},\quad u,v \in \Hbb(\Omega).
$$
The inner product \eqref{inprod-1} induces the following norm on $\Hbb(\Omega)$
\begin{equation} \label{norm1}
\sqrt{\| u \|_{L^2(\Omega)}^2 + \mathcal{B}(u,u)}, \quad u \in \Hbb(\Omega).
\end{equation}
\textit{From now on,  we use the same notation $\L$ to denote the extension $\widetilde{\L}$.}  

We further assume that
\begin{enumerate}[resume,label=(L\arabic{enumi}),ref=L\arabic{enumi}] \setcounter{enumi}{1}
	\item \label{eq_L2} There exists a constant $\Lambda  > 0$ such that
	\begin{equation}\label{eq:lowerbound}
	\norm{u}_{L^2(\Omega)}^2 \le \Lambda\inner{\L u, u}_{L^2(\Omega)},\quad \forall u \in C^{\infty}_c(\Omega). 
	\end{equation}
\end{enumerate}
Assumption \eqref{eq_L2} is closely related to the positivity of the first eigenvalue of $\L$ (see \cite[Subsection 3.1.1]{BonSirVaz_2015}). It can be also seen that $C^{\infty}_c(\Omega)$ is dense in $\Hbb(\Omega)$. Thus, under assumption \eqref{eq_L2} and the density of $C^{\infty}_c(\Omega)$, one has

$$
\norm{u}_{L^2(\Omega)}^2 \leq \Lambda \inner{ \L^{\frac{1}{2}} u, \L^{\frac{1}{2}} u}_{L^2(\Omega)} = \Lambda\mathcal{B}(u,u),\quad \forall u \in \Hbb(\Omega).
$$
Therefore, one can define an equivalent norm to the one in \eqref{norm1} as follows 
$$\norm{u}_{\Hbb(\Omega)}:= \sqrt{\mathcal{B}(u,u)}.
$$ 

In addition to the above properties, more specific characteristics of $\Hbb(\Omega)$ are required in our analysis. It can be verified that for typical nonlocal operators such as the restricted fractional Laplacian, the spectral fractional Laplacian and the censored (or regional) fractional Laplacian, $\Hbb(\Omega)$ can be identified with $H^s_{00}(\Omega)$ (see Subsection \ref{sec:someexamples} for more details). This indicates and motivates us to assume that
\begin{enumerate}[resume,label=(L\arabic{enumi}),ref=L\arabic{enumi}]
\setcounter{enumi}{2}
\item \label{eq_L3} $\Hbb(\Omega) = H^s_{00}(\Omega)$.
\end{enumerate}

\textbf{Assumptions on the inverse of $\mathbb{L}$.}  We also require the existence of the inverse operator of $\L$. 

\begin{enumerate}[label=(G\arabic{enumi}),ref=G\arabic{enumi}]
	\item \label{eq_G1} There exists an operator $\Gbb^{\Omega}$ such that for every $f \in C^{\infty}_c(\Omega)$,  one has
\begin{equation}\label{eq:homolinear}
\L [\Gbb^{\Omega} [f]] = f \text{ in } L^2(\Omega).
\end{equation}
In other words,  $\Gbb^{\Omega}$ is a right inverse of $\L$  and for every $f \in C^{\infty}_c(\Omega)$, one has $\Gbb^{\Omega}[f] \in \dom(\L) \subset \Hbb(\Omega)$.
    \item \label{eq_G2} The operator $\Gbb^{\Omega}$ admits a Green kernel $G^{\Omega}$, namely
$$
\Gbb^{\Omega}[f](x) = \int_{\Omega} G^{\Omega}(x,y)f(y)dy,\quad x \in \Omega,
$$    
where $G^{\Omega}: \Omega \times \Omega \setminus \{(x,x): x \in \Omega \} \to (0,\infty)$ is symmetric and satisfies
\begin{equation} \label{G-est} 
G^{\Omega} (x,y) \sim \dfrac{1}{|x-y|^{N-2s}} \left( \dfrac{\delta(x)}{|x-y|} \wedge 1 \right)^{\gamma} \left( \dfrac{\delta(y)}{|x-y|} \wedge 1 \right)^{\gamma}, \quad x,y \in \Omega, x \neq y,
\end{equation}
with $s,\gamma \in (0,1]$ and $N>2s$.
\end{enumerate}

Under assumptions \eqref{eq_G1} and \eqref{eq_G2}, our first main result provides existence and nonexistence results for \eqref{eq:E1} expressed in terms of the value of the exponent $p$ and the norm of the datum $\mu$ (see Theorem \ref{th:main1} for the precise statement of this result).

Finally, we impose the following condition on $N,s$ and $\gamma$ 
\begin{enumerate}[label=(G\arabic{enumi}),ref=G\arabic{enumi}]
\setcounter{enumi}{2}
\item \label{eq_G3} $N \geq N_{s,\gamma}$ where 
\begin{equation} \label{Nsgamma} 
N_{s,\gamma}:= \left\{ 
\begin{aligned} 
&4s &&\text{ if } \gamma < 2s,\\ 
&4s(1+\gamma) - \gamma  &&\text{ if }\gamma \ge 2s.
\end{aligned} \right. 
\end{equation}
\end{enumerate}
When $\gamma<2s$, assumption \eqref{eq_G3} becomes $N>4s$. This condition has been required in numerous papers dealing with variational settings (see e.g. Dipierro, Medina, Peral and Valdinoci \cite{DipMedPerVal_2017}, Servadei \cite{Ser_2014} and references therein). When $\gamma \geq 2s$,  we have $N_{s,\gamma} \leq 3$ and hence \eqref{eq_G3} always holds if $N \geq 3$. In the present paper, assumption (G3) will be employed to obtain an important regularity result in a variational setting (see Subsection 8.1).

Under the assumptions \eqref{eq_L1}--\eqref{eq_L3} and \eqref{eq_G1}--\eqref{eq_G3}, we establish uniqueness and simplicity results for  \eqref{eq:E1} (see Theorem \ref{th:main2} for the precise statement). 

\subsection{Some examples} \label{sec:someexamples} The class of operators satisfying the assumptions \eqref{eq_L1}--\eqref{eq_L3}, \eqref{eq_G1}--\eqref{eq_G3} is exemplified by the typical fractional Laplace operators.   \medskip

\textbf{The restricted fractional Laplacian (RFL).} One of the main examples is the fractional Laplacian  defined, for $s \in (0,1)$, by
$$\L u(x) = (-\Delta)^s_{\RFL} u(x): = c_{N,s}\, P.V. \int_{\R^N} \frac{u(x)-u(y)}{|x-y|^{N+2s}}dy,\quad x \in \Omega,$$
restricted to functions that are zero outside $\Omega$ as mentioned in the introduction. This operator has been intensively studied in the literature, see for instance \cite{Aba_2015,RosSer_2014,SerVal_2012,SerVal_2014} and references therein. For any $u \in C^{\infty}_c(\Omega)$, we deduce from \cite[page 13]{Garofalo_2019} that $(-\Delta)^s_{\RFL} u \in C^{\infty}(\overline{\Omega}) \subset L^2(\Omega)$. Therefore, for every $u,v \in C^{\infty}_c(\Omega)$,
$$
\int_{\Omega} u (-\Delta)^s_{\RFL} v dx = \dfrac{c_{N,s}}{2} \int_{\R^N} \int_{\R^N} \dfrac{(u(x)-u(y))(v(x)-v(y))}{|x-y|^{N+2s}}dx dy = \int_{\Omega} v(-\Delta)^s_{\RFL} u dx,
$$
hence \eqref{eq_L1} is satisfied. \eqref{eq_L2} follows from \cite[Theorem 6.5]{NezPalVal_2012} since
$$
\norm{u}_{L^2(\Omega)}^2 \lesssim \int_{\R^N} \int_{\R^N} \dfrac{|u(x)-u(y)|^2}{|x-y|^{N+2s}} = \inner{(-\Delta)^s_{\RFL} u,  u}_{L^2(\Omega)},\quad \forall u\in C^{\infty}_c(\Omega).
$$
By \cite[page 585]{SonVon_2003},
$$
\Hbb(\Omega) =  \left\{ u \in H^s(\R^N): u = 0 \text{ a.e. in }\Omega \right\} = H^s_{00}(\Omega),
$$
namely \eqref{eq_L3} is fulfilled.  

Next,  \eqref{eq_G1} holds due to \cite[page 5733]{BonSirVaz_2015} (see also Lemma \ref{lem:infinitesimal}) and \eqref{eq_G2} holds for $\gamma = s$ in  (see \cite{CheSon_1998,RosSer_2014}). Finally, in this case, assumption \eqref{eq_G3} reads as $N \geq 4s$.
 \medskip

\textbf{The spectral fractional Laplacian (SFL).} The spectral fractional Laplacian has been introduced in \cite{SonVon_2003} and studied in, for instance, \cite{AbaDup_2017,DhiMaaZri_2011,CafSti_2016,BraColPabSan_2013}.
It is defined, for $s \in (0,1)$, by
$$\L u(x) = (-\Delta)^s_{\SFL} u(x):= P.V. \int_{\Omega} [u(x)-u(y)]J_s(x,y)dy + \kappa_s(x) u(x),\quad x \in \Omega,  $$
where 
$$J_s(x,y) := \dfrac{s}{\Gamma(1-s)} \int_0^{\infty} K_{\Omega} (t,x,y) \dfrac{dt}{t^{1+s}}dt, \quad x,y \in \Omega,$$
and 
$$\kappa_s(x) := \dfrac{s}{\Gamma(1-s)} \int_0^{\infty} \left(1- \int_{\Omega} K_{\Omega} (t,x,y) dy\right) \dfrac{dt}{t^{1+s}} \sim \dfrac{1}{\delta(x)^{2s}},\quad x \in \Omega.$$
Here $K_{\Omega}$ is the heat kernel of the Laplacian $-\Delta$ in $\Omega$.
A class of more general spectral type operators defined as fractional orders of the local operator $\divv (A \nabla u)$, where $A$ is a $C^{1,\alpha}$ vector-valued function,  
was investigated in \cite{CafSti_2016}.

For this type of operators,  one has $(-\Delta)^s_{\SFL} u \in C^1_0(\overline{\Omega}) \subset L^2(\Omega)$ (see \cite[Lemma 18]{AbaDup_2017}, or Section \ref{sec:other-examples} for a more delicate approach) and for any $u, v \in C^{\infty}_c(\Omega)$,
\begin{align*}
\int_{\Omega} u (-\Delta)^s_{\SFL} v dx &= \frac{1}{2}\int_{\Omega}\int_{\Omega} (u(x)-u(y))(v(x)-v(y))J_s(x,y)dxdy + \int_{\Omega} \kappa_s(x)u(x)v(x) dx \\
&= \int_{\Omega} v (-\Delta)^s_{\SFL} u dx,
\end{align*}
which gives \eqref{eq_L1}.  Next, \eqref{eq_L2} is satisfied since
\begin{align*}
\norm{u}^2_{L^2(\Omega)} 
&\lesssim \frac{1}{2}\int_{\Omega}\int_{\Omega} (u(x)-u(y))^2 J_s(x,y)dxdy + \int_{\Omega} \kappa_s(x)|u(x)|^2 dx \\ &= \inner{ u, (-\Delta)^s_{\SFL} u}_{L^2(\Omega)}.
\end{align*}
By \cite{CafSti_2016,SonVon_2003}, \eqref{eq_L3} is fulfilled where $\Hbb(\Omega) = H^s_{00}(\Omega)$ is defined as an interpolation space

It can be seen that \eqref{eq_G1} holds by \cite[Lemma 11]{AbaDup_2017} (see also the Lemma \ref{lem:infinitesimal}) and  \eqref{eq_G2} holds with $\gamma = 1$ (see \cite{BogBurChe_2003,CafSti_2016,SonVon_2003}). Finally, assumption \eqref{eq_G3} becomes $N \geq 4s$ if $s>\frac{1}{2}$ or $N \geq 8s-1$ if $s \in (0,\frac{1}{2}]$.
\medskip

\textbf{The censored fractional Laplacian (CFL).} The censored fractional Laplacian is defined for $ s > \frac{1}{2}$ by
$$\L u(x) = (-\Delta)^s_{\CFL} u(x) := a_{N,s}\, P.V. \int_{\Omega} \dfrac{u(x)-u(y)}{|x-y|^{N+2s}}dy, \quad x \in \Omega.$$
This operator has been studied in \cite{BogBurChe_2003,Che_2018,Fal_2020}.  We verify that if $u \in C^{\infty}_c(\Omega)$, then $(-\Delta)^s_{\CFL} u \in L^{\infty}(\Omega)$.  Indeed,
\begin{align*}
(-\Delta)^s_{\CFL} u(x) =  \int_{\Omega \backslash B(x,\delta(x))} \dfrac{u(x)-u(y)}{|x-y|^{N+2s}}dy + \int_{B(x,\delta(x))}  \dfrac{u(x)-u(y) - \nabla u(x) \cdot (x-y)}{|x-y|^{N+2s}}dy.
\end{align*}
Here the constant $a_{N,s}$ is omitted because it does not affect the argument. Since
$$u(x) - u(y) = \nabla u(x)\cdot (x-y) + \frac{1}{2} D^2 u(\xi) (x-y) \cdot (x-y) \text{ and } |\nabla u(x)| \leq M\delta(x)$$ 
for some $\xi$ between $x$ and $y$, and for positive constant $M > 0$, we deduce that
\begin{align*}
|(-\Delta)^s_{\CFL} u(x)| &\lesssim \int_{\Omega \backslash B(x,\delta(x))} \dfrac{\delta(x)|x-y| + \norm{D^2 u}_{L^{\infty}(\Omega)}|x-y|^2}{|x-y|^{N+2s}} dy \\ 
&+  \int_{B(x,\delta(x))} \dfrac{\norm{D^2 u}_{L^{\infty}(\Omega)}|x-y|^2}{|x-y|^{N+2s}}dy \\ &\lesssim \delta(x) \int_{\delta(x)}^{d_{\Omega}} t^{-2s} dt  + \int_0^{d_{\Omega}}t^{1-2s} dt \\  &\lesssim 1+ \delta(x)^{2-2s},
\end{align*}
where $d_{\Omega}$ denotes the diameter of $\Omega$. Thus $(-\Delta)^s_{\CFL} u \in L^{\infty}(\Omega) \subset L^2(\Omega)$. Moreover, for any $u,v \in C_c^\infty(\Omega)$,
$$\int_{\Omega} v (-\Delta)^s_{\CFL} u dx =  \int_{\Omega} u (-\Delta)^s_{\CFL} v dx = \dfrac{a_{N,s}}{2} \int_{\Omega} \int_{\Omega} \dfrac{(u(x)-u(y))(v(x)-v(y))}{|x-y|^{N+2s}}dx dy,$$
which, together with  the Hardy inequality with $s > \frac{1}{2}$ (see \cite[(1.10)]{CheSon_2003})
$$\int_{\Omega} |u(x)|^2 dx \lesssim \int_{\Omega} \dfrac{|u(x)|^2}{\delta(x)^{2s}}dx \lesssim \int_{\Omega} \int_{\Omega} \dfrac{|u(x)-u(y)|^2}{|x-y|^{N+2s}}dxdy,\quad u \in C^{\infty}_c(\Omega), $$
yields \eqref{eq_L1} and \eqref{eq_L2}. It can be seen from \cite{BogBurChe_2003,CheKimSon_2009} that $\Hbb(\Omega) = H^s_0(\Omega) = H^s_{00}(\Omega)$ for $s> \frac{1}{2}$, whence \eqref{eq_L3} is satisfied.  

Next, \eqref{eq_G1} follows from Lemma \ref{lem:infinitesimal} (see also \cite{Che_2018}). We infer from \cite{BogBurChe_2003} that \eqref{eq_G2} holds with $\gamma = 2s - 1$.  Finally, assumption \eqref{eq_G3} becomes $N \geq 4s$.

We emphasize that the class of operators under consideration also includes other types of operators such as the Laplacian perturbed by a Hardy potential, the sum of two RFLs, relativistic Schr\"odinger operators and operators constructed from the interpolation between RFL and SFL. The mentioned operators will be discussed in Section \ref{sec:other-examples}.
\section{Formulation of problems, features and main results}\label{sec:result}

\noindent \textbf{Formulation of the boundary value problem.} We start with boundary value problems for linear equations driven by $\L$ with measure data of the form
\begin{equation}\label{eq:Lmu} 
\left\{ \begin{aligned}
\L u &= \mu &&\text{ in }\Omega, \\
u &=0 &&\text{ on } \partial \Omega, \\
u &=0 &&\text{ in }\R^N \backslash \overline{\Omega} \text{ (if applicable)},
\end{aligned} \right.
\end{equation}
where $\mu$ is a Radon measure on $\Omega$. 

For a function $f \in L^1_{\loc}(\Omega)$, the Green operator acting on $f$ is defined as
	$$
	\Gbb^{\Omega}[f](x) := \int_{\Omega} G^{\Omega}(x,y)f(y)dy,\quad  x \in \Omega.
	$$
\begin{definition}[Weak-dual solutions of linear problems] \label{def:weak-dual-linear}
Assume that $\mu \in \M(\Omega,\delta^{\gamma} )$. A function $u$ is a weak-dual solution, or $\Gbb^{\Omega} [\delta^{\gamma} L^{\infty}(\Omega)]$-weak solution, of problem \eqref{eq:Lmu} if $u \in L^1(\Omega,\delta^{\gamma} )$ and it satisfies
	\begin{equation} \label{weakdual-formula}
	\int_{\Omega} u\xi dx = \int_{\Omega} \Gbb^{\Omega} [\xi] d\mu,\quad \forall \xi \in \delta^{\gamma}L^{\infty}(\Omega).
	\end{equation}
\end{definition}

The notion of weak-dual solutions was first introduced by Bonforte and V\'azquez \cite{BonVaz_2015}, then became more prevalent and was developed in many papers (see e.g. \cite{BonFigVaz_2018,Aba_2019,ChaGomVaz_2019_2020,chan2020singular}) because it does not require to specify the meaning of $\L \xi$ for an arbitrary smooth function $\xi$. This notion is equivalent to other notions of solutions given in \cite[Theorem 2.1]{ChaGomVaz_2019_2020} for which the space of test functions $\delta^{\gamma}L^{\infty}(\Omega)$ is replaced by $L^{\infty}_c(\Omega)$ or $C^{\infty}_c(\Omega)$. We note that the homogeneous boundary condition in \eqref{eq:Lmu} is encoded in the weak-dual formulation \eqref{weakdual-formula} which can be better interpreted under additional relation between the exponents $s$ and $\gamma$ (see Subsection \ref{sec:weakduallinear} for further description of the boundary condition in the case $\gamma \geq s - \frac{1}{2}$). 

For a Radon measure $\mu$ on $\Omega$,  put
\begin{equation} \label{eq:Greenop} 
\Gbb^{\Omega}[\mu](x):=  \int_{\Omega} G^{\Omega}(x ,y)d\mu(y),\quad x \in \Omega
\end{equation}

Note that $\Gbb^{\Omega}[\mu](x)$ is finite for a.e. $x \in \Omega$ if and only if $\mu \in \M(\Omega,\delta^\gamma )$ (see Lemma \ref{lem:Gfinite}). Therefore, $\M(\Omega,\delta^\gamma )$ is the optimal class of measure data for problem \eqref{eq:Lmu}.

Moreover, it can be seen from weak formulation \eqref{weakdual-formula} and  Lemma \ref{lem:integrationbypart} that, for any $\mu \in \M(\Omega,\delta^\gamma )$, $\Gbb^{\Omega}[\mu]$ is the unique weak-dual solution of problem \eqref{eq:Lmu}. Hence in the context of weak-dual solutions, problem \eqref{eq:Lmu} is well posed. Since the solution of \eqref{eq:Lmu} can be represented in term of $\Gbb^{\Omega}$, in order to study problem \eqref{eq:Lmu}, it is sufficient to investigate $\Gbb^\Omega$. Main properties of $\Gbb^\Omega$ are given in Section \ref{sec:Green}.

The theory of linear equation \eqref{eq:Lmu} provides a basis in the study of nonnegative solutions to the semilinear equation
\begin{equation}\label{eq_nonlinear_source} \left\{ \begin{aligned}
\L u &= u^p + \lambda \mu &&\text{ in }\Omega, \\
u  &= 0 &&\text{ on }\partial \Omega, \\
u &=0 &&\text{ in }\R^N \backslash \overline{\Omega} \text{ (if applicable)},
\end{aligned} \right.
\end{equation}
where $p>1$, $\lambda$ is a positive parameter and $\mu \in \M^+(\Omega,\delta^\gamma )$ with $\| \mu \|_{\M(\Omega,\delta^\gamma )}=1$. We study \eqref{eq_nonlinear_source} by employing the concept of weak-dual solutions introduced above.

\begin{definition} \label{def:weak-dual-nonlinear}
Suppose that $\mu \in \M(\Omega,\delta^{\gamma})$. A nonnegative function $u$ is a weak-dual solution of \eqref{eq_nonlinear_source} if $u \in L^p(\Omega,\delta^{\gamma} )$ and it satisfies
\begin{equation} \label{weak-dual-nonlinear} 
\int_{\Omega} u\xi dx = \int_{\Omega}u^p \Gbb^{\Omega}[\xi] dx + \lambda \int_{\Omega} \Gbb^{\Omega} [\xi] d\mu, \quad \forall \xi \in \delta^{\gamma}L^{\infty}(\Omega).
\end{equation}
\end{definition}

\noindent \textbf{Features.} Problem \eqref{eq_nonlinear_source} has the following main features.

\begin{itemize}
\item The operator $\L$ is taken in a large class of local and nonlocal operators. Although several significant results for linear equations involving operator $\L$ have been recently obtained in function settings or measure frameworks (\cite{Aba_2019,ChaGomVaz_2019_2020}), sharp estimates on the Green operator associated to $\L$ and regularization results in weighted measure framework are still lacking. 
\item The equation in \eqref{eq_nonlinear_source} is nonlinear, with a superlinear source term $u^p$. Heuristically, the solvability for \eqref{eq_nonlinear_source} depends not only on the value of the exponent $p$ but also depends on the value of the parameter $\lambda$. 
\item The datum $\mu$ is chosen in weighted measure space $\M(\Omega,\delta^\gamma )$ which is the optimal class of data. The presence of  the weight $\delta^\gamma$ enlarges the value of the critical exponent for the existence for \eqref{eq_nonlinear_source} and hence complicates the investigation.
\end{itemize}

The interplay between the features yields substantial difficulties and requires a fine analysis. \medskip

\noindent \textbf{Main results.} 

Our first result shows that there exist a \textit{critical exponent} given by
\begin{equation} \label{pgamma} p^* = p^*(N,\gamma,s):= \frac{N+\gamma}{N+\gamma-2s}
\end{equation} 
and a \textit{threshold value} of the parameter for the existence and nonexistence of problem \eqref{eq_nonlinear_source}.

\begin{theorem}[Existence and nonexistence] \label{th:main1} Assume that \eqref{eq_G1}--\eqref{eq_G2} hold and $p>1$. 
	
\noindent {\sc 1. Subcritical case: $1<p<p^*$.} Let $\mu \in \M^+(\Omega,\delta^{\gamma} )$ with $\| \mu \|_{\M(\Omega,\delta^{\gamma})} = 1$. Then there exists a threshold value $\lambda^*  \in (0,+\infty)$ such that the followings hold.
	
	(i) If $\lambda \in (0,\lambda^*]$ then problem \eqref{eq_nonlinear_source} has a minimal positive weak-dual solution $\underline{u}_{\lambda}$. Moreover, there holds
	\begin{equation} \label{estimate-minimalsol} \lambda\Gbb^{\Omega}[\mu] \leq  \underline u_{\lambda} \le T(\lambda)\Gbb^{\Omega} [\mu] \quad  \text{a.e. in } \Omega,
	\end{equation}
	where $T$ is an increasing function and $T(\lambda) \to 0$ as $\lambda \to 0$.
	
	(ii) If $\lambda \in (\lambda^*,+\infty)$ then problem \eqref{eq_nonlinear_source} does not admit any nonnegative weak-dual solution.
	
\noindent {\sc 2. Supercritical case: $p \geq p^*$.} Let $\lambda>0$. There exists  $\mu \in \M^+(\Omega,\delta^{\gamma} )$ with $\| \mu \|_{\M(\Omega,\delta^{\gamma})} = 1$ such that problem \eqref{eq_nonlinear_source} does not admit any nonnegative weak-dual solution. 
\end{theorem}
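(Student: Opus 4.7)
The plan is to iterate: set $u_0 \equiv 0$ and $u_{k+1} := \Gbb^\Omega[u_k^p] + \lambda \Gbb^\Omega[\mu]$, which is monotone non-decreasing by the positivity of $G^\Omega$. The central analytic input I would invoke is a Green kernel self-mapping estimate: for $1 < p < p^*$ there exists a constant $C_0 = C_0(N,s,\gamma,p,\Omega) > 0$ such that for every unit-norm $\mu \in \M^+(\Omega, \delta^\gamma)$,
\[
\Gbb^\Omega[(\Gbb^\Omega[\mu])^p] \le C_0\, \Gbb^\Omega[\mu] \qquad \text{a.e.\ in } \Omega.
\]
I expect this to be established in Section \ref{sec:Green} by combining the two-sided bound \eqref{G-est} with a 3G-type inequality and the Marcinkiewicz-space fact that $\Gbb^\Omega[\mu] \in M^{p^*}(\Omega, \delta^\gamma)$; this is the principal analytical obstacle, since it is precisely here that the subcritical exponent $p^*$ enters. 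Granting it, induction yields $u_k \le T\, \Gbb^\Omega[\mu]$ whenever $T - C_0 T^p \ge \lambda$, and elementary analysis of the real map $T \mapsto T - C_0 T^p$ produces such a $T = T(\lambda)$ for every $\lambda$ below the explicit threshold $\lambda_0 := (1-1/p)(pC_0)^{-1/(p-1)}$, with $T(\lambda) \to 0$ as $\lambda \to 0$. Dominated convergence in \eqref{weak-dual-nonlinear} then delivers a minimal positive weak-dual solution $\underline u_\lambda$ satisfying \eqref{estimate-minimalsol}.

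\textbf{Threshold and attainment.} Define $\lambda^* := \sup\{\lambda > 0 : \eqref{eq_nonlinear_source} \text{ admits a nonnegative weak-dual solution}\} \ge \lambda_0$. For any $\lambda \in (0, \lambda^*)$, picking $\lambda' \in (\lambda, \lambda^*)$ with a solution $u_{\lambda'}$ provides a super-solution for the $\lambda$-problem while $\lambda \Gbb^\Omega[\mu]$ is a sub-solution; standard sub/super-solution iteration produces $\underline u_\lambda$ with \eqref{estimate-minimalsol}. To bound $\lambda^*$ from above, I would upgrade the weak-dual equation to the pointwise identity $u = \Gbb^\Omega[u^p] + \lambda \Gbb^\Omega[\mu]$ a.e., fix a compact $K \Subset \Omega$ on which $\Gbb^\Omega[\mu] \ge c_\mu > 0$ (which exists because $\mu$ is a nontrivial positive measure and $G^\Omega > 0$), and iterate $u \ge V_n$ with $V_0 = \lambda \Gbb^\Omega[\mu]$, $V_{n+1} = \Gbb^\Omega[V_n^p] + \lambda \Gbb^\Omega[\mu]$; the scalar quantities $a_n := \inf_K V_n$ satisfy $a_{n+1} \ge c'_K a_n^p + \lambda c_\mu$, which diverges once $\lambda$ exceeds some finite $\lambda_1 > 0$, contradicting $u \in L^p(\Omega, \delta^\gamma)$. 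Attainment at $\lambda = \lambda^*$ follows by letting $\lambda_n \nearrow \lambda^*$: the minimal solutions $\underline u_{\lambda_n}$ are monotone non-decreasing in $n$, and a uniform $L^1(\Omega, \delta^\gamma)$ a priori bound (obtained by testing \eqref{weak-dual-nonlinear} against $\xi = \delta^\gamma$ and using H\"older on the resulting identity together with the self-mapping estimate above) lets monotone convergence pass in the weak-dual formulation. Nonexistence for $\lambda > \lambda^*$ is immediate from the definition.

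\textbf{Supercritical nonexistence.} Testing \eqref{weak-dual-nonlinear} against arbitrary $\xi \in \delta^\gamma L^\infty(\Omega)$ with $\xi \ge 0$, dropping the nonnegative term $\int u^p \Gbb^\Omega[\xi]\,dx$, and swapping by Fubini in $\int \Gbb^\Omega[\xi]\, d\mu = \int \xi\, \Gbb^\Omega[\mu]\, dx$ yields $u \ge \lambda \Gbb^\Omega[\mu]$ a.e. Since any weak-dual solution lies in $L^p(\Omega, \delta^\gamma)$, this forces $\Gbb^\Omega[\mu] \in L^p(\Omega, \delta^\gamma)$, so it suffices to exhibit a unit-norm $\mu \in \M^+(\Omega, \delta^\gamma)$ with $\Gbb^\Omega[\mu] \notin L^p(\Omega, \delta^\gamma)$. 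The natural candidate is the normalized Dirac mass $\mu := \delta(x_0)^{-\gamma} \delta_{x_0}$ with $x_0 \in \Omega$ taken close to $\partial \Omega$; a direct computation based on \eqref{G-est}, splitting $\Omega$ into the inner region $\{|x-x_0| \le \delta(x_0)\}$ and the outer region $\{|x-x_0| > \delta(x_0)\}$ and using local half-space coordinates near $\partial \Omega$, shows that the ratio $\delta(x_0)^{-\gamma p} \int_\Omega G^\Omega(x, x_0)^p \delta(x)^\gamma dx$ blows up as $\delta(x_0) \to 0$ precisely when $p \ge p^*$, the critical scaling being dictated by the boundary factors $(\delta(\cdot)/|x-\cdot|)^\gamma$ in \eqref{G-est}. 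Choosing $x_0$ sufficiently close to $\partial \Omega$ then produces the required $\mu$.
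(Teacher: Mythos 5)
Your subcritical analysis (small-$\lambda$ iteration via the self-mapping estimate $\Gbb^\Omega[\Gbb^\Omega[\mu]^p]\le C_0\Gbb^\Omega[\mu]$, scalar fixed-point analysis of $T\mapsto T-C_0T^p$, sub/super-solution monotone iteration, and attainment at $\lambda^*$ via a uniform a priori bound and monotone convergence) matches the paper's argument closely; the self-mapping estimate is indeed the paper's Proposition~\ref{prop_priorestimate}, obtained via Jensen plus the 3G inequality. Your route to $\lambda^*<\infty$ via a compact-subset iteration $a_{n+1}\ge c'_K a_n^p+\lambda c_\mu$ is a valid alternative to the paper's cleaner test against the first eigenfunction $\varphi_1$; the latter has the advantage of simultaneously producing the uniform $L^p(\Omega,\delta^\gamma)$ bound $\|u\|_{L^p(\Omega,\delta^\gamma)}\le C(N,\Omega,s,\gamma,p,\lambda_1)$ valid for \emph{every} nonnegative weak-dual solution and \emph{every} unit-norm $\mu$, which the paper records in Remark~\ref{rmk:uniformbound} and then leans on heavily.

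There is, however, a genuine gap in your supercritical nonexistence argument. You reduce to exhibiting a single unit-norm $\mu$ with $\Gbb^\Omega[\mu]\notin L^p(\Omega,\delta^\gamma)$ and propose $\mu=\delta(x_0)^{-\gamma}\delta_{x_0}$ with $x_0$ close to $\partial\Omega$. But for any \emph{fixed} interior point $x_0$ this fails whenever $p^*\le p<\frac{N}{N-2s}$ (a nonempty range, since $p^*<\frac{N}{N-2s}$ because $s,\gamma>0$): near $x_0$ the boundary factors in \eqref{G-est} are comparable to $1$, so $G^\Omega(x,x_0)\sim|x-x_0|^{2s-N}$ and $\int_{B(x_0,\delta(x_0)/2)}|x-x_0|^{(2s-N)p}\delta(x)^\gamma dx<\infty$ precisely because $p<\frac{N}{N-2s}$; the outer integral is likewise finite once cut off at scale $\delta(x_0)$. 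You correctly observe that the quantity $\delta(x_0)^{-\gamma p}\int_\Omega G^\Omega(x,x_0)^p\delta(x)^\gamma dx$ blows up as $\delta(x_0)\to0$, but that is a divergence of a \emph{sequence of finite numbers}, not an infinite integral at a fixed $x_0$, so the conclusion ``$\Gbb^\Omega[\mu]\notin L^p(\Omega,\delta^\gamma)$'' does not follow, and no single interior Dirac mass yields nonexistence by this pointwise lower-bound alone in that exponent range.

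The missing ingredient is exactly the uniform-in-$\mu$ a priori $L^p$ bound. The paper's argument supposes for contradiction that \emph{every} unit-norm $\mu$ admits a solution, applies this to $\mu_n=\delta(y_n)^{-\gamma}\delta_{y_n}$ with $y_n\to y^*\in\partial\Omega$, and uses Fatou together with the pointwise lower bound $u_n\ge\lambda\Gbb^\Omega[\mu_n]$ and a cone estimate near $y^*$ (where $\delta(x)\sim|x-y^*|$, so the integrand behaves like $|x-y^*|^{-(N-2s+\gamma)p+\gamma}$, divergent at $0$ iff $p\ge p^*$) to get $\liminf_n\int_\Omega u_n^p\,\delta^\gamma\,dx=\infty$, contradicting the uniform bound. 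To repair your argument in the same spirit: either invoke the uniform $L^p$ bound and note that your blowup of $\|\Gbb^\Omega[\mu_{x_0}]\|_{L^p(\Omega,\delta^\gamma)}$ exceeds $C/\lambda$ once $\delta(x_0)$ is small enough, or build a single $\mu$ for which the integral genuinely diverges (e.g.\ a weighted sum $\sum_n c_n\delta(y_n)^{-\gamma}\delta_{y_n}$ with $\delta(y_n)$ shrinking fast enough that $c_n^p\|\Gbb^\Omega[\delta(y_n)^{-\gamma}\delta_{y_n}]\|_{L^p(\Omega,\delta^\gamma)}^p\to\infty$), which requires precisely the quantitative blowup rate you derived.
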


The existence of the minimal solution $\underline{u}_\lambda$ in the subcritical case $p \in (1,p^*)$ is based on the sub- and supersolution theorem established in Proposition \ref{prop_existence.nonlinear} for the framework of operator $\L$ and weak-dual solutions. The key step in the argument is the construction of a positive weak-dual supersolution of the form $T(\lambda)\Gbb^\Omega[\mu]$, which is obtained due to a subtle estimate on the Green kernel (see Proposition \ref{prop_priorestimate}). In the supercritical case $p \geq p^*$, to prove the nonexistence result, we make use of the fact that any positive weak-dual solution of problem \eqref{eq_nonlinear_source}, if exists, is bounded from below by $\lambda\Gbb^\Omega[\mu]$. Moreover, we take a careful care of the interaction between two terms appearing in the estimate \eqref{G-est} for the Green kernel: $|x-y|^{2s-N}$ (which describes the interior singularity of the Green kernel) and  $\delta(x)^\gamma$ (which characterize the behavior of the Green kernel near the boundary). By localizing and  `displacing' the singularity toward the boundary $\partial \Omega$, we are able to show that, when $p \geq p^*$, a positive weak-dual solution, if exists, would not belong to $L^p({\mathcal C}, \delta^\gamma )$ where ${\mathcal C} \subset \Omega$  is a cone of vertex at a point on $\partial \Omega$. This leads to a contradiction and hence we obtain the nonexistence result in the supercritical case. 

In order to go further in the investigation of the structure of solution set of \eqref{eq_nonlinear_source} in the subcritical case $p \in (1,p^*)$, we need additionally assumptions \eqref{eq_L1}--\eqref{eq_L3} and \eqref{eq_G1}--\eqref{eq_G3}. In the next result, we show a multiplicity result when $\lambda  \in (0,\lambda^*)$ and the uniqueness when  $\lambda = \lambda^*$.

\begin{theorem}[Uniqueness and multiplicity] \label{th:main2} Assume that \eqref{eq_L1}--\eqref{eq_L3} and \eqref{eq_G1}--\eqref{eq_G3} hold. Let $p \in (1,p^*)$ and $\mu \in \M^+(\Omega,\delta^{\gamma} )$ with $\| \mu \|_{\M(\Omega,\delta^{\gamma})} = 1$.  

{\sc 1. Uniqueness.} If $\lambda = \lambda^*$, then problem \eqref{eq_nonlinear_source} admits a unique positive weak-dual solution.

{\sc 2. Multiplicity.} If $\lambda \in (0, \lambda^*)$, then problem  \eqref{eq_nonlinear_source} admits a second positive weak-dual solution $\widetilde{u}_{\lambda} >  \underline{u}_{\lambda}$. Moreover, there exists a positive constant $C=C(\Omega, N,s,\gamma,p,\lambda_1)$ such that
\begin{equation} \label{estimate-secondsol} \lambda\Gbb^{\Omega}[\mu] \leq  \widetilde u_{\lambda} \le C(\Gbb^{\Omega} [\mu] +1) \quad  \text{a.e. in } \Omega.
\end{equation}
\end{theorem}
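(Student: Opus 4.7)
The plan is to handle both assertions by reducing \eqref{eq_nonlinear_source} to a problem without measure datum via the ansatz $u = \underline{u}_\lambda + v$ with $v \ge 0$, where $\underline{u}_\lambda$ is the minimal solution supplied by Theorem \ref{th:main1}. Formally, $v$ must solve
\begin{equation*}
\L v = (\underline{u}_\lambda + v)^p - \underline{u}_\lambda^p \quad \text{in } \Omega,
\end{equation*}
together with the homogeneous boundary and exterior conditions. Since the measure has disappeared, I can work variationally on the Hilbert space $\Hbb(\Omega) = H^s_{00}(\Omega)$ supplied by \eqref{eq_K3}--\eqref{eq_K4}, using the energy
\begin{equation*}
J_\lambda(v) := \frac{1}{2}\mathcal{B}(v,v) - \int_\Omega F_\lambda(x,v^+)\,dx, \quad F_\lambda(x,t) := \frac{(\underline{u}_\lambda(x)+t)^{p+1}-\underline{u}_\lambda(x)^{p+1}}{p+1} - \underline{u}_\lambda(x)^p\,t.
\end{equation*}
Its nonnegative critical points will, after regularity bootstrap, produce weak-dual solutions $\widetilde{u}_\lambda := \underline{u}_\lambda + v$ of \eqref{eq_nonlinear_source} with $\widetilde{u}_\lambda \ge \underline{u}_\lambda$.

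For the multiplicity part, I would apply the Mountain Pass Theorem to $J_\lambda$ on $H^s_{00}(\Omega)$. Three ingredients are needed: (a) the Ambrosetti--Rabinowitz superquadraticity $(p+1)F_\lambda(x,t) \le t\,\partial_t F_\lambda(x,t)$ for $t \ge 0$, from which Palais--Smale sequences are shown to be bounded; (b) the mountain pass geometry, where the barrier at the origin comes from $\partial_t F_\lambda(x,0)=0$ together with the Poincar\'e-type inequality $\lambda_1 \|v\|_{L^2(\Omega)}^2 \le \mathcal{B}(v,v)$ and the Sobolev embedding $H^s_{00}(\Omega) \hookrightarrow L^{p+1}(\Omega)$ (valid because $p<p^*<\frac{N+2s}{N-2s}$), while a mountain at infinity is produced by scaling a fixed positive test function, using $p+1 > 2$; (c) compactness of the embedding into $L^{p+1}(\Omega)$, again guaranteed by subcriticality. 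The resulting nontrivial nonnegative critical point $v_\lambda$ yields the second solution, and the strong maximum principle encoded in the positivity of $G^\Omega$ gives $\widetilde{u}_\lambda > \underline{u}_\lambda$ on $\Omega$. The pointwise upper bound in \eqref{estimate-secondsol} follows by feeding the variational $L^{p+1}$-bound on $v_\lambda$ into the weak-dual formulation and invoking the Marcinkiewicz-type Green estimates of Section \ref{sec:Green}.

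For the uniqueness part, I would argue by contradiction. Suppose a positive weak-dual solution $u \neq \underline{u}_{\lambda^*}$ exists at $\lambda = \lambda^*$; by minimality $u \ge \underline{u}_{\lambda^*}$, so $w := u - \underline{u}_{\lambda^*}$ is positive on a set of positive measure. Strict convexity of $t \mapsto t^p$ for $p>1$ gives
\begin{equation*}
\L w = u^p - \underline{u}_{\lambda^*}^p > p\,\underline{u}_{\lambda^*}^{p-1}\,w \quad \text{on } \{w > 0\}.
\end{equation*}
On the other hand, stability of the minimal branch forces the first eigenvalue of the linearized operator $\L - p\,\underline{u}_{\lambda^*}^{p-1}$ (formed in the Hilbert framework of \eqref{eq_K3}--\eqref{eq_K4}) to be nonnegative, and it must actually vanish at the extremal parameter: otherwise strict positivity combined with the implicit function theorem would extend the minimal branch past $\lambda^*$, contradicting Theorem \ref{th:main1}(ii). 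Taking the principal eigenfunction $\varphi > 0$ associated to this zero eigenvalue and using the symmetry \eqref{eq_K3} of $\L$,
\begin{equation*}
0 = \int_\Omega (\L\varphi - p\,\underline{u}_{\lambda^*}^{p-1}\varphi)\,w\,dx = \int_\Omega \varphi\,(\L w - p\,\underline{u}_{\lambda^*}^{p-1}\,w)\,dx > 0,
\end{equation*}
which is the desired contradiction.

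I expect the main obstacle to be the functional-analytic groundwork for the variational framework when $\underline{u}_\lambda$ is unbounded near the support of $\mu$: one must verify $J_\lambda \in C^1(H^s_{00}(\Omega))$ and that the mixed term $\int_\Omega \underline{u}_\lambda^{p-1}v^2\,dx$ is finite for every $v \in H^s_{00}(\Omega)$. This forces careful use of Marcinkiewicz-type bounds on $\underline{u}_\lambda \le T(\lambda)\Gbb^\Omega[\mu]$ together with the Hardy-type inequality $\int_\Omega v^2/\delta^{2s}\,dx \lesssim \mathcal{B}(v,v)$ implicit in the definition of $H^s_{00}(\Omega)$; the dimensional restriction \eqref{eq_K5} is precisely what makes the relevant H\"older exponents admissible. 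A parallel delicate point for uniqueness is the construction and strict positivity of the principal eigenfunction $\varphi$ of the perturbed operator $\L - p\,\underline{u}_{\lambda^*}^{p-1}$, which requires extending the spectral theory built from compactness of $\Gbb^\Omega$ to the quadratic form perturbed by a possibly unbounded weight.
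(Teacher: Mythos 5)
Your overall plan coincides with the paper's: pass to the translated problem $\L v=(\underline{u}_\lambda+v^+)^p-\underline{u}_\lambda^p$, run a mountain-pass argument in $\Hbb(\Omega)=H^s_{00}(\Omega)$ for the multiplicity part, and for uniqueness at $\lambda^*$ exploit the linearized eigenproblem with weight $p\underline{u}_{\lambda^*}^{p-1}$ together with the implicit-function-theorem argument that forces the corresponding eigenvalue to equal $1$. The uniqueness sketch is essentially the paper's (Proposition \ref{prop_eigenvalue1} followed by the contradiction using the eigenfunction $\xi_1$), and your remark about extending the spectral theory to the unbounded weight is exactly what Propositions \ref{prop_compactembd} and \ref{prop_eigenproblem} do, since $\underline{u}_{\lambda^*}^{p-1}\in L^r(\Omega,\delta^\gamma)$ for some $r>\frac{N+\gamma}{2s}$.

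There is, however, a genuine error in ingredient (a) of your multiplicity argument: the Ambrosetti--Rabinowitz inequality $(p+1)F_\lambda(x,t)\le t\,\partial_t F_\lambda(x,t)$ is \emph{false} whenever $\underline{u}_\lambda(x)>0$. Writing $a=\underline{u}_\lambda(x)\ge 0$ and $h(a,t)=(a+t)^p-a^p=\partial_t F_\lambda$, a direct computation gives
\begin{equation*}
(p+1)F_\lambda(x,t)-t\,h(a,t)=a\bigl[(a+t)^p-a^p-pa^{p-1}t\bigr]\ge 0
\end{equation*}
by convexity of $s\mapsto s^p$, so the inequality goes the wrong way; indeed, for any $\theta>2$ the map $t\mapsto \theta F_\lambda(x,t)-t\,h(a,t)$ has vanishing value and first derivative at $t=0$ but second derivative $(\theta-2)\,p\,a^{p-1}>0$, so no superquadraticity constant works near the origin. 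The paper replaces AR by the Naito--Sato estimate (Lemma \ref{lem_naitosato}(3)) $h(a,b)b-(2+c_p)H(a,b)\ge-\tfrac{c_p p}{2}a^{p-1}b^2$, which carries an error term that is then absorbed using the \emph{stability} of $\underline{u}_\lambda$, i.e.\ $\sigma(\lambda)>1$.

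The same missing ingredient infects your ingredient (b). Since $F_\lambda(x,t)\sim\tfrac{p}{2}\underline{u}_\lambda(x)^{p-1}t^2$ as $t\to 0$, the functional near the origin is $J_\lambda(v)\approx\tfrac12\bigl(\|v\|^2_{\Hbb(\Omega)}-p\int_\Omega\underline{u}_\lambda^{p-1}v^2\,dx\bigr)-c\|v\|^{p+1}_{L^{p+1}}$, and the quadratic part is coercive only because the weighted first eigenvalue $\sigma(\lambda)$ (with weight $p\underline{u}_\lambda^{p-1}$, not weight $1$) is strictly larger than $1$; the unweighted Poincar\'e inequality $\lambda_1\|v\|^2_{L^2}\le\|v\|^2_{\Hbb(\Omega)}$ controls $\int v^2$, not $\int\underline{u}_\lambda^{p-1}v^2$. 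The stability $\sigma(\lambda)>1$ for $\lambda\in(0,\lambda^*)$ is precisely the content of Proposition \ref{prop_stability}, and must be proved first (it is also where assumption \eqref{eq_K5} is actually used, to ensure $\underline{u}_\lambda^{p-1}\cdot(\text{bounded})\in L^2(\Omega)$ so that solutions of the linearized problem lie in $\Hbb(\Omega)$ --- not, as you suggest, for H\"older admissibility in the mixed term, which only needs \eqref{eq_K1}--\eqref{eq_K4} via Proposition \ref{prop_compactembd}). Finally, in step (c) you will also need the compactness $\Hbb(\Omega)\hookrightarrow\hookrightarrow L^2(\Omega,\underline{u}_\lambda^{p-1})$ from Proposition \ref{prop_compactembd}, and not only into $L^{p+1}(\Omega)$, in order to pass to the limit in $\int h(\underline{u}_\lambda,v_n)(v_n-v)\,dx$.
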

Here, $\lambda_1$ in Theorem \ref{th:main2} is the first eigenvalue of $\L$, see Section \ref{sec:lineartheory}. Let us discuss the main idea of the proof. A conspicuous feature of present paper in comparison with previous works  is the deep involvement of the weight $\delta^\gamma$ appearing in the optimal class of data $\M(\Omega,\delta^\gamma)$, which complicates the analysis.  Under assumptions \eqref{eq_L1}--\eqref{eq_L3} and \eqref{eq_G1}--\eqref{eq_G3}, we develop a new unifying technique which enables us to obtain a pivotal regularity result for weak-dual solutions to the following nonhomogeneous linear equations
\begin{equation}\label{nonhomogeneous} \left\{ \begin{aligned}
\L v &= a v + f &&\text{ in }\Omega, \\
v  &= 0 &&\text{ on }\partial \Omega, \\
v &=0 &&\text{ in }\R^N \backslash \overline{\Omega} \text{ (if applicable)},
\end{aligned} \right.
\end{equation}
where $a \in L^r(\Omega,\delta^{\gamma} ), r > \frac{N+\gamma}{2s}$ and $f \in L^m(\Omega,\delta^{\gamma} ), m > \frac{N+\gamma}{2s}$ (Notice that $\frac{N+\gamma}{2s} = (p^*)'$, where $p^*$ is the critical exponent given in \eqref{pgamma}). More precisely, if $v$ is a weak-dual solution of \eqref{nonhomogeneous} then $v \in L^\infty(\Omega)$ (see Proposition \ref{prop_bootstrap2}). This is a crucial ingredient in the derivation of the (semi)stability of the minimal solution $\underline{u}_\lambda$ when $N \geq N_{s,\gamma}$ (see Propositions \ref{prop_stability} and \ref{prop_eigenvalue1}). The technique is based on a bootstrap argument, combined with delicate estimates on the Green kernel and does not require any weighted Sobolev embeddings which are not available for general operators. In the case $\lambda=\lambda^*$, this regularity result implies that the minimal weak-dual solution $\underline{u}_{\lambda^*}$ is semi-stable and  the eigenvalue $\sigma(\lambda^*)$ of the eigenproblem with weight $\underline{u}_{\lambda^*}^{p-1}$ satisfies $\sigma(\lambda^*)=1$ (see Proposition \ref{prop_eigenvalue1}), which enables us to prove the uniqueness. In the case $\lambda \in (0,\lambda^*)$, to construct the second solution, we reduce problem \eqref{eq_nonlinear_source} to a related problem (see problem \eqref{eq_mountainpass}) with a variational structure for which the mountain-pass theorem can be applied thanks to the above-mentioned regularity. 

We notice that when $\gamma<2s$,  condition $N \geq 4s$ is required to ensure that solutions of \eqref{nonhomogeneous} belong to $\Hbb(\Omega)$ and can be relaxed when the datum $\mu$ in problem \eqref{eq_nonlinear_source} is more regular, for example in $L^r(\Omega, \delta^\gamma )$ for $r>\frac{N+\gamma}{2s}$. When $\gamma \geq 2s$, condition $N \geq 4s(1+\gamma)-\gamma$ is satisfied if $N \geq 3$.

It is worth adding that we also establish sharp estimates for the Green kernel and obtain regularization results in weighted function settings and weighted measure frameworks.

We stress that our results are new, even in the case $\L$ is the classical Laplacian, and cover, complement or extend several results in \cite{BidYar_2002,CheQua_2018,Aba_2019,ChaGomVaz_2019_2020}. Our theory is valid for various types of local and nonlocal operators.

\textbf{Organization of the paper.} The rest of the paper is organized as follows. In Section \ref{sec:Green}, we establish sharp estimates regarding the Green kernel and regularization results in weighted function settings and measure frameworks. In Section \ref{sec:lineartheory}, we study the linear problem associated to $\L$ in a measure framework and in a variational setting. Section \ref{sec:minimalsolution} is devoted to the proof of Theorem \ref{th:main1}. In Section \ref{sec:keyregularity}, we obtain a key regularity result which plays an important role in making the connection between the context of weak-dual solutions and the variational setting. This in turn enables us to show the semistability or stability of the minimal solution to \eqref{eq_nonlinear_source}. The proof of Theorem \ref{th:main2} is represented in Section \ref{sec:uni+multi}. Finally, we discuss further examples in Section \ref{sec:other-examples}.

\section{Green kernel} \label{sec:Green}
In this section, we assume that \eqref{eq_G1}--\eqref{eq_G2} hold.
\subsection{Basic properties} In this subsection, we prove some basic properties of the Green kernel in measure frameworks which are a counterpart of results for function settings established in \cite{Aba_2019,ChaGomVaz_2019_2020}. We start with providing upper bounds for the Green kernel. 
\begin{lemma} There exists a positive constant $C=C(N,\Omega,s,\gamma)$ such that
 for any $x,y \in \Omega, x \neq y$, there holds
 \begin{equation}\label{eq_Green0}
 G^{\Omega}(x,y) \lesssim C \min \left\{ \dfrac{1}{|x-y|^{N-2s}},\dfrac{\delta(y)^{\gamma}}{\delta(x)^{\gamma}}\dfrac{1}{|x-y|^{N-2s}} , \dfrac{\delta(y)^{\gamma}}{|x-y|^{N-2s+\gamma}},\dfrac{\delta(y)^{\gamma}\delta(x)^{\gamma}}{|x-y|^{N-2s+2\gamma}} \right\}.
 \end{equation}	
\end{lemma}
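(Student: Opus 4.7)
The statement is an upper bound for $G^\Omega$ by a minimum of four explicit quantities, so the strategy is simply to show that the sharp upper bound provided by assumption \eqref{eq_K1},
\[
G^\Omega(x,y)\le C_0\,\frac{1}{|x-y|^{N-2s}}\left(\frac{\delta(x)}{|x-y|}\wedge 1\right)^{\!\gamma}\left(\frac{\delta(y)}{|x-y|}\wedge 1\right)^{\!\gamma},
\]
dominates each of the four candidates separately (up to a multiplicative constant depending only on $\gamma$). Writing $A=\delta(x)/|x-y|$ and $B=\delta(y)/|x-y|$, the bound reduces to controlling the dimensionless factor $(A\wedge 1)^{\gamma}(B\wedge 1)^{\gamma}$.

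Three of the four estimates are essentially immediate from the elementary inequality $t\wedge 1\le \min\{1,t\}\le 1$, valid for $t\ge 0$. Namely: bound (i) follows from $(A\wedge 1)^{\gamma}(B\wedge 1)^{\gamma}\le 1$; bound (iii) from $(A\wedge 1)^{\gamma}\le 1$ and $(B\wedge 1)^{\gamma}\le B^{\gamma}$; and bound (iv) from $(A\wedge 1)^{\gamma}\le A^{\gamma}$ together with $(B\wedge 1)^{\gamma}\le B^{\gamma}$. No geometric information on $\Omega$ is needed here.

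The main obstacle is the asymmetric bound (ii), namely $G^\Omega(x,y)\lesssim \delta(y)^{\gamma}/(\delta(x)^{\gamma}|x-y|^{N-2s})$, which cannot hold without exploiting the Lipschitz continuity of the distance function, $|\delta(x)-\delta(y)|\le |x-y|$. To handle it I would distinguish two regimes according to the size of $\delta(x)$ relative to $|x-y|$. When $\delta(x)\le 2|x-y|$, the factor $(B\wedge 1)^{\gamma}\le B^{\gamma}=\delta(y)^{\gamma}/|x-y|^{\gamma}$ combined with $\delta(x)^{\gamma}\le 2^{\gamma}|x-y|^{\gamma}$ gives the result with constant $2^{\gamma}$. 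When $\delta(x)>2|x-y|$, the Lipschitz inequality yields $\delta(y)\ge \delta(x)-|x-y|>\delta(x)/2$, so that $A,B>1$; hence $(A\wedge 1)^{\gamma}(B\wedge 1)^{\gamma}=1\le 2^{\gamma}\delta(y)^{\gamma}/\delta(x)^{\gamma}$. In both regimes the inequality holds with constant $2^{\gamma}$.

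Taking the minimum of the four resulting inequalities yields \eqref{eq_Green0} with $C$ depending only on $N,\Omega,s,\gamma$ (through the constant $C_0$ in \eqref{G-est} and the factor $2^{\gamma}$). The only nontrivial ingredient beyond \eqref{eq_K1} is the Lipschitz estimate $|\delta(x)-\delta(y)|\le |x-y|$, which is free of charge.
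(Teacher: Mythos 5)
Your proof is correct, and it takes a genuinely different route from the paper's. The paper does not argue bound by bound: it first rewrites the two-sided estimate \eqref{eq_K1} in the equivalent single-fraction form
\[
G^{\Omega}(x,y)\ \sim\ \frac{1}{|x-y|^{N-2s}}\cdot\frac{\delta(x)^{\gamma}\delta(y)^{\gamma}}{|x-y|^{2\gamma}+\delta(x)^{2\gamma}+\delta(y)^{2\gamma}},
\]
after which all four inequalities in \eqref{eq_Green0} drop out by simply lower-bounding the three-term denominator by any one of its summands (and using $\delta(x)^{\gamma}\delta(y)^{\gamma}\le \tfrac12(\delta(x)^{2\gamma}+\delta(y)^{2\gamma})$, $\delta(x)^{\gamma}|x-y|^{\gamma}\le\tfrac12(\delta(x)^{2\gamma}+|x-y|^{2\gamma})$ where needed). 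In contrast, you stay with the original $(\cdot\wedge 1)$ form and verify each of the four bounds directly, with the only delicate case being the asymmetric bound (ii), which you handle by the dichotomy $\delta(x)\lessgtr 2|x-y|$ together with the Lipschitz property $|\delta(x)-\delta(y)|\le|x-y|$. Note that the paper's equivalence step also secretly uses the same Lipschitz ingredient — without it, $(A\wedge 1)(B\wedge 1)$ is not comparable to $(AB)\wedge 1$ — so the essential input is the same; the difference is purely organizational. Your argument is more elementary and self-contained for this lemma; the paper's reformulation is slightly more compact and, more importantly, the displayed single-fraction form is reused later (e.g., for the lower Hopf bound in Lemma \ref{lem:Gfinite} and in the nonexistence proof of Theorem \ref{th:main1}(2)), which is what motivates stating it as a separate equivalence rather than just a min of four upper bounds.
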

\begin{proof}
We infer from \eqref{eq_G2} that, for any $x,y \in \Omega, x \neq y$,
\begin{equation}\label{eq_equivalentgreen}
\begin{aligned}
G^{\Omega}(x,y) &\sim \dfrac{1}{|x-y|^{N-2s}}\left(1 \wedge \dfrac{\delta(x)\delta(y)}{|x-y|^2} \right)^{\gamma}\\
&\sim \dfrac{1}{|x-y|^{N-2s}}\cdot\dfrac{\delta(x)^{\gamma}\delta(y)^{\gamma}}{|x-y|^{2\gamma} \vee \delta(x)^{2\gamma} \vee \delta(y)^{2\gamma}} \\ 
&\sim \dfrac{1}{|x-y|^{N-2s}}\cdot\dfrac{\delta(x)^{\gamma}\delta(y)^{\gamma}}{|x-y|^{2\gamma} +\delta(x)^{2\gamma} + \delta(y)^{2\gamma} }.
\end{aligned}
\end{equation}
This implies \eqref{eq_Green0}. 
\end{proof}

Let $\Gbb^{\Omega}$ be the Green operator defined in \eqref{eq:Greenop}. 

\begin{lemma} \label{lem:Gfinite} $\Gbb^{\Omega}[\mu](x)$ is finite for a.e. $x \in \Omega$ if and only if $\mu \in \M(\Omega,\delta^\gamma )$.
\end{lemma}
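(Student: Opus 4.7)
The plan is to treat the two implications separately after reducing to the case $\mu \geq 0$ (by replacing $\mu$ with $|\mu|$), which legitimizes Tonelli's theorem throughout.

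For the implication $\mu \in \mathcal{M}(\Omega,\delta^{\gamma}) \Rightarrow \mathbb{G}^{\Omega}[\mu] < \infty$ a.e., I would test $\mathbb{G}^{\Omega}[\mu]$ against the weight $\delta^{\gamma}$ using the second bound in \eqref{eq_Green0}, namely
$$G^{\Omega}(x,y) \lesssim \frac{\delta(y)^{\gamma}}{\delta(x)^{\gamma}}\cdot\frac{1}{|x-y|^{N-2s}}.$$
Multiplying by $\delta(x)^{\gamma}$ and interchanging integrals gives
$$\int_{\Omega}\delta(x)^{\gamma}\mathbb{G}^{\Omega}[\mu](x)\,dx \lesssim \int_{\Omega}\delta(y)^{\gamma}\left(\int_{\Omega}\frac{dx}{|x-y|^{N-2s}}\right)d\mu(y),$$
and the inner integral is uniformly bounded in $y \in \Omega$ because $\Omega$ is bounded and $N-2s<N$. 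Hence $\mathbb{G}^{\Omega}[\mu]\in L^{1}(\Omega,\delta^{\gamma}\,dx)$, forcing a.e. finiteness.

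For the converse, I would extract a ``thick'' test set on which $\mathbb{G}^{\Omega}[\mu]$ is bounded: fix any ball $B \subset \subset \Omega$; since $\mathbb{G}^{\Omega}[\mu]$ is a.e. finite on $B$, the set $A := B \cap \{\mathbb{G}^{\Omega}[\mu]\le M\}$ satisfies $|A|>0$ for all $M$ large enough, and $\int_{A}\mathbb{G}^{\Omega}[\mu]\,dx \le M|A|$. Tonelli then recasts this as
$$\int_{\Omega}\mathbb{G}^{\Omega}[\chi_{A}](y)\,d\mu(y) = \int_{A}\mathbb{G}^{\Omega}[\mu]\,dx < \infty,$$
so everything reduces to the pointwise lower bound $\mathbb{G}^{\Omega}[\chi_{A}](y) \gtrsim \delta(y)^{\gamma}$ for every $y\in\Omega$. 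This follows from \eqref{G-est}: for every $x \in A$ one has $\delta(x) \ge c_{B} > 0$ and $|x-y| \le d := \mathrm{diam}(\Omega)$, so $\bigl(\delta(x)/|x-y|\wedge 1\bigr)^{\gamma}$ is bounded below by a positive constant, $\bigl(\delta(y)/|x-y|\wedge 1\bigr)^{\gamma} \ge (\delta(y)/d)^{\gamma}$, and $|x-y|^{-(N-2s)} \ge d^{-(N-2s)}$. Integrating over $A$ gives $\mathbb{G}^{\Omega}[\chi_{A}](y) \gtrsim |A|\,\delta(y)^{\gamma}$, whence $\int_{\Omega}\delta^{\gamma}\,d\mu < \infty$.

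The only mildly subtle ingredient is the measure-theoretic upgrade of ``a.e. finite'' to ``bounded on a set of positive Lebesgue measure'' needed to produce $A$; once this is in hand, both implications collapse to brief Tonelli computations with the two-sided estimates \eqref{G-est} and \eqref{eq_Green0}.
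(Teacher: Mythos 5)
Your proof is correct, but it diverges from the paper's on both implications, and in the converse direction you take a genuinely more roundabout route than is needed.

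For the forward direction the paper simply cites \cite[Theorem 2.11]{Aba_2019} to conclude $\Gbb^{\Omega}[\mu] \in L^1_{\loc}(\Omega)$. Your direct Tonelli computation, bounding $\delta(x)^\gamma G^{\Omega}(x,y) \lesssim \delta(y)^\gamma |x-y|^{2s-N}$ and integrating, is self-contained and in fact proves something stronger, namely that $\Gbb^{\Omega}[\mu] \in L^1(\Omega,\delta^\gamma)$ with a norm estimate; this is a genuinely cleaner route than appealing to an external reference.

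For the converse, though, you introduce a set $A\subset\subset\Omega$ of positive measure where $\Gbb^{\Omega}[\mu]$ is bounded, dualize via Tonelli to $\int_\Omega \Gbb^{\Omega}[\chi_A]\,d\mu < \infty$, and then prove the lower bound $\Gbb^{\Omega}[\chi_A](y) \gtrsim \delta(y)^\gamma$. This works, but it is unnecessarily heavy. The two-sided estimate \eqref{G-est} already gives the uniform \emph{pointwise} lower bound $G^{\Omega}(x,y) \geq c\,\delta(x)^\gamma\delta(y)^\gamma$ for all $x,y\in\Omega$ (since $(\delta(x)/|x-y|)\wedge 1 \geq \delta(x)/\operatorname{diam}(\Omega)$ and likewise in $y$). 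Hence for any single $x_0$ with $\Gbb^{\Omega}[\mu](x_0)<\infty$ one gets directly
$$
\infty > \Gbb^{\Omega}[\mu](x_0) \geq c\,\delta(x_0)^\gamma\int_\Omega \delta(y)^\gamma\,d\mu(y),
$$
which is the conclusion. This is exactly what the paper does, and it sidesteps both the measure-theoretic extraction of $A$ (the step you flagged as "mildly subtle") and the extra application of Tonelli. Your argument is not wrong, but you have smuggled in machinery to produce a positive-measure test set when a single point already does the job because the lower Hopf bound holds everywhere, not just against a fixed compact set.
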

\begin{proof} Without loss of generality, it is sufficient to prove this lemma for a positive measure $\mu$. 
	
Assume $\mu \in \M^+(\Omega,\delta^\gamma )$. We infer from \cite[Theorem 2.11]{Aba_2019} that $\Gbb^{\Omega}[\mu] \in L^1_{\loc}(\Omega)$, whence $\Gbb^{\Omega}[\mu](x)$ is finite for a.e. $x \in \Omega$.

Conversely, assume $\Gbb^{\Omega}[\mu](x)$ is finite for a.e. $x \in \Omega$. Let $x_0 \in \Omega$ such that $\Gbb^{\Omega}[\mu](x_0)<+\infty$. From \eqref{eq_equivalentgreen}, we derive that there exists $c=c(N,s,\gamma,\Omega)$ such that
$$ G^{\Omega}(x,y) \geq c\delta(x)^\gamma \delta(y)^\gamma, \quad \forall x,y \in \Omega, x\neq y.
$$
It follows that
\begin{equation} \label{eq:lowerHopf}
\Gbb^{\Omega}[\mu](x) \geq c\delta(x)^{\gamma} \int_{\Omega}\delta(y)^\gamma d\mu(y), \quad x \in \Omega.
\end{equation}
Taking $x=x_0$ in the above estimate yields
$$  \int_{\Omega}\delta(y)^\gamma d\mu(y) \leq  c\frac{\Gbb^{\Omega}[\mu](x_0)}{\delta(x_0)^\gamma} < + \infty,
$$ 
whence $\mu \in \M^+(\Omega,\delta^\gamma )$. The proof is complete.
\end{proof}

We remark that estimate \eqref{eq:lowerHopf} can be regarded as a lower Hopf estimate in measure framework. Moreover, Lemma \ref{lem:Gfinite} indicates that $\M(\Omega,\delta^\gamma )$ is the optimal class of measure data for those operators $\Gbb^{\Omega}$ satisfying \eqref{eq_G1} and \eqref{eq_G2}.

The following results were given in \cite{ChaGomVaz_2019_2020} for integrable data and can be easily adapted to measure frameworks,  therefore we omit the proofs.

\begin{lemma}[Maximum principle]\label{lem_maximum} Assume that $\mu \in \M^+(\Omega,\delta^{\gamma} )$. Then either $\Gbb^{\Omega} [\mu] > 0$ a.e. in $\Omega$ or $\Gbb^{\Omega} [\mu] \equiv 0$.
\end{lemma}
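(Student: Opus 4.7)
The plan is to leverage the lower Hopf-type estimate \eqref{eq:lowerHopf} already established in the proof of Lemma \ref{lem:Gfinite}. That estimate is an immediate consequence of the pointwise lower bound $G^{\Omega}(x,y) \gtrsim \delta(x)^{\gamma} \delta(y)^{\gamma}$ implicit in the equivalent form \eqref{eq_equivalentgreen} of the two-sided estimate in \eqref{eq_K1}, and it asserts the existence of a constant $c=c(N,s,\gamma,\Omega)>0$ such that
$$\Gbb^{\Omega}[\mu](x) \geq c\,\delta(x)^{\gamma} \int_{\Omega} \delta(y)^{\gamma}\, d\mu(y), \quad \forall x \in \Omega.$$
The dichotomy claimed in the lemma then reduces to a trivial case split on whether the positive measure $\mu$ is zero or not.

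First, if $\mu \equiv 0$ as a measure, then $\Gbb^{\Omega}[\mu] \equiv 0$ on $\Omega$ and we are in the second alternative. Otherwise, since $\mu \in \M^+(\Omega,\delta^{\gamma})$ is a nontrivial positive Radon measure concentrated in $\Omega$ and the weight $\delta^{\gamma}$ is strictly positive and continuous on $\Omega$, the elementary fact that the integral of a strictly positive function against a nontrivial positive Radon measure is strictly positive gives $\int_{\Omega} \delta(y)^{\gamma}\, d\mu(y) > 0$. Combining this with $\delta(x) > 0$ for every $x \in \Omega$, the lower bound above yields $\Gbb^{\Omega}[\mu](x) > 0$ for \emph{every} $x \in \Omega$, which is in fact strictly stronger than the a.e.\ positivity claimed.

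No substantive obstacle arises in this argument: the entire analytic content of the lemma is already packaged in the Hopf-type lower bound inherited from the proof of Lemma \ref{lem:Gfinite}, and the remaining step is pure measure theory. This is why the author only sketches the proof and refers to \cite{chan2020singular}, where an analogous statement for integrable data is shown; the extension to measure data requires nothing beyond replacing $L^1$ integration by integration against $\mu$ in that argument.
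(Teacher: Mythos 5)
Your proof is correct. The paper omits the argument and cites \cite{chan2020singular} for the integrable-data case, but your reduction of the dichotomy to the Hopf-type lower bound $G^{\Omega}(x,y)\gtrsim \delta(x)^{\gamma}\delta(y)^{\gamma}$ already extracted in the proof of Lemma~\ref{lem:Gfinite} is a clean, self-contained justification that in fact yields the stronger pointwise conclusion $\Gbb^{\Omega}[\mu](x)>0$ for every $x\in\Omega$ whenever $\mu\not\equiv 0$.
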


\begin{lemma}[Integration by parts formula with $\Gbb^{\Omega}$]\label{lem:integrationbypart} Assume that $\mu \in \M(\Omega,\delta^{\gamma} )$ and $\xi \in \delta^{\gamma}L^{\infty}(\Omega)$. Then we have
	$$\int_{\Omega} \Gbb^{\Omega} [\xi] d\mu = \int_{\Omega} \xi \Gbb^{\Omega}[\mu] dx.$$
\end{lemma}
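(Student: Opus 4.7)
The plan is to derive the identity from Tonelli/Fubini's theorem combined with the symmetry $G^{\Omega}(x,y) = G^{\Omega}(y,x)$ in \eqref{eq_K1}. Decomposing $\mu = \mu^{+} - \mu^{-}$ with $\mu^{\pm} \in \M^{+}(\Omega, \delta^{\gamma})$, and writing $\xi = \xi^{+} - \xi^{-}$ with nonnegative $\xi^{\pm} \in \delta^{\gamma} L^{\infty}(\Omega)$, the bilinearity of both sides reduces the claim to the case when both $\mu$ and $\xi$ are nonnegative.

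For nonnegative data, the integrand $(x,y) \mapsto G^{\Omega}(x,y)\xi(y)$ is jointly measurable and $[0,+\infty]$-valued on $\Omega \times \Omega$, so Tonelli's theorem gives
\[
\int_{\Omega} \Gbb^{\Omega}[\xi]\,d\mu = \int_{\Omega}\!\int_{\Omega} G^{\Omega}(x,y)\xi(y)\,dy\,d\mu(x) = \int_{\Omega}\!\int_{\Omega} G^{\Omega}(x,y)\xi(y)\,d\mu(x)\,dy.
\]
Substituting $G^{\Omega}(x,y) = G^{\Omega}(y,x)$ in the inner integral of the right-hand side identifies it as $\xi(y)\,\Gbb^{\Omega}[\mu](y)$, producing the asserted formula as an equality in $[0,+\infty]$. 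For the signed reassembly to be legitimate, one still needs this common value to be finite. Bounding $\xi(y) \leq \|\xi/\delta^{\gamma}\|_{L^{\infty}(\Omega)}\,\delta(y)^{\gamma}$ reduces the matter to showing
\[
\int_{\Omega} \Gbb^{\Omega}[\delta^{\gamma}]\,d|\mu| < +\infty.
\]

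To establish this, I will prove the pointwise bound $\Gbb^{\Omega}[\delta^{\gamma}](x) \lesssim \delta(x)^{\gamma}$ on $\Omega$ via a direct case analysis using \eqref{eq_Green0}: on the near-field region $\{|x-y| \leq \delta(x)\}$ one has $\delta(y) \sim \delta(x)$ and $G^{\Omega}(x,y) \lesssim |x-y|^{2s-N}$, contributing $\lesssim \delta(x)^{\gamma+2s}$; on the far-field $\{|x-y| > \delta(x)\}$ one uses $G^{\Omega}(x,y) \lesssim \delta(x)^{\gamma}\delta(y)^{\gamma}|x-y|^{2s-N-2\gamma}$ together with $\delta(y) \leq |x-y| + \delta(x) \lesssim |x-y|$ to bring out a factor $\delta(x)^{\gamma}$, the residual $y$-integral $\int |x-y|^{2s-N}\,dy$ being finite since $N > 2s$. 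Combined with $|\mu| \in \M(\Omega,\delta^{\gamma})$, this yields the required finiteness and completes the proof.

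The only delicate point is the pointwise bound $\Gbb^{\Omega}[\delta^{\gamma}] \lesssim \delta^{\gamma}$, but this is routine from \eqref{eq_K1}--\eqref{eq_K2} and uses the same mechanism already behind Lemma \ref{lem:Gfinite}. The symmetry step is free, and no variational structure, approximation by smooth data, or appeal to \eqref{eq_K3}--\eqref{eq_K5} is needed for this lemma.
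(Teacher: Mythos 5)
Your argument is correct and is the natural Tonelli/Fubini-plus-symmetry route; the paper omits a proof here, citing \cite{chan2020singular} where the analogous identity is proved for $L^1$ data, and your treatment handles the measure setting directly. One small imprecision: on the near-field region $\{|x-y| \le \delta(x)\}$ you write $\delta(y) \sim \delta(x)$, but the two-sided comparability fails when $|x-y|$ is close to $\delta(x)$ (then $\delta(y)$ can be arbitrarily small); fortunately only the one-sided bound $\delta(y) \le 2\delta(x)$ is used, so the pointwise estimate $\Gbb^{\Omega}[\delta^{\gamma}] \lesssim \delta^{\gamma}$ and the resulting finiteness check both go through as claimed.
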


\subsection{Estimates on the Green kernels}
We next establish some estimates for the Green kernel. The following results were proved in \cite{Aba_2019}.
\begin{proposition}[{\cite[Theorem 3.9]{Aba_2019}}] \label{prop_L1estimate} Suppose that $\alpha > (\gamma-2s) \vee (-\gamma-1)$. Then the mapping $\Gbb^{\Omega}\colon L^1(\Omega,\delta^{\gamma} ) \longrightarrow L^1(\Omega,\delta^{\alpha})$
is well-defined and continuous. In particular, if $\gamma < 2s$ then
$\Gbb^{\Omega} \colon L^1(\Omega,\delta^{\gamma} ) \longrightarrow L^1(\Omega)$ is continuous.
\end{proposition}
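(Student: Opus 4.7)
My plan is to reduce the claim to a single pointwise bound on a ``dual'' Green integral and then prove that bound by splitting $\Omega$ into regimes according to the Green kernel estimate \eqref{eq_Green0}. By Tonelli's theorem, it suffices to estimate
\begin{equation*}
\int_\Omega \Gbb^\Omega[|\mu|](x)\,\delta(x)^\alpha\,dx = \int_\Omega \Bigl(\int_\Omega G^\Omega(x,y)\,\delta(x)^\alpha\,dx\Bigr) d|\mu|(y),
\end{equation*}
by the symmetry of $G^\Omega$. Hence the whole proposition follows if one shows the uniform pointwise bound
\begin{equation*}
I(y) := \int_\Omega G^\Omega(x,y)\,\delta(x)^\alpha\,dx \;\lesssim\; \delta(y)^\gamma, \qquad y \in \Omega,
\end{equation*}
with a constant depending only on $N,s,\gamma,\alpha,\Omega$. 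The ``in particular'' assertion then comes for free with $\alpha = 0$, since $0 > \gamma-2s$ when $\gamma<2s$ and $0 > -\gamma-1$ always.

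To prove this pointwise bound I would fix $y\in\Omega$, set $r = \delta(y)$, and split $\Omega$ into four regimes governed by the competing scales $|x-y|$, $\delta(x)$ and $r$, in which one of the four upper bounds on the right-hand side of \eqref{eq_Green0} is sharp: namely (A) $|x-y| \le \delta(x)\wedge r$, where $G^\Omega \lesssim |x-y|^{2s-N}$ and by the triangle inequality $\delta(x) \sim r$; (B) $\delta(x) \le |x-y| \le r$, where $G^\Omega \lesssim \delta(x)^\gamma |x-y|^{2s-N-\gamma}$; (C) $r \le |x-y| \le \delta(x)$, where $G^\Omega \lesssim r^\gamma \delta(x)^{-\gamma} |x-y|^{2s-N}$; and (D) $|x-y| \ge \delta(x)\vee r$, where $G^\Omega \lesssim \delta(x)^\gamma r^\gamma |x-y|^{2s-N-2\gamma}$. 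On each region the integration is straightforward using polar coordinates around $y$ and the crude bound $\delta(x) \le |x-y| + r$ (or $\delta(x) \sim r$ on (A)), together with the fact that $\delta(x) \le d_\Omega$ everywhere. Region (A) contributes $\lesssim r^{\alpha+2s}$, and $\alpha + 2s \ge \gamma$ because $\alpha > \gamma-2s$, so this is $\lesssim r^\gamma$; regions (B) and (C) each contribute $\lesssim r^\gamma$ after integrating $|x-y|^{2s-N-\gamma}$ times the appropriate power of $\delta$; and region (D), which includes the portion of $\Omega$ far from $y$, contributes $\lesssim r^\gamma \int_\Omega \delta(x)^{\alpha+\gamma}\, (|x-y|+1)^{2s-N-2\gamma}\,dx$, which is finite precisely when $\alpha+\gamma > -1$, i.e., under the hypothesis $\alpha > -\gamma-1$.

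The delicate step I expect to do carefully is region (B), where $\delta(x)$ may be as small as one wishes and the exponent $2s-N-\gamma$ may or may not exceed $-N$. Here I would parametrize by $t = |x-y|$, observe that the $(N-1)$-dimensional slice at radius $t$ within the strip $\{\delta(x) \le t\}$ has volume $\lesssim t^{N-1}\wedge (t^{N-2}\cdot t)$ and weight $\int_0^{t\wedge r} \rho^{\alpha+\gamma}\,d\rho \lesssim t^{\alpha+\gamma+1}$ (using $\alpha+\gamma+1>0$), producing $\int_0^r t^{\alpha + 2s - 1}\,dt \lesssim r^{\alpha+2s} \lesssim r^\gamma$; the analogous computation for region (C) uses $\alpha - \gamma + 1 > -2\gamma$ which is harmless. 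Summing the four contributions yields $I(y) \lesssim \delta(y)^\gamma$, and plugging back through Tonelli gives $\|\Gbb^\Omega[\mu]\|_{L^1(\Omega,\delta^\alpha)} \lesssim \|\mu\|_{\M(\Omega,\delta^\gamma)}$, which is both the well-definedness and continuity claimed.
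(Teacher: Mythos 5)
The paper does not prove this proposition; it is stated as a citation to \cite[Theorem 3.9]{Aba_2019} and no proof appears in the text. So there is no ``paper's own proof'' to compare with; I will instead assess your argument directly.

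Your reduction to the pointwise bound $I(y):=\int_\Omega G^\Omega(x,y)\,\delta(x)^\alpha\,dx\lesssim\delta(y)^\gamma$ via Tonelli is exactly the right idea, and your splitting into the four regimes dictated by the sharp branches of \eqref{eq_Green0} is natural. Regions (A), (B), (C) are handled correctly (modulo the usual informality in writing surface-measure estimates near $\partial\Omega$; the conclusions $\lesssim r^{\alpha+2s}\lesssim r^\gamma$ on (A), (B) and $\lesssim r^\gamma$ on (C) are right, and you correctly use $\alpha>\gamma-2s\iff\alpha+2s>\gamma>0$ to absorb the extra powers of $r$).

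Region (D) contains a genuine error. You bound the contribution by
$r^\gamma\int_\Omega\delta(x)^{\alpha+\gamma}(|x-y|+1)^{2s-N-2\gamma}\,dx$,
but since $2s-N-2\gamma<0$ one has $|x-y|^{2s-N-2\gamma}\ge(|x-y|+1)^{2s-N-2\gamma}$, so this replacement makes the integrand \emph{smaller}: it is not an upper bound for what you need to estimate. Moreover, the claim that the resulting integral ``is finite precisely when $\alpha+\gamma>-1$'' is incomplete even as a heuristic. The correct statement is that on $D$ (where $|x-y|\ge\delta(x)\vee\delta(y)$) one has $\int_D\delta(x)^{\alpha+\gamma}|x-y|^{2s-N-2\gamma}\,dx\lesssim 1$ uniformly in $y$, and proving this requires \emph{both} hypotheses: $\alpha+\gamma>-1$ to make $\delta^{\alpha+\gamma}$ locally integrable near $\partial\Omega$, and $\alpha>\gamma-2s$ so that the dyadic sum over annuli $\{|x-y|\sim 2^k\delta(y)\}$ (where one uses $\int_{B_R(y)\cap\Omega}\delta^{\alpha+\gamma}\lesssim R^{N+\alpha+\gamma}$ and $\delta(x)\le|x-y|$) yields a geometric series $\sum_k (2^k\delta(y))^{\alpha+2s-\gamma}$ dominated by its largest scale $\sim\mathrm{diam}(\Omega)$, hence a constant. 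Once you run this argument, $r^\gamma\cdot O(1)\lesssim r^\gamma$ and the proof closes. So the overall strategy and conclusion are sound, but the step as written is logically backwards and needs the repair above.
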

Some improvements of the above result have been achieved  for the SFL in \cite{AbaDup_2017}, for RFL in \cite{CheVer_2014} and  for a more general class of nonlocal operators in function or measure settings. The next result provides sharp estimates for the Green operator which covers or improves the above-mentioned results. Our approach involves weighted Marcinkiewicz spaces introduced in Subsection \ref{sec:LPspace}. Put
\begin{equation} \label{p-alpha}
p_{\gamma,\alpha}:= \frac{N+\alpha}{N+\gamma-2s}.
\end{equation}
Notice that if $\alpha>\gamma - 2s$ then $p_{\gamma,\alpha}>1$. In particular, $p^* = p_{\gamma,\gamma} > 1$.

\begin{proposition}\label{prop_marcinkiewicz}Let $\alpha$ be such that
$$\max\left\{-\gamma-1,\gamma - 2s, - \frac{\gamma N}{N-2s+\gamma} \right\} < \alpha < \frac{\gamma N}{N-2s}.$$	
Then $\Gbb^{\Omega} : \M(\Omega,\delta^{\gamma} ) \longrightarrow M^{p_{\gamma,\alpha}}(\Omega,\delta^{\alpha} )$
is continuous, namely there exists a positive constant $C = C(\Omega,N,s,\gamma,\alpha)$ such that
\begin{equation} \label{Green-meas}
\| \Gbb^{\Omega}[\mu] \|_{M^{p_{\gamma,\alpha}}(\Omega,\delta^{\alpha} )} \leq C\| \mu  \|_{\M(\Omega,\delta^{\gamma} )}, \quad \forall \, \mu \in \M(\Omega,\delta^{\gamma} ).
\end{equation}
\end{proposition}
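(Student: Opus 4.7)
The plan is to reduce the Marcinkiewicz bound to a kernel-level estimate via duality, and then to prove the kernel estimate by splitting the integration region according to the two regimes in \eqref{eq_equivalentgreen}. Recall that for $q>1$ and a nonnegative weight $\eta$, the quasi-norm $\|\cdot\|_{M^q(\Omega,\eta)}$ is equivalent, up to a constant depending only on $q$, to $\sup_E |E|_\eta^{-1/q'}\int_E |f|\,\eta\,dx$, the supremum being taken over Borel sets $E \subset \Omega$ with $0<|E|_\eta<\infty$, where $|E|_\eta := \int_E \eta\,dx$. Writing $\Gbb^{\Omega}[\mu](x) = \int_\Omega G^{\Omega}(x,y)\,d\mu(y)$ and applying Fubini, I would reduce \eqref{Green-meas} to the pointwise-in-$y$ bound
$$\int_E G^{\Omega}(x,y)\,\delta(x)^\alpha\,dx \le C\,\delta(y)^\gamma\,|E|_\alpha^{1-1/p_{\gamma,\alpha}} \qquad \forall\, y \in \Omega,\ E \subset \Omega \text{ Borel,}$$
with $C$ independent of $y$ and $E$; integrating against $d|\mu|(y)$, after decomposing $\mu$ into positive and negative parts, then yields \eqref{Green-meas}.

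To establish the kernel estimate, I would decompose $E = E_1 \cup E_2$ with $E_1 := E \cap B(y,\delta(y)/2)$ and $E_2 := E \setminus B(y,\delta(y)/2)$. On $E_1$, where $\delta(x) \sim \delta(y)$ and $|x-y| < \delta(x) \wedge \delta(y)$, the form \eqref{eq_equivalentgreen} collapses to $G^{\Omega}(x,y) \sim |x-y|^{2s-N}$, so the Hardy--Littlewood rearrangement inequality yields $\int_{E_1}|x-y|^{2s-N}dx \le C|E_1|^{2s/N}$. Combining this with $|E_1|_\alpha \sim \delta(y)^\alpha |E_1|$ and the a priori constraint $|E_1|_\alpha \le |B(y,\delta(y)/2)|_\alpha \sim \delta(y)^{N+\alpha}$, and using the upper restriction $\alpha < \gamma N/(N-2s)$ to arithmetically match the $\delta(y)$-exponents, will bound the $E_1$-contribution by $C\delta(y)^\gamma |E|_\alpha^{1-1/p_{\gamma,\alpha}}$. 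On $E_2$, I would invoke the global bound $G^{\Omega}(x,y) \lesssim \delta(y)^\gamma/|x-y|^{N-2s+\gamma}$ from \eqref{eq_Green0} and the layer-cake formula, which converts the integral to
$$(N-2s+\gamma)\int_{\delta(y)/2}^\infty |E_2 \cap B(y,r)|_\alpha\,r^{-(N-2s+\gamma+1)}\,dr.$$
Since $r \ge \delta(y)/2$ forces every $B(y,r)$ to meet $\partial\Omega$, the $C^2$-regularity of $\partial\Omega$ gives $|B(y,r) \cap \Omega|_\alpha \lesssim r^{N+\alpha}$; the pointwise bound $|E_2 \cap B(y,r)|_\alpha \le \min(|E|_\alpha, Cr^{N+\alpha})$ combined with splitting at $\rho^\ast := |E|_\alpha^{1/(N+\alpha)}$ reduces everything to one-variable integrals, with $\alpha > \gamma-2s$ ensuring that $\int r^{\alpha+2s-\gamma-1}dr$ converges near zero and produces $|E|_\alpha^{1-1/p_{\gamma,\alpha}}$.

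The main obstacle will be the bound on $E_1$ when $\alpha > 0$: for an interior ball $B(y,r)$ with $r \ll \delta(y)$, the measure $|B(y,r)\cap\Omega|_\alpha \sim \delta(y)^\alpha r^N$ strictly dominates $r^{N+\alpha}$, so the boundary-ball estimate fails and the bound $G^{\Omega} \lesssim \delta(y)^\gamma/|x-y|^{N-2s+\gamma}$ alone is too crude. The finer interior singularity $|x-y|^{2s-N}$ of $G^{\Omega}$ is what makes the estimate close, and the upper restriction $\alpha < \gamma N/(N-2s)$ is forced precisely so that the final $\delta(y)$-power does not exceed $\gamma$. The lower bound $\alpha > \max(\gamma-2s,\,-\gamma N/(N-2s+\gamma))$ then ensures $p_{\gamma,\alpha} > 1$ and keeps the various exponents arising from the layer-cake on $E_2$ compatible with the Marcinkiewicz duality.
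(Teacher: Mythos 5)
Your approach is genuinely different from the paper's. The paper reduces to a uniform level-set estimate $m_\lambda(y) := \int_{A_\lambda(y)}\delta^\alpha\,dx \le C\lambda^{-p_{\gamma,\alpha}}$ via the Bidaut-V\'eron–Vivier lemma (Lemma \ref{lem_marcin}), where $A_\lambda(y) = \{x : G^\Omega(x,y)/\delta(y)^\gamma > \lambda\}$, and then proves the level-set bound by exploiting the pointwise constraints inherited from membership in $A_\lambda(y)$ — in particular, for $\alpha < 0$ the constraint $\delta(x)^\gamma > C\lambda|x-y|^{N-2s+2\gamma}$ is used to replace $\delta(x)^\alpha$ by a power of $\lambda|x-y|^{N-2s+2\gamma}$, which eliminates the boundary weight entirely before integrating in $|x-y|$. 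You instead use the duality characterization of the weak norm, reduce to the kernel bound $\int_E G^\Omega(x,y)\delta(x)^\alpha dx \le C\delta(y)^\gamma |E|_\alpha^{1-1/p_{\gamma,\alpha}}$, and estimate the two regions $E_1 = E\cap B(y,\delta(y)/2)$ and $E_2 = E\setminus B(y,\delta(y)/2)$ separately. The $E_1$ estimate is fine: the computation reduces to showing $|E_1|_\alpha \lesssim \delta(y)^{N+\alpha}$ with the excess exponent $N\gamma - \alpha(N-2s)$ positive, which is precisely the upper restriction on $\alpha$.

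However, the $E_2$ estimate has a genuine gap. You rely on the boundary-ball bound $|B(y,r)\cap\Omega|_\alpha \lesssim r^{N+\alpha}$, but for $\alpha \le -1$ the weight $\delta^\alpha$ is not locally integrable on boundary balls, so $|B(y,r)\cap\Omega|_\alpha = \infty$ for $r > \delta(y)$ and the claimed envelope bound simply fails. The permitted range $\alpha > -\gamma N/(N+\gamma-2s)$ does extend below $-1$: this occurs whenever $\gamma > (N-2s)/(N-1)$, e.g. for the censored fractional Laplacian with $s=0.9$, $\gamma = 2s-1 = 0.8$ and $N=3$, giving a lower threshold $\approx -1.2$. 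Replacing the bound $G^\Omega \lesssim \delta(y)^\gamma/|x-y|^{N-2s+\gamma}$ by the finer bound $G^\Omega \lesssim \delta(y)^\gamma\delta(x)^\gamma/|x-y|^{N-2s+2\gamma}$ improves the weight to $\delta^{\alpha+\gamma}$, but then the layer-cake produces $|E|_{\alpha+\gamma}$, not $|E|_\alpha$, which no longer matches the duality quantity — and $\alpha+\gamma > -1$ can still fail in the allowed range for some parameters satisfying \eqref{eq_K2}. The underlying difficulty is that in the duality set-up nothing a priori keeps $E_2$ away from $\partial\Omega$, so you cannot a priori control $\delta(x)^\alpha$ there; the paper sidesteps this precisely because, on a level set $A_\lambda(y)$, the Green-function bound \emph{forces} a lower bound on $\delta(x)$ in terms of $\lambda$ and $|x-y|$, which tames the negative power. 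To close your argument along these lines you would need to import that same observation: decompose $E_2$ further according to the size of $\delta(x)$ relative to $|x-y|$ and use the appropriate branch of \eqref{eq_Green0} in each regime, which is nontrivial and not sketched in your proposal.
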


The proof of this result relies on the following key lemma.
\begin{lemma}[{\cite[Lemma 2.4]{BidViv_2000}}]\label{lem_marcin}
Let $\nu$ be a nonnegative bounded Radon measure on $D= \Omega$ or $\partial \Omega$ and $\eta \in C(\Omega)$ a positive weight function. Let $E$ be a continuous nonnegative function on $\left\{ (x,y) \in \Omega \times D: x \neq y \right\}$. Set
$$ {\mathbb E}[\nu](x): = \int_D E(x,y)d\nu(y), \quad x \in \Omega.
$$
For any $\lambda > 0$ and $y \in D$, we set
$$A_{\lambda}(y) := \left\{ x \in \Omega \backslash \{ y\}: E(x,y) > \lambda \right\} \quad \text{and}  \quad m_{\lambda}(y):= \int_{A_{\lambda}(y)} \eta dx.$$
Suppose that there exist $q>1$ and  $\bar C> 0$ such that
$$m_{\lambda}(y) \leq \bar  C \lambda^{-q}\quad \text{for all } y \in D \text{ and } \lambda > 0.$$
Then ${\mathbb E}[\nu] \in M^q(\Omega, \eta )$ and there holds
$$\norm{\mathbb{E}[\nu]}_{M^p(\Omega, \eta )} \le \left(1 + \frac{q}{q-1}\bar C \right) \norm{\nu}_{\M(D)}.$$
\end{lemma}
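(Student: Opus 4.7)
The plan is to apply the abstract Marcinkiewicz criterion of Lemma \ref{lem_marcin} to a rescaled kernel in which the boundary weight $\delta^\gamma$ on the measure side is absorbed. First, by the Hahn decomposition $\mu = \mu^+ - \mu^-$ and the subadditivity of the Marcinkiewicz quasi-norm, it suffices to treat $\mu \in \M^+(\Omega, \delta^{\gamma})$. I would set $d\nu(y) := \delta(y)^\gamma d\mu(y)$, a finite positive Radon measure on $\Omega$ with $\|\nu\|_{\M(\Omega)} = \|\mu\|_{\M(\Omega, \delta^{\gamma})}$, and define $E(x,y) := \delta(y)^{-\gamma} G^\Omega(x,y)$, so that $\Gbb^\Omega[\mu](x) = \int_\Omega E(x,y)\,d\nu(y)$. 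This fits the template of Lemma \ref{lem_marcin} with $D = \Omega$, $\eta = \delta^\alpha$, and target exponent $q = p_{\gamma,\alpha}$. Observe also that $E$ is continuous off the diagonal by \eqref{eq_K1}, as required.

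The crux is then to verify $m_\lambda(y) \leq \bar C \lambda^{-p_{\gamma,\alpha}}$ uniformly in $y \in \Omega$ and $\lambda > 0$. Setting $\rho := \lambda \delta(y)^\gamma$ turns this into the level-set estimate
\[
\int_{S_\rho(y)}\delta(x)^\alpha\,dx \leq \bar C\,\rho^{-p_{\gamma,\alpha}}\delta(y)^{\gamma p_{\gamma,\alpha}}, \qquad S_\rho(y) := \{x \in \Omega : G^\Omega(x,y) > \rho\}.
\]
The first and third pointwise upper bounds in \eqref{eq_Green0} yield $S_\rho(y) \subset B(y, cr_1)\cap B(y, cr_2)$ with $r_1 := \rho^{-1/(N-2s)}$ and $r_2 := (\delta(y)^\gamma/\rho)^{1/(N-2s+\gamma)}$, and the elementary identity $r_1 \leq r_2 \iff r_1 \leq \delta(y) \iff \rho\,\delta(y)^{N-2s} \geq 1$ naturally splits the analysis into an \emph{interior} and a \emph{boundary} regime.

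In the interior regime ($\rho\,\delta(y)^{N-2s} \geq 1$), $S_\rho(y) \subset B(y, cr_1)$ is contained in $\{\delta \geq \delta(y)/2\}$, so $\delta(x) \sim \delta(y)$ throughout and $\int_{S_\rho(y)}\delta^\alpha\,dx \lesssim r_1^N\delta(y)^\alpha$; elementary algebra combined with the regime constraint $\rho\,\delta(y)^{N-2s}\geq 1$ matches this to the target $\rho^{-p_{\gamma,\alpha}}\delta(y)^{\gamma p_{\gamma,\alpha}}$ precisely when $\alpha \leq \gamma N/(N-2s)$, which is where the upper bound on $\alpha$ originates. In the boundary regime ($\rho\,\delta(y)^{N-2s} < 1$) one has $S_\rho(y) \subset B(y, cr_2)$ with $r_2 \geq \delta(y)$; for $\alpha \geq 0$ the trivial bound $\delta(x) \leq cr_2$ gives $\int_{S_\rho(y)}\delta^\alpha\,dx \lesssim r_2^{N+\alpha} = \rho^{-p_{\gamma,\alpha}}\delta(y)^{\gamma p_{\gamma,\alpha}}$, and for $-1 < \alpha < 0$ a $C^2$-flattening of $\partial\Omega$ still yields $\int_{B(y, cr_2)}\delta^\alpha\,dx \lesssim r_2^{N+\alpha}$, which closes the estimate.

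The main obstacle is the case $\alpha \leq -1$ in the boundary regime, where the naive volumetric integral $\int_{B(y, cr_2)}\delta^\alpha\,dx$ is infinite and one must exploit the boundary decay of $G^\Omega$ to exclude a thin neighborhood of $\partial\Omega$ from the level set. The fourth bound in \eqref{eq_Green0}, $G^\Omega(x,y) \lesssim \delta(x)^\gamma\delta(y)^\gamma/|x-y|^{N-2s+2\gamma}$, converts the membership $x \in S_\rho(y)$ into the pointwise lower bound $\delta(x) \gtrsim (\rho/\delta(y)^\gamma)^{1/\gamma}|x-y|^{(N-2s+2\gamma)/\gamma}$, which for $\alpha < 0$ majorizes $\delta(x)^\alpha$ by a negative power of $|x-y|$. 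Splitting $B(y, cr_2)$ into the near-$y$ piece $\{|x-y| \leq \delta(y)/2\}$, where one uses instead $\delta(x)\sim\delta(y)$, and the complementary far piece, and carefully computing the two contributions, will yield the sharp bound throughout the full range $\alpha > -\gamma N/(N-2s+\gamma)$. Since the resulting $\bar C$ is independent of $y$ and $\lambda$, Lemma \ref{lem_marcin} then delivers \eqref{Green-meas} with $C = C(\Omega,N,s,\gamma,\alpha)$, completing the proof.
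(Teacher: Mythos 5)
Your proposal does not prove the statement in question. The statement is Lemma \ref{lem_marcin} itself: the abstract weak-type criterion asserting that the uniform level-set bound $m_{\lambda}(y)\le \bar C\lambda^{-q}$ on the kernel $E$ forces ${\mathbb E}[\nu]\in M^q(\Omega,\eta)$ with a norm bound linear in $\norm{\nu}_{\M(D)}$. What you have written is instead an outline of the proof of Proposition \ref{prop_marcinkiewicz} (the mapping property of $\Gbb^{\Omega}$ from $\M(\Omega,\delta^{\gamma})$ to $M^{p_{\gamma,\alpha}}(\Omega,\delta^{\alpha})$), and it opens by invoking Lemma \ref{lem_marcin} as a black box. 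In other words, the target statement is assumed rather than established: nowhere do you address why the hypothesis $m_{\lambda}(y)\le\bar C\lambda^{-q}$ yields the Marcinkiewicz estimate for ${\mathbb E}[\nu]$. (For reference, the paper does not prove this lemma either; it is quoted from Bidaut-V\'eron and Vivier.)

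The missing argument is a short truncation/layer-cake computation, roughly as follows. For $t>0$ write $E=\min(E,t)+(E-t)_+$, so that ${\mathbb E}[\nu]\le t\norm{\nu}_{\M(D)}+\int_D (E(\cdot,y)-t)_+\,d\nu(y)$. By Fubini and the layer-cake formula, $\int_{\Omega}(E(x,y)-t)_+\,\eta(x)\,dx=\int_t^{\infty}m_{\sigma}(y)\,d\sigma\le \frac{\bar C}{q-1}\,t^{1-q}$ uniformly in $y\in D$ (this is where $q>1$ enters), hence $\int_{\Omega}\left(\int_D (E-t)_+\,d\nu\right)\eta\,dx\le \frac{\bar C}{q-1}\,t^{1-q}\norm{\nu}_{\M(D)}$. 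Since $\{ {\mathbb E}[\nu]>2t\norm{\nu}_{\M(D)}\}\subseteq\{\int_D(E-t)_+\,d\nu>t\norm{\nu}_{\M(D)}\}$, Chebyshev's inequality with respect to $\eta\,dx$ bounds the $\eta$-measure of the former set by $\frac{\bar C}{q-1}t^{-q}$, and choosing $t$ in terms of the level $\lambda$ gives the stated bound (with the constant depending on the normalization of the $M^q$ quasi-norm adopted in the cited reference). None of this appears in your proposal. As a secondary remark, the level-set verification you do carry out for the Green kernel belongs to the proof of Proposition \ref{prop_marcinkiewicz}; its interior/boundary splitting is plausible and broadly consistent with the paper's case analysis there, but it answers a different question from the one posed.
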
 

\begin{proof}[\sc Proof of Proposition \ref{prop_marcinkiewicz}] We write
$$\Gbb^{\Omega} [\mu](x) = \int_{\Omega} \frac{G^{\Omega} (x,y)}{\delta(y)^{\gamma}} \delta(y)^{\gamma} d \mu(y), \quad x \in \Omega.$$
We will apply Lemma \ref{lem_marcin} with $D=\Omega$, $\eta=\delta^\alpha$, $d\nu=\delta^\gamma d\mu$ and $E(x,y)=\dfrac{G^{\Omega} (x,y)}{\delta(y)^{\gamma}}$.	
	
For $\lambda>0$ and $y \in \Omega$, denote
$$A_{\lambda}(y):= \left\{ x \in \Omega \backslash \{ y\}: \dfrac{G^{\Omega} (x,y)}{\delta(y)^{\gamma}} > \lambda \right\} \quad \text{and} \quad m_\lambda(y):=\int_{A_\lambda(y)}\delta^\alpha dx.$$
In order to estimate $m_\lambda(y)$, we consider successively two cases.

\textbf{Case 1:} $0 \le \alpha < \frac{\gamma N}{N-2s}$. 
By \eqref{eq_Green0}, for every $x \in A_{\lambda}(y)$, we have
$$\lambda <  \dfrac{G^{\Omega} (x,y)}{\delta(y)^{\gamma}} \le  \dfrac{C}{\delta(x)^{\gamma}|x-y|^{N-2s}} \quad \text{ and } \quad \lambda <  \dfrac{G^{\Omega} (x,y)}{\delta(y)^{\gamma}} \le \dfrac{C}{|x-y|^{N-2s+\gamma}} $$
for some $C = C(\Omega,N,s,\gamma) > 0$, which implies
$$\delta(x)^{\gamma} < C \lambda^{-1} |x-y|^{2s-N} \quad \text{ and } \quad |x-y| < C \lambda^{-\frac{1}{N-2s+\gamma}}.$$
Thus,
\begin{align*}
m_{\lambda} (y) = \int_{A_{\lambda}(y)} \delta(x)^{\alpha} dx &\le C \int_{ |x-y| \le C \lambda^{-\frac{1}{N-2s+\gamma}}} \left(\lambda^{-1} |x-y|^{2s-N} \right)^{\frac{\alpha}{\gamma}} dy \\
& \le  C \lambda^{-\frac{\alpha}{\gamma}}  \int_0^{\lambda^{-\frac{1}{N-2s+\gamma}}} z^{(2s-N)\frac{\alpha}{\gamma} + N-1} dz.
\end{align*}
Since $\gamma - 2s < \alpha < \frac{\gamma N}{N-2s}$, it follows that
$$(2s-N)\frac{\alpha}{\gamma} + N-1  > -1 \quad \text{and} \quad \dfrac{N+\alpha}{N+\gamma - 2s } > 1.$$
Therefore we obtain $m_{\lambda} (y) \le C \lambda^{-p_{\gamma,\alpha}}$.

\textbf{Case 2:} $\max\left\{ -\frac{\gamma N}{N+\gamma-2s}, \gamma - 2s,-\gamma-1\right\} < \alpha < 0$. Again by \eqref{eq_Green0}, for $x \in A_{\lambda}(y)$,
$$ \lambda < \dfrac{G^{\Omega}(x,y)}{\delta^{\gamma}(y)} \le C \dfrac{\delta(x)^{\gamma}}{|x-y|^{N-2s+2\gamma}},$$
which implies $\delta(x)^{\gamma} > C \lambda |x-y|^{N-2s+2\gamma}$.
From this, since $\alpha < 0$,
\begin{align*}
m_{\lambda} (y) = \int_{A_{\lambda}(y)} \delta(x)^{\alpha} dx & \le \int_{|x-y| \le C \lambda^{-\frac{1}{N-2s+\gamma}}} (\lambda|x-y|^{N-2s+2\gamma})^{\frac{\alpha}{\gamma}}dx \\
&\le C\lambda^{\frac{\alpha}{\gamma}} \int_0^{\lambda^{-\frac{1}{N-2s+\gamma}}} z^{\frac{\alpha}{\gamma}(N+2\gamma-2s)+N-1} dz  \\
&\le C \lambda^{-\frac{N+\alpha}{N-2s+\gamma}} = C \lambda^{-p_{\gamma,\alpha}}.
\end{align*}

In all cases, one has $$m_{\lambda} (y) \le C \lambda^{-p_{\gamma,\alpha}} \text{ for any }y \in \Omega \text{ and }\lambda>0.$$
Using Lemma \ref{lem_marcin} we derive that $\Gbb^{\Omega} [\mu] \in M^{p_{\gamma,\alpha}}(\Omega,\delta^{\gamma} )$ and \eqref{Green-meas} holds. We complete the proof.
\end{proof}

We note that our result is optimal in the sense that for $\mu \in \M(\Omega,\delta^{\gamma})$, $\Gbb^{\Omega}[\mu] \in M^{p^*}(\Omega,\delta^{\gamma})$ but in general $\Gbb^{\Omega}[\mu] \notin L^{p^*}(\Omega,\delta^{\gamma})$.

As a direct consequence of Proposition \ref{prop_marcinkiewicz}, we obtain the following result. We recall that $p^*$ is the exponent given in \eqref{pgamma}.
\begin{corollary} \label{cor:GLL} 

(i) For any $q \in [1,p^*)$,  $\Gbb^{\Omega}: L^1(\Omega,\delta^{\gamma} ) \longrightarrow L^q(\Omega,\delta^{\gamma} )$ is continuous, namely there exists a positive constant $C=C(N,\Omega,s,\gamma,q)$ such that
\begin{equation} \label{Gr-est-1}
\| \Gbb^{\Omega}[f] \|_{L^q(\Omega,\delta^\gamma )} \leq C \| f \|_{L^1(\Omega,\delta^\gamma )}, \quad \forall f \in L^1(\Omega,\delta^\gamma ).
\end{equation}

(ii) Suppose $\gamma < 2s$. For any $q \in \left[1,\frac{N}{N+\gamma-2s}\right)$, 
$\Gbb^{\Omega}: L^1(\Omega,\delta^{\gamma} ) \longrightarrow L^q(\Omega)$ is continuous, namely there exists a positive constant $C=C(N,\Omega,s,\gamma,q)$ such that
\begin{equation} \label{Gr-est-2}
\| \Gbb^{\Omega}[f] \|_{L^q(\Omega)} \leq C \| f \|_{L^1(\Omega,\delta^\gamma )}, \quad \forall f \in L^1(\Omega,\delta^\gamma ).
\end{equation}
\end{corollary}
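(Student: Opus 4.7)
The plan is to derive this corollary as a direct consequence of Proposition \ref{prop_marcinkiewicz} by choosing the auxiliary exponent $\alpha$ appropriately in each part, then invoking the continuous embeddings listed in Section \ref{sec:LPspace}, namely
$$L^1(\Omega,\delta^\gamma) \hookrightarrow \M(\Omega,\delta^\gamma) \quad \text{and} \quad M^{q}(\Omega,\eta) \hookrightarrow L^{m}(\Omega,\eta) \text{ for } 1 \le m < q < \infty.$$

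For part (i), I would take $\alpha = \gamma$. Then $p_{\gamma,\gamma} = (N+\gamma)/(N+\gamma-2s) = p^*$. The admissibility constraints in Proposition \ref{prop_marcinkiewicz} reduce to
$$\max\{\gamma - 2s,\; -\tfrac{\gamma N}{N+\gamma-2s}\} < \gamma < \tfrac{\gamma N}{N-2s},$$
both of which are trivially satisfied since $\gamma \in (0,1]$, $N > 2s$, and $2s > 0$. Hence Proposition \ref{prop_marcinkiewicz} yields the continuous map $\Gbb^\Omega \colon \M(\Omega,\delta^\gamma) \to M^{p^*}(\Omega,\delta^\gamma)$. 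Composing with $L^1(\Omega,\delta^\gamma)\hookrightarrow \M(\Omega,\delta^\gamma)$ on the input side and $M^{p^*}(\Omega,\delta^\gamma)\hookrightarrow L^q(\Omega,\delta^\gamma)$ for any $q<p^*$ on the output side produces the estimate \eqref{Gr-est-1}.

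For part (ii), under the extra assumption $\gamma < 2s$, I would take $\alpha = 0$, giving $p_{\gamma,0} = N/(N+\gamma-2s)$, which is strictly greater than $1$ precisely because $\gamma < 2s$. The admissibility constraints become $\max\{\gamma-2s,\,-\gamma N/(N+\gamma-2s)\} < 0 < \gamma N/(N-2s)$; the left inequality uses exactly the hypothesis $\gamma < 2s$ together with $N + \gamma - 2s > 0$ (which follows from $N > 2s$), while the right inequality is immediate. Proposition \ref{prop_marcinkiewicz} then delivers the continuous map $\Gbb^\Omega \colon \M(\Omega,\delta^\gamma) \to M^{N/(N+\gamma-2s)}(\Omega)$, and the same sandwich of embeddings as above yields \eqref{Gr-est-2} for every $q\in[1,\,N/(N+\gamma-2s))$.

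There is essentially no obstacle; the content is to verify that the chosen $\alpha$ lies in the admissible window and that the resulting Marcinkiewicz exponent exceeds every $q$ allowed in the statement. The only mild point worth flagging is that the optimality remark following Proposition \ref{prop_marcinkiewicz} — namely $\Gbb^\Omega[\mu] \in M^{p^*}(\Omega,\delta^\gamma)$ but in general $\Gbb^\Omega[\mu]\notin L^{p^*}(\Omega,\delta^\gamma)$ — explains why the range in (i) is strictly subcritical ($q<p^*$ rather than $q\le p^*$), and analogously for (ii).
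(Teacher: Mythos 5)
Your proof is correct and follows exactly the route the paper takes: choose $\alpha=\gamma$ (respectively $\alpha=0$) in Proposition \ref{prop_marcinkiewicz}, note that the resulting Marcinkiewicz exponent is $p^*$ (respectively $N/(N+\gamma-2s)$), and sandwich with the embeddings $L^1(\Omega,\delta^\gamma)\hookrightarrow\M(\Omega,\delta^\gamma)$ and $M^{p}(\Omega,\eta)\hookrightarrow L^{m}(\Omega,\eta)$ for $m<p$. The only difference is that you spell out the verification of the admissibility window for $\alpha$, which the paper leaves implicit.
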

\begin{proof}
The proof of statement (i) follows directly from Proposition \ref{prop_marcinkiewicz} and the continuous embeddings $L^1(\Omega,\delta^\gamma ) \hookrightarrow \M(\Omega,\delta^\gamma )$, $M^{p^*}(\Omega,\delta^\gamma ) \hookrightarrow L^q(\Omega,\delta^\gamma )$ due to $q<p^*$. The proof of statement (ii) is similar and we omit it. The condition $\gamma<2s$ in statement (ii) is required to ensure that $\frac{N}{N+\gamma -2s}>1$.
\end{proof}

Another useful consequence of Proposition \ref{prop_marcinkiewicz} is the following delicate estimate, which will be employed in the derivation of regularity results.
\begin{corollary} \label{G/delta} Assume $q \in [1,p^*)$. There exists a constant $C=C(\Omega,N,s,\gamma,q)$ such that
\begin{equation} \label{cor-Mar}
\int_{\Omega} \left( \frac{G^{\Omega}(x,y)}{\delta(y)^\gamma} \right)^q \delta(x)^\gamma dx \leq C, \quad \forall y \in \Omega.
\end{equation}
\end{corollary}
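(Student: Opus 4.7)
The plan is to derive this pointwise-uniform integral bound as a direct consequence of Proposition \ref{prop_marcinkiewicz} applied to Dirac measures, plus the standard embedding of weighted Marcinkiewicz spaces into weighted Lebesgue spaces when the underlying measure is finite.

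First I would, for each fixed $y \in \Omega$, consider the Dirac mass $\mu_y := \delta_y$ concentrated at $y$. Then $\mu_y$ is a positive Radon measure on $\Omega$ with
\[
\| \mu_y \|_{\M(\Omega,\delta^\gamma )} = \int_\Omega \delta^\gamma \, d\mu_y = \delta(y)^\gamma < \infty,
\]
so $\mu_y \in \M(\Omega,\delta^\gamma )$, and by definition $\Gbb^\Omega[\mu_y](x) = G^\Omega(x,y)$ for $x \in \Omega\setminus\{y\}$. Applying Proposition \ref{prop_marcinkiewicz} with the choice $\alpha = \gamma$, for which $p_{\gamma,\gamma} = p^*$ and the admissibility conditions on $\alpha$ are satisfied by \eqref{eq_K2}, I obtain a constant $C_1 = C_1(\Omega,N,s,\gamma)$, independent of $y$, such that
\[
\bigl\| G^\Omega(\cdot,y) \bigr\|_{M^{p^*}(\Omega,\delta^\gamma )} \leq C_1 \| \mu_y \|_{\M(\Omega,\delta^\gamma )} = C_1\, \delta(y)^\gamma.
\]
Dividing by $\delta(y)^\gamma$ and using the homogeneity of the Marcinkiewicz quasi-norm yields
\[
\left\| \frac{G^\Omega(\cdot,y)}{\delta(y)^\gamma} \right\|_{M^{p^*}(\Omega,\delta^\gamma )} \leq C_1, \quad \forall y \in \Omega.
\]

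Next I would invoke the continuous embedding $M^{p^*}(\Omega,\delta^\gamma ) \hookrightarrow L^q(\Omega,\delta^\gamma )$ for $1 \leq q < p^*$, which is the weighted analogue of the classical inclusion and is valid here because $\Omega$ is bounded and $\gamma \in (0,1]$, so $\delta^\gamma\, dx$ is a finite measure on $\Omega$. This embedding was already recalled in the functional-space preliminaries (see Section \ref{sec:LPspace}) and provides a constant $C_2 = C_2(\Omega,N,s,\gamma,q)$ with $\|\cdot\|_{L^q(\Omega,\delta^\gamma )} \leq C_2 \|\cdot\|_{M^{p^*}(\Omega,\delta^\gamma )}$. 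Combining with the previous display produces, uniformly in $y \in \Omega$,
\[
\int_\Omega \left( \frac{G^\Omega(x,y)}{\delta(y)^\gamma} \right)^q \delta(x)^\gamma\, dx \leq (C_1 C_2)^q =: C,
\]
which is exactly the desired estimate with $C = C(\Omega,N,s,\gamma,q)$.

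The only real subtlety is verifying that the conclusion of Proposition \ref{prop_marcinkiewicz} survives specializing to Dirac data, but this is immediate since the proposition is stated for arbitrary $\mu \in \M(\Omega,\delta^\gamma )$ and the constant there depends only on $\Omega, N, s, \gamma, \alpha$, not on $\mu$; so no compactness or approximation step is needed. I do not expect any genuine obstacle beyond checking that $\alpha = \gamma$ meets the range $\max\{\gamma-2s, -\gamma N/(N+\gamma-2s)\} < \gamma < \gamma N/(N-2s)$ from Proposition \ref{prop_marcinkiewicz}, which holds trivially since $\gamma > 0$ and $2s > 0$.
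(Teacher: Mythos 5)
Your proposal is correct and follows essentially the same route as the paper's own proof: apply Proposition \ref{prop_marcinkiewicz} with $\mu=\delta_y$ and $\alpha=\gamma$, note that $\|\delta_y\|_{\M(\Omega,\delta^\gamma)}=\delta(y)^\gamma$, then use the continuous embedding $M^{p^*}(\Omega,\delta^\gamma)\hookrightarrow L^q(\Omega,\delta^\gamma)$ for $q<p^*$ and divide by $\delta(y)^\gamma$.
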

\begin{proof} Let  $y \in \Omega$.  In Proposition \ref{prop_marcinkiewicz}, taking $\mu = \delta_y \in \M(\Omega) \subset \M(\Omega,\delta^{\gamma})$ and $\alpha=\gamma$, one has 
$$\norm{ \Gbb^{\Omega} [\delta_y]}_{M^{p^*}(\Omega,\delta^{\gamma} )} \le C(\Omega,N,s,\gamma) \norm{\delta_y}_{\M(\Omega,\delta^{\gamma} )},$$
which, together with the continuous embedding $M^{p^*}(\Omega,\delta^\gamma ) \hookrightarrow L^q(\Omega,\delta^{\gamma})$ for $q < p^*$, implies
$$ \left( \int_{\Omega} G^{\Omega}(x,y)^q \delta(x)^{\gamma}dx\right)^{\frac{1}{q}} \leq C(\Omega,N,s,\gamma,q)\delta(y)^{\gamma}.$$
Thus \eqref{cor-Mar} follows. We complete the proof.
\end{proof}

\begin{remark} By a similar argument to that used in the proof of Proposition \ref{prop_marcinkiewicz}, we can show that, for any $\alpha \in \left(-\frac{N\gamma}{N+\gamma-2s}, 0\right]$, $\Gbb^{\Omega}: \M(\Omega) \longrightarrow M^{\frac{N+\alpha}{N-2s}}(\Omega,\delta^\gamma )$ is continuous. This extends the result in \cite[Proposition 3.2]{ChaGomVaz_2019_2020} to the framework of weighted spaces.
\end{remark}

The next propositions provide higher regularity results which are a weighted global counterpart of the local estimates in \cite[Proposition 3.6]{ChaGomVaz_2019_2020}.  

\begin{proposition}\label{prop_Linftyreg} For any $r > \frac{N+\gamma}{2s}$, $ \Gbb^{\Omega}: L^r(\Omega,\delta^\gamma ) \longrightarrow L^\infty(\Omega)$ is continuous, namely
\begin{equation} \label{eq_GLinfty} \left\| \Gbb^{\Omega}[f] \right\|_{L^{\infty}(\Omega)} \leq C\| f \|_{L^r(\Omega,\delta^\gamma )}, \quad \forall f \in L^r(\Omega,\delta^\gamma ).
\end{equation}
\end{proposition}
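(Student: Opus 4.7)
The plan is to reduce the estimate to a Hölder inequality combined with the already established weighted $L^q$ bound on slices of the Green kernel (Corollary \ref{G/delta}). First I would observe that the hypothesis $r > \frac{N+\gamma}{2s}$ is exactly the statement that $r' < p^*$, since $(p^*)' = \frac{N+\gamma}{2s}$ by the definition \eqref{pgamma}. This is the reason the threshold $\frac{N+\gamma}{2s}$ appears: it is the dual exponent of the Marcinkiewicz critical exponent for $\Gbb^{\Omega}$ acting on measures.

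Next I would write, for $f \in L^r(\Omega, \delta^\gamma)$ and $x \in \Omega$,
\[
\Gbb^{\Omega}[f](x) = \int_{\Omega} \frac{G^{\Omega}(x,y)}{\delta(y)^{\gamma}} \,\delta(y)^{\gamma/r'} \cdot f(y)\, \delta(y)^{\gamma/r} \, dy,
\]
and apply Hölder's inequality with exponents $r'$ and $r$ to get
\[
|\Gbb^{\Omega}[f](x)| \le \left( \int_{\Omega} \left( \frac{G^{\Omega}(x,y)}{\delta(y)^{\gamma}} \right)^{r'} \delta(y)^{\gamma}\, dy \right)^{1/r'} \|f\|_{L^r(\Omega,\delta^\gamma)}.
\]

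The core step is to bound the first factor uniformly in $x$. Since $G^{\Omega}$ is symmetric, this integral is precisely the integral that Corollary \ref{G/delta} controls (with the roles of $x$ and $y$ exchanged and exponent $q = r'$). Because $r' < p^*$ by the above observation, Corollary \ref{G/delta} applies and yields a constant $C = C(\Omega, N, s, \gamma, r)$ such that
\[
\sup_{x \in \Omega} \int_{\Omega} \left( \frac{G^{\Omega}(x,y)}{\delta(y)^{\gamma}} \right)^{r'} \delta(y)^{\gamma}\, dy \le C.
\]
Combining the last two displays gives $\|\Gbb^{\Omega}[f]\|_{L^\infty(\Omega)} \le C^{1/r'} \|f\|_{L^r(\Omega,\delta^\gamma)}$, which is \eqref{eq_GLinfty}.

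There is no serious obstacle: everything is driven by the duality between $r$ and $r'$ and by the sharp bound of Corollary \ref{G/delta}. The only subtlety worth flagging is the role of the symmetry of $G^{\Omega}$ (part of \eqref{eq_K1}), which converts the $y$-integral in the Hölder bound into the $x$-integral controlled by Corollary \ref{G/delta}; without symmetry an analogous estimate in the opposite variable would be required.
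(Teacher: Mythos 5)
Your Hölder step is correct and in fact coincides with the paper's: writing
\[
|\Gbb^{\Omega}[f](x)| \le \left( \int_{\Omega} \left( \frac{G^{\Omega}(x,y)}{\delta(y)^{\gamma}} \right)^{r'} \delta(y)^{\gamma}\, dy \right)^{1/r'} \|f\|_{L^r(\Omega,\delta^\gamma)}
\]
is the same as the paper's estimate with first factor $\bigl(\int_{\Omega} (G^{\Omega}(x,y)/\delta(y)^{\gamma/r})^{r'}dy\bigr)^{1/r'}$, since $\gamma r'/r = \gamma(r'-1)$. The gap is in the way you try to bound this factor.

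You claim that by symmetry of $G^{\Omega}$ the integral above is the one controlled by Corollary \ref{G/delta}, but this is not so. Corollary \ref{G/delta} states
\[
\int_{\Omega} \left( \frac{G^{\Omega}(x,y)}{\delta(y)^\gamma} \right)^q \delta(x)^\gamma\, dx \le C \quad \text{for all } y \in \Omega,
\]
where the \emph{weight} $\delta(x)^\gamma$ sits on the integration variable while the \emph{denominator} $\delta(y)^\gamma$ sits on the free variable. Swapping the names $x\leftrightarrow y$ and using $G^{\Omega}(x,y)=G^{\Omega}(y,x)$ gives
\[
\int_{\Omega} \left( \frac{G^{\Omega}(x,y)}{\delta(x)^\gamma} \right)^q \delta(y)^\gamma\, dy \le C \quad \text{for all } x \in \Omega,
\]
i.e. $\int_\Omega G^{\Omega}(x,y)^q \delta(y)^\gamma\, dy \le C\,\delta(x)^{\gamma q}$. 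What you need instead is
\[
\int_{\Omega} \left( \frac{G^{\Omega}(x,y)}{\delta(y)^\gamma} \right)^{r'} \delta(y)^\gamma\, dy = \int_{\Omega} \frac{G^{\Omega}(x,y)^{r'}}{\delta(y)^{\gamma(r'-1)}}\, dy \le C,
\]
where now both the weight and the denominator involve the integration variable $y$. These are different estimates — the first has the benign weight $\delta(y)^\gamma$, the second has the singular weight $\delta(y)^{-\gamma(r'-1)}$ — and no symmetry argument converts one into the other. (Attempting to recover it from Proposition \ref{prop_marcinkiewicz} with a negative $\alpha=-\gamma(r'-1)$ also fails for the full range $r>\frac{N+\gamma}{2s}$, since the condition $r'<p_{\gamma,\alpha}$ becomes $r'<\frac{N+\gamma}{N+2\gamma-2s}$, strictly stronger than $r'<p^*$.)

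The missing ingredient is the direct pointwise bound from \eqref{G-est}: since
\[
G^{\Omega}(x,y)\lesssim \frac{\delta(y)^{\gamma/r}}{|x-y|^{N-2s+\gamma/r}},
\]
the Hölder factor reduces to $\bigl(\int_\Omega |x-y|^{-r'(N-2s+\gamma/r)}\,dy\bigr)^{1/r'}$, which is finite uniformly in $x$ exactly because $r>\frac{N+\gamma}{2s}$ is equivalent to $r'(N-2s+\gamma/r)<N$. This is the step the paper carries out, and it is where the numerology $r>(p^*)'$ is actually used. Your proposal correctly identifies the duality threshold but does not supply a valid justification for the key uniform bound.
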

\begin{proof}
Let $f \in L^r(\Omega,\delta^\gamma )$. By H\"older's inequality, we have
\begin{equation} \label{Gf-1} \begin{aligned} 
\left|\Gbb^{\Omega}[f](x)\right| &= \left| \int_{\Omega} G^{\Omega}(x,y) f(y) dy \right| \\
&\le \left( \int_{\Omega} \left(\dfrac{G^{\Omega}(x,y)}{\delta(y)^{\frac{\gamma}{r}}}\right)^{r'} dy \right)^{\frac{1}{r'}}\left( \int_{\Omega} |f(y)|^r \delta(y)^{\gamma} dy \right)^{\frac{1}{r}}.
\end{aligned} \end{equation}
By \eqref{G-est} and since $r>1$, we deduce that  
$$G^{\Omega} (x,y) \le C \delta(y)^{\frac{\gamma}{r}}|x-y|^{-N+2s-\frac{\gamma}{r}}, \quad  \forall x,y \in \Omega, x \neq y.$$
Plugging it into \eqref{Gf-1} yields
\begin{align*}
\left|\Gbb^{\Omega}[f](x)\right|&\le C\left(\int_{\Omega} |x-y|^{-r'\left(N-2s+\frac{\gamma}{r}\right)}dy \right)^{\frac{1}{r'}}\left( \int_{\Omega} |f(y)|^r \delta(y)^{\gamma} dy \right)^{\frac{1}{r}} \le C\| f \|_{L^r(\Omega,\delta^\gamma )}.
\end{align*}
Here for  the second inequality, we have used the fact that $\int_{\Omega} |x-y|^{-r'\left(N-2s+\frac{\gamma}{r}\right)} dy < \infty$
since $r' \left(N-2s+\frac{\gamma}{r}\right) < N$. Therefore we obtain \eqref{eq_GLinfty}.
\end{proof}

\begin{proposition}\label{prop_higherregularity} Let $m \in \left(1,\frac{N+\gamma}{2s}\right)$ and $\varrho$ be given by $$ \frac{1}{\varrho}=\frac{1}{m}-\frac{2s}{N+\gamma}.$$ Then $\Gbb^{\Omega}: L^m(\Omega,\delta^\gamma ) \longrightarrow L^\varrho(\Omega,\delta^\gamma )$ is continuous, namely there exists a positive constant $C$ such that
\begin{equation} \label{eq_Grq} \norm{ \Gbb^{\Omega}[f] }_{L^\varrho(\Omega,\delta^\gamma )} \leq C\| f \|_{L^m(\Omega,\delta^\gamma )}, \quad \forall\, f \in L^m(\Omega,\delta^\gamma ).
\end{equation}
\end{proposition}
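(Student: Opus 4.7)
The plan is to prove this weighted Hardy--Littlewood--Sobolev-type inequality by Marcinkiewicz real interpolation, anchored to two endpoint bounds already in hand. At the lower endpoint, Proposition \ref{prop_marcinkiewicz} applied with $\alpha=\gamma$, together with the continuous embedding $L^1(\Omega,\delta^\gamma)\hookrightarrow\M(\Omega,\delta^\gamma)$, yields the weak-type bound
\[
\|\Gbb^\Omega[f]\|_{M^{p^*}(\Omega,\delta^\gamma)}\lesssim \|f\|_{L^1(\Omega,\delta^\gamma)},
\]
namely $\Gbb^\Omega$ is of weak type $(1,p^*)$ relative to the measure $\delta^\gamma\,dx$ (note $p_{\gamma,\gamma}=p^*$). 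At the upper endpoint, Proposition \ref{prop_Linftyreg} provides the strong-type bound $\Gbb^\Omega:L^r(\Omega,\delta^\gamma)\to L^\infty(\Omega)$ for every $r>(p^*)'=(N+\gamma)/(2s)$. Since both endpoints live on the same weighted measure space $(\Omega,\delta^\gamma\,dx)$, they fit a single interpolation scale.

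I would then invoke the Marcinkiewicz interpolation theorem for the pair (weak $(1,p^*)$, strong $(r,\infty)$). For each fixed $m\in(1,(p^*)')$, the interpolation parameter $\theta\in(0,1)$ determined by $1/m=(1-\theta)+\theta/r$ produces the strong-type conclusion $\Gbb^\Omega:L^m(\Omega,\delta^\gamma)\to L^{q}(\Omega,\delta^\gamma)$ with $1/q=(1-\theta)/p^*$. A short calculation shows that $q$ increases to $\varrho$ as $r\downarrow(p^*)'$. Keeping track of the interpolation constants (which remain bounded as long as $r$ stays bounded away from $1$ and $\infty$) and using the density of $C_c^\infty(\Omega)$ in $L^m(\Omega,\delta^\gamma)$, one passes to the limit $r\to(p^*)'^+$ to recover \eqref{eq_Grq} with the sharp exponent $\varrho$.

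The main obstacle is precisely that the target exponent $\varrho$ sits on the critical Sobolev line, which corresponds to the forbidden endpoint $r=(p^*)'$ excluded from Proposition \ref{prop_Linftyreg}. If the direct limit argument does not close cleanly (for instance, because the interpolation constant degenerates as $r\downarrow(p^*)'$), the cleanest alternative is to bypass the limit by re-running the proof of Proposition \ref{prop_marcinkiewicz} with $L^m$ data in place of $L^1$ data: apply Lemma \ref{lem_marcin} to the kernel $K(x,y)=G^\Omega(x,y)/\delta(y)^\gamma$ combined with the layer-cake decomposition $|f|=\int_0^\infty \mathbf{1}_{\{|f|>t\}}\,dt$, then optimize the cutoff $t=t(\lambda)$ in the distributional estimate for $\Gbb^\Omega[|f|]$ in the spirit of Hedberg's proof of the classical Hardy--Littlewood--Sobolev inequality. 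The distributional computation underlying Proposition \ref{prop_marcinkiewicz}, which relied on the sharp Green-kernel bound \eqref{G-est} and effectively produced the ``dimension'' $N+\gamma$, is exactly the ingredient that delivers the critical exponent $\varrho$ on the nose.
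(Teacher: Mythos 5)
Your interpolation plan is the route the paper takes: the published proof invokes the Riesz--Thorin theorem between $\Gbb^\Omega\colon L^1(\Omega,\delta^\gamma)\to L^q(\Omega,\delta^\gamma)$, $q<p^*$, and $\Gbb^\Omega\colon L^r(\Omega,\delta^\gamma)\to L^\infty$, $r>(N+\gamma)/(2s)$, and simply states \eqref{eq_Grq}. Your variant --- the weak-type $(1,p^*)$ endpoint from Proposition \ref{prop_marcinkiewicz} plus Marcinkiewicz interpolation --- is a cleaner way to set this up, but it is the same strategy.

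The concern you raise at the end, however, is not hypothetical, and your parenthetical claim that the constants stay bounded is false. The constant in Proposition \ref{prop_Linftyreg} is essentially $\bigl(\int_\Omega|x-y|^{-r'(N-2s+\gamma/r)}\,dy\bigr)^{1/r'}$, and a short computation shows $r'(N-2s+\gamma/r)\to N$ as $r\downarrow(N+\gamma)/(2s)$, so that integral diverges at the endpoint. Consequently, for fixed $m$ the interpolated estimate yields $L^m\to L^\rho$ only for $\rho<\varrho$, with a constant that blows up as $\rho\uparrow\varrho$, and the naive limit does not produce the critical exponent --- the same reason the sharp Hardy--Littlewood--Sobolev inequality cannot be obtained by interpolating $I_\alpha\colon L^1\to L^{N/(N-\alpha),\infty}$ against $I_\alpha\colon L^r\to L^\infty$ for $r>N/\alpha$. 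Your fallback, a Hedberg/O'Neil-type argument that re-runs the distributional estimate behind Proposition \ref{prop_marcinkiewicz} with $L^m(\Omega,\delta^\gamma)$ data and exploits the effective dimension $N+\gamma$ encoded in the Green bound, is the right mechanism to reach $\varrho$ on the nose; but as written it is only a sketch, and the near/far splitting of $\Gbb^\Omega[f]$, the optimization over the cutoff, and the resulting bound on $\|\Gbb^\Omega[f]\|_{L^\varrho(\Omega,\delta^\gamma)}$ all need to be carried out before the proof is complete.
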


\begin{proof}

From Proposition \ref{prop_marcinkiewicz} and Proposition \ref{prop_Linftyreg}, we obtain that for any $r > \frac{N+\gamma}{2s}$ and $1 < q <p^*$,
$$
\begin{array}{lll}
\Gbb^{\Omega}: &L^1(\Omega,\delta^\gamma ) \longrightarrow L^q(\Omega,\delta^\gamma ) \\
& L^r(\Omega,\delta^\gamma ) \longrightarrow L^\infty(\Omega,\delta^{\gamma})
\end{array}
$$
are continuous. Here, $L^{\infty}(\Omega,\delta^{\gamma})$ denotes the space of essentially  bounded functions with respect to the measure $\delta^{\gamma} dx$, which satisfies $L^{\infty}(\Omega) \hookrightarrow L^{\infty}(\Omega,\delta^{\gamma})$. 

By Riesz-Thorin Interpolation Theorem (see, for instance, \cite{Tri_1978}), we derive that, for any $m \in (1,\frac{N+\gamma}{2s})$ and $\varrho$ is given by $\frac{1}{\varrho}=\frac{1}{m}-\frac{2s}{N+\gamma}$, the operator $\Gbb^{\Omega}: L^m(\Omega,\delta^\gamma ) \longrightarrow L^\varrho(\Omega,\delta^\gamma )$ is continuous, i.e. estimate \eqref{eq_Grq} holds. The proof is complete.
\end{proof}

Our next result is a 3G-inequality, whose proof follows line by line from the proof of \cite[Proposition 4.2]{CheSon_2002}.

\begin{proposition}[3G--inequality]\label{prop_3G} There exists a positive constant $C = C(\Omega,N,s,\gamma)$ such that for any pairwise different points  $x,y,z \in \Omega$, there holds 
\begin{align*}
\dfrac{G^{\Omega}(x,y)G^{\Omega}(y,z)}{G^{\Omega}(x,z)} \le C\left( \dfrac{\delta(y)^{\gamma}}{\delta(x)^{\gamma}} G^{\Omega} (x,y) + \dfrac{\delta(y)^{\gamma}}{\delta(z)^{\gamma}} G^{\Omega}(y,z)\right).
\end{align*}
In particular, one has
\begin{align*}
\dfrac{G^{\Omega}(x,y)G^{\Omega}(y,z)}{G^{\Omega}(x,z)} \le C\left( |x-y|^{2s-N} + |y-z|^{2s-N}\right).
\end{align*}
\end{proposition}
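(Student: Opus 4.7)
The plan is to exploit the two-sided estimate \eqref{eq_equivalentgreen}, which, writing $D(x,y):=|x-y|^{2\gamma}+\delta(x)^{2\gamma}+\delta(y)^{2\gamma}$, reads
$$G^{\Omega}(x,y)\sim \frac{\delta(x)^{\gamma}\delta(y)^{\gamma}}{|x-y|^{N-2s}\,D(x,y)}.$$
Substituting this into the ratio $G^{\Omega}(x,y)G^{\Omega}(y,z)/G^{\Omega}(x,z)$ causes the factors $\delta(x)^{\gamma}$ and $\delta(z)^{\gamma}$ to cancel, and rewriting the right-hand side in the same notation, the claim reduces, after division through by the leftover $\delta(y)^{2\gamma}$, to the purely geometric estimate
$$\frac{|x-z|^{N-2s}\,D(x,z)}{|x-y|^{N-2s}|y-z|^{N-2s}\,D(x,y)\,D(y,z)}\;\lesssim\;\frac{1}{|x-y|^{N-2s}\,D(x,y)}+\frac{1}{|y-z|^{N-2s}\,D(y,z)}.$$

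The core step is then handled by a WLOG reduction based on the triangle inequality. I would assume $|x-y|\ge|y-z|$; the reverse case is symmetric after interchanging $x$ and $z$. In this regime $|x-z|\le 2|x-y|$, so $|x-z|^{N-2s}\lesssim|x-y|^{N-2s}$. Moreover, the elementary bounds $\delta(z)\le \delta(y)+|y-z|\le \delta(y)+|x-y|$ and $\delta(x)\le \delta(y)+|x-y|$, combined with $(a+b)^{2\gamma}\le C(a^{2\gamma}+b^{2\gamma})$ (valid since \eqref{eq_K2} gives $2\gamma\le 2$), yield $D(x,z)\lesssim D(x,y)$. Inserting these two bounds in the numerator on the left cancels the factor $|x-y|^{N-2s}\,D(x,y)$ against its counterpart in the denominator, leaving exactly $|y-z|^{-(N-2s)}D(y,z)^{-1}$, i.e.\ the second term on the right. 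Multiplying back by $\delta(y)^{2\gamma}$ restores the original inequality.

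The ``in particular'' statement then follows by applying the symmetric form $G^{\Omega}(x,y)\lesssim (\delta(x)/\delta(y))^{\gamma}|x-y|^{2s-N}$ of \eqref{eq_Green0} (obtained by interchanging the roles of $x$ and $y$ in the second entry of the minimum and using symmetry of $G^\Omega$) to each summand on the right of the main inequality: this immediately gives $(\delta(y)/\delta(x))^{\gamma}G^{\Omega}(x,y)\lesssim|x-y|^{2s-N}$ and similarly for $(y,z)$. I do not expect a substantive obstacle: the scheme is the standard bookkeeping already appearing in \cite{CheSon_2002}, and the only point requiring verification is that the exponent $\gamma$ (allowed here to differ from $s$, unlike that reference) still lies in $(0,1]$, which is exactly \eqref{eq_K2}.
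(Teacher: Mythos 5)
Your proof is correct and follows essentially the same route as the paper, which does not give a self-contained argument but instead states that the proof carries over line by line from \cite[Proposition 4.2]{CheSon_2002}: substitute the two-sided estimate \eqref{eq_equivalentgreen}, cancel $\delta(x)^\gamma$, $\delta(z)^\gamma$, divide by $\delta(y)^{2\gamma}$, and handle the remaining geometric inequality by a triangle-inequality case split on whether $|x-y|\ge|y-z|$. The only tiny inaccuracy is the parenthetical justification of $(a+b)^{2\gamma}\le C(a^{2\gamma}+b^{2\gamma})$: this holds for every $\gamma>0$ (with $C=\max\{1,2^{2\gamma-1}\}$), not only because $2\gamma\le 2$; but this does not affect the argument.
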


\begin{proposition} \label{prop_priorestimate} Suppose that $q \in \left[1, p^*\right)$. Then there exists a positive constant $C = C(\Omega,N,s,\gamma,q)$ such that for any $\mu \in \M^+(\Omega,\delta^{\gamma} )$ with $\norm{\mu}_{\M(\Omega,\delta^{\gamma} )} = 1$, there holds
	\begin{equation}\label{eq_5.Greenestimate}
	\Gbb^{\Omega} \left[ \Gbb^{\Omega} [\mu]^q \right] \le C \Gbb^{\Omega} [ \mu]\text{ a.e. in } \Omega.
	\end{equation}
\end{proposition}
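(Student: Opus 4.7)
The plan is to combine three ingredients already available in the paper: the normalization $\int_{\Omega}\delta^{\gamma}d\mu=1$, the 3G-inequality from Proposition \ref{prop_3G}, and the weighted $L^{q}$-bound on the Green kernel provided by Corollary \ref{G/delta}. The rough idea is to pull the power $q$ inside the inner integral by Jensen, then reduce everything to a pointwise three-variable Green estimate in which $G^{\Omega}(x,z)$ can be factored out.

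\textbf{Step 1 (Jensen).} Since $\|\mu\|_{\M(\Omega,\delta^{\gamma})}=1$, the measure $\delta(z)^{\gamma}d\mu(z)$ is a probability measure on $\Omega$. For $q\ge 1$ the convexity of $t\mapsto t^{q}$ gives
\[
\mathbb{G}^{\Omega}[\mu](y)^{q}=\left(\int_{\Omega}\frac{G^{\Omega}(y,z)}{\delta(z)^{\gamma}}\,\delta(z)^{\gamma}d\mu(z)\right)^{q}\le \int_{\Omega}\frac{G^{\Omega}(y,z)^{q}}{\delta(z)^{\gamma(q-1)}}\,d\mu(z).
\]
Multiplying by $G^{\Omega}(x,y)$, integrating in $y$, and using Fubini yields
\[
\mathbb{G}^{\Omega}[\mathbb{G}^{\Omega}[\mu]^{q}](x)\le \int_{\Omega}\frac{1}{\delta(z)^{\gamma(q-1)}}\left(\int_{\Omega}G^{\Omega}(x,y)G^{\Omega}(y,z)^{q}dy\right)d\mu(z).
\]

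\textbf{Step 2 (three-variable Green estimate).} The key claim is that there exists $C>0$ depending only on $\Omega,N,s,\gamma,q$ such that, for a.e.\ $x\in\Omega$ and every $z\in\Omega$,
\[
\int_{\Omega}G^{\Omega}(x,y)G^{\Omega}(y,z)^{q}\,dy\le C\,G^{\Omega}(x,z)\,\delta(z)^{\gamma(q-1)}.
\]
Factoring $G^{\Omega}(x,y)G^{\Omega}(y,z)^{q}=[G^{\Omega}(x,y)G^{\Omega}(y,z)]\cdot G^{\Omega}(y,z)^{q-1}$ and inserting the 3G-inequality from Proposition \ref{prop_3G},
\[
G^{\Omega}(x,y)G^{\Omega}(y,z)\le CG^{\Omega}(x,z)\,\delta(y)^{\gamma}\left(\frac{G^{\Omega}(x,y)}{\delta(x)^{\gamma}}+\frac{G^{\Omega}(y,z)}{\delta(z)^{\gamma}}\right),
\]
splits the $dy$-integral into two pieces. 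The piece with $G^{\Omega}(y,z)^{q}$ is $\le C\delta(z)^{\gamma q}$ directly by Corollary \ref{G/delta}. The cross piece $\int_{\Omega}\delta(y)^{\gamma}G^{\Omega}(x,y)G^{\Omega}(y,z)^{q-1}dy$ is handled by H\"older with exponents $q$ and $q'=q/(q-1)$; noting that $(q-1)q'=q$, this bounds it by
\[
\left(\int_{\Omega}\delta(y)^{\gamma}G^{\Omega}(x,y)^{q}dy\right)^{1/q}\left(\int_{\Omega}\delta(y)^{\gamma}G^{\Omega}(y,z)^{q}dy\right)^{1/q'}\le C\,\delta(x)^{\gamma}\delta(z)^{\gamma(q-1)},
\]
using Corollary \ref{G/delta} on each factor (this is where $q<p^{*}$ is needed). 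Dividing the first piece by $\delta(x)^{\gamma}$ and the second by $\delta(z)^{\gamma}$ inside the bracket from 3G, both terms become $\delta(z)^{\gamma(q-1)}$, proving the claim.

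\textbf{Step 3 (conclusion).} Inserting Step 2 into the inequality from Step 1, the factor $\delta(z)^{\gamma(q-1)}$ cancels and we obtain
\[
\mathbb{G}^{\Omega}[\mathbb{G}^{\Omega}[\mu]^{q}](x)\le C\int_{\Omega}G^{\Omega}(x,z)d\mu(z)=C\,\mathbb{G}^{\Omega}[\mu](x),
\]
which is \eqref{eq_5.Greenestimate}. The main obstacle is Step 2: the 3G-inequality produces a residual factor $G^{\Omega}(y,z)^{q-1}$ whose contribution must be absorbed, and this is exactly what the Marcinkiewicz-type bound of Corollary \ref{G/delta} allows, provided $q<p^{*}$. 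The constant $C$ depends on $\Omega,N,s,\gamma,q$ through the constants appearing in Proposition \ref{prop_3G} and Corollary \ref{G/delta}.
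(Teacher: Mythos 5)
Your proof is correct and uses the same two central ideas as the paper's own argument --- Jensen's inequality on the probability measure $\delta^{\gamma}d\mu$ followed by the 3G-inequality of Proposition~\ref{prop_3G} --- but you organize the residual estimate differently. After Jensen and 3G, the paper replaces $G^{\Omega}(y,z)/\delta(y)^{\gamma}$ by the pointwise bound $C|y-z|^{-(N-2s+\gamma)}$, uses the Euclidean form of 3G, $G^{\Omega}(x,z)G^{\Omega}(y,z)\le C\,G^{\Omega}(x,y)\bigl(|x-z|^{2s-N}+|y-z|^{2s-N}\bigr)$, and then checks directly that the remaining integral $\int_{\Omega}\bigl(|x-z|^{2s-N}+|y-z|^{2s-N}\bigr)|y-z|^{-(q-1)(N-2s+\gamma)}dz$ is uniformly bounded (which is where $q<p^{*}$ enters as a convergence condition). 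You instead keep the weighted form of 3G, split into two terms, and control both via H\"older together with the weighted Green-kernel bound of Corollary~\ref{G/delta}, so the Green-kernel structure is preserved throughout and the role of $q<p^{*}$ enters through Corollary~\ref{G/delta} rather than through an explicit Riesz-type potential computation. Both routes are sound and of comparable length; the paper's is more elementary and self-contained at this point, while yours makes more systematic use of the Marcinkiewicz machinery.

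One small gap to close: in Step 2 your H\"older step uses the conjugate exponent $q'=q/(q-1)$, which is undefined when $q=1$. The fix is trivial --- for $q=1$ the cross term contains no $G^{\Omega}(y,z)^{q-1}$ factor at all, so
\[
\int_{\Omega}\delta(y)^{\gamma}G^{\Omega}(x,y)\,dy\le C\,\delta(x)^{\gamma}
\]
directly from Corollary~\ref{G/delta} with $q=1$ and the desired power $\delta(z)^{\gamma(q-1)}=1$ --- but it should be said explicitly, since the statement does allow $q=1$.
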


\begin{proof}

Since $q \ge 1$, applying Jensen's inequality with respect to the measure $\delta^{\gamma}d\mu$ satisfying $\norm{\mu}_{\M(\Omega,\delta^{\gamma} )} = 1$, one has
\begin{align*}
\Gbb^{\Omega} [\mu ] (x)^q = \left[\int_{\Omega} \dfrac{G^{\Omega}(x,y)}{\delta(y)^{\gamma}}\delta(y)^{\gamma} d\mu(y) \right]^q &\le \int_{\Omega} \left( \dfrac{G^{\Omega}(x,y)}{\delta(y)^{\gamma}} \right)^q \delta(y)^{\gamma} d\mu(y)  \\
& =  \int_{\Omega} G^{\Omega}(x,y) \delta(y)^{\gamma(1-q)} d\mu(y).
\end{align*}
It follows that
\begin{equation} \label{eq_Green00}
\begin{aligned}
\Gbb^{\Omega} \left[\Gbb^{\Omega} [\mu]^q\right](x)  
&= \int_{\Omega} G^{\Omega} (x,z) \Gbb^{\Omega}[\mu] (z)^q dz \\
& \le \int_{\Omega} \int_{\Omega} G^{\Omega} (x,z) G^{\Omega}(y,z)^q \delta(y)^{\gamma(1-q)} d\mu(y) dz \\
& \le \int_{\Omega} \int_{\Omega} G^{\Omega}(x,z) G^{\Omega} (y,z) \left[ \dfrac{G^{\Omega} (y,z)}{\delta(y)^{\gamma}} \right]^{q-1} d\mu(y) dz.
\end{aligned}
\end{equation}
By \eqref{eq_Green0}, we have
\begin{equation}\label{eq_green1}
\frac{G^{\Omega} (y,z)}{\delta(y)^{\gamma}} \le \dfrac{C}{|y-z|^{N-2s+\gamma}},\quad \forall y,z \in \Omega, y \neq z.
\end{equation}
On the other hand, using 3G--inequality (Proposition \ref{prop_3G}), one has
\begin{equation}\label{eq_green2}
G^{\Omega} (x,z)G^{\Omega} (y,z) \le CG^{\Omega}(x,y) \left( |x-z|^{2s-N} + |y-z|^{2s-N} \right).
\end{equation}
Plugging \eqref{eq_green1} and \eqref{eq_green2} into \eqref{eq_Green00} leads to
\begin{equation} \label{eq_green1.1}  \Gbb^{\Omega} \left[\Gbb^{\Omega} [\mu]^q\right](x) \leq C \int_{\Omega}G^{\Omega}(x,y)I(x,y)d\mu(y),
\end{equation}
where 
\[ I(x,y):=  \int_{\Omega} \dfrac{|x-z|^{2s-N} + |y-z|^{2s-N}}{|y-z|^{(q-1)(N+\gamma-2s)}} dz. \]
Since $q < p^*$, we have $(N-2s)+(q-1)(N+\gamma-2s) < N$ and therefore
\begin{equation} \label{eq_green1.2} I(x,y) \le \int_{\Omega} \left(\dfrac{1}{|x-z|^{{(N-2s)+(q-1)(N+\gamma-2s)}}} + \dfrac{1}{|y-z|^{{(N-2s)+(q-1)(N+\gamma-2s)}}} \right) dz <\widetilde{C},
\end{equation}
where $\widetilde{C}$ is independent of $x,y$. Combining \eqref{eq_green1.1} and \eqref{eq_green1.2}, we conclude that \eqref{eq_5.Greenestimate} holds true. We complete the proof.
\end{proof}

\section{Linear equations}\label{sec:lineartheory}
\subsection{Weak-dual solutions}\label{sec:weakduallinear}

In this subsection, we assume that \eqref{eq_G1}--\eqref{eq_G2} hold. Boundary value problems with measures for linear equations involving usual fractional Laplacians have been extensively studied in the literature and recently investigated in a general framework, see \cite{Aba-PhD,BonVaz_2015,Aba_2019}.

Let $\mu \in \M(\Omega,\delta^{\gamma})$. Recall that a function $u$ is a weak-dual solution of \eqref{eq:Lmu} if $u \in L^1(\Omega,\delta^{\gamma} )$ and it satisfies
\begin{equation} \label{weak-dual-linear}\int_{\Omega} u\xi dx = \int_{\Omega} \Gbb^{\Omega} [\xi] d\mu, \quad \forall \xi \in \delta^{\gamma}L^{\infty}(\Omega).
\end{equation}
By Lemma \ref{lem:integrationbypart}, for any $\mu \in \M(\Omega,\delta^{\gamma} )$, the unique weak-dual solution of \eqref{eq:Lmu} is $\Gbb^{\Omega}[\mu]$.

The sense of the boundary condition in problem \eqref{eq:Lmu} depends on the relation between the exponents $s$, $\gamma$ and the smoothness of $\mu$. Below we interpret the sense of the boundary condition in case $\gamma \geq s - \frac{1}{2}$.

Let $W$ be a weight function defined by
\begin{equation} \label{eq:weight} W(x): = \left\{  \begin{aligned}
&\delta(x)^{2s-\gamma-1} \quad &&\text{if } \gamma>s-\frac{1}{2}, \\
&\delta(x)^{2s-\gamma-1}(1+|\ln\delta(x)|) &&\text{if } \gamma=s-\frac{1}{2}.
\end{aligned} \right.
\end{equation}

If $\mu=f \in C^{\infty}_c(\Omega)$ then the homogeneous boundary condition in \eqref{eq:Lmu} can be understood in the pointwise sense in connection with the weight $W$ as  
	\begin{equation} \label{G/W} \lim_{x \in \Omega, x \to \partial \Omega} \frac{\Gbb^\Omega[f](x)}{W(x)} = 0.
	\end{equation}
	Indeed, by \cite[Theorem 2.10]{Aba_2019}, $|\Gbb^{\Omega}[f](x)| \leq C\norm{f}_{L^{\infty}(\Omega)} \delta(x)^{\gamma}$ for every $x \in \Omega$, which yields
	\begin{equation} \label{G/W1} \left|\frac{\Gbb^\Omega[f](x)}{W(x)} \right| \leq C\norm{f}_{L^{\infty}(\Omega)}\frac{\delta(x)^\gamma}{W(x)}, \quad \forall x \in \Omega.
	\end{equation}
	By taking into account the definition of $W$ in \eqref{eq:weight} and $\gamma \geq s - \frac{1}{2}$, we deduce \eqref{G/W} from \eqref{G/W1}.
	
If $\mu \in \M(\Omega,\delta^\gamma )$, the homogeneous boundary condition in  \eqref{eq:Lmu} cannot be understood in the pointwise sense, but in  the following trace sense
\begin{equation} \label{trace}
\lim_{\varepsilon \to 0} \frac{1}{\varepsilon} \int_{ \{x \in \Omega: \delta(x) < \varepsilon  \} }\frac{\Gbb^\Omega[\mu](x)}{W(x)}dx = 0.
\end{equation}
Indeed, this can be obtained by using an argument similar to that of \cite[Lemma 3.8 a) and b)]{Aba_2019}. Although the proof of \cite[Lemma 3.8]{Aba_2019} deals with data in $L^1(\Omega,\delta^\gamma )$, the extension to data in $\M(\Omega,\delta^\gamma )$ is trivial. The case $\gamma = s - \frac{1}{2}$ is slightly different but follows straightforward from \cite[Lemma 3.8 b)]{Aba_2019}.

\subsection{Variational setting} \label{sec:variational} In this subsection, we assume that \eqref{eq_L1}--\eqref{eq_L3} and \eqref{eq_G1}--\eqref{eq_G3} hold.  For $f \in L^2(\Omega)$, consider the problem

\begin{equation}\label{eq_Lf} \left\{ \begin{aligned}
\L u &= f &&\text{ in }\Omega, \\
u  &= 0 &&\text{ on }\partial \Omega, \\
u &=0 &&\text{ in }\R^N \backslash \overline{\Omega} \text{ (if applicable)},
\end{aligned} \right.
\end{equation}

\begin{definition}[Variational solutions] \label{def:varsol} Assume $f \in L^2(\Omega)$. A function $u$ is a variational solution of \eqref{eq_Lf} if $u \in \Hbb(\Omega)$ and
\begin{equation}\label{eq_variation}
\inner{u,\xi}_{\Hbb(\Omega)} = \int_{\Omega} f\xi dx, \quad \forall \xi \in \Hbb(\Omega).
\end{equation}
\end{definition}
The existence and uniqueness of the variational solution to \eqref{eq_Lf} follows from the Lax--Milgram theorem since 
$$\norm{u}_{L^2(\Omega)}^2 \leq \mathcal{B}(u,u),\quad \forall u \in \Hbb(\Omega),$$
by the assumption \eqref{eq_L2}. Moreover,  the variational solution $u$ of \eqref{eq_Lf} satisfies
\begin{equation}\label{eq_L2estimate}
\norm{u}_{L^2(\Omega)} \le \norm{u}_{\Hbb(\Omega)} \lesssim\norm{f}_{L^2(\Omega)}.
\end{equation}
We note that the notion of variational solutions in Definition \ref{def:varsol} is equivalent to the one in \cite[Definition 2.5]{ChaGomVaz_2019_2020}.

Next we show that the variational solution of \eqref{eq_Lf} is a weak-dual solution.

\begin{proposition}\label{prop_variationalsolution} For any $f \in L^2(\Omega)$, $\Gbb^{\Omega}[f]$ is the variational solution of \eqref{eq_Lf}. 
\end{proposition}
\begin{proof}
First we assume that $f \in C^{\infty}_c(\Omega)$. Then $\Gbb^{\Omega} [f]$ is the weak-dual solution of \eqref{eq_Lf} and $\Gbb^{\Omega} [f] \in \Hbb(\Omega)$ by \eqref{eq_G1}. Furthermore, for any $v \in C^{\infty}_c(\Omega)$, one has
$$\inner{\Gbb^{\Omega}[f], v}_{\Hbb(\Omega)} = \inner{ \L[\Gbb^{\Omega}[f]], v}_{L^2(\Omega)} = \inner{ f, v}_{L^2(\Omega)}.$$
This implies that $\Gbb^{\Omega}[f]$ is the variational solution of \eqref{eq_Lf}. 
	
Next, assume $f \in L^2(\Omega)$ and let $u$ be the variational solution of \eqref{eq_Lf}. We will show that $u=\Gbb^{\Omega}[f]$. 

Let $\{f_n\} \subset C^{\infty}_c(\Omega)$ be a sequence converging to $f$ in $L^2(\Omega)$. Let $u_n = \Gbb^{\Omega}[f_n]$ be the variational solution of \eqref{eq_Lf} with $f$ replaced by $f_n$. Using the fact that $\Gbb^{\Omega}: L^2(\Omega) \longrightarrow L^2(\Omega)$ is continuous (see \cite[Theorem 2.2]{ChaGomVaz_2019_2020}), we obtain
$$\norm{\Gbb^{\Omega}[f_n] - \Gbb^{\Omega}[f]}_{L^2(\Omega)} = \| \Gbb^{\Omega}[f_n-f]\|_{L^2(\Omega)} \lesssim \norm{f_n - f}_{L^2(\Omega)}, \quad \forall n \in \mathbb{N},$$
which implies $\Gbb^{\Omega}[f_n] \to \Gbb^{\Omega}[f]$ in $L^2(\Omega)$. On the other hands, since $\Gbb^{\Omega}[f_n]-u$ is the variational solution of \eqref{eq_Lf} with $f$ replaced by $f_n-f$, we deduce from \eqref{eq_L2estimate} that
$$\norm{\Gbb^{\Omega}[f_n] - u}_{L^2(\Omega)}  \le \norm{\Gbb^{\Omega}[f_n] - u}_{\Hbb(\Omega)} \lesssim \norm{f_n - f}_{L^2(\Omega)}, \quad \forall n \in \mathbb{N}.$$
This implies $\Gbb^{\Omega}[f_n] \to u$ in $L^2(\Omega)$. By the uniqueness of the limit, $u = \Gbb^{\Omega}[f]$.
\end{proof}

Under the assumptions \eqref{eq_G1}--\eqref{eq_G2},  the operator $\Gbb^{\Omega} : L^2(\Omega) \to L^2(\Omega)$ is compact (see \cite[Proposition 5.1]{BonFigVaz_2018}).  Thus, there exist an orthogonal basis of $L^2(\Omega)$ consisting of eigenfunctions of $\Gbb^{\Omega}$, and a sequence $\{ \lambda_n \}$ satisfying $ 0 < \lambda_1 < \lambda_2 \le \cdots \le \lambda_n \nearrow + \infty$ such that
$$
\lambda_n \Gbb^{\Omega}[\varphi_n] = \varphi_n,\quad \forall n \in \mathbb{N}.
$$
In other words, $\lambda_n^{-1}$ is the eigenvalue of $\Gbb^{\Omega}$ associated to $\varphi_n$ for every $n \in \mathbb{N}$. By Proposition \ref{prop_variationalsolution}, $\varphi_n \in \Hbb(\Omega),\forall n \in \mathbb{N}$ and
$$
\inner{\varphi_n, \varphi_m}_{\Hbb(\Omega)} = \lambda_n \inner{\Gbb^{\Omega}[\varphi_n],\varphi_m}_{\Hbb(\Omega)} = \lambda_n \inner{\varphi_n, \varphi_m}_{L^2(\Omega)},\quad \forall n,m \in \mathbb{N}.
$$
Thus, for every $u,v \in \Hbb(\Omega)$, one has
$$
\mathcal{B}(u,v) = \inner{ u,v}_{\Hbb(\Omega)} = \inner{\sum_{n=1}^{\infty} \widehat{u_n} \varphi_n, \sum_{n=1}^{\infty} \widehat{v_n} \varphi_n}_{\Hbb(\Omega)} = \sum_{n=1}^{\infty}\lambda_n \widehat{u_n}\widehat{v_n},
$$
where $\widehat{u_n}:= \inner{u,\varphi_n}_{L^2(\Omega)},\forall n \in \mathbb{N}$.  Thus, one has the following characterization of $\Hbb(\Omega)$ 
$$
\Hbb(\Omega) = \left\{ u \in L^2(\Omega): \sum_{n=1}^{\infty} \lambda_n |\widehat{u_n}|^2 < \infty \right\}.
$$
We note that the space $\Hbb(\Omega)$ has been also recently investigated in \cite{BonSirVaz_2015} and \cite{ChaGomVaz_2019_2020}.

\section{Semilinear equations: existence and nonexistence} \label{sec:minimalsolution}

In this section, we assume that \eqref{eq_G1}--\eqref{eq_G2} hold. The aim of this section is to prove the existence of minimal solutions of \eqref{eq_nonlinear_source}. To this end, we first prove a general existence result for the semilinear problem
\begin{equation}\label{eq_general_nonlinear_source}
\left\{ \begin{aligned}  \L u &= g(x,u) + \mu &&\text{ in }\Omega, \\
u  &= 0 &&\text{ on }\partial \Omega, \\
u &=0 &&\text{ in }\R^N \backslash \overline{\Omega} \text{ (if applicable)},
\end{aligned} \right. \end{equation}
where $g:\Omega \times \R \to \R$ is a Caratheodory function such that $g(x,\cdot)$ is increasing in the second variable, for a.e. $x \in \Omega$ and $\mu \in \M(\Omega,\delta ^{\gamma} )$.  

Following Bonforte and V\'azquez \cite{BonVaz_2015} (see also \cite{BonFigVaz_2018,Aba_2019,ChaGomVaz_2019_2020}), we give the definition of weak-dual sub- and supersolutions. 
\begin{definition} \label{subweakdual} We say that a function $u$ is a weak-dual subsolution (resp. supersolution) of \eqref{eq_general_nonlinear_source} if $u \in L^1(\Omega,\delta^{\gamma} )$, $g(\cdot, u) \in L^1(\Omega,\delta^{\gamma} )$ and
\begin{equation}\label{eq_subsol}
\int_{\Omega} u\xi dx \leq \, (\text{resp.}\,  \geq)\, \int_{\Omega} g(x,u) \Gbb^{\Omega}[\xi] dx + \int_{\Omega} \Gbb^{\Omega}[\xi] d\mu, \quad \forall \xi \in \delta^{\gamma}L^{\infty}(\Omega), \xi \ge 0.
\end{equation}
A function $u$ is a weak-dual solution of \eqref{eq_general_nonlinear_source} if it is both subsolution and supersolution of \eqref{eq_general_nonlinear_source}.
\end{definition}

\begin{remark}
It can be seen that a function $u$ is a weak-dual subsolution of \eqref{eq_general_nonlinear_source} if and only if $u \in L^1(\Omega,\delta^{\gamma} )$,  $g(\cdot, u) \in L^1(\Omega,\delta^{\gamma} )$ and
\begin{equation}\label{eq_weaksubsol}
u \le \Gbb^{\Omega} [g(\cdot, u)] + \Gbb^{\Omega} [\mu] \quad \text{ a.e. in }\Omega.
\end{equation}
Indeed, if $u$ is a weak-dual subsolution of \eqref{eq_general_nonlinear_source}, then by Definition \ref{subweakdual}, $u \in L^1(\Omega,\delta^{\gamma} )$ and  $g(\cdot, u) \in L^1(\Omega,\delta^{\gamma} )$. Moreover, using the integration by parts formula in Lemma \ref{lem:integrationbypart}, for any $\xi \in \delta^{\gamma}L^{\infty}(\Omega)$, $\xi \ge 0$, one has
\begin{align*}
\int_{\Omega} u \xi dx \le \int_{\Omega} g(x,u) \Gbb^{\Omega}[\xi] dx + \int_{\Omega} \Gbb^{\Omega}[\xi] d\mu 
= \int_{\Omega} \xi \Gbb^{\Omega} [g(x,u)]dx  + \int_{\Omega} \xi \Gbb^{\Omega}[\mu] dx,
\end{align*}
 which implies 
 $$\int_{\Omega} \left( u - \Gbb^{\Omega} [g(\cdot,u)] - \Gbb^{\Omega}[\mu]  \right) \xi dx \le 0,\quad \forall \xi \in \delta^{\gamma}L^{\infty}(\Omega),\xi \ge 0.$$
This gives \eqref{eq_weaksubsol}. The converse statement can be established in an analogous way.

A similar observation also holds true for a weak-dual supersolution of \eqref{eq_general_nonlinear_source}. Finally, it can be seen that a function $u$ is a weak-dual solution of \eqref{eq_general_nonlinear_source} if and only if $u \in L^1(\Omega,\delta^{\gamma} )$, $g(\cdot, u) \in L^1(\Omega,\delta^{\gamma} )$ and
\begin{equation} \label{represent} u = \Gbb^{\Omega} [g(\cdot, u)] +\Gbb^{\Omega} [\mu] \quad \text{a.e. in } \Omega. 
\end{equation}
For interesting results regarding the equivalence of different types of solutions of \eqref{eq_general_nonlinear_source}, we refer to \cite[Theorem 2.1]{ChaGomVaz_2019_2020}. 
\end{remark}

\begin{remark} \label{gam<2s} From the representation \eqref{represent} and Marcinkiewicz estimates (Proposition \ref{prop_marcinkiewicz}), we see that if $u$ is a solution of \eqref{eq_general_nonlinear_source} then $u \in L^p(\Omega,\delta^{\gamma} )$, $p \in [1,p^*)$ and $u \in L^q(\Omega), q \in \left[1, \frac{N}{N-2s+\gamma}\right)$  if $\gamma < 2s$.
\end{remark}

Our general existence result is based on the sub- and supersolution method. The reader is referred to \cite[Theorem 5.4.1]{MarVer_2014} and \cite{MonPon_2008} for results in local frameworks.

\begin{proposition}\label{prop_existence.nonlinear} Let $v $ and $w$ be a weak-dual subsolution and a weak-dual supersolution of \eqref{eq_general_nonlinear_source}, respectively, satisfying $v \le w$ a.e. in $\Omega$.
Then there exist weak-dual solutions $\underline{u} \le \overline{u}$ of \eqref{eq_general_nonlinear_source}  such that for any weak-dual solution $u$ of \eqref{eq_general_nonlinear_source} satisfying $v \le u \le w$ a.e. in $\Omega$, there holds 
$$v \le \underline{u} \le u \le \overline{u} \le w \text{ a.e. in }\Omega.$$
\end{proposition}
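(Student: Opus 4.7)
The plan is to use a monotone iteration scheme, exploiting the fact that $g(x,\cdot)$ is nondecreasing and that the Green operator $\mathbb{G}^\Omega$ is positivity-preserving (Lemma \ref{lem_maximum}). Starting from $u_0:=v$ and $\bar u_0:=w$, I would define inductively
\begin{equation*}
u_{n+1}:=\mathbb{G}^\Omega[g(\cdot,u_n)]+\mathbb{G}^\Omega[\mu], \qquad \bar u_{n+1}:=\mathbb{G}^\Omega[g(\cdot,\bar u_n)]+\mathbb{G}^\Omega[\mu].
\end{equation*}
The first task is to show by induction that $v\le u_n\le u_{n+1}\le \bar u_{n+1}\le \bar u_n\le w$ a.e. The base step uses the equivalent characterizations of weak-dual sub- and supersolutions in \eqref{eq_weaksubsol}, namely $v\le \mathbb{G}^\Omega[g(\cdot,v)]+\mathbb{G}^\Omega[\mu]=u_1$ and $w\ge \bar u_1$. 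The inductive step follows because whenever $f_1\le f_2$ a.e. in $\Omega$, the positivity of $G^\Omega$ yields $\mathbb{G}^\Omega[f_1]\le \mathbb{G}^\Omega[f_2]$, and because $g(x,\cdot)$ is nondecreasing. Along the way, the bound $g(\cdot,v)\le g(\cdot,u_n)\le g(\cdot,\bar u_n)\le g(\cdot,w)$ together with $g(\cdot,v),g(\cdot,w)\in L^1(\Omega,\delta^\gamma)$ ensures that $g(\cdot,u_n),g(\cdot,\bar u_n)\in L^1(\Omega,\delta^\gamma)$, so each iterate is well-defined and belongs to $L^1(\Omega,\delta^\gamma)$ thanks to Proposition \ref{prop_L1estimate}.

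Next, the monotone and bounded sequences converge a.e.: set $\underline u:=\lim_{n\to\infty} u_n$ and $\overline u:=\lim_{n\to\infty}\bar u_n$, which satisfy $v\le \underline u\le \overline u\le w$ a.e. To pass to the limit in
\begin{equation*}
u_{n+1}=\mathbb{G}^\Omega[g(\cdot,u_n)]+\mathbb{G}^\Omega[\mu]\quad\text{a.e. in }\Omega,
\end{equation*}
I would combine the monotone convergence theorem (for the increasing sequence $g(\cdot,u_n)$) with Fubini/Tonelli applied to $G^\Omega(x,y)g(y,u_n(y))$; the uniform domination by the integrable function $y\mapsto G^\Omega(x,y)|g(y,w)|+G^\Omega(x,y)|g(y,v)|$ allows exchange of limit and integral for a.e.\ $x\in\Omega$. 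Hence $\underline u=\mathbb{G}^\Omega[g(\cdot,\underline u)]+\mathbb{G}^\Omega[\mu]$ a.e., which, by the equivalence \eqref{represent}, means $\underline u$ is a weak-dual solution of \eqref{eq_general_nonlinear_source}. The identical argument applies to $\overline u$.

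Finally, for the minimality/maximality property, let $u$ be any weak-dual solution of \eqref{eq_general_nonlinear_source} with $v\le u\le w$ a.e. A second induction shows $u_n\le u\le \bar u_n$ for every $n$: assuming $u_n\le u\le \bar u_n$, the monotonicity of $g$ gives $g(\cdot,u_n)\le g(\cdot,u)\le g(\cdot,\bar u_n)$, and applying $\mathbb{G}^\Omega$ (which preserves order) together with the fixed-point identity $u=\mathbb{G}^\Omega[g(\cdot,u)]+\mathbb{G}^\Omega[\mu]$ yields $u_{n+1}\le u\le \bar u_{n+1}$. Passing to the limit gives $\underline u\le u\le \overline u$ a.e., which is the desired minimality and maximality.

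The main technical obstacle is the justification of the limit passage in the integral representation in a framework where $g$ is only measurable and increasing, without any growth restriction: this is why sandwiching the iterates between $v$ and $w$ (to control $g(\cdot,u_n)$ in $L^1(\Omega,\delta^\gamma)$) is essential, and why the argument must rely on pointwise monotonicity and Proposition \ref{prop_L1estimate} rather than on stronger regularity of $\mathbb{G}^\Omega$.
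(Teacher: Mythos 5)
Your construction of the minimal solution $\underline u$ by increasing iteration from $v$ coincides with the paper's. For $\overline u$, however, you take a genuinely different route: the mirror decreasing iteration $\bar u_0 = w$, $\bar u_{n+1} = \Gbb^\Omega[g(\cdot,\bar u_n)] + \Gbb^\Omega[\mu]$, with maximality proved by a second induction ($\bar u_n \ge u$ for every sandwiched solution $u$). The paper instead sets $M := \sup\left\{\int_\Omega u\,\delta^\gamma\,dx\right\}$ over all sandwiched solutions, constructs an increasing sequence of \emph{solutions} whose integrals tend to $M$ (using the auxiliary observation that the pointwise maximum of two solutions is a subsolution, and then pushing it up to a solution by the already-established existence argument), and takes its a.e.\ limit. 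Your symmetric iteration is more elementary and avoids this lattice-type detour entirely, at no loss of generality. Both constructions rest on the same passage to the limit: since $g(\cdot,v) \le g(\cdot,\bar u_n) \le g(\cdot,w)$ with $g(\cdot,v), g(\cdot,w) \in L^1(\Omega,\delta^\gamma)$, the function $y \mapsto G^\Omega(x,y)\left(|g(y,v(y))| + |g(y,w(y))|\right)$ is integrable for a.e.\ $x$ (Lemma \ref{lem:Gfinite}), so dominated convergence gives $\Gbb^\Omega[g(\cdot,\bar u_n)] \to \Gbb^\Omega[g(\cdot,\overline u)]$ a.e., and \eqref{represent} identifies $\overline u$ as a weak-dual solution. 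One small precision: you first invoke the Monotone Convergence Theorem for the sequence $g(\cdot,u_n)$, but MCT requires nonnegativity, which is not assumed of $g$; the dominated-convergence route you give immediately afterward is the rigorous version (alternatively, apply MCT to the nonnegative increments $g(\cdot,u_n) - g(\cdot,v)$). Also, $u_n \in L^1(\Omega,\delta^\gamma)$ already follows from $v \le u_n \le w$ with $v,w \in L^1(\Omega,\delta^\gamma)$, so the appeal to Proposition \ref{prop_L1estimate} is sound but not strictly needed.
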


Here we note that only the existence of the minimal weak-dual solution $\underline{u}$ is needed in our paper. However, we still prove the existence of the maximal solution $\overline{u}$ for the completeness. 

\begin{proof}For simplicity, in this proof by a solution (subsolution, supersolution, respectively) we mean a weak-dual solution (subsolution, supersolution, respectively).

We define an iterative sequence
$$u_0:= v, \quad u_{n+1} := \Gbb^{\Omega} [g(\cdot, u_n)] + \Gbb^{\Omega} [\mu ], \quad n \in \mathbb{N}.$$
It can be seen that $u_0 = v \le w$, therefore
\begin{align*}
v \le u_1 = \Gbb^{\Omega}[g(\cdot, u_0)] +\Gbb^{\Omega}[\mu] \le \Gbb^{\Omega}[g(\cdot, w)] +\Gbb^{\Omega}[\mu] \le w,
\end{align*}
since $v, w$ are a subsolution and a supersolution of \eqref{eq_general_nonlinear_source}, respectively, and $g(x,\cdot)$ is nondecreasing.
By induction and the monotonicity of $g(x,\cdot)$, one has $v \le u_n \le u_{n+1} \le w$ for every  $n \in \mathbb{N}$. 
This means $\{ u_n\}$ is an increasing sequence bounded from above by $w \in L^1(\Omega,\delta^{\gamma} )$. Using the Monotone Convergence Theorem, there exists a function $\underline{u} \in L^1(\Omega,\delta^{\gamma} )$ such that $u_n \to \underline{u}$  a.e. in $\Omega$  and in $L^1(\Omega,\delta^{\gamma} )$.
Furthermore, since $g(x,\cdot)$ is continuous and increasing, one has $g(\cdot, u_n) \to g(\cdot,\underline{u})$ a.e. in $\Omega$ and also in $L^1(\Omega,\delta^{\gamma} )$. Thus, since
$$\int_{\Omega} u_{n+1}\xi  dx  = \int_{\Omega} g(\cdot,u_n) \Gbb^{\Omega}[ \xi ]dx + \int_{\Omega} \Gbb^{\Omega}[ \xi ] d\mu, \quad \forall \xi \in \delta^{\gamma}L^{\infty}(\Omega),$$
letting $n \to \infty$ and using the Dominated Convergence Theorem, we deduce
$$\int_{\Omega} \underline{u}\xi  dx  = \int_{\Omega} g(\cdot,\underline{u}) \Gbb^{\Omega}[\xi] dx + \int_{\Omega} \Gbb^{\Omega}[ \xi ] d\mu, \quad \forall \xi \in \delta^{\gamma}L^{\infty}(\Omega),$$
from which we conclude that $\underline{u}$ is a solution to \eqref{eq_general_nonlinear_source}.

We prove that $\underline{u}$ is the minimal solution among all weak-dual solutions of \eqref{eq_general_nonlinear_source} being greater than $v$. Assume $u$ is another solution of \eqref{eq_general_nonlinear_source} such that $ v = u_0 \le u$. By induction, one has 
$$u = \Gbb^{\Omega} [g(\cdot, u)] + \Gbb^{\Omega} [\mu ] \ge \Gbb^{\Omega} [g(\cdot, u_n)] + \Gbb^{\Omega} [\mu ] = u_{n+1}, \quad \forall n \in \mathbb{N}.$$ 
Letting $n \to \infty$, we have $u \ge \underline{u}$, i.e. $\underline{u}$ is the desired minimal solution.

We next prove the existence of the maximal solution $\overline{u}$ of \eqref{eq_nonlinear_source}. Denote
$$ M := \sup \left\{ \int_{\Omega} u \delta^{\gamma} dx : u \text{ is a solution of }\eqref{eq_general_nonlinear_source} \text{ and }v \le u \le w \right\}< \infty. $$
Consider a sequence of solutions $\{u_n\}$ of \eqref{eq_general_nonlinear_source} such that $\int_{\Omega} u_n \delta^{\gamma} dx \to M$.
We see that if $v_1,v_2$ are two solutions of \eqref{eq_general_nonlinear_source}, then $v_1 \vee v_2$ is a subsolution since
$$v_i = \Gbb^{\Omega}[g(\cdot, v_i)] + \Gbb^{\Omega} [\mu] \le \Gbb^{\Omega} [g(\cdot, v_1 \vee v_2)]  + \Gbb^{\Omega}[\mu], \quad i = 1,2.$$ By this remark, the sequence $\{ u_n\}$ can be chosen to be increasing. Indeed, if $\{u_n\}$ is not increasing, one can always construct a sequence $\{v_n\}$ with $v_1:= u_1$ and $v_n$ being a solution of \eqref{eq_general_nonlinear_source} satisfying $\max\{ u_n, v_{n-1} \} \le v_n \le w$ (such a solution $v_n$ exists by the same argument leading to the existence of the minimal solution $\underline{u}$ above). Using the Monotone Convergence Theorem, there exists a function $\overline{u} \in L^1(\Omega,\delta^{\gamma} )$ such that 
$$u_n \to \overline{u} \text{ a.e. in }\Omega \text{ and in }L^1(\Omega,\delta^{\gamma} ) \text{ with } \int_{\Omega} \overline{u} \delta^{\gamma} dx = M.$$
Since $g(\cdot, u_n) \to g(\cdot, \overline{u})$ a.e. in $\Omega$, by the Monotone Convergence Theorem, we have $g(\cdot, u_n) \to g(\cdot, \overline{u})$ in $L^1(\Omega,\delta^{\gamma} )$. The same argument as in the first part shows that $\overline{u}$ is a solution of \eqref{eq_general_nonlinear_source}. 
Finally, we prove that $\overline{u}$ is the maximal solution. Indeed, for an arbitrary solution $u$ of \eqref{eq_general_nonlinear_source} such that $v \le u \le w$, we have $\overline{u}\vee u$ is a subsolution. Thus, there exists a solution $\widetilde{u}$ of \eqref{eq_general_nonlinear_source} such that $\overline{u} \le \overline{u} \vee u \le \widetilde{u} \le w$. This implies
$$\int_{\Omega} (\overline{u} \vee u) \delta^{\gamma} dx \le \int_{\Omega} \widetilde{u} \delta^{\gamma} dx \le  M = \int_{\Omega} \overline{u} \delta^{\gamma} dx,$$
which implies $\overline{u} \vee u = \overline{u}$. We complete the proof.
\end{proof}

We need the following lemma for the proof of Theorem \ref{th:main1}.
\begin{lemma}\label{lem_technicaleq} Consider the equation $t = c_pt^p + \lambda$, where $c_p > 0$ is a positive constant and $p > 1$. Then for every $\lambda \ge 0$ small enough, this equation admits a unique positive solution $t = T(\lambda) \le t_0 := (pc_p)^{-\frac{1}{p-1}}$, where $T$ is an increasing function and $T(0) = 0$.
\end{lemma}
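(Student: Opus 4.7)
The plan is to study the auxiliary function $f(t) := t - c_p t^p$ on $[0,\infty)$ and to realize $T(\lambda)$ as the inverse of $f$ restricted to its increasing branch. First I would differentiate: $f'(t) = 1 - p c_p t^{p-1}$, which vanishes at the unique critical point $t_* := (p c_p)^{-1/(p-1)}$, is positive on $(0,t_*)$ and negative on $(t_*,\infty)$. Consequently $f$ is strictly increasing from $f(0)=0$ up to its maximum
\[
\lambda_0 \,:=\, f(t_*) \,=\, \left(1-\tfrac{1}{p}\right) t_* \,>\, 0
\]
on $[0,t_*]$, and strictly decreasing on $[t_*,\infty)$ with $f(t)\to -\infty$ as $t\to\infty$.

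For every $\lambda \in (0,\lambda_0)$ the Intermediate Value Theorem applied on $[0,t_*]$ then produces a unique $T(\lambda) \in (0,t_*)$ satisfying $f(T(\lambda)) = \lambda$, which is precisely the relation $T(\lambda) = c_p T(\lambda)^p + \lambda$. On $(t_*,\infty)$ there is in general a second positive root of the same equation, but the normalization $T(\lambda)\to 0$ selects the small branch unambiguously, so ``unique positive solution'' should be read as uniqueness on the increasing branch, i.e.\ the branch compatible with $T(0^+)=0$.

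Finally, since $f\colon [0,t_*]\to [0,\lambda_0]$ is a strictly increasing homeomorphism, its inverse $T=f^{-1}$ is a strictly increasing homeomorphism of $[0,\lambda_0]$ onto $[0,t_*]$, which gives both the claimed monotonicity of $T$ and the limit $T(\lambda)\to 0$ as $\lambda\to 0^+$ (by continuity of $f^{-1}$ at $0$ and $f^{-1}(0)=0$). There is no real obstacle here: the lemma is a one-variable calculus statement, and the only subtle point worth flagging is the selection of the correct branch of roots of $t - c_p t^p = \lambda$.
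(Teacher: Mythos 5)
Your proof is correct and follows essentially the same approach as the paper: both analyze the auxiliary function $k(t)=t-c_pt^p$, locate its unique maximum at $t_*=(pc_p)^{-1/(p-1)}$, and restrict $\lambda$ to $(0,k(t_*))$ so that $T(\lambda)$ is obtained on the increasing branch of $k$. You are also right to flag, more explicitly than the paper does, that for such $\lambda$ there is a second root beyond $t_*$, so the ``unique positive solution'' in the statement should be understood as the small root selected by the requirement $T(\lambda)\to 0$.
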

\begin{proof}
It can be seen that the function $f(t) = t - c_p t^p$ is continuous and monotone increasing on $[0,t_0]$ ($t_0$ is given as above) since $f$ attains a unique maximum at $t = t_0$. Thus, for any $\lambda \in [0,f(t_0)]$, we have 
	$$
	f(t) = \lambda \Longleftrightarrow t = f^{-1}(\lambda) =: T(\lambda).
	$$
	Since $T$ is the inverse of $f$, it is continuous, monotone increasing with $T(0) = 0$. We have the desired result.
\end{proof}

\smallskip

\begin{proof}[{\sc Proof of Theorem \ref{th:main1} (1)}] ~
	
\textbf{Step 1.} We first prove the existence of the minimal solution of \eqref{eq_nonlinear_source} for $\lambda > 0$ small enough.

Put $v_\lambda:= \lambda \Gbb^{\Omega}[\mu]$, $w_\lambda:= T(\lambda) \Gbb^{\Omega}[\mu]$ for some function $T$ which will be specified later. It can be seen that $v_\lambda$ is a weak-dual subsolution of \eqref{eq_nonlinear_source} since $v_\lambda = \lambda \Gbb^{\Omega}[\mu] \in L^p(\Omega,\delta^{\gamma} )$ and $v_\lambda \le \Gbb^{\Omega}[ (v_\lambda )^p ] + \lambda \Gbb^{\Omega}[\mu]$. By Proposition \ref{prop_existence.nonlinear}, it is enough to find a suitable function $T$ such that $w_\lambda$ is a weak-dual  supersolution and $w_\lambda \ge v_\lambda$ a.e. in $\Omega$. 

Indeed, Lemma \ref{lem_technicaleq} shows that for all $\lambda > 0$ small enough, there exists a $T(\lambda) > 0$ such that $T$ is increasing with respect to $\lambda$, $T(\lambda) \to 0$ as $\lambda \to 0$ and $T(\lambda) = c_p T(\lambda)^p +\lambda$, where $c_p$ is the constant $C$ in \eqref{eq_5.Greenestimate}. Thus, it follows from Proposition \ref{prop_priorestimate} that
\begin{equation}\label{eq_minimal1}
\begin{aligned}
\Gbb^{\Omega}[(w_\lambda)^p] = \Gbb^{\Omega} \left[\left(T(\lambda)\Gbb^{\Omega}[\mu] \right)^p\right] = T(\lambda)^p\Gbb^{\Omega} [\Gbb^{\Omega}[\mu]^p] &\le c_pT(\lambda)^p \Gbb^{\Omega}[\mu] \\ &= (T(\lambda) - \lambda) \Gbb^{\Omega}[\mu],
\end{aligned}
\end{equation}
from which we deduce that
$$ w_\lambda = T(\lambda) \Gbb^{\Omega}[\mu] \ge \Gbb^{\Omega}[(w_\lambda)^p] + \lambda\Gbb^{\Omega}[\mu].$$
This implies $w_\lambda$ is a weak-dual supersolution. Furthermore, it is clear that $w_\lambda = T(\lambda) \Gbb^{\Omega}[\mu] \ge \lambda \Gbb^{\Omega}[\mu] = v_\lambda$. We deduce from Proposition \ref{prop_existence.nonlinear} that there exists a minimal nonnegative weak-dual solution $\underline u_{\lambda}$ of \eqref{eq_nonlinear_source} which satisfies $\lambda\Gbb^{\Omega}[\mu] \le  \underline u_{\lambda} \le T(\lambda)\Gbb^{\Omega} [\mu]$ a.e. in $\Omega$.

\textbf{Step 2.} We prove the existence of a threshold value $\lambda^* > 0$  stated in Theorem \ref{th:main1}.

It can be seen that if \eqref{eq_nonlinear_source} has a minimal positive weak-dual solution $u_{\lambda}$ for $\lambda > 0$, then it also admits a minimal positive weak-dual solution for all $\lambda' \in (0,\lambda)$. Furthermore, the map $\lambda \mapsto u_{\lambda}$ is increasing and continuous. It follows from the fact that the sequence defining the minimal solution $\underline u_{\lambda}$, namely
\begin{equation}\label{eq_sequence_minimal}
u_{\lambda,0} = \lambda \Gbb^{\Omega} [\mu], \quad u_{\lambda,n+1}  = \Gbb^{\Omega} [(u_{\lambda,n})^p] + \lambda \Gbb^{\Omega}[\mu], \quad  n \in \mathbb{N},
\end{equation}
satisfies $u_{\lambda,n} > u_{\lambda',n}$, and the map $\lambda \mapsto u_{\lambda,n}$ is continuous and increasing for all $n \in \mathbb{N}$. Thus, we may define
\begin{equation} \label{lambda*} \lambda^*:= \sup \left\{ \lambda > 0: \eqref{eq_nonlinear_source} \text{ has a positive weak-dual solution} \right\}.
\end{equation}

We prove that $\lambda^* < \infty$. Indeed, let $\lambda>0$ be such that problem \eqref{eq_nonlinear_source} admits a positive weak-dual solution  $u_\lambda$. Then $u_\lambda \in L^p(\Omega,\delta^\gamma )$ and 
\begin{equation} \label{eq_ulambda} \int_{\Omega} u_{\lambda}\xi dx = \int_{\Omega}(u_{\lambda})^p \xi dx + \lambda \int_{\Omega} \Gbb^{\Omega} [\xi] d\mu, \quad \forall \xi \in \delta^{\gamma}L^{\infty}(\Omega).
\end{equation}

Let $\lambda_1$ and $\varphi_1$ be the first eigenvalue and the corresponding positive eigenfunction of $\L$, respectively. Recall that $\varphi_1 = \lambda_1 \Gbb^{\Omega}[\varphi_1] \sim \lambda_1\delta^{\gamma}$ by \cite[Proposition 5.3]{BonFigVaz_2018}. Choosing $\xi = \varphi_1 \in \delta^{\gamma}L^{\infty}(\Omega)$ as a test function in \eqref{eq_ulambda}, one has
\begin{equation} \label{ueigen}
\int_{\Omega} (u_{\lambda})^p \Gbb^{\Omega}[\varphi_1] dx + \lambda \int_{\Omega} \Gbb^{\Omega}[\varphi_1]d\mu  
=  \int_{\Omega} u_{\lambda} \varphi_1 dx 
= \lambda_1 \int_{\Omega} u_{\lambda} \Gbb^{\Omega}[\varphi_1] dx.
\end{equation}
This and H\"older's inequality imply
\begin{equation}\label{eq_boundedness1}
\begin{aligned}
\int_{\Omega} (u_{\lambda})^p \Gbb^{\Omega}[\varphi_1] dx + \lambda \int_{\Omega} \Gbb^{\Omega}[\varphi_1]d\mu  \le \lambda_1 \left(\int_{\Omega} (u_{\lambda})^p \Gbb^{\Omega}[\varphi_1] dx \right)^{\frac{1}{p}} \left( \int_{\Omega} \Gbb^{\Omega}[\varphi_1] dx \right)^{1-\frac{1}{p}}.
\end{aligned}
\end{equation}
This leads to
\begin{equation}\label{eq_boundedness}
\int_{\Omega} (u_{\lambda})^p \Gbb^{\Omega}[\varphi_1] dx \le \lambda_1^{\frac{p}{p-1}} \int_{\Omega}\Gbb^{\Omega}[\varphi_1] dx.
\end{equation}
Combining \eqref{eq_boundedness1}, \eqref{eq_boundedness} and the fact that $ \int_{\Omega} \Gbb^{\Omega}[\varphi_1]d\mu \sim \int_{\Omega}\delta^{\gamma} d\mu = 1$, 
one has
\begin{equation} \label{tildelambda}
\lambda \le \frac{1}{\int_{\Omega} \Gbb^{\Omega}[\varphi_1] d\mu} \lambda_1^{\frac{p}{p-1}} \int_{\Omega} \Gbb^{\Omega}[\varphi_1] dx \leq  C\lambda_1^{\frac{p}{p-1}} \int_{\Omega} \delta^{\gamma} dx=:\tilde \lambda<+\infty.
\end{equation}
Notice that $\tilde \lambda$ depends on $N,\Omega,s,\gamma, p,\lambda_1$. Therefore $\lambda^* < +\infty$. 

It can be seen that 
$$
\lambda^*= \sup \left\{ \lambda > 0: \eqref{eq_nonlinear_source} \text{ has a minimal positive weak-dual solution} \right\}.
$$
This is obtained due to the observation that if  \eqref{eq_nonlinear_source} has a positive weak-dual solution $u_\lambda$ for $\lambda > 0$ then  \eqref{eq_nonlinear_source} has a minimal positive weak-dual solution. Indeed, since $u_{\lambda}$ is a weak-dual solution, it is also a weak-dual supersolution \eqref{eq_nonlinear_source}, while $0$ is a weak-dual subsolution of \eqref{eq_nonlinear_source}. Using Proposition \ref{prop_existence.nonlinear}, we know that there exists a minimal weak-dual solution $\underline{u}_{\lambda}$ such that $0 \le \underline{u}_{\lambda} \le u_{\lambda}$. 

\textbf{Step 3.} We prove the existence of the minimal positive weak-dual solution of \eqref{eq_nonlinear_source} for $\lambda = \lambda^*$.

First, observing \eqref{eq_boundedness1} and the estimate $\Gbb^{\Omega}[\varphi_1] \sim \delta^\gamma$, we derive that
\begin{equation} \label{Lpuni}
\int_{\Omega} (u_\lambda)^p \delta^\gamma dx \leq C\lambda_1^{\frac{p}{p-1}}\int_{\Omega}\delta^\gamma dx < C\lambda_1^{\frac{p}{p-1}} < C.
\end{equation}
From \eqref{ueigen}, \eqref{Lpuni} and again the estimate $\Gbb^{\Omega}[\varphi_1] \sim \delta^\gamma$, we derive
\begin{equation} \label{L1uni}
\int_{\Omega} u_\lambda \delta^\gamma dx \leq C\left(\int_{\Omega}(u_\lambda)^p \delta^\gamma dx + \lambda \int_{\Omega}\delta^\gamma d\mu \right) \leq C.
\end{equation}

From Step 2, for each $\lambda \in (0,\lambda^*)$, there exists a minimal positive weak-dual solution $\underline{u}_{\lambda}$ of \eqref{eq_nonlinear_source}. By \eqref{Lpuni} and \eqref{L1uni}, the sequence $\{ \underline{u}_\lambda \}$  is uniformly bounded in $L^1(\Omega,\delta^{\gamma} )$ and in $L^p(\Omega,\delta^{\gamma} )$ with respect to $\lambda$. Moreover, $\{ \underline{u}_\lambda \}$ is increasing with respect to $\lambda$. By the Monotone Convergence Theorem,  $\{ \underline{u}_\lambda \}$ converges to a function $\underline{u}_{\lambda^*}$ a.e. in $\Omega$, in $L^1(\Omega,\delta^\gamma )$ and in $L^p(\Omega,\delta^\gamma )$. 
For any  $\lambda < \lambda^*$ and $\xi \in \delta^{\gamma}L^{\infty}(\Omega)$, one has
$$\int_{\Omega} \underline{u}_{\lambda} \xi dx = \int_{\Omega}(\underline{u}_{\lambda})^p \Gbb^{\Omega}[\xi] dx + \lambda \int_{\Omega} \Gbb^{\Omega}[\xi] d\mu.$$
By letting $\lambda \nearrow \lambda^*$, one has
$$ \int_{\Omega} \underline{u}_{\lambda^*} \xi dx = \int_{\Omega}(\underline{u}_{\lambda^*})^p \Gbb^{\Omega}[\xi]dx + \lambda^* \int_{\Omega} \Gbb^{\Omega}[\xi ] d\mu.$$
Therefore $\underline{u}_{\lambda^*}$ is a weak-dual solution of \eqref{eq_nonlinear_source} for $\lambda = \lambda^*$.

We show that $\underline{u}_{\lambda^*}$ is the minimal solution of \eqref{eq_nonlinear_source} with $\lambda=\lambda^*$. It can be seen that if $u_{\lambda^*}$ is an arbitrary solution of \eqref{eq_nonlinear_source} with $\lambda=\lambda^*$, then $u_{\lambda^*} \geq \underline{u}_{\lambda}$ for $\lambda < \lambda^*$ since $u_{\lambda^*} \geq \underline{u}_{\lambda,n}$ for all $n \in \mathbb{N}$. Letting $\lambda \nearrow \lambda^*$, we obtain $u_{\lambda^*} \geq \underline{u}_{\lambda^*}$. The proof is complete.
\end{proof}

\begin{remark} \label{rmk:uniformbound} 
	
(i) We notice that the argument leading to \eqref{Lpuni} in Step 2 of the proof of Theorem \ref{th:main1}(1) is valid for every $p>1$. Therefore, for any $p>1$, if $u$ is a positive weak-dual solution of \eqref{eq_nonlinear_source} then 
$\| u \|_{L^p(\Omega,\delta^\gamma )} \leq C$ where $C=C(N,\Omega,s,\gamma,p,\lambda_1)$. Moreover, from the representation
\begin{equation} \label{repr2} u = \Gbb^{\Omega}[u^p] + \lambda\Gbb^{\Omega}[\mu],
\end{equation}
Proposition \ref{prop_marcinkiewicz}, Corollary \ref{cor:GLL} (i) and \eqref{Lpuni}, we have, for any $q \in [1,p^*)$, there hold
\begin{equation} \label{univer-2} \begin{aligned} \| u \|_{L^q(\Omega,\delta^\gamma )} &= \| \Gbb^{\Omega}[u^p] + \lambda\Gbb^{\Omega}[\mu] \|_{L^q(\Omega,\delta^\gamma )} \\
&\leq C(\| u \|_{L^p(\Omega,\delta^\gamma )}^p + \lambda\| \mu \|_{\M(\Omega,\delta^\gamma )}) \leq C,
\end{aligned}\end{equation}
where $C=C(N,\Omega,s,\gamma,\lambda_1,p,q)$.
\end{remark}

\begin{remark} \label{rmk:subsuperup} We can infer from Proposition \ref{prop_existence.nonlinear} that for $p>1$ and $0<\lambda<\lambda'$, if problem \eqref{eq_nonlinear_source} with $\lambda=\lambda'$ admits a positive weak-dual solution $u_{\lambda'}$ then problem \eqref{eq_nonlinear_source} admits a positive weak-dual solution $u_{\lambda}$ such that $u_{\lambda} < u_{\lambda '}$. In other words, if problem \eqref{eq_nonlinear_source} does not admit any positive weak-dual solution then neither does \eqref{eq_nonlinear_source} with $\lambda=\lambda'$.
\end{remark} \smallskip


\begin{proof}[{\sc Proof of Theorem \ref{th:main1} (2)}] ~ We observe from Remark \ref{rmk:subsuperup} that it is sufficient to consider $\lambda < \tilde \lambda$ where $\tilde \lambda$ is defined in \eqref{tildelambda}. 

Suppose by contradiction that for every $\mu \in \M(\Omega,\delta^{\gamma} )$ with $\norm{\mu}_{\M(\Omega,\delta^{\gamma} )} = 1$, there exists a positive weak-dual solution to \eqref{eq_nonlinear_source}. Let $y^* \in \partial \Omega$ and $\{y_n\} \subset \Omega$ such that $y_n \to y^* \in \partial \Omega$.  Put $\mu_n = \delta^{-\gamma}\delta_{y_n}$ then $\| \mu_n \|_{\M(\Omega,\delta^\gamma )} = 1$. By the supposition, for each $n \in \N$, there exists a positive weak-dual solution $u_n$ of \eqref{eq_nonlinear_source} with datum $\lambda \mu_n$. 

	
From the representation $ u_n = \Gbb^\Omega[(u_n)^p] + \lambda \Gbb^\Omega[\mu_n]$,
we deduce
	\begin{equation} \label{un-2} u_n(x) \geq  \lambda \Gbb^\Omega[\mu_n](x) = \lambda \int_{\Omega} G^\Omega(x,y)d\mu_n(y) = \lambda\frac{G^\Omega(x,y_n)}{\delta(y_n)^\gamma}. 
	\end{equation}
	Since
	$$G^{\Omega}(x,y) \sim \dfrac{1}{|x-y|^{N-2s}}\cdot \dfrac{\delta(x)^{\gamma}\delta(y)^{\gamma}}{\delta(x)^{2\gamma}+\delta(y)^{2\gamma} + |x-y|^{2\gamma}}, \quad x,y \in \Omega,$$
it follows from Fatou lemma that
	\begin{equation}\label{eq:bound1}
	\begin{aligned}
	\liminf_{ n \to \infty} \int_{\Omega} (u_n)^p \delta^{\gamma} dx \geq \int_{\Omega} (\liminf_{n \to \infty} (u_n)^p) \delta^{\gamma} dx 
	\gtrsim \lambda \int_{\Omega} \left( \dfrac{\delta(x)^{\gamma}}{|x-y^*|^{N-2s+2\gamma}} \right)^p \delta(x)^{\gamma} dx.
	\end{aligned}
	\end{equation}

Since $\Omega$ is a $C^2$ domain, it satisfies the interior cone condition, hence there exists $r_0 > 0$ small enough such that the circular cone at vertex $y^*$
	$$
	{\mathcal C}_{r_0}(y^*):=\left\{ x \in B_{r_0}(y^*):  (x-y^*)\cdot {\bf n}_{y^*} > \frac{1}{2}|x-y^*| \right\}  \subset \Omega,
	$$
where ${\bf n}_{y^*}$ denotes the inward unit normal vector to $\partial \Omega$ at $y^*$.

Without loss of generality, suppose that the coordinates are placed so that $y^* = 0 \in \partial \Omega$, the tangent hyperplane to $\partial \Omega$ at $0$ is $\{ x=(x_1,\ldots,x_{N-1},x_N) \in \R^N: x_N=0\}$ and ${\bf n}_0 = (0,\ldots, 0,1)$. We can choose $r_0$ small enough such that 
$$\delta(x) \geq \alpha |x|, \quad \forall x \in \mathcal{C}_{r_0}(0),$$
for some $\alpha \in (0,1)$.
Then we have 
	\begin{equation} \label{intr0}
	\begin{aligned}
\int_{\Omega} \left( \dfrac{\delta(x)^{\gamma}}{|x|^{N-2s+2\gamma}} \right)^p \delta(x)^{\gamma} dx 
& \gtrsim \int_{{\mathcal C}_{r_0}(0)} \dfrac{1}{|x|^{(N-2s+\gamma)p-\gamma}}dx \\
& \gtrsim \int_{B_{r_0}(0) } \dfrac{1}{|x|^{(N-2s+\gamma)p-\gamma}}dx \\
& \sim \int_0^{r_0} t^{N-1-(N-2s+\gamma)p +\gamma} dt.
	\end{aligned} 
	\end{equation}
Since $p \geq p^*$, the last integral in \eqref{intr0} is divergent. This, joint with \eqref{eq:bound1} and \eqref{intr0}, yields
	$$ \liminf_{n \to \infty}\int_{\Omega}(u_n)^p \delta^\gamma dx  =\infty,
	$$
which leads to a contradiction since the sequence $\{u_n\}$ is uniformly bounded in $L^p(\Omega,\delta^{\gamma})$ by Remark \ref{rmk:uniformbound}. The proof is complete.
\end{proof}

\section{Eigenproblem and key regularity results} \label{sec:keyregularity}
In this section, we assume that assumptions \eqref{eq_L1}--\eqref{eq_L3} and \eqref{eq_G1}--\eqref{eq_G3} hold. We study weighted eigenproblem associated to $\L$ in the variational framework.

\subsection{Embeddings}

We prove some embedding results. We first recall a classical result.
\begin{lemma}\label{lem_sobolevembedding} $\Hbb(\Omega) \hookrightarrow L^q(\Omega)$ for all $q \in \left[1, \frac{2N}{N-2s} \right]$, and $\Hbb(\Omega) \hookrightarrow \hookrightarrow L^q(\Omega)$ for all $q \in \left[1, \frac{2N}{N-2s} \right)$.
\end{lemma}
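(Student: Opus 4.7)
The plan is to exploit assumption \eqref{eq_K4}, which identifies $\Hbb(\Omega)$ with $H^s_{00}(\Omega)$, and then transfer the problem to a well-known embedding question for $H^s(\R^N)$. Indeed, as recorded in Section \ref{sec:LPspace}, every $u \in H^s_{00}(\Omega)$ admits a trivial extension $\tilde u$ (by zero outside $\Omega$) which belongs to $H^s(\R^N)$, with the extension map being continuous. Moreover the norm $\| \cdot\|_{\Hbb(\Omega)} = \sqrt{\mathcal{B}(\cdot,\cdot)}$ is equivalent, via the bottom of the spectrum estimate already used in Section \ref{sec:pre_assumption}, to the full $H^s_{00}(\Omega)$-norm, so it suffices to prove the embeddings for $H^s_{00}(\Omega)$.

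First, I would handle the continuous embedding. By the classical fractional Sobolev inequality on $\R^N$ (see e.g.\ \cite{NezPalVal_2012}), $H^s(\R^N) \hookrightarrow L^{2^*_s}(\R^N)$ with $2^*_s := \frac{2N}{N-2s}$, which is well-defined since $N>2s$ by \eqref{eq_K2}. Applied to $\tilde u$ and using $\supp \tilde u \subset \overline\Omega$, this gives $\| u \|_{L^{2^*_s}(\Omega)} = \| \tilde u\|_{L^{2^*_s}(\R^N)} \lesssim \| \tilde u\|_{H^s(\R^N)} \lesssim \| u\|_{\Hbb(\Omega)}$. Since $\Omega$ is bounded, H\"older's inequality then yields $\| u\|_{L^q(\Omega)} \lesssim \| u\|_{L^{2^*_s}(\Omega)}$ for every $q \in [1, 2^*_s]$, giving the first statement.

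For the compact embedding, I would invoke the fractional Rellich--Kondrachov-type theorem (see \cite[Theorem 7.1]{NezPalVal_2012}): if $\{u_n\} \subset H^s_{00}(\Omega)$ is bounded, then $\{\tilde u_n\}$ is bounded in $H^s(\R^N)$ and supported in the fixed compact set $\overline\Omega$; by the localised compactness result, a subsequence converges in $L^2(\R^N)$, hence in $L^2(\Omega)$. Interpolating between $L^2(\Omega)$ and $L^{2^*_s}(\Omega)$ via the continuous embedding $\Hbb(\Omega) \hookrightarrow L^{2^*_s}(\Omega)$ upgrades this to strong convergence in $L^q(\Omega)$ for any $q \in [1, 2^*_s)$, since strong $L^2$ convergence combined with $L^{2^*_s}$-boundedness and $q<2^*_s$ gives strong $L^q$ convergence.

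The argument is essentially routine given \eqref{eq_K4}; the only subtle point to check is that the equivalence of norms on $H^s_{00}(\Omega)$ (namely the $\Hbb$-norm versus the $H^s(\R^N)$-norm of the zero extension) really holds for all three typical examples of $\L$ covered by the paper. This is granted by \eqref{eq_K4} together with the identifications recorded in Section \ref{sec:someexamples} (for RFL via \cite{SonVon_2003}, for SFL via the interpolation identity $[H^1_0(\Omega), L^2(\Omega)]_{1-s}=H^s_{00}(\Omega)$, and for CFL via \cite{BogBurChe_2003,CheKimSon_2009}), so no further work is needed.
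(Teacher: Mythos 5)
Your proof is correct and follows essentially the same route as the paper: both reduce via assumption (K4) to the identification $\Hbb(\Omega)=H^s_{00}(\Omega)\hookrightarrow H^s(\Omega)$ and then invoke the classical fractional Sobolev embedding and Rellich--Kondrachov compactness (the paper cites \cite[Theorem 6.7, Corollary 7.2]{NezPalVal_2012} directly, while you spell out the zero-extension to $H^s(\R^N)$ and the interpolation step). The extra detail on norm equivalence and the extension operator is a harmless expansion of the same argument.
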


\begin{proof} Since  $\Hbb(\Omega) \hookrightarrow H^s(\Omega)$ by \eqref{eq_L3},  we obtain $$\Hbb(\Omega) \hookrightarrow L^q(\Omega), \quad \forall \, q \in \left[1,\frac{2N}{N-2s}\right]$$
and
$$\Hbb(\Omega) \hookrightarrow \hookrightarrow L^q(\Omega), \quad\forall \, q \in \left[1,\frac{2N}{N-2s}\right)$$ 
due to \cite[Theorem 6.7, Corollary 7.2]{NezPalVal_2012}.
\end{proof}
\begin{lemma}\label{lem_interpolation} Assume that $\alpha \in [0,2s)$ and $q \in \left[1+\frac{\alpha}{2s}, \frac{2N-2\alpha}{N-2s}\right]$. Then for any $u \in \Hbb (\Omega)$, we have $u \in L^{\frac{2(qs-\alpha)}{2s-\alpha}}(\Omega) \cap  L^q(\Omega,\delta^{-\alpha} )$ and furthermore, the following inequality holds
\begin{equation}\label{eq_interpolation}
\| u \|_{L^q(\Omega,\delta^{-\alpha})}  \lesssim \norm{u}_{\Hbb(\Omega)}^{\frac{\alpha}{qs}} \norm{u}^{1-\frac{\alpha}{qs}}_{L^{\frac{2(qs-\alpha)}{2s-\alpha}}(\Omega)}.
\end{equation}
As a consequence, $\Hbb(\Omega)\hookrightarrow L^q(\Omega,\delta^{-\alpha} )$ for all $q \in \left[1+\frac{\alpha}{2s}, \frac{2N-2\alpha}{N-2s}\right]$, and $\Hbb(\Omega)\hookrightarrow \hookrightarrow L^q(\Omega,\delta^{-\alpha} )$ for all $q \in \left[1+\frac{\alpha}{2s}, \frac{2N-2\alpha}{N-2s}\right)$. 
\end{lemma}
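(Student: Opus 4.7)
Inequality \eqref{eq_interpolation} will follow from a single H\"older interpolation combined with the Hardy-type inclusion encoded in the identification $\Hbb(\Omega)=H^s_{00}(\Omega)$ granted by \eqref{eq_K4}. The case $\alpha=0$ trivializes (the claim reduces to Lemma \ref{lem_sobolevembedding}), so I assume $\alpha\in(0,2s)$. The key algebraic factorization is
\[
\frac{|u|^q}{\delta^\alpha}=\left(\frac{|u|^2}{\delta^{2s}}\right)^{\!\alpha/(2s)} |u|^{\,q-\alpha/s},
\]
which is consistent since $2\cdot\tfrac{\alpha}{2s}+(q-\tfrac{\alpha}{s})=q$ and $2s\cdot\tfrac{\alpha}{2s}=\alpha$. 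I would then apply H\"older with conjugate exponents $p_1=2s/\alpha$ and $p_2=2s/(2s-\alpha)$ (both $>1$) to obtain
\[
\int_\Omega |u|^q\delta^{-\alpha}\,dx\le \left(\int_\Omega \frac{|u|^2}{\delta^{2s}}\,dx\right)^{\!\alpha/(2s)} \left(\int_\Omega |u|^{r}\,dx\right)^{\!(2s-\alpha)/(2s)},
\]
where $r:=(q-\alpha/s)\cdot 2s/(2s-\alpha)=2(qs-\alpha)/(2s-\alpha)$. Taking the $q$-th root and chasing the powers reproduces the right-hand side of \eqref{eq_interpolation} with exponents $\alpha/(qs)$ and $1-\alpha/(qs)$.

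Next, since $\Hbb(\Omega)=H^s_{00}(\Omega)$, the very definition of $H^s_{00}(\Omega)$ yields the Hardy bound $\|u/\delta^s\|_{L^2(\Omega)}\lesssim \|u\|_{\Hbb(\Omega)}$, which handles the first factor. To control the second factor I need $r\in[1,2N/(N-2s)]$: an elementary computation gives $r\ge 1\iff q\ge 1+\alpha/(2s)$ and $r\le 2N/(N-2s)\iff q\le(2N-2\alpha)/(N-2s)$, which are precisely the endpoints assumed in the hypothesis on $q$. Lemma \ref{lem_sobolevembedding} then delivers $\|u\|_{L^r(\Omega)}\lesssim \|u\|_{\Hbb(\Omega)}$, proving in particular that $u\in L^{2(qs-\alpha)/(2s-\alpha)}(\Omega)\cap L^q(\Omega,\delta^{-\alpha})$, and yielding the continuous embedding $\Hbb(\Omega)\hookrightarrow L^q(\Omega,\delta^{-\alpha})$.

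For the compact embedding in the range $q<(2N-2\alpha)/(N-2s)$, the same interpolation does all the work. Given $\{u_n\}\subset\Hbb(\Omega)$ with $u_n\rightharpoonup u$ weakly, the corresponding $r$ satisfies $r<2N/(N-2s)$, so the compact part of Lemma \ref{lem_sobolevembedding} gives $u_n\to u$ in $L^r(\Omega)$. Applying \eqref{eq_interpolation} to $u_n-u$,
\[
\|u_n-u\|_{L^q(\Omega,\delta^{-\alpha})}\lesssim \|u_n-u\|_{\Hbb(\Omega)}^{\alpha/(qs)}\,\|u_n-u\|_{L^r(\Omega)}^{1-\alpha/(qs)},
\]
and noting that the first factor is uniformly bounded while the second tends to zero, compactness follows. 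I do not anticipate any genuine obstacle: the only thing to verify carefully is the arithmetic matching the interval of admissible $q$ with the Sobolev range for $r$, and that the H\"older split is sharp enough to land exactly in $L^2(\Omega,\delta^{-2s})$ and $L^r(\Omega)$ with no residual weight.
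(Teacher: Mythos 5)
Your proposal is correct and follows essentially the same route as the paper's own proof: the identical factorization $|u|^q\delta^{-\alpha}=(|u|^2\delta^{-2s})^{\alpha/(2s)}|u|^{q-\alpha/s}$, the same H\"older split with exponents $2s/\alpha$ and $2s/(2s-\alpha)$ landing in $L^2(\Omega,\delta^{-2s})$ and $L^{2(qs-\alpha)/(2s-\alpha)}(\Omega)$, the same use of \eqref{eq_K4} for the Hardy-type control of the first factor, and the same weak-convergence-plus-interpolation argument for compactness.
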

Notice that we recover Lemma \ref{lem_sobolevembedding} if $\alpha = 0$.
\begin{proof}[\sc Proof of Lemma \ref{lem_interpolation}]  By the assumption on $q$, one has $1\le \frac{2(qs-\alpha)}{2s-\alpha} \le \frac{2N}{N-2s}$. Therefore, for any $u \in \Hbb(\Omega)$, it follows from Lemma \ref{lem_sobolevembedding} that $u \in L^{\frac{2(qs-\alpha)}{2s-\alpha}}(\Omega)$ and
\begin{equation} \label{embed0} \norm{u}_{L^{\frac{2(qs-\alpha)}{2s-\alpha}}(\Omega)} \lesssim\norm{u}_{\Hbb(\Omega)}, \quad \forall \, u \in \Hbb(\Omega).
\end{equation}
	
Since $\alpha < 2s$, using H\"older's inequality and the fact that $\norm{\cdot}_{\Hbb(\Omega)}$ is equivalent to $\norm{\cdot}_{H^s_{00}(\Omega)}$ by \eqref{eq_L3}, one has, for any $u \in \Hbb(\Omega)$,
\begin{equation}\label{eq_compactsource1}
\begin{aligned}
\| u \|_{L^q(\Omega,\delta^{-\alpha})}^q  &= \int_{\Omega} \left(\frac{|u|^2}{\delta^{2s}}\right)^{\frac{\alpha}{2s}} \cdot |u|^{q - \frac{\alpha}{s}} dx \\ 
&\leq \left(\int_{\Omega} \dfrac{|u|^2}{\delta^{2s}}dx \right)^{\frac{\alpha}{2s}}\left(\int_{\Omega} |u|^{\frac{2(qs-\alpha)}{2s-\alpha}}dx \right)^{1-\frac{\alpha}{2s}}\\
&\lesssim \norm{u}^{\frac{\alpha}{s}}_{\Hbb(\Omega)} \norm{u}^{q-\frac{\alpha}{s}}_{L^{\frac{2(qs-\alpha)}{2s-\alpha}}(\Omega)}.
\end{aligned}
\end{equation}
Combining \eqref{embed0} and \eqref{eq_compactsource1} yields \eqref{eq_interpolation}. 

From \eqref{eq_interpolation} and \eqref{eq_compactsource1}, we obtain
\begin{equation} \label{embed1} \| u \|_{L^q(\Omega,\delta^{-\alpha})}  \lesssim \norm{u}_{\Hbb(\Omega)}, \quad \forall \, u \in \Hbb(\Omega).
\end{equation}
Hence $\Hbb(\Omega)$ is continuously embedded in $L^q(\Omega,\delta^{-\alpha} )$.

Now, consider the case $q \in \left[1+\frac{\alpha}{2s}, \frac{2N-2\alpha}{N-2s}\right)$. Assume that $\{u_n\}$ is a bounded sequence in $\Hbb(\Omega)$. Since $\Hbb(\Omega)$ is reflexive, there exists  $u \in \Hbb (\Omega)$ such that $u_n \rightharpoonup u$ in $\Hbb (\Omega)$. Also, since $\Hbb (\Omega) \hookrightarrow \hookrightarrow L^{\frac{2(qs-\alpha)}{2s-\alpha}}(\Omega)$ is compact (notice that $\frac{2(qs-\alpha)}{2s-\alpha} < \frac{2N}{N-2s}$), up to a subsequence, $u_n \to u$ in $L^{\frac{2(qs-\alpha)}{2s-\alpha}}(\Omega)$. Now we can apply \eqref{eq_interpolation} with $u_n - u$ to have
\begin{equation}\label{eq_embdd}
\begin{aligned}
\|u_n - u\|_{L^q(\Omega,\delta^{-\alpha})}^q  \lesssim \norm{u_n - u}_{\Hbb(\Omega)} ^{\frac{\alpha}{s}}  \norm{u_n-u}^{q-\frac{\alpha}{s}}_{L^{\frac{2(qs-\alpha)}{2s-\alpha}}(\Omega)} \lesssim 2M \norm{u_n-u}^{q-\frac{\alpha}{s}}_{L^{\frac{2(qs-\alpha)}{2s-\alpha}}(\Omega)}.
\end{aligned}
\end{equation}
From \eqref{eq_embdd}, we deduce that $u_n \to u$ in $L^q(\Omega,\delta^{-\alpha} )$. Therefore  $\Hbb (\Omega) \hookrightarrow \hookrightarrow L^q(\Omega,\delta^{-\alpha}  )$. 
\end{proof}

\begin{proposition}\label{prop_compactembd} Assume that $a \in L^r(\Omega,\delta^{\gamma} )$ is a positive function with $r > \frac{N+\gamma}{2s}$. Then $\Hbb(\Omega) \hookrightarrow \hookrightarrow L^2(\Omega,a )$.
\end{proposition}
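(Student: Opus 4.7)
The plan is to combine H\"older's inequality (to absorb the weight $a$) with the interpolation embedding of Lemma \ref{lem_interpolation} (to handle the resulting negative power of $\delta$). The key identification is the pair of parameters
\[
\alpha := \frac{\gamma}{r-1}, \qquad q := \frac{2r}{r-1} = 2r',
\]
which are tailored so that H\"older with conjugate exponents $r,r'$ gives, for any $u \in \Hbb(\Omega)$,
\begin{equation}\label{eq:Holderpivot}
\int_{\Omega} |u|^2 a\,dx \;=\; \int_{\Omega} \bigl(a\,\delta^{\gamma/r}\bigr)\bigl(|u|^2 \delta^{-\gamma/r}\bigr)\,dx
\;\le\; \| a \|_{L^r(\Omega,\delta^\gamma)} \,\| u \|_{L^{q}(\Omega,\delta^{-\alpha})}^{2}.
\end{equation}

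The main step is to verify that the admissibility conditions of Lemma \ref{lem_interpolation} hold for this choice, with strict inequality on the upper end so that the compact embedding applies. I will check:
\begin{itemize}
\item $\alpha \in [0,2s)$: since $r > \frac{N+\gamma}{2s} > 1 + \frac{\gamma}{2s}$ (using $N>2s$), one has $r-1>\gamma/(2s)$, hence $\alpha<2s$; and $\alpha\ge 0$ since $\gamma>0$.
\item $q \ge 1 + \alpha/(2s)$: this reduces to $2s(r+1)\ge \gamma$, which is immediate from $r > \frac{N+\gamma}{2s}$ and $N>2s$.
\item $q < \frac{2N-2\alpha}{N-2s}$: a short algebraic manipulation shows that this is equivalent to $2sr > N+\gamma$, which is exactly the hypothesis $r>\frac{N+\gamma}{2s}$.
\end{itemize}

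With these conditions satisfied (strictly at the upper endpoint), Lemma \ref{lem_interpolation} yields the compact embedding
\[
\Hbb(\Omega) \hookrightarrow \hookrightarrow L^{q}(\Omega,\delta^{-\alpha}).
\]
Now let $\{u_n\}\subset \Hbb(\Omega)$ be bounded. By reflexivity of $\Hbb(\Omega)$ and the above compactness, we may extract a subsequence with $u_n \rightharpoonup u$ in $\Hbb(\Omega)$ and $u_n \to u$ in $L^{q}(\Omega,\delta^{-\alpha})$. Applying \eqref{eq:Holderpivot} to $u_n-u$ gives
\[
\int_{\Omega} |u_n-u|^2 a\,dx \;\le\; \| a \|_{L^r(\Omega,\delta^\gamma)} \,\| u_n-u \|_{L^{q}(\Omega,\delta^{-\alpha})}^{2} \;\longrightarrow\; 0,
\]
so that $u_n \to u$ in $L^2(\Omega,a)$. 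This proves the compact embedding.

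The only delicate point is calibrating $(\alpha,q)$ so that the H\"older step produces a weighted $L^q$ norm of $u$ that falls strictly inside the compact range of Lemma \ref{lem_interpolation}; the strict inequality $r>\frac{N+\gamma}{2s}$ is precisely what provides this slack and is the critical hypothesis. The continuous embedding $\Hbb(\Omega)\hookrightarrow L^2(\Omega,a)$ is a byproduct of \eqref{eq:Holderpivot} combined with the continuous part of Lemma \ref{lem_interpolation}.
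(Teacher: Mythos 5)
Your proof is correct and matches the paper's argument essentially verbatim: the same choice $\alpha = \gamma/(r-1)$ and $q = 2r'$, the same H\"older step splitting the weight as $a\delta^{\gamma/r}\cdot|u|^2\delta^{-\gamma/r}$, and the same appeal to the compact part of Lemma \ref{lem_interpolation} with the strict upper-end inequality secured by $r>\frac{N+\gamma}{2s}$. Your explicit verification of the three admissibility conditions is accurate, and the sequence-extraction argument at the end is a slightly more spelled-out version of what the paper compresses into the composition $\Hbb(\Omega)\hookrightarrow\hookrightarrow L^{2r'}(\Omega,\delta^{-\alpha})\hookrightarrow L^2(\Omega,a)$.
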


\begin{proof} Put $\alpha = \frac{\gamma}{r-1}$. Since $r > \frac{N+\gamma}{2s} \ge 1+\frac{\gamma}{2s}$, it follows that
$$ 1+\frac{\alpha}{2s} \le  2r'  = \frac{2r}{r-1} < \frac{2(N-\alpha)}{N-2s} \text{ and }\alpha < 2s. $$
By Lemma \ref{lem_interpolation}, $\Hbb(\Omega) \hookrightarrow \hookrightarrow L^{2r'} (\Omega, \delta^{-\alpha} )$ and  estimate \eqref{embed1} holds with $q=2r'$, namely
\begin{equation} \label{embed1a} \| u \|_{L^{2r'}(\Omega,\delta^{-\alpha})}  \lesssim \norm{u}_{\Hbb(\Omega)}, \quad \forall \,u \in \Hbb(\Omega).
\end{equation}
Using H\"older's inequality, one has
$$ 
\int_{\Omega} a |u|^2  dx = \int_{\Omega} a\delta^{\frac{\gamma}{r}} |u|^2\delta^{-\frac{\gamma}{r}} dx  \leq \left( \int_{\Omega} a^r \delta^{\gamma} dx\right)^{\frac{1}{r}} \left( \int_{\Omega} |u|^{2r'}\delta^{-\alpha} dx \right)^{\frac{1}{r'}},
$$
for any $u \in L^{2r'}(\Omega,\delta^{-\alpha})$. Equivalently,
\begin{equation} \label{eq-weight-1}
\| u \|_{L^2(\Omega,a)} \leq \| a \|_{L^r(\Omega,\delta^\gamma )}^{\frac{1}{2}} \| u \|_{L^{2r'}(\Omega,\delta^{-\alpha})} .
\end{equation}
Combining \eqref{embed1a} and \eqref{eq-weight-1} yields
\begin{equation} \label{embed-2} \| u \|_{L^2(\Omega,a)} \lesssim \| a \|_{L^r(\Omega,\delta^\gamma )}^{\frac{1}{2}} \| u \|_{\Hbb(\Omega)}, \quad \forall \, u \in \Hbb(\Omega).
\end{equation}
Moreover, from Lemma \ref{lem_interpolation} and \eqref{eq-weight-1}, we obtain $\Hbb(\Omega) \hookrightarrow \hookrightarrow L^{2r'} (\Omega, \delta^{-\alpha} ) \hookrightarrow L^2(\Omega, a )$. This implies that $\Hbb(\Omega) \hookrightarrow \hookrightarrow L^2(\Omega,a )$. We complete the proof.
\end{proof}

\subsection{Eigenproblems} We consider the weighted eigenproblem
\begin{equation}\label{eq_eigenvalue} 
\left\{ \begin{aligned} 
\L u &= \sigma a u &&\text{ in }\Omega, \\
u  &= 0 &&\text{ on }\partial \Omega, \\
u &=0 &&\text{ in }\R^N \backslash \overline{\Omega} \text{ (if applicable)},
\end{aligned} \right. 
\end{equation}
where $a \in L^r(\Omega,\delta^{\gamma} )$, $r > \frac{N+\gamma}{2s}$, is a positive function.
This problem has been studied in \cite{ChaGomVaz_2019_2020} in the case $a = 1$. In this case, one has $\varphi \in \delta^{\gamma}L^{\infty}(\Omega)$ \cite[Proposition 3.7]{ChaGomVaz_2019_2020}, where $\varphi$ is an arbitrary eigenfunction of $\L$. Furthermore,  the first eigenvalue and the corresponding eigenfunction of $\L$ are characterized by (see \cite[Proposition 5.1]{BonFigVaz_2018})
$$0 < \lambda_1 = \inf_{ u \in L^2(\Omega) \setminus \{0\} } \frac{\displaystyle \int_{\Omega}u^2 dx}{\displaystyle  \int_{\Omega} u\Gbb^{\Omega}[u]dx} = \frac{\displaystyle  \int_{\Omega} \varphi_1^2 dx }{\displaystyle \int_{\Omega} \varphi_1\Gbb^{\Omega}[\varphi_1]dx}$$
with $\varphi_1 \sim \delta^{\gamma}$ in $\Omega$.

In our present work, we consider the eigenproblem in a framework involving spaces $\Hbb(\Omega)$ (cf. space $\Hbb^1_{\mathcal{L}}(\Omega)$  in \cite{ChaGomVaz_2019_2020}). More precisely, a nonzero function $\varphi \in \Hbb(\Omega)$ is an eigenfunction associated to an eigenvalue $\sigma$ if
\begin{equation}\label{eq_variationaleigen0}
\inner{\varphi,\xi}_{\Hbb(\Omega)} = \sigma \int_{\Omega} a \varphi \xi dx, \quad \forall \xi \in \Hbb(\Omega).
\end{equation}

\begin{remark}\label{rmk_welldefined}
We note that the right-hand side of \eqref{eq_variationaleigen0} is well-defined by Proposition \ref{prop_compactembd}. Furthermore, it can easily be seen that if $\varphi$ satisfies \eqref{eq_variationaleigen0}, then $\varphi = \sigma \Gbb^{\Omega}[ a\varphi]$. Indeed, for a function $\xi \in C^{\infty}_c(\Omega)$, choosing $\Gbb^{\Omega} [\xi] \in \Hbb(\Omega)$ as a test function in \eqref{eq_variationaleigen0}, one has
$$\inner{\varphi, \Gbb^{\Omega} [\xi]}_{\Hbb(\Omega)} = \sigma \int_{\Omega} a \varphi \Gbb^{\Omega} [\xi] dx.$$
By noticing that
$$\inner{\varphi, \Gbb^{\Omega} [\xi]}_{\Hbb(\Omega)}  = \int_{\Omega} \varphi \xi dx \text{ and } \int_{\Omega} a \varphi \Gbb^{\Omega} [\xi] dx = \int_{\Omega} \Gbb^{\Omega}[a\varphi] \xi dx,$$
we conclude that
$$ \int_{\Omega} \left(\varphi - \sigma \Gbb^{\Omega}[a\varphi] \right)\xi dx,\quad \forall \xi \in C^{\infty}_c(\Omega),$$
from which we have $\varphi = \sigma \Gbb^{\Omega}[a\varphi]$.
\end{remark}

\begin{proposition}\label{prop_eigenproblem} Let $0< a \in L^r(\Omega,\delta^{\gamma} )$ with $r > \frac{N+\gamma}{2s}$. Then the eigenproblem
\begin{equation} \label{weighteigen} \sigma_1^a:  = \inf_{ \phi \in \Hbb(\Omega) \setminus \{0\}} \frac{\norm{\phi}^2_{\Hbb(\Omega)}}{\norm{\phi}^2_{L^2(\Omega, a )}}
\end{equation}
 admits a positive minimizer $\varphi_1^a$ in $\Hbb(\Omega)$ with normalization $\| \varphi_1^a \|_{L^2(\Omega,a)}=1$. Therefore $\sigma_1^a$ is the first eigenvalue of $\L$ in $\Hbb(\Omega)$ with respect to the weight $a$ and $\varphi_1^a$ is a corresponding eigenfunction. The eigenpair $(\sigma_1^a,\varphi_1^a)$ solves \eqref{eq_eigenvalue} in the sense 
\begin{equation} \label{weighteigen-1}
\langle \varphi_1^a, \xi \rangle_{\Hbb(\Omega)} = \sigma_1^a \int_{\Omega} a \varphi_1^a \xi dx, \quad \forall \xi \in \Hbb(\Omega).
\end{equation}
\end{proposition}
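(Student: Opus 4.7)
The plan is to prove Proposition~\ref{prop_eigenproblem} by a standard direct method of the calculus of variations, leveraging the compact embedding $\Hbb(\Omega) \hookrightarrow\hookrightarrow L^2(\Omega,a)$ established in Proposition~\ref{prop_compactembd}. First, I would observe that $\sigma_1^a > 0$: from Proposition~\ref{prop_compactembd}, there is a constant $C>0$ such that $\|\phi\|_{L^2(\Omega,a)}^2 \le C\|\phi\|_{\Hbb(\Omega)}^2$ for all $\phi \in \Hbb(\Omega)$, which gives $\sigma_1^a \ge 1/C > 0$.

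Next, take a minimizing sequence $\{\phi_n\} \subset \Hbb(\Omega)$, normalized so that $\|\phi_n\|_{L^2(\Omega,a)} = 1$ and $\|\phi_n\|_{\Hbb(\Omega)}^2 \to \sigma_1^a$. Since $\{\phi_n\}$ is bounded in the Hilbert space $\Hbb(\Omega)$, passing to a subsequence we may assume $\phi_n \rightharpoonup \varphi_1^a$ weakly in $\Hbb(\Omega)$. The compact embedding $\Hbb(\Omega) \hookrightarrow\hookrightarrow L^2(\Omega,a)$ yields $\phi_n \to \varphi_1^a$ strongly in $L^2(\Omega,a)$, so in particular $\|\varphi_1^a\|_{L^2(\Omega,a)} = 1$ (hence $\varphi_1^a \neq 0$). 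Weak lower semicontinuity of $\|\cdot\|_{\Hbb(\Omega)}$ gives
\[
\sigma_1^a \le \frac{\|\varphi_1^a\|_{\Hbb(\Omega)}^2}{\|\varphi_1^a\|_{L^2(\Omega,a)}^2} = \|\varphi_1^a\|_{\Hbb(\Omega)}^2 \le \liminf_{n\to\infty} \|\phi_n\|_{\Hbb(\Omega)}^2 = \sigma_1^a,
\]
so $\varphi_1^a$ is a minimizer. The Lagrange multiplier identity \eqref{weighteigen-1} follows at once from differentiating the Rayleigh quotient along arbitrary directions $\xi \in \Hbb(\Omega)$.

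It remains to establish positivity, which I expect to be the most delicate point. Replacing $\varphi_1^a$ by $|\varphi_1^a|$ does not change $\|\cdot\|_{L^2(\Omega,a)}$, and by assumption~\eqref{eq_K4} the norm $\|\cdot\|_{\Hbb(\Omega)}$ is equivalent to $\|\cdot\|_{H^s_{00}(\Omega)}$, for which the standard pointwise inequality $\bigl||u(x)|-|u(y)|\bigr| \le |u(x)-u(y)|$ gives $\||\varphi_1^a|\|_{\Hbb(\Omega)} \le \|\varphi_1^a\|_{\Hbb(\Omega)}$. Hence $|\varphi_1^a|$ is also a minimizer, and we may assume $\varphi_1^a \ge 0$. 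To upgrade this to strict positivity, I would invoke Remark~\ref{rmk_welldefined}, which yields the integral representation
\[
\varphi_1^a = \sigma_1^a\, \Gbb^{\Omega}[a\varphi_1^a] \quad \text{a.e. in } \Omega.
\]
Since $a\varphi_1^a \ge 0$ is a nontrivial element of $\M^+(\Omega,\delta^\gamma)$ (as $a \in L^r(\Omega,\delta^\gamma)$ and $\varphi_1^a \in L^2(\Omega,a)$ is nonzero), the maximum principle in Lemma~\ref{lem_maximum} gives $\varphi_1^a > 0$ a.e. in $\Omega$. Finally, renormalizing so that $\|\varphi_1^a\|_{L^2(\Omega,a)} = 1$ yields the desired positive minimizer.
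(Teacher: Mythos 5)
Your proof follows essentially the same direct-method argument as the paper (minimizing sequence normalized in $L^2(\Omega,a)$, weak compactness in the Hilbert space $\Hbb(\Omega)$, strong convergence via the compact embedding of Proposition \ref{prop_compactembd}, weak lower semicontinuity of the norm), which is exactly what the paper does, citing \cite[Proposition~9]{SerVal_2013}. The paper, however, simply asserts ``by a standard argument'' that the minimizer can be chosen nonnegative, whereas you try to spell out this step and then to upgrade nonnegativity to strict positivity; the latter, via the representation $\varphi_1^a = \sigma_1^a \Gbb^{\Omega}[a\varphi_1^a]$ from Remark~\ref{rmk_welldefined} together with the strong maximum principle of Lemma~\ref{lem_maximum}, is correct and usefully fills in a detail the paper left implicit.

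The one point you should fix is the justification of $\||\varphi_1^a|\|_{\Hbb(\Omega)} \le \|\varphi_1^a\|_{\Hbb(\Omega)}$. You derive it from the lattice inequality in $H^s_{00}(\Omega)$ plus the \emph{equivalence} of the norms $\|\cdot\|_{\Hbb(\Omega)}$ and $\|\cdot\|_{H^s_{00}(\Omega)}$ guaranteed by \eqref{eq_K4}. But equivalence of norms does not transfer a lattice (contractivity) inequality: from $c\|u\|_{H^s_{00}} \le \|u\|_{\Hbb} \le C\|u\|_{H^s_{00}}$ and $\||u|\|_{H^s_{00}} \le \|u\|_{H^s_{00}}$ you only obtain $\||u|\|_{\Hbb} \le (C/c)\|u\|_{\Hbb}$, which is useless for comparing Rayleigh quotients. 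What you actually need is that the quadratic form $\mathcal{B}$ of $\L$ satisfies the Markovian (Beurling--Deny) property $\mathcal{B}(|u|,|u|) \le \mathcal{B}(u,u)$, i.e.\ that $(\mathcal{B},\Hbb(\Omega))$ is a Dirichlet form. This holds for every concrete operator considered in the paper (RFL, SFL, CFL, Hardy-perturbed Laplacian, relativistic Schr\"odinger, etc., whose forms all have the Stampacchia-type structure), but it is not literally one of the abstract hypotheses \eqref{eq_K1}--\eqref{eq_K5}, and cannot be recovered from \eqref{eq_K4} alone. You should either invoke the Markovianity of $\mathcal{B}$ directly (or note that it follows from the positivity of the Green kernel, which implies the associated semigroup is positivity-preserving, hence the form is Dirichlet by standard Dirichlet-form theory), or replace this step with a Krein--Rutman/Perron--Frobenius argument using the compact positive operator $u\mapsto \Gbb^{\Omega}[au]$.
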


\begin{proof}The proof is standard and similar to that in \cite[Proposition 9]{SerVal_2013}. It is easy to see that $0 \leq \sigma_1^a < \infty$. Moreover, from \eqref{embed-2}, there exists a constant $C > 0$ depending on $\|a\|_{L^r(\Omega,\delta^\gamma)}$, such that
$$
\|u\|_{L^2(\Omega,a)}^2 \leq C \| u \|_{\Hbb(\Omega)}^2 \quad \forall \, u \in \Hbb(\Omega).
$$
Without loss of generality, we may assume that there exists a sequence $\{ u_n \} \subset \Hbb(\Omega)$ such that $\norm{u_n}_{L^2(\Omega,a )} = 1$ and $\norm{u_n}_{\Hbb(\Omega)} \to \sigma_1^a$. Since $\Hbb(\Omega)$ is reflexive, there exists a function $\varphi_1^a \in \Hbb(\Omega)$ such that $u_n \rightharpoonup \varphi_1^a$ in $\Hbb(\Omega)$. Since the embedding $\Hbb(\Omega) \hookrightarrow L^2(\Omega, a )$ is compact (Proposition \ref{prop_compactembd}), $u_n \to \varphi_1^a$ in $L^2(\Omega, a )$. Thus,
\begin{align*}
\sigma_1^a \leq \| \varphi_1^a \|_{\Hbb(\Omega)} \leq \liminf_{n \to \infty}\| u_n \|_{\Hbb(\Omega)} = \sigma_1^a,
\end{align*}
which, together with the fact that $\| \varphi_1^a \|_{L^2(\Omega,a)}=1$, implies that $\varphi_1^a$ is a minimizer of \eqref{weighteigen}. By a standard argument, we can show that $\varphi_1^a$ can be chosen to be nonnegative and $\varphi_1^a$ solves \eqref{eq_eigenvalue} in the sense of \eqref{weighteigen-1}. The proof is complete.
\end{proof}

\begin{lemma}\label{lem_uniquesolution} Let $0 \leq a \in L^r(\Omega,\delta^{\gamma} )$ with $r > \frac{N+\gamma}{2s}$ and $\sigma_1^a$ be the first eigenvalue defined by \eqref{weighteigen}. Assume that $\sigma_1^a >1$.  Then for any $f \in \Hbb^{-1}(\Omega)$, the problem 
\begin{equation}\label{eq_eigenvalue3}
\left\{ \begin{aligned}
 \L u &= a u + f &&\text{ in }\Omega, \\
u  &= 0 &&\text{ on }\partial \Omega, \\
u &=0 &&\text{ in }\R^N \backslash \overline{\Omega} \text{ (if applicable)},
\end{aligned} \right. 
\end{equation}
has a unique variational solution $u \in \Hbb(\Omega)$.
\end{lemma}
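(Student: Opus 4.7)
The plan is to recast \eqref{eq_eigenvalue3} as an abstract bilinear equation on $\Hbb(\Omega)$ and apply the Lax--Milgram theorem. Define the bilinear form $B:\Hbb(\Omega)\times\Hbb(\Omega)\to\R$ by
\[
B(u,\xi):=\langle u,\xi\rangle_{\Hbb(\Omega)}-\int_\Omega a\,u\,\xi\,dx,
\]
so that a variational solution of \eqref{eq_eigenvalue3} is precisely a $u\in\Hbb(\Omega)$ satisfying $B(u,\xi)=\langle f,\xi\rangle_{\Hbb^{-1},\Hbb}$ for all $\xi\in\Hbb(\Omega)$.

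First I would check that $B$ is well defined and continuous on $\Hbb(\Omega)\times\Hbb(\Omega)$. The first term is automatically continuous since it is the inner product. For the second term, Proposition \ref{prop_compactembd} gives the continuous (in fact compact) embedding $\Hbb(\Omega)\hookrightarrow L^2(\Omega,a)$ with an embedding constant depending only on $\|a\|_{L^r(\Omega,\delta^\gamma)}$, so by Cauchy--Schwarz in $L^2(\Omega,a)$,
\[
\Bigl|\int_\Omega a u\xi\,dx\Bigr|\leq \|u\|_{L^2(\Omega,a)}\|\xi\|_{L^2(\Omega,a)}\lesssim \|u\|_{\Hbb(\Omega)}\|\xi\|_{\Hbb(\Omega)}.
\]
Similarly, since $f\in\Hbb^{-1}(\Omega)$, the map $\xi\mapsto\langle f,\xi\rangle$ is a bounded linear functional on $\Hbb(\Omega)$.

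The heart of the argument is coercivity, which is exactly where the hypothesis $\sigma_1^a>1$ comes in. By the variational characterization \eqref{weighteigen},
\[
\int_\Omega a u^2\,dx=\|u\|_{L^2(\Omega,a)}^2\leq \frac{1}{\sigma_1^a}\|u\|_{\Hbb(\Omega)}^2,\qquad \forall\,u\in\Hbb(\Omega),
\]
hence
\[
B(u,u)=\|u\|_{\Hbb(\Omega)}^2-\int_\Omega a u^2\,dx\geq \Bigl(1-\tfrac{1}{\sigma_1^a}\Bigr)\|u\|_{\Hbb(\Omega)}^2,
\]
and the coefficient on the right is strictly positive by assumption. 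This yields coercivity.

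With $B$ continuous and coercive on the Hilbert space $\Hbb(\Omega)$ and $\xi\mapsto\langle f,\xi\rangle$ a continuous linear functional, the Lax--Milgram theorem produces a unique $u\in\Hbb(\Omega)$ with $B(u,\xi)=\langle f,\xi\rangle$ for every $\xi\in\Hbb(\Omega)$, which is the desired variational solution. The one delicate point worth double-checking is that the assumption $r>\tfrac{N+\gamma}{2s}$ is precisely what Proposition \ref{prop_compactembd} needs in order to make sense of the weighted term $\int_\Omega au\xi\,dx$ for arbitrary $u,\xi\in\Hbb(\Omega)$; apart from this embedding the argument is a purely abstract application of Lax--Milgram, and no subtler obstacle is expected.
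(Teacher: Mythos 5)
Your proposal is correct and follows essentially the same route as the paper: both define the bilinear form $B(u,\xi)=\langle u,\xi\rangle_{\Hbb(\Omega)}-\int_\Omega a u\xi\,dx$, verify continuity via the weighted embedding from Proposition \ref{prop_compactembd}, obtain coercivity from the variational characterization of $\sigma_1^a$ and the hypothesis $\sigma_1^a>1$, and conclude by Lax--Milgram. The only cosmetic difference is that you invoke Cauchy--Schwarz in $L^2(\Omega,a)$ while the paper phrases the same estimate via H\"older, which changes nothing.
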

\begin{proof}We define the bilinear form $A: \Hbb(\Omega) \times \Hbb(\Omega) \to \R$ by
$$A(u,v):= \inner{u,v}_{\Hbb(\Omega)}  - \int_{\Omega} a uv dx,\quad \forall u,v \in \Hbb(\Omega).$$
It can be seen that $A$ is continuous since
\begin{align*}
|A(u,v)|  \le |\inner{u,v}_{\Hbb(\Omega)}| + \int_{\Omega} a |uv| dx  
 \le (1+ C^2\norm{a}_{L^r(\Omega,\delta^{\gamma} )})\norm{u}_{\Hbb(\Omega)} \norm{v}_{\Hbb(\Omega)},
\end{align*}
by using H\"older's inequality and estimate \eqref{embed-2}. Furthermore, $A$ is coercive. Indeed, from Proposition \ref{prop_eigenproblem}, we see that
\begin{equation} \label{weightHar}
\sigma_1^a \| \xi \|_{L^2(\Omega,a)}^2 \leq \| \xi \|_{\Hbb(\Omega)}^2, \quad \forall \xi \in \Hbb(\Omega).
\end{equation}	
By \eqref{weightHar} and assumption $\sigma_1^a >1$, we have
$$A(u,u) = \norm{u}_{\Hbb(\Omega)}^2 - \int_{\Omega} au^2 dx \ge \left( 1 - \frac{1}{\sigma_1^a}\right)\norm{u}^2_{\Hbb(\Omega)}, \quad \forall u \in \Hbb(\Omega).$$
Invoking Lax--Milgram theorem, we conclude that for every $f \in \Hbb^{-1}(\Omega)$, there exists a unique $u \in \Hbb(\Omega)$ such that 
$$A(u,\xi ) = \inner{ f, \xi },\quad \forall \xi \in \Hbb(\Omega).$$
This implies that \eqref{eq_eigenvalue3} has a unique solution $u \in \Hbb(\Omega)$. We complete the proof.
\end{proof}

\subsection{Key regularity results}
\begin{lemma} \label{lem_bootstrap1}
Assume $0 \leq a \in L^{r}(\Omega,\delta^\gamma )$ with $r>\frac{N+\gamma}{2s}$. Then there exists a constant $\sigma = \sigma(N,s,\gamma,r) > 1$ such that, for every $\ell>1$, the linear operator 
$$ \begin{aligned} \T: L^\ell(\Omega,a \delta^{\gamma} ) &\longrightarrow L^{\ell \sigma} (\Omega,a \delta^{\gamma} ),   \\
f &\mapsto \Gbb^{\Omega}[af]
\end{aligned}
$$
is continuous. Moreover, there exists a constant $C = C(\Omega,N,s,\gamma,r) > 0$ such that
\begin{equation} \label{T-cont}  \|\T[f] \|_{L^{\ell\sigma}(\Omega,a\delta^\gamma )} \leq C\norm{a}_{L^r(\Omega,\delta^{\gamma} )}^{1-\frac{1}{\ell}+\frac{1}{\ell\sigma}} \| f \|_{L^\ell(\Omega,a\delta^\gamma )}, \quad \forall f \in L^\ell(\Omega,a\delta^\gamma ).
\end{equation}
\end{lemma}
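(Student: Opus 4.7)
The plan is to compose three estimates: a Hölder inequality placing $af$ in $L^m(\Omega,\delta^\gamma)$ for a carefully tuned exponent $m$, the Green-kernel regularization of Proposition~\ref{prop_higherregularity}, and a second Hölder inequality to return to the weighted space $L^{\ell\sigma}(\Omega, a\delta^\gamma)$. The central difficulty will be to choose $\sigma>1$ uniformly in $\ell>1$. The natural choice is
\[
\sigma := \frac{p^*}{r'} = \frac{(r-1)(N+\gamma)}{r(N+\gamma-2s)},
\]
where $p^* = (N+\gamma)/(N+\gamma-2s)$ is the critical exponent from \eqref{pgamma}. The hypothesis $r > (N+\gamma)/(2s)$ translates precisely into $\sigma > 1$. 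I would correspondingly set $\varrho := \ell\sigma r' = \ell p^*$ and
\[
m := \frac{\ell(N+\gamma)}{(N+\gamma)+2s(\ell-1)},
\]
so that the Sobolev-type identity $\tfrac{1}{\varrho} = \tfrac{1}{m} - \tfrac{2s}{N+\gamma}$ holds. Elementary algebra using $N+\gamma > 2s$ and $\ell > 1$ shows $m \in (1,(N+\gamma)/(2s))$ and, crucially, $m(\ell-1)/(\ell-m) = (N+\gamma)/(2s)$, a quantity \emph{independent} of $\ell$ and strictly less than $r$.

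\textbf{Execution.} For the first step I would decompose the integrand as
\[
a^m |f|^m \delta^\gamma = \bigl(a|f|^\ell \delta^\gamma\bigr)^{m/\ell} \cdot a^{m(\ell-1)/\ell}\, \delta^{\gamma(\ell-m)/\ell},
\]
apply Hölder with conjugate exponents $(\ell/m, \ell/(\ell-m))$, and then bound the remaining integral of $a^{m(\ell-1)/(\ell-m)}$ against $\delta^\gamma dx$ by one more Hölder step, using the identity $m(\ell-1)/(\ell-m) = (N+\gamma)/(2s) < r$. This will yield
\[
\|af\|_{L^m(\Omega,\delta^\gamma)} \leq C\, \|a\|_{L^r(\Omega,\delta^\gamma)}^{(\ell-1)/\ell}\, \|f\|_{L^\ell(\Omega, a\delta^\gamma)},
\]
with $C = C(\Omega,N,s,\gamma,r)$. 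The second step is a direct application of Proposition~\ref{prop_higherregularity}, giving $\|\Gbb^\Omega[af]\|_{L^\varrho(\Omega,\delta^\gamma)} \leq C \|af\|_{L^m(\Omega,\delta^\gamma)}$. The third step splits the weight $a\delta^\gamma = a\cdot\delta^\gamma$ and applies Hölder with exponents $(r, r')$:
\[
\|\Gbb^\Omega[af]\|_{L^{\ell\sigma}(\Omega,a\delta^\gamma)} \leq \|a\|_{L^r(\Omega,\delta^\gamma)}^{1/(\ell\sigma)}\, \|\Gbb^\Omega[af]\|_{L^{\ell\sigma r'}(\Omega,\delta^\gamma)}.
\]
Since $\ell\sigma r' = \varrho$, chaining the three bounds will produce exactly the exponent $1 - \tfrac{1}{\ell} + \tfrac{1}{\ell\sigma}$ on $\|a\|_{L^r(\Omega,\delta^\gamma)}$, which is \eqref{T-cont}.

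\textbf{Main obstacle.} The essential difficulty is the uniformity of $\sigma$ in $\ell$. The ``natural'' Hölder split---taking $m = r\ell/(r+\ell-1)$, the largest exponent for which the first step closes without waste---produces a gain factor $\sigma(\ell)$ that degenerates to $1$ as $\ell \searrow 1$. The remedy is the deliberately smaller $m$ above, tailored so that the Hölder exponent appearing on $a$ becomes the fixed number $(N+\gamma)/(2s)$, thereby decoupling $\sigma$ from $\ell$. Uniformity of the implicit constants as $\ell$ varies is then routine: $m$ remains in $(1,(N+\gamma)/(2s))$, and the constant in Proposition~\ref{prop_higherregularity}, being obtained from Riesz--Thorin interpolation between the fixed endpoint estimates of Corollary~\ref{cor:GLL}(i) and Proposition~\ref{prop_Linftyreg}, is uniformly bounded over that interval.
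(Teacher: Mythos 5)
Your route is genuinely different from the paper's. Where the paper applies H\"older's inequality directly with respect to the kernel measure $G^{\Omega}(x,y)a(y)dy$ and then invokes Fubini together with Corollary~\ref{G/delta}, you decompose the task as $L^\ell(\Omega,a\delta^\gamma)\to L^m(\Omega,\delta^\gamma)\xrightarrow{\ \Gbb^\Omega\ }L^\varrho(\Omega,\delta^\gamma)\to L^{\ell\sigma}(\Omega,a\delta^\gamma)$, with the middle arrow supplied by Proposition~\ref{prop_higherregularity}. The algebra is sound: your $m=\frac{\ell(N+\gamma)}{(N+\gamma)+2s(\ell-1)}$ does satisfy $m(\ell-1)/(\ell-m)=(N+\gamma)/(2s)$ and $1/(\ell p^*)=1/m-2s/(N+\gamma)$, the three H\"older splits are correct, and the exponent on $\|a\|_{L^r(\Omega,\delta^\gamma)}$ after chaining is indeed $1-1/\ell+1/(\ell\sigma)$.

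The gap is in your final paragraph, where you assert that the constant in Proposition~\ref{prop_higherregularity} is ``uniformly bounded'' as $m$ ranges over $(1,(N+\gamma)/(2s))$. It is not, and the failure occurs precisely where it matters: as $\ell\to\infty$ you have $m\to(N+\gamma)/(2s)$, and if the operator norm of $\Gbb^\Omega:L^m(\Omega,\delta^\gamma)\to L^\varrho(\Omega,\delta^\gamma)$ stayed bounded in that limit, then sending $\varrho\to\infty$ (using that $\delta^\gamma dx$ is a finite measure equivalent to Lebesgue on $\Omega$) would yield $\Gbb^\Omega:L^{(N+\gamma)/(2s)}(\Omega,\delta^\gamma)\to L^\infty(\Omega)$. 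That endpoint fails; in the proof of Proposition~\ref{prop_Linftyreg} the exponent $r'(N-2s+\gamma/r)$ equals $N$ exactly when $r=(N+\gamma)/(2s)$, so the relevant integral diverges. The appeal to Riesz--Thorin between ``fixed endpoints'' does not save this: interpolating between a fixed $L^1\to L^{q_0}$ with $q_0<p^*$ and a fixed $L^{r_0}\to L^\infty$ with $r_0>(N+\gamma)/(2s)$ gives a Lebesgue relation strictly weaker than the Sobolev scaling $1/\varrho=1/m-2s/(N+\gamma)$, and to attain that scaling for $m$ close to $(N+\gamma)/(2s)$ you must push $r_0$ toward the endpoint, where the Proposition~\ref{prop_Linftyreg} constant blows up. Since Lemma~\ref{lem_bootstrap1} is invoked in the bootstrap of Proposition~\ref{prop_bootstrapsequence} precisely along the sequence $\ell_{n+1}=\ell_n\sigma\to\infty$, this is not a cosmetic point: the lemma's claim that $C$ depends only on $(\Omega,N,s,\gamma,r)$ and not on $\ell$ is the content you have not justified. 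The paper's own proof avoids this by choosing $\sigma$ strictly inside $(1,p^*/r')$ and applying Corollary~\ref{G/delta} at the fixed exponent $\sigma r'<p^*$; the resulting constant has the form $C_1^{(\ell-1)/\ell}C_3^{1/(\ell\sigma)}$, which is manifestly bounded uniformly in $\ell>1$.
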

The following H\"older's inequality will be used several times in the proof of the lemma.
\begin{equation}\label{eq:Holder}
\left(\int_{\Omega} |v| d\tau \right)^q \le \left( \int_{\Omega} |v|^q d\tau \right)\left(\int_{\Omega} d\tau \right)^{q-1},
\end{equation}
for $q \ge 1$ and $\tau$ is a positive bounded measure.

\begin{proof} We will prove \eqref{T-cont}, which implies the continuity of the linear operator $\T$. Since $r  > \frac{N+\gamma}{2s}$, $r' < \frac{N+\gamma}{N+\gamma-2s} = p^*$. Choose $\sigma \in \left(1, \frac{p^*}{r'}\right)$. Recall from \eqref{eq_GLinfty} that
\begin{equation}\label{eq_Gaest}
\Gbb^{\Omega}[a](x) =\int_{\Omega} G^{\Omega}(x,y)a(y) dy < C_1(\Omega,N,s,\gamma)\norm{a}_{L^r(\Omega,\delta^{\gamma} )}.
\end{equation}
Using H\"older's inequality \eqref{eq:Holder} with $q=\ell$ and $d\tau = G^{\Omega}(x,y)a(y)dy$,  and \eqref{eq_Gaest}, we have
\begin{align*}
 |\T[f](x)|^{\ell\sigma} & \leq \left(\int_{\Omega} G^{\Omega}(x,y)|f(y)|a(y) dy\right)^{\ell\sigma}  \\
&\le C_2 \left( \int_{\Omega} G^{\Omega}(x,y)a(y) dy \right)^{(\ell-1)\sigma}\left(\int_{\Omega} G^{\Omega}(x,y) |f(y)|^\ell a(y) dy\right)^{\sigma} \\
&\le C_2\norm{a}_{L^r(\Omega,\delta^{\gamma} )}^{\sigma(\ell-1)} \left(\int_{\Omega} G^{\Omega}(x,y) |f(y)|^\ell a(y) dy\right)^{\sigma},
\end{align*}
where $C_2 = C_1^{\sigma(\ell-1)}$ with $C_1$ being the constant in \eqref{eq_Gaest}. Then applying again H\"older's inequality \eqref{eq:Holder} with $q=\sigma$ and $d\tau  = |f(y)|^\ell a(y)\delta(y)^{\gamma}dy$, we obtain
\begin{equation}\label{eq:keyestimate}
\begin{aligned}
 |\T[f](x)|^{\ell\sigma} &\le C_2\norm{a}_{L^r(\Omega,\delta^{\gamma} )}^{\sigma(\ell-1)} \left(\int_{\Omega} \dfrac{G^{\Omega}(x,y)}{\delta(y)^{\gamma}} |f(y)|^\ell a(y)\delta(y)^{\gamma} dy\right)^{\sigma} \\
&\le C_2 \norm{a}_{L^r(\Omega,\delta^{\gamma} )}^{\sigma(\ell-1)} \| f \|_{L^\ell(\Omega,a\delta^\gamma )}^{(\sigma-1)\ell}
\int_{\Omega}  \left(\dfrac{G^{\Omega}(x,y)}{\delta(y)^{\gamma}}\right)^{\sigma} |f(y)|^\ell a(y)\delta(y)^{\gamma} dy.
\end{aligned}
\end{equation}
Set
$$M_{a,f}:= \norm{a}_{L^r(\Omega,\delta^{\gamma} )}^{\sigma(\ell-1)} \| f \|_{L^\ell(\Omega,a\delta^\gamma )}^{(\sigma-1)\ell}.$$
By Fubini's theorem and \eqref{eq:keyestimate}, we have
\begin{align*}
\int_{\Omega} |\T[f](x)|^{\ell\sigma} a(x) \delta(x)^{\gamma} dx &\le C_2 M_{a,f} \int_{\Omega}  \int_{\Omega}  \left(\dfrac{G^{\Omega}(x,y)}{\delta(y)^{\gamma}}  \right)^{\sigma} |f(y)|^\ell a(y)\delta(y)^{\gamma}dy \, a(x) \delta(x)^{\gamma} dx \\
&=C_2 M_{a,f}\int_{\Omega} \left(\int_{\Omega} \left(\dfrac{G^{\Omega}(x,y)}{\delta(y)^{\gamma}}  \right)^{\sigma} a(x)\delta(x)^{\gamma} dx \right) |f(y)|^\ell a(y)\delta(y)^{\gamma} dy.
\end{align*}
Using H\"older's inequality again, we see that
\begin{equation}\label{eq_bootstrap2}
I(y) := \int_{\Omega} \left(\dfrac{G^{\Omega}(x,y)}{\delta(y)^{\gamma}}\right)^{\sigma} a(x)\delta(x)^{\gamma} dx \le  \norm{a}_{L^r(\Omega,\delta^{\gamma} )} \left(\int_{\Omega} \left(\dfrac{G^{\Omega}(x,y)}{\delta(y)^{\gamma}}\right)^{\sigma r'} \delta(x)^{\gamma} dx \right)^{\frac{1}{r'}}. 
\end{equation}
This estimate and Corollary \ref{G/delta} (notice that $\sigma r' < p^*$) yield 
\begin{equation} \label{Iest} I(y)  \le  \norm{a}_{L^r(\Omega,\delta^{\gamma} )} \left(\int_{\Omega} \left(\dfrac{G^{\Omega}(x,y)}{\delta(y)^{\gamma}}\right)^{\sigma r'} \delta(x)^{\gamma} dx \right)^{\frac{1}{r'}} \le C_3\norm{a}_{L^r(\Omega,\delta^{\gamma} )}.
\end{equation}
Using estimate \eqref{Iest} and taking into account the definition of $M_{a,f}$, we derive
$$  \int_{\Omega} |\T[f](x)|^{\ell\sigma} a(x) \delta(x)^{\gamma} dx \le C_2C_3 \norm{a}_{L^r(\Omega,\delta^{\gamma} )}^{\sigma(\ell-1)+1} \| f \|_{L^\ell(\Omega,a\delta^\gamma )}^{\ell\sigma}.$$
This implies \eqref{T-cont} with the constant $C=C(N,\Omega,s,\gamma,r)$.
\end{proof}

\begin{proposition}\label{prop_bootstrapsequence} Assume  $0 \leq a \in L^{r}(\Omega,\delta^\gamma )$ with $r>\frac{N+\gamma}{2s}$ and  $u \in L^\ell(\Omega,a\delta^{\gamma} )$ with $\ell > 1$. Let $\{u_n\}$ be the sequences given by
$$ u_0 =u, \quad u_{n+1} = \Gbb^{\Omega}[a u_n], \quad  \forall n\in \mathbb{N}.$$
Then there exists $n_0 \in \N$ such that $u_{n_0} \in L^{\infty}(\Omega)$. Moreover, 
$$ \|u_{n_0}\|_{L^\infty(\Omega)} \leq C_{n_0}\norm{a}_{L^r(\Omega,\delta^{\gamma} )}^{n_0 - \frac{1}{\ell}} \| u \|_{L^\ell(\Omega,a\delta^\gamma )}$$
for some constant $C_{n_0} = C_{n_0}(\Omega,N,s,\gamma,r) > 0$.
\end{proposition}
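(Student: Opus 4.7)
The strategy is a two-stage bootstrap: first iterate Lemma~\ref{lem_bootstrap1} to push $u_n$ into weighted $L^q$ spaces of arbitrarily large exponent, and then perform one last step that upgrades to $L^\infty(\Omega)$ via Proposition~\ref{prop_Linftyreg}.

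\textbf{Stage 1: Iterating the smoothing estimate.} Setting $\T[f]:=\Gbb^\Omega[af]$, Lemma~\ref{lem_bootstrap1} furnishes some $\sigma=\sigma(N,s,\gamma,r)>1$ such that $\T$ maps $L^{\ell}(\Omega,a\delta^\gamma)$ continuously into $L^{\ell\sigma}(\Omega,a\delta^\gamma)$. Iterating, $u_n=\T^n[u]\in L^{\ell\sigma^n}(\Omega,a\delta^\gamma)$ for every $n\in\N$, with
\[
\|u_n\|_{L^{\ell\sigma^n}(\Omega,a\delta^\gamma)}\le C^{\,n}\,\|a\|_{L^r(\Omega,\delta^\gamma)}^{\,\beta_n}\,\|u\|_{L^\ell(\Omega,a\delta^\gamma)},
\]
for an explicit sequence of exponents $\beta_n$ obtained by chaining the estimate \eqref{T-cont}. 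Since $\sigma>1$, the exponents $\ell\sigma^n$ grow to infinity.

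\textbf{Stage 2: Transition to $L^\infty$.} To reach $L^\infty(\Omega)$ I will invoke Proposition~\ref{prop_Linftyreg}, which requires $a u_{n_0-1}\in L^m(\Omega,\delta^\gamma)$ for some $m>\tfrac{N+\gamma}{2s}$. Writing $|au_{n_0-1}|^m\delta^\gamma=|a|^{m-1}|u_{n_0-1}|^m\cdot a\delta^\gamma$ and applying H\"older's inequality against the measure $a\delta^\gamma dx$ with exponents $(\alpha,\alpha')$, $\alpha'=\ell\sigma^{n_0-1}/m$, gives
\[
\int_\Omega |au_{n_0-1}|^m\delta^\gamma dx\le\Bigl(\int_\Omega |a|^{(m-1)\alpha+1}\delta^\gamma dx\Bigr)^{\!1/\alpha}\|u_{n_0-1}\|_{L^{\ell\sigma^{n_0-1}}(\Omega,a\delta^\gamma)}^{\,m}.
\]
Choosing $m$ so that $(m-1)\alpha+1=r$ yields $m=\dfrac{r\,\ell\sigma^{n_0-1}}{r+\ell\sigma^{n_0-1}-1}\nearrow r$ as $n_0\to\infty$. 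Since $r>\tfrac{N+\gamma}{2s}$, one can fix $n_0\in\N$ so that $m>\tfrac{N+\gamma}{2s}$, and then
\[
\|au_{n_0-1}\|_{L^m(\Omega,\delta^\gamma)}\lesssim\|a\|_{L^r(\Omega,\delta^\gamma)}^{\,(r-1)/m}\,\|u_{n_0-1}\|_{L^{\ell\sigma^{n_0-1}}(\Omega,a\delta^\gamma)}.
\]
Applying Proposition~\ref{prop_Linftyreg} to $u_{n_0}=\Gbb^\Omega[a u_{n_0-1}]$ delivers $u_{n_0}\in L^\infty(\Omega)$, and composing with the bound from Stage~1 yields the claimed inequality $\|u_{n_0}\|_{L^\infty(\Omega)}\le C_{n_0}\|a\|_{L^r(\Omega,\delta^\gamma)}^{\,n_0-1/\ell}\|u\|_{L^\ell(\Omega,a\delta^\gamma)}$ after a bookkeeping of the exponents of $\|a\|_{L^r}$.

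\textbf{Main obstacle.} The conceptual content lives entirely inside Lemma~\ref{lem_bootstrap1}; what is delicate in the present statement is bridging the weighted space $L^{\ell\sigma^n}(\Omega,a\delta^\gamma)$ back to the unweighted (in $a$) space $L^m(\Omega,\delta^\gamma)$ required by Proposition~\ref{prop_Linftyreg}. The H\"older manipulation above is the correct bridge, and solving $(m-1)\alpha+1=r$ is what forces the transition exponent $m$ to converge to $r$, thereby crossing the threshold $\tfrac{N+\gamma}{2s}$ after finitely many iterations. The final exponent $n_0-\tfrac{1}{\ell}$ on $\|a\|_{L^r(\Omega,\delta^\gamma)}$ arises by summing the exponents $1-\tfrac{1}{\ell\sigma^k}+\tfrac{1}{\ell\sigma^{k+1}}$ from \eqref{T-cont} across $k=0,\dots,n_0-2$ (producing a telescoping contribution $n_0-1-\tfrac{1}{\ell}+\tfrac{1}{\ell\sigma^{n_0-1}}$) and adding the contribution $\tfrac{r-1}{m}+1$ coming from Proposition~\ref{prop_Linftyreg} composed with H\"older; the crude bound $n_0-\tfrac{1}{\ell}$ absorbs all of this into the constant $C_{n_0}$.
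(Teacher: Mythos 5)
Your proof is correct and it takes a genuinely different route from the paper for the final step. Stage 1 is identical: iterating Lemma~\ref{lem_bootstrap1} to get $u_n\in L^{\ell\sigma^n}(\Omega,a\delta^\gamma)$ with the telescoping exponent $\sum_{k=0}^{n-1}(1-\tfrac{1}{\ell\sigma^k}+\tfrac{1}{\ell\sigma^{k+1}})=n-\tfrac{1}{\ell}+\tfrac{1}{\ell\sigma^n}$ on $\|a\|_{L^r(\Omega,\delta^\gamma)}$. For Stage 2, the paper does \emph{not} route through Proposition~\ref{prop_Linftyreg}; instead it writes $u_{n+1}(x)=\Gbb^{\Omega}[au_n](x)$, applies H\"older with exponents $(\ell_n,\ell_n')$ against $a\delta^\gamma dy$, then a second H\"older with exponents $(r,r')$ to peel off $a$, and invokes a dedicated ``Claim'' that $\int_\Omega G^\Omega(x,y)^{\alpha_1}\delta(y)^{-\alpha_2}\,dy$ is uniformly bounded provided $\alpha_1<\tfrac{N-\alpha_2}{N-2s}$ and $\alpha_2\le\gamma\alpha_1$; these conditions are then verified for $n$ large. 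Your bridge via $au_{n_0-1}\in L^m(\Omega,\delta^\gamma)$ with $m\nearrow r$ and Proposition~\ref{prop_Linftyreg} is more modular — it reuses an already-established mapping property of $\Gbb^\Omega$ rather than re-deriving a Green-kernel integrability claim — and it is a legitimate alternative, since the H\"older manipulation and the solvability of $(m-1)\alpha+1=r$ are exactly what you say.

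One correction on the bookkeeping, which you hedge unnecessarily. Your H\"older step gives $\|au_{n_0-1}\|_{L^m(\Omega,\delta^\gamma)}\le\|a\|_{L^r(\Omega,\delta^\gamma)}^{r/(\alpha m)}\|u_{n_0-1}\|_{L^{\ell\sigma^{n_0-1}}(\Omega,a\delta^\gamma)}$, and with $L:=\ell\sigma^{n_0-1}$, $m=\tfrac{rL}{r+L-1}$, $\alpha=\tfrac{r+L-1}{L-1}$ one computes $\alpha m=\tfrac{rL}{L-1}$, hence $\tfrac{r}{\alpha m}=1-\tfrac{1}{L}$. Proposition~\ref{prop_Linftyreg} contributes no further power of $\|a\|_{L^r}$. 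Adding Stage~1's contribution $n_0-1-\tfrac{1}{\ell}+\tfrac{1}{L}$ to $1-\tfrac{1}{L}$ gives \emph{exactly} $n_0-\tfrac{1}{\ell}$; there is nothing to absorb into $C_{n_0}$, and indeed there could not be, since $C_{n_0}$ is not permitted to depend on $\|a\|_{L^r}$. The exponent $\tfrac{r-1}{m}+1$ you quote is not the right count; replace it by $1-\tfrac{1}{\ell\sigma^{n_0-1}}$ and the argument closes cleanly.
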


\begin{proof} Let $\{\ell_n\}$ be a sequence given by $\ell_0 = \ell$ and $\ell_{n+1} = \ell_n \sigma$ for  $n \in \N$.	
By the argument as in Lemma \ref{lem_bootstrap1}, $u_n \in L^{q_n}(\Omega,a \delta^{\gamma} )$ for any $n \in \N$ and
$$ \| u_{n} \|_{L^{\ell_{n}}(\Omega,a\delta^\gamma )} \le \tilde C\norm{a}_{L^r(\Omega,\delta^{\gamma} )}^{1-\frac{1}{\ell_{n-1}}+\frac{1}{\ell_{n}}} \| u_{n-1} \|_{L^{\ell_{n-1}}(\Omega,a\delta^\gamma )} $$
for some $\tilde C = \tilde C(\Omega,N,s,\gamma,r)$. This implies
\begin{equation}
\| u_{n} \|_{L^{\ell_{n}}(\Omega,a\delta^\gamma )} \le {\tilde C}^{n}\norm{a}_{L^r(\Omega,\delta^{\gamma} )}^{n-\frac{1}{\ell}+\frac{1}{\ell_{n}}} \| u \|_{L^\ell(\Omega,a\delta^\gamma )}.
\end{equation}
Using H\"older's inequality, we have
\begin{equation} \label{conqn0} \begin{aligned}
|u_{n+1}(x)| &\leq \int_{\Omega}G^{\Omega}(x,y)a(y)|u_n(y)|dy \\ 
&\leq \| u_{n} \|_{L^{\ell_{n}}(\Omega,a\delta^\gamma )} \left( \int_{\Omega} G^{\Omega}(x,y)^{\ell_n'}a(y)\delta(y)^{-\frac{\ell_n' \gamma}{\ell_n}} dy  \right)^{\frac{1}{\ell_n'}} \\ 
&\leq {\tilde C}^{n}\norm{a}_{L^r(\Omega,\delta^{\gamma} )}^{n-\frac{1}{\ell}+\frac{1}{\ell_{n}}} \| u \|_{L^\ell(\Omega,a\delta^\gamma )} \left( \int_{\Omega} G^{\Omega}(x,y)^{\ell_n'}a(y) \delta(y)^{-\frac{\ell_n' \gamma}{\ell_n}} dy  \right)^{\frac{1}{\ell_n'}} \\
&\leq {\tilde C}^{n}\norm{a}_{L^r(\Omega,\delta^{\gamma} )}^{n+1 -\frac{1}{\ell}} \| u \|_{L^\ell(\Omega,a\delta^\gamma )} \left( \int_{\Omega} G^{\Omega}(x,y)^{\ell_n' r'} \delta(y)^{-\gamma r'(\frac{1}{r} + \frac{\ell_n'}{\ell_n})} dy  \right)^{\frac{1}{\ell_n'r'}}.
\end{aligned} \end{equation}

\textbf{Claim.} Let $\alpha_1,\alpha_2 \ge 0$ be such that $\alpha_1 < \frac{N-\alpha_2}{N-2s}$ and $\alpha_2 \le \gamma \alpha_1$. There holds
$$\int_{\Omega} G^{\Omega}(x,y)^{\alpha_1}\delta(y)^{-\alpha_2}dy < C(\Omega,N,s,\gamma, \alpha_1,\alpha_2).$$
Indeed, one has
$$G^{\Omega}(x,y) \le C\frac{\delta(y)^{\frac{\alpha_2}{\alpha_1}}}{|x-y|^{N-2s+\frac{\alpha_2}{\alpha_1}}},  \quad \forall x,y \in \Omega, x \neq y,$$
which, together with the fact that $\alpha_1(N-2s) + \alpha_2 < N$ due to $\alpha_1 < \frac{N-\alpha_2}{N-2s}$, implies
$$\int_{\Omega} G^{\Omega}(x,y)^{\alpha_1} \delta(y)^{-\alpha_2} dy \le C \int_{\Omega} \dfrac{1}{|x-y|^{\alpha_1(N-2s)+\alpha_2}}dy < C.$$

Using the above claim with $\alpha_1=\ell_n'r'$ and $\alpha_2=\gamma r'(\frac{1}{r} + \frac{\ell_n'}{\ell_n})$, to show that
$$\int_{\Omega} G^{\Omega} (x,y)^{\ell_n' r'} \delta(y)^{-\gamma r'(\frac{1}{r} + \frac{\ell_n'}{\ell_n})} dy  < \infty,$$
it is sufficient to check
\begin{equation} \label{conqn1} 
r' \left(\frac{1}{r} + \frac{\ell_n'}{\ell_n}\right) \le \ell_n' r' < \frac{N}{N-2s} - \frac{\gamma r'}{N-2s} \left(\frac{1}{r} + \frac{\ell_n'}{\ell_n}\right)
\end{equation}
for some $n$ large enough.
The inequalities are equivalent to
\begin{equation} \label{conqn2}
r > 1\quad \text{ and } \quad \frac{2sr - (N+\gamma)}{r-1} \ell_n > \gamma\left(r-\frac{1}{r-1}\right),
\end{equation}
respectively. Since $r > \frac{N+\gamma}{2s}$ and $\ell_n \to \infty$ as $n \to \infty$, there exists $n_0 \in \N$ large enough such that \eqref{conqn2} holds for $n=n_0$. This means \eqref{conqn1} is fulfilled for $n=n_0$. 

Using \eqref{conqn0} with $n=n_0$, we conclude that $u_{n_0+1} \in L^{\infty}(\Omega)$ with
$$|u_{n_0+1}(x)| \le C\tilde C^{n_0}\norm{a}_{L^r(\Omega,\delta^{\gamma} )}^{n_0 + 1-\frac{1}{\ell}} \| u \|_{L^\ell(\Omega,a\delta^\gamma )}.$$
We complete the proof.
\end{proof}

\begin{proposition}\label{prop_bootstrap2}
Let $0 \leq a \in L^{r}(\Omega,\delta^\gamma )$ with $r>\frac{N+\gamma}{2s}$ and $f \in L^m(\Omega,\delta^\gamma )$ with $m>\frac{N+\gamma}{2s}$. Assume $u$ is a weak-dual solution of \eqref{eq_eigenvalue3}.
Then $u \in L^\infty(\Omega)$. 
\end{proposition}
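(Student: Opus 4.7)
The plan is to combine the natural integral representation of the weak-dual solution with the bootstrap scheme of Proposition \ref{prop_bootstrapsequence}. Testing the weak-dual identity against $\Gbb^{\Omega}[\xi]$ for $\xi \in C_c^\infty(\Omega)$ and invoking Lemma \ref{lem:integrationbypart} delivers the pointwise representation
$$ u = \Gbb^{\Omega}[au] + \Gbb^{\Omega}[f] \quad \text{a.e. in } \Omega. $$
Since $m > \frac{N+\gamma}{2s}$, Proposition \ref{prop_Linftyreg} immediately yields $v := \Gbb^{\Omega}[f] \in L^\infty(\Omega)$, so the remaining work is entirely about $\Gbb^{\Omega}[au]$.

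Before invoking Proposition \ref{prop_bootstrapsequence} I must upgrade the integrability of $u$ to $L^\ell(\Omega,a\delta^\gamma)$ for some $\ell > 1$. The hypothesis $r > \frac{N+\gamma}{2s} = (p^*)'$ is equivalent to $r' < p^*$, so I can pick $\ell > 1$ with $\ell r' < p^*$. Since $u \in L^1(\Omega,\delta^\gamma)$ and $au + f \in \M(\Omega,\delta^\gamma)$ (indeed $\int_\Omega au\,\delta^\gamma dx \leq \|a\|_{L^r(\Omega,\delta^\gamma)}\|u\|_{L^{r'}(\Omega,\delta^\gamma)}$ by Hölder, while a preliminary Marcinkiewicz bootstrap guarantees $u \in L^{r'}(\Omega,\delta^\gamma)$), Proposition \ref{prop_marcinkiewicz} applied to the representation gives $u \in L^q(\Omega,\delta^\gamma)$ for every $q \in [1,p^*)$, and in particular for $q = \ell r'$. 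A second Hölder application,
$$ \int_\Omega |u|^\ell a\,\delta^\gamma dx \leq \|u\|_{L^{\ell r'}(\Omega,\delta^\gamma)}^\ell \|a\|_{L^r(\Omega,\delta^\gamma)} < \infty, $$
then produces $u \in L^\ell(\Omega, a\delta^\gamma)$.

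Introduce the linear operator $T\phi := \Gbb^{\Omega}[a\phi]$. Iterating $u = Tu + v$ gives
$$ u = T^n u + \sum_{k=0}^{n-1} T^k v \quad \text{for every } n \geq 1. $$
By Proposition \ref{prop_bootstrapsequence} applied to $u$ there exists $n_0 \in \mathbb{N}$ with $T^{n_0} u \in L^\infty(\Omega)$. For the remaining terms, estimate \eqref{eq_Gaest} yields $\|T\phi\|_{L^\infty(\Omega)} \leq C\|a\|_{L^r(\Omega,\delta^\gamma)} \|\phi\|_{L^\infty(\Omega)}$ whenever $\phi \in L^\infty(\Omega)$; together with $v \in L^\infty(\Omega)$ a trivial induction gives $T^k v \in L^\infty(\Omega)$ for every $k \geq 0$. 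Summing the two contributions yields $u \in L^\infty(\Omega)$.

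The main obstacle, and the only place where the estimates are genuinely delicate, is bridging the gap between the native integrability $u \in L^1(\Omega,\delta^\gamma)$ afforded by the weak-dual formulation and the hypothesis $u \in L^\ell(\Omega,a\delta^\gamma)$ with $\ell > 1$ demanded by the bootstrap. This is precisely where the sharp Marcinkiewicz regularization of Proposition \ref{prop_marcinkiewicz} and the strict inequality $r' < p^*$ (forced by $r > (p^*)'$) are used in a nontrivial way: without this gain one cannot even start the iteration.
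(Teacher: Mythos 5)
Your proof is correct and follows the same route as the paper: write $u = \Gbb^{\Omega}[au] + \Gbb^{\Omega}[f]$, iterate to get $u = T^{n}u + \sum_{k<n}T^{k}\Gbb^{\Omega}[f]$ with $T\phi := \Gbb^{\Omega}[a\phi]$, kill $T^{n_0}u$ via Proposition \ref{prop_bootstrapsequence}, and bound the finitely many remaining terms via the $L^\infty$-regularization of $\Gbb^{\Omega}$. You are more explicit than the paper in verifying the hypothesis $u \in L^{\ell}(\Omega,a\delta^{\gamma})$, which is a genuine (if routine) gap in the paper's write-up; however, your parenthetical justification of $au+f\in\M(\Omega,\delta^\gamma)$ is circular as stated, since the ``preliminary Marcinkiewicz bootstrap'' you invoke to get $u\in L^{r'}(\Omega,\delta^\gamma)$ itself presupposes $au+f\in\M(\Omega,\delta^\gamma)$. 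The fix is simply to observe that $au+f\in L^1(\Omega,\delta^\gamma)$ is already built into the notion of weak-dual solution (cf.\ Definition \ref{subweakdual}, where $g(\cdot,u)\in L^1(\Omega,\delta^\gamma)$ is required), after which a single application of Proposition \ref{prop_marcinkiewicz} to $u=\Gbb^{\Omega}[au+f]$ yields $u\in L^{q}(\Omega,\delta^\gamma)$ for every $q<p^*$, and the rest of your H\"older argument proceeds exactly as written.
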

\begin{proof}Since $u$ is a weak-dual solution of \eqref{eq_eigenvalue3}, we can write
$$u = \Gbb^{\Omega} [au]+\Gbb^{\Omega}[f] = \Gbb^{\Omega}[a \Gbb^{\Omega} [au]] + \Gbb^{\Omega}[a\Gbb^{\Omega}[f]] + \Gbb^{\Omega}[f]= \cdots.$$
In particular, we can write
$$u = v_n + w_1+\cdots + w_n, \quad n \in \mathbb{N},$$
where $v_0 = u$, $v_{n+1} = \Gbb^{\Omega}[a v_n]$, $n \in \mathbb{N}$ and $w_1 = \Gbb^{\Omega}[f], w_{n+1} = \Gbb^{\Omega}[aw_n]$, $n \in \mathbb{N}^*$.
It can be seen from estimate \eqref{eq_GLinfty} that $w_n \in L^{\infty}(\Omega)$ for all $n \in \mathbb{N}^*$ since
$$\norm{\Gbb^{\Omega}[f]}_{L^{\infty}(\Omega)} \le C \norm{f}_{L^m(\Omega,\delta^{\gamma} )}, \quad \forall f \in L^m(\Omega,\delta^{\gamma} ).$$
By Proposition \ref{prop_bootstrapsequence} there exists an $n_0$ such that $v_{n_0} \in L^{\infty}(\Omega)$. Thus $u \in L^{\infty}(\Omega)$.
\end{proof}

The next result provides a uniform upper bound for weak-dual solutions of \eqref{eq_nonlinear_source}.

\begin{proposition}\label{prop:boundonu} Assume that $u$ is a nonnegative weak-dual solution of \eqref{eq_nonlinear_source} for $\lambda > 0$. Then there exists a constant $C = C(\Omega,N,s,\gamma, \lambda_1)$ such that
\begin{equation}
\lambda \Gbb^{\Omega}[\mu] \le u \le C(\Gbb^{\Omega}[\mu] + 1) \text{ a.e. in }\Omega.
\end{equation}
\end{proposition}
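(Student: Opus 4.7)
The plan is to prove the two inequalities separately; the lower bound is immediate from the representation formula, while the upper bound will come from a finite bootstrap in weighted Lebesgue spaces which crucially uses the subcritical assumption $p<p^{*}$.

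\textbf{Lower bound.} Since $u$ is a weak-dual solution, the representation \eqref{repr2} gives $u=\Gbb^{\Omega}[u^{p}]+\lambda\Gbb^{\Omega}[\mu]$ a.e.~in $\Omega$. As $u\geq 0$, one has $u^{p}\geq 0$ and hence $\Gbb^{\Omega}[u^{p}]\geq 0$ by Lemma \ref{lem_maximum}, which yields $u\geq\lambda\Gbb^{\Omega}[\mu]$ a.e.

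\textbf{Upper bound via bootstrap.} The starting point is Remark \ref{rmk:uniformbound}, which provides a uniform bound $\|u\|_{L^{q}(\Omega,\delta^{\gamma})}\leq C$ for every $q\in[1,p^{*})$, where $C=C(N,\Omega,s,\gamma,p,\lambda_{1},q)$. Set $q^{*}:=(p-1)(N+\gamma)/(2s)$; since $p<p^{*}$ one checks $q^{*}<p^{*}$, so we may pick $q_{0}\in\bigl(\max(p,q^{*}),p^{*}\bigr)$. Recursively define $q_{k+1}$ by
\[
\frac{1}{q_{k+1}}=\frac{p}{q_{k}}-\frac{2s}{N+\gamma},
\]
as long as $q_{k}/p<(N+\gamma)/(2s)$. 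Writing $a_{k}:=1/q_{k}$ and $a^{*}:=2s/\bigl((p-1)(N+\gamma)\bigr)$, the recursion reads $a_{k+1}-a^{*}=p(a_{k}-a^{*})$; since $a_{0}<a^{*}$ (equivalently $q_{0}>q^{*}$), one has $a_{k}-a^{*}=p^{k}(a_{0}-a^{*})\to-\infty$ geometrically. Hence $\{q_{k}\}$ is strictly increasing and, in finitely many steps $k_{0}$, it crosses the threshold, i.e. $q_{k_{0}}/p>(N+\gamma)/(2s)$.

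At each intermediate step we have $u\in L^{q_{k}}(\Omega,\delta^{\gamma})$, hence $u^{p}\in L^{q_{k}/p}(\Omega,\delta^{\gamma})$ with $q_{k}/p\in\bigl(1,(N+\gamma)/(2s)\bigr)$, so Proposition \ref{prop_higherregularity} gives $\Gbb^{\Omega}[u^{p}]\in L^{q_{k+1}}(\Omega,\delta^{\gamma})$. Since $\Gbb^{\Omega}[\mu]\in L^{q}(\Omega,\delta^{\gamma})$ for every $q\in[1,p^{*})$ by Corollary \ref{cor:GLL}, the representation \eqref{repr2} propagates the integrability to $u\in L^{q_{k+1}}(\Omega,\delta^{\gamma})$ with quantitative bounds. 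At step $k_{0}$, we have $u^{p}\in L^{m}(\Omega,\delta^{\gamma})$ with $m=q_{k_{0}}/p>(N+\gamma)/(2s)$, and Proposition \ref{prop_Linftyreg} gives $\Gbb^{\Omega}[u^{p}]\in L^{\infty}(\Omega)$ with $\|\Gbb^{\Omega}[u^{p}]\|_{L^{\infty}(\Omega)}\leq C$ where $C$ depends only on $N,\Omega,s,\gamma,p,\lambda_{1}$. Combined with the upper bound $\lambda\leq\tilde{\lambda}=\tilde\lambda(N,\Omega,s,\gamma,p,\lambda_{1})$ from \eqref{tildelambda}, the representation \eqref{repr2} finally yields
\[
u=\Gbb^{\Omega}[u^{p}]+\lambda\Gbb^{\Omega}[\mu]\leq C+\tilde{\lambda}\,\Gbb^{\Omega}[\mu]\leq C\bigl(1+\Gbb^{\Omega}[\mu]\bigr)\quad\text{a.e. in }\Omega,
\]
as required, after relabeling the constant.

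\textbf{Main obstacle.} The delicate point is not the regularity estimates themselves but the verification that the bootstrap sequence terminates: one must check that $q^{*}<p^{*}$ (so that a valid starting exponent $q_{0}$ exists in the a~priori integrability range) and that the iteration maintains $q_{k}/p>1$ while growing geometrically past $p(N+\gamma)/(2s)$. Both hinge on the strict subcritical condition $p<p^{*}$; at $p=p^{*}$ one has $q^{*}=p^{*}$ and the fixed-point analysis degenerates, so the argument genuinely uses this threshold.
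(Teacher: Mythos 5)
Your lower bound is fine, and the organizing idea (bootstrap and finish with Proposition \ref{prop_Linftyreg}) is in the right spirit, but the specific iteration you set up in $L^{q}(\Omega,\delta^{\gamma})$ does not close. The obstruction is the term $\lambda\Gbb^{\Omega}[\mu]$ in the representation $u=\Gbb^{\Omega}[u^{p}]+\lambda\Gbb^{\Omega}[\mu]$. For a general $\mu\in\M^{+}(\Omega,\delta^{\gamma})$ one only has $\Gbb^{\Omega}[\mu]\in M^{p^{*}}(\Omega,\delta^{\gamma})$, i.e.\ $\Gbb^{\Omega}[\mu]\in L^{q}(\Omega,\delta^{\gamma})$ only for $q<p^{*}$ (the paper notes explicitly after Corollary \ref{G/delta} that $\Gbb^{\Omega}[\mu]\notin L^{p^{*}}$ in general). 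Your step ``the representation propagates the integrability to $u\in L^{q_{k+1}}(\Omega,\delta^{\gamma})$'' therefore requires $q_{k+1}<p^{*}$. But computing the first step: $\tfrac{1}{q_{1}}=\tfrac{p}{q_{0}}-\tfrac{2s}{N+\gamma}$, and $q_{1}>p^{*}$ is equivalent to $\tfrac{p}{q_{0}}<1$, i.e.\ to $q_{0}>p$ — precisely the condition you need in order to apply Proposition \ref{prop_higherregularity} at step zero. So $q_{1}>p^{*}$ \emph{always}, and the propagation $u\in L^{q_{1}}(\Omega,\delta^{\gamma})$ already fails at the first iterate. (Equivalently: the $L^{\infty}$ threshold $q/p>\tfrac{N+\gamma}{2s}$ sits at $q>p(p^{*})'$, which exceeds $p^{*}$ precisely when $p>p^{*}-1$; since $p^{*}-1<1$ whenever $N+\gamma>4s$, which holds under \eqref{eq_K5}, this obstruction is generic in the relevant regime.)

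The paper circumvents this by first \emph{separating out} the Green potential of $\mu$: using Proposition \ref{prop_priorestimate} ($\Gbb^{\Omega}[\Gbb^{\Omega}[\mu]^{p}]\le c_{p}\Gbb^{\Omega}[\mu]$) one obtains inductively $u\le C_{n}(\Gbb^{\Omega}[\mu]+u_{n})$ with $u_{n+1}=\Gbb^{\Omega}[(u_{n})^{p}]$, so only $u_{n}$ has to be shown bounded. One then dominates $u_{n}$ by the \emph{linear} iterate $v_{n+1}=\Gbb^{\Omega}[u^{p-1}v_{n}]$ (fixed weight $a=u^{p-1}\in L^{p'}(\Omega,\delta^{\gamma})$, and $p'>\tfrac{N+\gamma}{2s}$ because $p<p^{*}$), and applies Proposition \ref{prop_bootstrapsequence}, whose bootstrap lives in the spaces $L^{\ell}(\Omega,a\delta^{\gamma})$ rather than $L^{q}(\Omega,\delta^{\gamma})$; there is no $\Gbb^{\Omega}[\mu]$ term in that recursion, so there is no $p^{*}$ cap. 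In short: the missing idea in your write-up is the decomposition $u\le C_{n}(\Gbb^{\Omega}[\mu]+u_{n})$ that removes $\Gbb^{\Omega}[\mu]$ from the bootstrap, and the use of the fixed-weight linear bootstrap of Lemma \ref{lem_bootstrap1}/Proposition \ref{prop_bootstrapsequence} in place of the unweighted $L^{q}(\Omega,\delta^{\gamma})$ iteration.
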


\begin{proof}
Without loss of generality, assume that $\lambda = 1$ and  $u$ is a nonnegative weak-dual solution of \eqref{eq_nonlinear_source} with $\lambda=1$. By the representation $u = \Gbb^{\Omega} [\mu] + \Gbb^{\Omega} [ u^p ]$, it is easy to see that $u \geq \Gbb^{\Omega} [\mu]$ a.e. in $\Omega$. Using again the representation and the inequality $(a+b)^p \le 2^p (a^p + b^p) $, one has
\begin{align*}
u   = \Gbb^{\Omega}[\mu] + \Gbb^{\Omega} \left[ (\Gbb^{\Omega} [\mu] + \Gbb^{\Omega} [ u^p ])^p \right] 
 \le \Gbb^{\Omega}[\mu] + 2^p\left( \Gbb^{\Omega}[\Gbb^{\Omega}[\mu]^p] + \Gbb^{\Omega}[\Gbb^{\Omega}[ u^p ]^p]\right).
\end{align*}
Since $\Gbb^{\Omega}[\Gbb^{\Omega}[\mu]] \le c_p \Gbb^{\Omega}[\mu]$, one has
$u \le C \Gbb^{\Omega}[\mu] + 2^p\Gbb^{\Omega}[\Gbb^{\Omega}[u^p ]^p]$ where $C=C(\Omega,N,s,\gamma,p)$. 
Let $\{u_n\}$ be a sequence given by 
$$u_0 = u, \quad u_{n+1} = \Gbb^{\Omega}[ (u_n)^p], \quad n \in \mathbb{N}.$$
By induction, we obtain
\begin{equation} \label{reg-un} u \leq C_n(\Gbb^{\Omega}[\mu]+u_n), \quad \forall n \in \N,
\end{equation}
where $C_n$ depends on $\Omega,N,s,\gamma,p,n$. It is enough to prove that there exists an $n_0$ such that $u_{n_0}$ is bounded. To that end, we define a dominant sequence $\{v_n\}$ by
$$ v_0 = u, \quad v_{n+1} = \Gbb^{\Omega} [ u^{p-1} v_n], \quad \forall n \in \N.
$$
We observe that  $v_n \ge u_n$ for every $n \in \mathbb{N}$ since $u \ge u_n$ for every $n \in \mathbb{N}$. 

Fix $\ell \in (1,p^*-p+1)$. Applying Proposition \ref{prop_bootstrapsequence} for $r=p' > (p^*)'$ (due to $p<p^*$), $a=u^{p-1} \in L^{p'}(\Omega,\delta^\gamma )$ (since $u \in L^p(\Omega,\delta^\gamma )$) and the sequence $\{v_n\}$, one deduce that there exists $n_0 \in \N$ large enough such that
\begin{equation} \label{reg-vn}\| v_{n_0}\|_{L^{\infty}(\Omega)} \le C_{n_0}(\Omega,N,s,\gamma,p) \| u \|_{L^{p}(\Omega,\delta^\gamma )}^{(n_0-\frac{1}{\ell})(p-1)} \left( \int_{\Omega}|u|^{\ell+p-1}\delta^\gamma dx \right)^{\frac{1}{\ell}} < C.
\end{equation}
The second estimate follows from \eqref{univer-2} since for any $q \in (1,p^*)$, one has $u \in L^q(\Omega,\delta^\gamma )$ and $\| u \|_{L^q(\Omega,\delta^\gamma )}$ is bounded from above by a constant $C=C(N,\Omega,s,\gamma,\lambda_1,p,q)$.
Combining \eqref{reg-un}, \eqref{reg-vn} and the inequality $u_n \leq v_n$, we obtain $ u \leq C_{n_0} (\Gbb^{\Omega}[\mu]+1)$.
Thus we complete the proof.
\end{proof}

\section{Semilinear equations: uniqueness and multiplicity} \label{sec:uni+multi}
In this section, we assume that \eqref{eq_L1}--\eqref{eq_L3} and \eqref{eq_G1}--\eqref{eq_G3} hold. 

\subsection{Stability} We will prove the (semi-)stability of the minimal solution $\underline{u}_{\lambda}$ of \eqref{eq_nonlinear_source}. 

Let us recall the definition of stable solutions (see, for instance \cite[Definition 1.1.2]{Dup_2011}).
\begin{definition}A solution of $u$ of \eqref{eq_nonlinear_source} is stable (respectively, semistable) if
$$\norm{\xi}^2_{\Hbb(\Omega)} > (\text{resp.} \geq) \,\, p\int_{\Omega} u^{p-1} \xi^2 dx, \,  \quad \forall \xi \in \Hbb(\Omega) \setminus \{0\}.$$
\end{definition}

Recall that for every $\lambda \in (0,\lambda^*)$, \eqref{eq_nonlinear_source} admits a minimal nonnegative solution $\underline{u}_{\lambda} \in L^p (\Omega,\delta^{\gamma} )$, $p \in [1,p^*)$ (Theorem \ref{th:main1}). 

\begin{proposition}\label{prop_stability}For every $\lambda \in (0,\lambda^*)$, the minimal nonnegative solution $\underline{u}_{\lambda}$ of \eqref{eq_nonlinear_source} is stable.
\end{proposition}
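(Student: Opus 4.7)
The plan is to prove stability (\emph{i.e.}, $\sigma_1^{p\underline{u}_\lambda^{p-1}}>1$) by contradiction, exploiting the room between $\lambda$ and $\lambda^*$ to build a strict super-eigenfunction of the linearized operator $\L-p\underline{u}_\lambda^{p-1}$ and testing it against the first eigenfunction.

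First I would verify that the weighted eigenproblem is well posed. From Proposition~\ref{prop:boundonu} and Proposition~\ref{prop_marcinkiewicz}, $\underline{u}_\lambda\leq C(\Gbb^\Omega[\mu]+1)\in L^q(\Omega,\delta^\gamma)$ for every $q<p^*$. Since $p<p^*$ implies $(p-1)(p^*)'<p^*$, one may pick $q\in((p-1)(p^*)',p^*)$, yielding
$$a:=p\,\underline{u}_\lambda^{p-1}\in L^r(\Omega,\delta^\gamma),\qquad r:=\tfrac{q}{p-1}>(p^*)'=\tfrac{N+\gamma}{2s}.$$
Proposition~\ref{prop_eigenproblem} then furnishes a first eigenvalue $\sigma_1:=\sigma_1^a>0$ and a positive eigenfunction $\varphi_1\in\Hbb(\Omega)$ with $\varphi_1=\sigma_1\Gbb^\Omega[a\varphi_1]$, and Proposition~\ref{prop_bootstrap2} (applied with $f\equiv 0$) upgrades $\varphi_1$ to $L^\infty(\Omega)$. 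Stability is exactly the statement $\sigma_1>1$.

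Next I construct the super-eigenfunction. Since $\lambda<\lambda^*$, pick any $\lambda'\in(\lambda,\lambda^*)$. The strict monotonicity in $\lambda$ of the iteration \eqref{eq_sequence_minimal} (whose limit is $\underline{u}_\lambda$) gives $w:=\underline{u}_{\lambda'}-\underline{u}_\lambda>0$ a.e.\ in $\Omega$. Subtracting the weak-dual identities for $\underline{u}_{\lambda'}$ and $\underline{u}_\lambda$, and invoking the convexity inequality $\underline{u}_{\lambda'}^{\,p}-\underline{u}_\lambda^{\,p}\ge p\,\underline{u}_\lambda^{\,p-1}w=aw$, I obtain
\begin{equation*}
\int_\Omega w\,\xi\,dx\;\ge\;\int_\Omega aw\,\Gbb^\Omega[\xi]\,dx+(\lambda'-\lambda)\int_\Omega\Gbb^\Omega[\xi]\,d\mu
\end{equation*}
for every $\xi\in\delta^\gamma L^\infty(\Omega)$ with $\xi\ge 0$, which is a weak-dual form of $(\L-a)w\ge(\lambda'-\lambda)\mu>0$. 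Assuming toward a contradiction that $\sigma_1\le 1$, I formally insert $\xi=a\varphi_1$ and use $\Gbb^\Omega[a\varphi_1]=\varphi_1/\sigma_1$ to get $J\ge\sigma_1^{-1}J+\sigma_1^{-1}(\lambda'-\lambda)\int_\Omega\varphi_1\,d\mu$ with $J:=\int_\Omega aw\varphi_1\,dx$, equivalently $(\sigma_1-1)J\ge(\lambda'-\lambda)\int_\Omega\varphi_1\,d\mu$; since $\sigma_1\le 1$, $J\ge 0$, $\varphi_1>0$ and $\mu\not\equiv 0$, the LHS is nonpositive while the RHS is strictly positive---contradiction. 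To make the substitution legitimate, which generally fails because $a\varphi_1\notin\delta^\gamma L^\infty(\Omega)$ (as $\underline{u}_\lambda$ need not be $L^\infty$ for a genuine measure $\mu$), I approximate by the truncations $\xi_n:=p(\underline{u}_\lambda\wedge n)^{p-1}\varphi_1\mathbf{1}_{\{\delta>1/n\}}\in\delta^\gamma L^\infty(\Omega)$, which increase monotonically to $a\varphi_1$ with $\Gbb^\Omega[\xi_n]\nearrow\varphi_1/\sigma_1$, and pass to the limit via Monotone Convergence and Fubini.

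The principal obstacle is the finiteness $J<\infty$, without which the cancellation above is vacuous. The ingredients needed are $a\in L^r(\Omega,\delta^\gamma)$ with $r>(p^*)'$, $w\le C(\Gbb^\Omega[\mu]+1)\in L^{r'}(\Omega,\delta^\gamma)$ with $r'<p^*$ (from Propositions~\ref{prop:boundonu} and \ref{prop_marcinkiewicz}), and a boundary decay estimate of the form $\varphi_1\lesssim\delta^\gamma$ extracted from the Green potential representation $\varphi_1=\sigma_1\Gbb^\Omega[a\varphi_1]$ combined with the sharp kernel bounds of Section~\ref{sec:Green}; these combine by weighted Hölder to give $J\le C\,\|a\|_{L^r(\Omega,\delta^\gamma)}\|w\|_{L^{r'}(\Omega,\delta^\gamma)}<\infty$. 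Once this integrability is secured, the monotone passage to the limit is routine and the proof concludes.
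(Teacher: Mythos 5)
Your strategy is genuinely different from the paper's: you stay entirely in the weak-dual framework and test the inequality $(\L-a)w\ge(\lambda'-\lambda)\mu$ directly against (truncations of) $a\varphi_1$, whereas the paper constructs an auxiliary function $\tilde u$ solving $\L\tilde u=a\tilde u+\tilde c\,\delta^\gamma$ with $0\le\tilde u\le w$, upgrades $\tilde u$ to $L^\infty(\Omega)\cap\Hbb(\Omega)$ via the bootstrap of Proposition~\ref{prop_bootstrapsequence} together with assumption~\eqref{eq_K5}, and then tests $\tilde u$ against $\xi_1$ in the variational sense. Your route, if it closed, would be slicker and would even dispense with~\eqref{eq_K5}, which the paper needs precisely to place $\tilde u$ in $\Hbb(\Omega)$. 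The fact that~\eqref{eq_K5} never enters your argument is a warning sign.

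The gap is the assertion $\varphi_1\lesssim\delta^\gamma$, on which the finiteness of $J=\int_\Omega aw\varphi_1\,dx$ (and hence the entire contradiction) rests. You claim this is ``extracted from $\varphi_1=\sigma_1\Gbb^\Omega[a\varphi_1]$ combined with the sharp kernel bounds,'' but the extraction fails for the exponent $r$ actually available. Using $G^\Omega(x,y)\lesssim\delta(x)^\gamma|x-y|^{-(N-2s+\gamma)}$ and H\"older against $a\in L^r(\Omega,\delta^\gamma)$, one is led to require
$\int_\Omega \delta(y)^{-\gamma/(r-1)}|x-y|^{-r'(N-2s+\gamma)}dy$ uniformly bounded, and the near-diagonal part alone forces $r'(N-2s+\gamma)<N$, i.e.\ $r>\tfrac{N}{2s-\gamma}$ when $\gamma<2s$ (and no $r$ works when $\gamma\ge 2s$). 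Since $\tfrac{N+\gamma}{2s}<\tfrac{N}{2s-\gamma}$ (because $N>2s$), the available $r>(p^*)'$ is strictly too small, and in fact as $p\to p^*$ the admissible $r$ shrinks to $(p^*)'$. So $\varphi_1\lesssim\delta^\gamma$ does not follow from these bounds. Without it, you only get $\varphi_1\in L^\infty(\Omega)$, and then $J\le\|\varphi_1\|_\infty\int_\Omega aw\,dx$ needs $aw\in L^1(\Omega)$, i.e.\ roughly $\underline{u}_{\lambda'}\in L^{p}(\Omega)$ without the weight $\delta^\gamma$; this is strictly stronger than the $L^p(\Omega,\delta^\gamma)$ integrability guaranteed by Remark~\ref{rmk:uniformbound}, and fails in general for $p$ near $p^*$. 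When $J=\infty$ the truncated inequalities $J_n\ge I_n$ merely read $\infty\ge\infty$ in the limit and yield nothing. The paper's construction of the bounded function $\tilde u\le w$ solving a clean equation with source $\delta^\gamma$ is exactly the device that makes the analogous pairing finite; if you want to keep your weak-dual formulation you would need to replace $w$ by such a $\tilde u$ (or otherwise prove a genuine boundary decay of $\varphi_1$), at which point the argument essentially coincides with the paper's, including the use of~\eqref{eq_K5}.
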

\begin{proof} We first prove the stability of minimal solutions $\underline{u}_{\lambda}$, provided that $\lambda > 0$ is small enough. By Lemma \ref{lem_technicaleq} and Theorem \ref{th:main1}, one can see that $0 < \underline{u}_{\lambda} \le T(\lambda) \Gbb^{\Omega}[\mu]$
for all $\lambda > 0$ small enough, where $T$ is increasing and $T(\lambda) \to 0$ as $\lambda \to 0$. Hence, for $\xi \in \Hbb(\Omega) \setminus \{0\}$, we have
$$p\int_{\Omega} \xi^2 (\underline{u}_{\lambda})^{p-1} dx \le p\int_{\Omega} \xi^2 \left(T(\lambda) \Gbb^{\Omega}[\mu]\right)^{p-1} dx \le pT(\lambda)^{p-1}\int_{\Omega} \xi^2 \Gbb^{\Omega}[\mu]^{p-1}dx.$$
Since $\Gbb^{\Omega}[\mu] \in L^q(\Omega,\delta^{\gamma} )$ for any $q \in [1, p^*)$, one has $\Gbb^{\Omega}[\mu]^{p-1} \in L^r(\Omega,\delta^{\gamma} )$ for some $r > \frac{N+\gamma}{2s}$. Thus, the embedding $\Hbb(\Omega) \hookrightarrow L^2(\Omega,\Gbb^{\Omega}[\mu]^{p-1} )$ is continuous and also compact (see Proposition \ref{prop_compactembd}). This implies
$$p\int_{\Omega} \xi^2 (\underline{u}_{\lambda})^{p-1} dx \le pT(\lambda)^{p-1}\int_{\Omega} \xi^2 \Gbb^{\Omega}[\mu]^{p-1}dx \le CT(\lambda)^{p-1}\norm{\xi}_{\Hbb(\Omega)}^2 <\norm{\xi}_{\Hbb(\Omega)}^2,$$
if $\lambda > 0$ is small enough. We conclude that in this case, $\underline{u}_{\lambda}$ is stable.

Suppose there exists a $\lambda \in (0,\lambda^*)$ such that $\underline{u}_{\lambda}$ is not stable. Then one has
$$\sigma(\lambda) := \inf_{\xi \in \Hbb(\Omega), \xi \neq 0} \dfrac{\norm{\xi}^2_{\Hbb(\Omega)} }{ \int_{\Omega} p(\underline{u}_{\lambda})^{p-1} \xi^2dx } \le 1.$$
By applying Proposition \ref{prop_eigenproblem} with $a=(\underline{u}_{\lambda})^{p-1} \in L^{p'}(\Omega,\delta^\gamma )$, we deduce that there exists a minimizer $0 \le \xi_1 \in \Hbb(\Omega)$, which means 
$$\sigma(\lambda) = \dfrac{\norm{\xi_1}^2_{\Hbb(\Omega)} }{ \int_{\Omega} p(\underline{u}_{\lambda})^{p-1} \xi_1^2dx }.$$
Moreover, there holds
\begin{equation}\label{eq_stable1}
\inner{\xi_1, \zeta}_{\Hbb(\Omega)} = \sigma (\lambda)  \int_{\Omega} p(u_\lambda)^{p-1} \xi_1 \zeta dx, \quad \forall \zeta \in \Hbb(\Omega).
\end{equation}
Choose $\widetilde{\lambda} \in (\lambda, \lambda^*)$ and denote by $\underline{u}_{\widetilde{\lambda}}$ the  minimal solution of \eqref{eq_nonlinear_source} for parameter $\tilde \lambda$. Set $w := \underline{u}_{\widetilde{\lambda}} - \underline{u}_{\lambda} \ge 0$. Due to Proposition \ref{prop_priorestimate}, one has
\begin{align*}
w = \Gbb^{\Omega}[(\underline{u}_{\widetilde{\lambda}})^p - (\underline{u}_{\lambda})^p] + (\widetilde{\lambda} - \lambda)\Gbb^{\Omega} [\mu] \ge \Gbb^{\Omega} [p(\underline{u}_{\lambda})^{p-1} w]+ c(\widetilde{\lambda} - \lambda)\Gbb^{\Omega} [\Gbb^{\Omega}[\mu]].
\end{align*}
By noticing that $\mu \in \M^+(\Omega,\delta^{\gamma} )$ with $\norm{\mu}_{\M(\Omega,\delta^{\gamma} )} = 1$, one has
$$\Gbb^{\Omega}[\mu](x) = \int_{\Omega} G^{\Omega} (x,y) d\mu(y) \gtrsim \int_{\Omega} \delta(x)^{\gamma} \delta(y)^{\gamma} d\mu(y) = \delta(x)^{\gamma}, \quad x\in \Omega,$$
which implies $\Gbb^{\Omega}[\Gbb^{\Omega}[\mu]] \gtrsim \Gbb^{\Omega}[\delta^{\gamma}]$ a.e. in $\Omega$. This gives $w \ge \Gbb^{\Omega} [p(\underline{u}_{\lambda})^{p-1} w] + \tilde c \, \Gbb^{\Omega} [\delta^{\gamma}]$
for some positive constant $\tilde c$. 
On the other hand, the function $v = \tilde c \, \Gbb^{\Omega} [\delta^{\gamma}]$ satisfying $v \leq w$ and obviously, $v \le \Gbb^{\Omega} [p(\underline{u}_{\lambda})^{p-1} v] + \tilde c \, \Gbb^{\Omega} [\delta^{\gamma}]$.
We deduce from Proposition \ref{prop_existence.nonlinear} that there exists a weak-dual solution $\tilde u \in L^1(\Omega,\delta^{\gamma} )$ of the problem
\begin{equation}\label{eq_stable2}
\left\{ \begin{aligned}
\L \tilde u &= p(\underline{u}_{\lambda})^{p-1} \tilde u + \tilde c \, \delta^{\gamma} &&\text{ in }\Omega, \\ 
u  &= 0 &&\text{ on }\partial \Omega, \\
u &=0 &&\text{ in }\R^N \backslash \overline{\Omega} \text{ (if applicable)}.
\end{aligned} \right. 
\end{equation}
Since $0 \leq \tilde u \leq w$, it follows that $\tilde u \in L^{q}(\Omega,\delta^{\gamma} )$ for any $q < p^*$. Furthermore, since $\underline{u}_{\lambda} \in L^q(\Omega,\delta^{\gamma} )$ for any $q < p^*$ by Remark \ref{rmk:uniformbound}, we have $(\underline{u}_{\lambda})^{p-1} \in L^r(\Omega,\delta^{\gamma} )$ for some $r > \frac{N+\gamma}{2s}$. Corollary \ref{prop_bootstrap2} shows that  $\tilde u \in L^{\infty}(\Omega)$.

Under assumption \eqref{eq_G3}, i.e. $N \geq N_{s,\gamma}$, we prove that $\tilde u$ belongs to $\Hbb(\Omega)$. It is enough to demonstrate that the right-hand side of the equation in \eqref{eq_stable2} belongs to $L^2(\Omega)$.  Since $\tilde u,\, \delta^\gamma \in L^\infty(\Omega)$, we need to show that $(\underline{u}_{\lambda})^{p-1} \in L^2(\Omega)$.  To this purpose, we will consider two cases.
 
\textbf{Case 1:} $\gamma \geq 2s$. One has $N_{s,\gamma}=4s(\gamma+1)-\gamma$. Since $N  \geq N_{s,\gamma}$ (which implies $0<\frac{2\gamma}{r-2}<1$) and  $(\underline{u}_{\lambda})^{p-1} \in L^r(\Omega,\delta^{\gamma} )$, it follows that
 \begin{align*}
\int_{\Omega}|\underline{u}_{\lambda}|^2 dx = \int_{\Omega} |\underline{u}_{\lambda}|^2\delta^{\frac{2\gamma}{r}} \cdot \delta^{-\frac{2\gamma}{r}}dx \leq \left(\int_{\Omega} |\underline{u}_{\lambda}|^r \delta^{\gamma}dx \right)^{\frac{2}{r}} \left(\int_{\Omega} \delta^{-\frac{2\gamma}{r-2}} dx\right)^{1-\frac{2}{r}} < +\infty.
\end{align*}
Hence $(\underline{u}_{\lambda})^{p-1} \in L^2(\Omega)$. 

\textbf{Case 2:} $\gamma < 2s$. One has $N_{s,\gamma}=4s$. By Remark \ref{gam<2s}, $\underline{u}_{\lambda} \in L^q(\Omega)$ for $q < \frac{N}{N-2s+\gamma}$, which implies  $(\underline{u}_{\lambda})^{p-1} \in L^r(\Omega)$ for some $r > \frac{N}{2s}$. Since $N \geq N_{s,\gamma}=4s$, we derive $(\underline{u}_{\lambda})^{p-1} \in L^2(\Omega)$. 

In both cases, when $N \geq N_{s,\gamma}$, we have $\Gbb^{\Omega}[ (\underline{u}_{\lambda})^{p-1}\tilde u] \in \Hbb(\Omega)$ and hence $\tilde u \in \Hbb(\Omega)$. Choosing $\xi_1 \in \Hbb(\Omega)$ as a test function in \eqref{eq_stable2} and $\tilde u$ as a test function in \eqref{eq_stable1}, one has
\begin{align*}
\int_{\Omega} p(\underline{u}_{\lambda})^{p-1} \tilde u \xi_1 dx + \tilde c \int_{\Omega} \xi_1 \delta^\gamma dx = \langle \tilde u, \xi_1 \rangle_{\Hbb(\Omega)} =  \sigma(\lambda) \int_{\Omega} p(\underline{u}_{\lambda})^{p-1} \xi_1 \tilde u dx,
\end{align*}
which is a contradiction since we assumed that $\sigma(\lambda) \le 1$. We conclude that $\underline{u}_{\lambda}$ is stable for all $\lambda \in (0,\lambda^*)$.
\end{proof}

\subsection{Uniqueness of the extremal solution}\label{sec:uniqueness}

 We study the  uniqueness of the extremal solution of \eqref{eq_nonlinear_source}, which is the minimal solution in the case $\lambda = \lambda^*$.

\begin{lemma}\label{lem_solutionHs} Assume that \eqref{eq_nonlinear_source} has a nonnegative weak-dual solution $u_{\lambda}$, $\lambda \in (0,\lambda^*]$. Denote by $\left\{ u_{\lambda,n} \right\}$, $n \in \mathbb{N}$, $\lambda > 0$ the sequence as in \eqref{eq_sequence_minimal}. Then there exists an $n_0$ such that for all $n \ge n_0$, one has
\begin{align*}
u_{\lambda} - u_{\lambda,n} &\in L^{\infty}(\Omega), \quad \forall \lambda \in (0,\lambda^*],\\
u_{\lambda,n+1} - u_{\lambda,n} &\in L^{\infty}(\Omega), \quad \forall \lambda > 0.
\end{align*}
Furthermore, if $N \geq N_{s,\gamma}$, then 
\begin{align*}
u_{\lambda} - u_{\lambda,n} &\in \Hbb(\Omega), \quad \forall \lambda \in (0,\lambda^*],\\
u_{\lambda,n+1} - u_{\lambda,n} &\in \Hbb(\Omega), \quad \forall \lambda > 0.
\end{align*}
\end{lemma}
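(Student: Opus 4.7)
The key is to apply the bootstrap result of Proposition~\ref{prop_bootstrapsequence} to the differences $w_n := \underline{u}_\lambda - u_{\lambda,n}$ and $v_n := u_{\lambda,n+1} - u_{\lambda,n}$, both of which are nonnegative by the monotonicity of the iteration \eqref{eq_sequence_minimal}. Subtracting the representations of $\underline{u}_\lambda$ and $u_{\lambda,n+1}$, and using the convexity of $t \mapsto t^p$ together with $0 \le u_{\lambda,n} \le \underline{u}_\lambda$, one obtains
\[
w_{n+1} = \Gbb^{\Omega}\bigl[\underline{u}_\lambda^{\,p} - u_{\lambda,n}^{\,p}\bigr] \le p\,\Gbb^{\Omega}\bigl[\underline{u}_\lambda^{\,p-1} w_n\bigr].
\]
Setting $a := p\,\underline{u}_\lambda^{\,p-1}$, Remark~\ref{rmk:uniformbound} gives $\underline{u}_\lambda \in L^q(\Omega,\delta^{\gamma})$ for every $q < p^*$, and combined with the equivalence $p < p^* \Longleftrightarrow \tfrac{p^*}{p-1} > \tfrac{N+\gamma}{2s}$ this yields $a \in L^r(\Omega,\delta^{\gamma})$ for some $r > \tfrac{N+\gamma}{2s}$.

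Then, with the dominating sequence $\tilde w_0 := w_0 = \Gbb^{\Omega}[\underline{u}_\lambda^{\,p}]$ and $\tilde w_{n+1} := \Gbb^{\Omega}[a \tilde w_n]$, induction yields $w_n \le \tilde w_n$. Proposition~\ref{prop_bootstrapsequence} requires $\tilde w_0 \in L^\ell(\Omega, a\delta^\gamma)$ for some $\ell > 1$; using $w_0 \le \underline{u}_\lambda$ and H\"older,
\[
\int_\Omega w_0^{\,\ell}\, a\, \delta^\gamma\, dx \le p \int_\Omega \underline{u}_\lambda^{\,\ell + p - 1}\, \delta^\gamma\, dx < +\infty,
\]
which is finite for any $\ell \in (1,\, p^* - p + 1)$, a nonempty interval since $p < p^*$. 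Proposition~\ref{prop_bootstrapsequence} then furnishes an $n_0$ with $\tilde w_{n_0} \in L^\infty(\Omega)$, and hence $w_{n_0} \in L^\infty(\Omega)$. Since $\{w_n\}$ is pointwise nonincreasing, $w_n \in L^\infty(\Omega)$ for every $n \ge n_0$. The argument for $v_n$ is parallel, starting from $v_0 \le u_{\lambda,1} \le \underline{u}_\lambda$ and noting that once the majorant $\tilde v_{n_0}$ is bounded, Proposition~\ref{prop_Linftyreg} applied inductively to $a\tilde v_n \in L^r(\Omega,\delta^\gamma)$ preserves boundedness of each subsequent term.

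For the $\Hbb(\Omega)$ upgrade under $N \ge N_{s,\gamma}$, once $w_n \in L^\infty(\Omega)$ the source of the Green representation of $w_{n+1}$ is dominated pointwise by $p\,\|w_n\|_{L^\infty(\Omega)}\,\underline{u}_\lambda^{\,p-1}$, so it suffices to establish $\underline{u}_\lambda^{\,p-1} \in L^2(\Omega)$; then Proposition~\ref{prop_variationalsolution} identifies $w_{n+1}$ with the variational solution in $\Hbb(\Omega)$. The $L^2$ bound follows from the two-case argument used in the proof of Proposition~\ref{prop_stability}. When $\gamma \ge 2s$, the hypothesis $N \ge 4s(1+\gamma) - \gamma$ gives $\tfrac{N+\gamma}{2s} \ge 2(1+\gamma)$, so I can choose $r$ with simultaneously $r > \tfrac{N+\gamma}{2s}$ and $r > 2+2\gamma$, which ensures $\delta^{-2\gamma/(r-2)} \in L^1(\Omega)$ and lets a H\"older split close the estimate. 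When $\gamma < 2s$, the condition $N \ge 4s$ together with the unweighted Marcinkiewicz estimate of Remark~\ref{gam<2s} yields $\underline{u}_\lambda \in L^q(\Omega)$ for some $q > 2(p-1)$, giving $\underline{u}_\lambda^{\,p-1} \in L^2(\Omega)$ directly. The main obstacle is precisely this two-case $L^2$ bound, which requires simultaneously satisfying the lower threshold $r > \tfrac{N+\gamma}{2s}$ dictated by the bootstrap and an upper boundary-integrability constraint; the remainder of the argument is routine once Proposition~\ref{prop_bootstrapsequence} and Proposition~\ref{prop_variationalsolution} are in hand.
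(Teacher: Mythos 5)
Your argument for the first family of differences $u_\lambda - u_{\lambda,n}$ (with $\lambda \le \lambda^*$) matches the paper's proof step by step: the convexity inequality $u_\lambda^p - u_{\lambda,n}^p \le p\,u_\lambda^{p-1}(u_\lambda - u_{\lambda,n})$, the dominating sequence with fixed weight $a = p\,u_\lambda^{p-1}$, the membership $a \in L^r(\Omega,\delta^\gamma)$ for some $r > \tfrac{N+\gamma}{2s}$ via Remark~\ref{rmk:uniformbound} and $p < p^* \Leftrightarrow \tfrac{p^*}{p-1} > (p^*)'$, the verification of the $L^\ell(\Omega,a\delta^\gamma)$ hypothesis of Proposition~\ref{prop_bootstrapsequence} with $\ell \in (1,p^*-p+1)$, and the two-case $(\gamma \ge 2s$ vs.\ $\gamma < 2s)$ analysis showing $u_\lambda^{p-1} \in L^2(\Omega)$ so that Proposition~\ref{prop_variationalsolution} yields membership in $\Hbb(\Omega)$ — this is exactly the paper's route (which outsources the $L^2$ discussion to the proof of Proposition~\ref{prop_stability}). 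A small slip: the lemma is stated for an arbitrary nonnegative weak-dual solution $u_\lambda$, not the minimal $\underline u_\lambda$; the bounds of Remark~\ref{rmk:uniformbound} hold for any such $u_\lambda$, so the argument carries over verbatim with the right symbol.

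The genuine gap is in the second family of differences. You set $v_0 \le u_{\lambda,1} \le \underline u_\lambda$ and keep the fixed weight $p\,\underline u_\lambda^{p-1}$, but the second assertion of the lemma is stated for all $\lambda > 0$, in particular for $\lambda > \lambda^*$ where $\underline u_\lambda$ does not exist and the iterates $u_{\lambda,n}$ need not stay below any solution. This range is not vacuous: the implicit-function-theorem step in Proposition~\ref{prop_eigenvalue1} pushes $\lambda$ slightly above $\lambda^*$ and uses precisely $u_{\lambda,n_0+1} - u_{\lambda,n_0} \in \Hbb(\Omega)$ there. The repair — compressed in the paper's ``similarly'' — is to iterate $u_{\lambda,k+1}-u_{\lambda,k} \le \Gbb^\Omega\!\left[p(u_{\lambda,k})^{p-1}(u_{\lambda,k}-u_{\lambda,k-1})\right]$, fix the target index $n$, and dominate the whole finite chain $k=0,\dots,n$ by the monotone sequence built from the single weight $a_n := p(u_{\lambda,n})^{p-1}$, using $(u_{\lambda,k})^{p-1} \le (u_{\lambda,n})^{p-1}$ for $k \le n$. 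By iterating Proposition~\ref{prop_priorestimate} one gets $u_{\lambda,n} \le C_n\,\Gbb^\Omega[\mu]$, so each $a_n \in L^r(\Omega,\delta^\gamma)$; and the index $n_0$ in Proposition~\ref{prop_bootstrapsequence} depends only on $(\Omega,N,s,\gamma,r,\ell)$ — not on $\|a_n\|_{L^r}$ — hence is uniform in $n$. Once the majorant is bounded at step $n_0$ it stays bounded, giving $u_{\lambda,n+1}-u_{\lambda,n} \in L^\infty(\Omega)$ for all $n \ge n_0$ and all $\lambda > 0$; the $\Hbb(\Omega)$ upgrade then uses $(u_{\lambda,n})^{p-1}$ (controlled by $\Gbb^\Omega[\mu]^{p-1}$) in place of $\underline u_\lambda^{p-1}$.
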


\begin{proof} For $\lambda \le \lambda^*$, it can easily be seen that
$$u_{\lambda} - u_{\lambda,{n+1}} =  \Gbb^{\Omega} \left[ (u_{\lambda} )^p - (u_{n,\lambda})^p\right] \le \Gbb^{\Omega} \left[ p(u_{\lambda})^{p-1} \left(u_{\lambda} - u_{n,\lambda}\right)\right],\quad \forall n \in \mathbb{N}.$$
Consider the sequence  $w_{\lambda,0} = u_{\lambda} - u_{\lambda,0} $ and $w_{\lambda,n+1} = \Gbb^{\Omega} [p(u_{\lambda})^{p-1} w_{\lambda,n} ]$ for  $n \in \mathbb{N}$. 
It can be seen that $w_{\lambda,0} \in L^q(\Omega,\delta^{\gamma} )$ for all $q \in \left[1,p^*\right)$ and $(u_{\lambda})^{p-1} \in L^r(\Omega,\delta^{\gamma}), r > \frac{N+\gamma}{2s}$. By Proposition \ref{prop_bootstrapsequence}, there exists an $n_0$ such that $w_{\lambda,n} \in L^{\infty}(\Omega)$ for all $n \ge n_0$. Since $0 \le u_{\lambda} - u_{\lambda,n}  \le w_{\lambda,n}$ for all $n \in \mathbb{N}$, we conclude that $u_{\lambda} - u_{\lambda,n} \in L^{\infty}(\Omega)$ for all $n \ge n_0$. 
The second statement can be proceeded similarly by noticing that
$$u_{\lambda,n+1} - u_{\lambda,n} = \Gbb^{\Omega} [ (u_{\lambda,n})^p - (u_{\lambda, n -1})^p],\quad \forall n \in \mathbb{N}.$$
Finally, if $N \geq N_{s,\gamma}$, then by the same argument as in the proof of Proposition \ref{prop_stability} we have $u_{\lambda} - u_{\lambda,n}  \in \Hbb(\Omega)$ and $u_{\lambda,n+1} - u_{\lambda,n} \in \Hbb(\Omega)$.
The proof is complete.
\end{proof}

%

\begin{proposition}\label{prop_eigenvalue1} The minimal positive weak-dual solution $\underline{u}_{\lambda^*}$ of \eqref{eq_nonlinear_source} with $\lambda = \lambda^*$  is semi-stable. Moreover, $\sigma(\lambda^*)=1$ where
\begin{equation} \label{sigma}
\sigma(\lambda^*) := \inf_{\xi \in \Hbb(\Omega) \setminus \{0\}} \dfrac{\norm{\xi}^2_{\Hbb(\Omega)} }{ \int_{\Omega} p(\underline{u}_{\lambda^*})^{p-1} \xi^2dx }.
\end{equation}
\end{proposition}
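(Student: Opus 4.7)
The proof proceeds by establishing the two inequalities $\sigma(\lambda^*)\ge 1$ (semi-stability) and $\sigma(\lambda^*)\le 1$ separately. For the first one, fix any $\xi\in\Hbb(\Omega)\setminus\{0\}$. For every $\lambda\in(0,\lambda^*)$, Proposition~\ref{prop_stability} yields $\|\xi\|_{\Hbb(\Omega)}^{2}>p\int_\Omega(\underline u_\lambda)^{p-1}\xi^{2}\,dx$. From Step~3 of the proof of Theorem~\ref{th:main1} the family $\{\underline u_\lambda\}_{\lambda<\lambda^*}$ is monotone increasing in $\lambda$ and converges a.e.\ to $\underline u_{\lambda^*}$ as $\lambda\nearrow\lambda^*$. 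Since $p<p^*$, Remark~\ref{rmk:uniformbound} gives $(\underline u_{\lambda^*})^{p-1}\in L^{r}(\Omega,\delta^\gamma)$ for some $r>(N+\gamma)/(2s)$, so by Proposition~\ref{prop_compactembd} the integral $\int_\Omega(\underline u_{\lambda^*})^{p-1}\xi^{2}\,dx$ is finite. The monotone convergence theorem then transfers the strict inequality above to its non-strict analogue at $\lambda^*$, yielding $\sigma(\lambda^*)\ge 1$.

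For the reverse inequality, I would argue by contradiction: assume $\sigma(\lambda^*)>1$ and construct a weak-dual solution of \eqref{eq_nonlinear_source} at some parameter $\lambda^*+\tau>\lambda^*$, contradicting the definition of $\lambda^*$. Setting $a:=p(\underline u_{\lambda^*})^{p-1}\in L^r(\Omega,\delta^\gamma)$ with $r>(N+\gamma)/(2s)$ (by the first paragraph), the hypothesis $\sigma_1^a=\sigma(\lambda^*)>1$ allows Lemma~\ref{lem_uniquesolution} to solve $\L v=av+g$ uniquely in $\Hbb(\Omega)$ for any $g\in\Hbb^{-1}(\Omega)$. Choosing $g$ strictly positive and bounded (for instance, $g:=c_0\delta^\gamma\in L^\infty(\Omega)\subset\Hbb^{-1}(\Omega)$), the maximum principle combined with Proposition~\ref{prop_bootstrap2} produces $v\in\Hbb(\Omega)\cap L^\infty(\Omega)$ with $v>0$. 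The candidate supersolution is $w:=\underline u_{\lambda^*}+\varepsilon v$; using the integral representations of $\underline u_{\lambda^*}$ and $v$ in terms of $\Gbb^\Omega$, the supersolution inequality $w\ge\Gbb^\Omega[w^{p}]+(\lambda^*+\tau)\Gbb^\Omega[\mu]$ reduces to
\[
\varepsilon\,\Gbb^\Omega[g]\ge\Gbb^\Omega[R(\varepsilon)]+\tau\,\Gbb^\Omega[\mu],
\]
where $R(\varepsilon):=(\underline u_{\lambda^*}+\varepsilon v)^{p}-(\underline u_{\lambda^*})^{p}-\varepsilon p(\underline u_{\lambda^*})^{p-1}v\ge 0$ by the convexity of $t\mapsto t^p$. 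Since $v\in L^\infty(\Omega)$, an elementary second-order comparison bounds $R(\varepsilon)$ pointwise by $C\varepsilon^{\min(p,2)}$ times a function locally integrable against $\delta^\gamma$, so that $\Gbb^\Omega[R(\varepsilon)]=o(\varepsilon)$. Choosing $\varepsilon$ small and then $\tau$ correspondingly smaller secures the displayed inequality, making $w$ a weak-dual supersolution of \eqref{eq_nonlinear_source} at $\lambda^*+\tau$ that dominates the subsolution $\underline u_{\lambda^*}$; Proposition~\ref{prop_existence.nonlinear} then delivers a weak-dual solution at $\lambda^*+\tau$, the contradiction sought.

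The main obstacle is precisely the pointwise comparison $\tau\,\Gbb^\Omega[\mu]\le(\varepsilon/2)\,\Gbb^\Omega[g]$ in the final step. The difficulty becomes visible when $\mu$ carries interior concentrations (for instance, a Dirac mass): $\Gbb^\Omega[\mu]$ inherits such singularities, while the Green potential $\Gbb^\Omega[g]$ of a bounded source behaves like $\delta^\gamma$ and is bounded in the interior of $\Omega$. To overcome this, I would let the source $g=g_\tau$ in Lemma~\ref{lem_uniquesolution} depend on $\tau$ and majorise a suitable truncation or mollification of $\tau\mu$ while remaining in $\Hbb^{-1}(\Omega)$; what matters is that the ratio $\tau/\varepsilon$ be small, rather than either $\tau$ or $\varepsilon$ independently. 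The $L^r$-integrability of $(\underline u_{\lambda^*})^{p-1}$ with $r>(N+\gamma)/(2s)$ established in the first paragraph is what keeps all these estimates consistent across the iteration.
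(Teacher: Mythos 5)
Your semi-stability argument (Step 1) is correct and is essentially the paper's: take the strict stability inequality for $\lambda<\lambda^*$, let $\lambda\nearrow\lambda^*$, and pass to the limit by monotone convergence; integrability of $(\underline u_{\lambda^*})^{p-1}\xi^2$ is ensured by the embedding in Proposition~\ref{prop_compactembd}, exactly as you say.

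The contradiction argument for $\sigma(\lambda^*)\le 1$ has a genuine gap, and it is precisely the one you flag. Your supersolution $w=\underline u_{\lambda^*}+\varepsilon v$ requires the pointwise inequality $\tau\,\Gbb^\Omega[\mu]\le(\varepsilon/2)\,\Gbb^\Omega[g]$ with $g\in\Hbb^{-1}(\Omega)$ bounded. When $\mu$ carries an interior Dirac mass, $\Gbb^\Omega[\mu]$ has an interior singularity $\sim|x-x_0|^{2s-N}$ while $\Gbb^\Omega[g]$ is bounded, so the inequality fails near $x_0$ no matter how small $\tau$ is relative to $\varepsilon$. Your suggested repair --- letting $g=g_\tau$ majorize a truncation or mollification of $\tau\mu$ --- runs into a dilemma: if $g_\tau$ is strong enough that $\Gbb^\Omega[g_\tau]$ dominates $\Gbb^\Omega[\mu]$ near the singularity, then $g_\tau$ will no longer lie in $\Hbb^{-1}(\Omega)$ (Lemma~\ref{lem_uniquesolution} then does not apply; recall $\delta_{x_0}\notin\Hbb^{-1}(\Omega)$ since $N>2s$), and $v$ need not stay in $L^\infty(\Omega)$, which is what makes $R(\varepsilon)=o(\varepsilon)$. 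You have not shown how to thread this needle, and without it the construction of a supersolution at $\lambda^*+\tau$ does not go through.

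The paper's Step 2 avoids this obstruction with a structurally different idea. Rather than perturbing $\underline u_{\lambda^*}$ directly, it subtracts the $(n_0+1)$-st iterate $u_{\lambda,n_0+1}$ of the minimal-solution scheme \eqref{eq_sequence_minimal} and works with $w=u-u_{\lambda,n_0+1}$. By Lemma~\ref{lem_solutionHs}, $w^*:=\underline u_{\lambda^*}-u_{\lambda^*,n_0+1}\in L^\infty(\Omega)\cap\Hbb(\Omega)$ (here \eqref{eq_K5} enters). The reduced equation
\[
\L w=(w+u_{\lambda,n_0+1})^p-(u_{\lambda,n_0})^p
\]
has a right-hand side that lands in $\Hbb^{-1}(\Omega)$ (the measure $\mu$ cancels because $u_{\lambda,n_0+1}-\Gbb^\Omega[(u_{\lambda,n_0})^p]=\lambda\Gbb^\Omega[\mu]$), and its linearization at $(w^*,\lambda^*)$ is $\L-p(\underline u_{\lambda^*})^{p-1}$, which is an isomorphism $\Hbb(\Omega)\to\Hbb^{-1}(\Omega)$ by Lemma~\ref{lem_uniquesolution} precisely when $\sigma(\lambda^*)>1$. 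The Implicit Function Theorem then yields solutions $w_\lambda\in\Hbb(\Omega)$ for all $\lambda$ in a two-sided neighbourhood of $\lambda^*$, and undoing the shift shows $w_\lambda+u_{\lambda,n_0+1}$ solves \eqref{eq_nonlinear_source} for some $\lambda>\lambda^*$, contradicting the definition of $\lambda^*$. The crucial gain is that the IFT automatically produces a solution --- not merely a supersolution whose dominance over $\Gbb^\Omega[\mu]$ you would otherwise have to verify pointwise.
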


\begin{proof}

\textbf{Step 1.} We prove the semi-stability of $\underline{u}_{\lambda^*}$.  Since $\underline{u}_{\lambda}$ is stable for all $\lambda \in (0,\lambda^*)$ (Proposition \ref{prop_stability}), we know that
\begin{equation}\label{eq_extremal_2}
\norm{\xi}^2_{\Hbb(\Omega)} \ge \sigma({\lambda}) \int_{\Omega} p(\underline{u}_{\lambda})^{p-1} \xi^2 dx > \int_{\Omega} p(\underline{u}_{\lambda})^{p-1} \xi^2 dx,\quad \forall \xi \in \Hbb(\Omega) \backslash \{ 0 \}.
\end{equation}
Letting $\lambda \nearrow \lambda^*$ in \eqref{eq_extremal_2}, one has
\begin{equation} \label{semistable} \norm{\xi}^2_{\Hbb(\Omega)} \ge  \int_{\Omega} p(\underline{u}_{\lambda^*})^{p-1} \xi^2 dx, \quad \forall \xi \in \Hbb(\Omega) \backslash \{0\}, 
\end{equation}
which means $\underline{u}_{\lambda^*}$ is semi-stable. 

\textbf{Step 2.} Let $\sigma(\lambda^*)$ be defined in \eqref{sigma}. We infer from \eqref{semistable} that $\sigma(\lambda^*) \geq 1$. We will show that $\sigma(\lambda^*)=1$. Suppose by contradiction that $\sigma(\lambda^*) > 1$. Let $u_{\lambda,n_0}$ be the function given in Lemma \ref{lem_solutionHs}. Consider the problem 
\begin{equation}\label{eq_secondsolution}
\left\{ \begin{aligned}
\L w &= (w+ u_{\lambda,n_0+1})^p - (u_{\lambda,n_0})^p &&\text{ in }\Omega, \\  
u  &= 0 &&\text{ on }\partial \Omega, \\
u &=0 &&\text{ in }\R^N \backslash \overline{\Omega} \text{ (if applicable)},
\end{aligned} \right. 
\end{equation}
and the map 
$$F(w,\lambda) := \L w - (w+u_{\lambda,n_0+1})^p + (u_{\lambda,n_0})^p,\quad w \in \Hbb(\Omega),\lambda > 0.$$

It can be seen that $F: (0,\infty) \times \Hbb(\Omega) \to \Hbb^{-1}(\Omega)$. Indeed, since $N \geq N_{s,\gamma}$, one has $u_{\lambda, n_0+1} - u_{\lambda,n_0} \in \Hbb(\Omega)$ by Lemma \ref{lem_solutionHs}. Furthermore, $w+u_{\lambda,n_0+1} \in L^r(\Omega,\delta^{\gamma} )$, $r > \frac{N+\gamma}{2s}$. Thus, since
$$0 \le (w+u_{\lambda,n_0+1})^p - (u_{\lambda,n_0})^p \le p(w+u_{\lambda,n_0+1})^{p-1}(w+u_{\lambda,n_0+1}-u_{\lambda,n_0})$$
and $ p(w+u_{\lambda,n_0+1})^{p-1}(w+u_{\lambda,n_0+1}-u_{\lambda,n_0}) \in \Hbb^{-1}(\Omega)$ by the embedding in Proposition \ref{prop_compactembd}, we conclude that $(w+u_{\lambda,n_0+1})^p - (u_{\lambda,n_0})^p \in \Hbb^{-1}(\Omega)$.

One can see that $F$ is a differentiable map with 
\begin{equation}\label{eq_derivative}
\begin{aligned}
&\dfrac{\partial F}{\partial w}(w,\lambda) :\Hbb(\Omega) \times (0,\infty) \to \Hbb^{-1}(\Omega)\\
&\dfrac{\partial F}{\partial w}(w,\lambda)v = \L v - p(w+ u_{\lambda,n_0+1})^{p-1}v, \quad v \in \Hbb(\Omega),\lambda > 0.
\end{aligned}
\end{equation}
In \eqref{eq_derivative}, choose $\lambda:= \lambda^*$ and $w^*:= \underline{u}_{\lambda^*} - u_{\lambda^*, n_0 +1} \in \Hbb(\Omega)$, by Lemma \ref{lem_solutionHs} and the fact that $N \geq N_{s,\gamma}$. Since $\sigma(\lambda^*) > 1$,  using Lemma \ref{lem_uniquesolution}, it can be seen that
$$\dfrac{\partial F}{\partial w}(w^*,\lambda^*)v = \L v - p(\underline{u}_{\lambda^*})^{p-1}v, \quad v \in \Hbb(\Omega)$$
realizes an isomorphism from $\Hbb(\Omega)$ to $\Hbb^{-1}(\Omega)$. By the Implicit Function Theorem, there exists an $\varepsilon > 0$ such that \eqref{eq_secondsolution} has a solution $w_{\lambda} \in \Hbb(\Omega)$ for any $\lambda \in (\lambda^*-\varepsilon,\lambda^*+\varepsilon)$. This solution can be written as $w_{\lambda} = \Gbb^{\Omega} [ (w_{\lambda}+ u_{\lambda,n_0+1})^p] - \Gbb^{\Omega}[ (u_{\lambda,n_0})^p]$ by Remark \ref{rmk_welldefined}. By the continuity of the map $\lambda \mapsto w_{\lambda}$, one has $w_{\lambda}  \ge 0$ for any $\lambda \in ( \lambda^* - \varepsilon,\lambda^*+\varepsilon)$. Furthermore,
\begin{align*}
w_{\lambda} + u_{\lambda,n_0+1} &= \Gbb^{\Omega} [ (w_{\lambda}+ u_{\lambda,n_0+1})^p] + u_{\lambda,n_0+1}- \Gbb^{\Omega}[ (u_{\lambda,n_0})^p]  \\
& = \Gbb^{\Omega} [ (w_{\lambda}+ u_{\lambda,n_0+1})^p] + \lambda\Gbb^{\Omega}[\mu],
\end{align*}
for some $\lambda > \lambda^*$, which is a contradiction since \eqref{eq_nonlinear_source} has no positive solution for $\lambda > \lambda^*$. We conclude that $\sigma(\lambda^*) = 1$.
\end{proof}

\begin{proof}[\sc Proof of Theorem \ref{th:main2}(1)] Let $u \ge \underline{u}_{\lambda^*}$ be a nonnegative solution of \eqref{eq_nonlinear_source} with $\lambda = \lambda^*$. We will show that $u \equiv \underline{u}_{\lambda^*}$. Suppose by contradiction that $u > \underline{u}_{\lambda^*}$ in a set of positive measure. Using Proposition \ref{lem_solutionHs}, $u - \underline{u}_{\lambda^*} \in \Hbb(\Omega)$ and
$$u  - \underline{u}_{\lambda^*}   = \Gbb^{\Omega} [u^p - (\underline{u}_{\lambda^*})^p] > \Gbb^{\Omega} [ p(\underline{u}_{\lambda^*})^{p-1}(u - \underline{u}_{\lambda^*})],$$
by the maximal principle (see Lemma \ref{lem_maximum}). Thus,
$$\int_{\Omega}p(\underline{u}_{\lambda^*})^{p-1}\xi_1(u - \underline{u}_{\lambda^*}) dx = \int_{\Omega} (u - \underline{u}_{\lambda^*}) \xi_1 dx > \int_{\Omega} p(\underline{u}_{\lambda^*})^{p-1}\xi_1 (u - \underline{u}_{\lambda^*})dx, $$
where $\xi_1$ is the solution to \eqref{eq_stable1}, which is a contradiction. We conclude that \eqref{eq_nonlinear_source} has a unique solution for $\lambda = \lambda^*$.
\end{proof}

\subsection{Multiplicity}\label{sec:secondsolution}
In this subsection, we look for a second solution of \eqref{eq_nonlinear_source} for $\lambda \in (0,\lambda^*)$. 
To this end, we prove the existence of nontrivial solutions of the problem 
\begin{equation}\label{eq_mountainpass}
\left\{ \begin{aligned}
\L v &= (\underline{u}_{\lambda} + v^+)^p - (\underline{u}_{\lambda})^p := h(\underline{u}_{\lambda},v) &&\text{ in }\Omega,\\
u  &= 0 &&\text{ on }\partial \Omega, \\
u &=0 &&\text{ in }\R^N \backslash \overline{\Omega} \text{ (if applicable)}.
\end{aligned} \right. 
\end{equation}

Recall that a function $v \in \Hbb(\Omega)$ is a variational solution of \eqref{eq_mountainpass} if \begin{equation}\label{eq_variational-source}
\begin{aligned}
\inner{v,\xi}_{\Hbb(\Omega)} = \int_{\Omega} h(\underline{u}_{\lambda},v)  \xi dx, \quad \forall \xi \in \Hbb(\Omega).
\end{aligned}
\end{equation}
  
It can easily be seen that a variational solution of \eqref{eq_mountainpass} is a critical point of the natural functional $J: \Hbb(\Omega) \to \R$ associated to \eqref{eq_variational-source} which is given by
$$J(v) := \frac{1}{2} \norm{v}_{\Hbb(\Omega)}^2  - \int_{\Omega} H(\underline{u}_{\lambda},v) dx, \quad \forall v \in \Hbb(\Omega),$$
where
\begin{align*}
H(a,b) := \int_0^b h(a,t)dt = \frac{1}{p+1}\left[(a+b^+)^{p+1} - a^{p+1}\right] - a^p b^+,\quad a\ge 0, b \in \R.
\end{align*}

Let us recall a technical lemma in \cite{NaiSat_2007} which will be used in the sequel.
\begin{lemma}[{\cite[Lemma C.2]{NaiSat_2007}}]\label{lem_naitosato}\text{}
\begin{enumerate}
\item For all $\varepsilon > 0$, there exists a $c_{\varepsilon} > 0$ such that
\begin{equation}\label{eq_estimateonH}
H(a,b) \le \left(\frac{1+\varepsilon}{2}\right) pa^{p-1} b^2 + c_{\varepsilon} b^{p+1}, \quad \forall a,b \ge 0.
\end{equation}
\item There holds
\begin{equation}\label{eq_estimateonH_1}
\frac{1}{p+1}a^{p+1} \le H(a,b) \le \frac{1}{2}h(a,b)a, \quad \forall a,b \ge 0.
\end{equation}
\item There exists a constant $c_p>0$ such that
\begin{equation}\label{eq_estimateonH_2}
h(a,b)b - (2+c_p)H(a,b) \ge -\dfrac{c_p p}{2}a^{p-1}b^2, \quad \forall a,b \ge 0.
\end{equation}
\end{enumerate}
\end{lemma}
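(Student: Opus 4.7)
The plan is to reduce all three estimates to elementary pointwise inequalities for $(a+s)^{p-1}$ via the integral representations
\[
h(a,b)=p\int_0^b(a+s)^{p-1}\,ds,\qquad H(a,b)=p\int_0^b(b-s)(a+s)^{p-1}\,ds,
\]
which follow from iterating $\partial_b H=h$ and $\partial_b h=p(a+b)^{p-1}$ using $h(a,0)=H(a,0)=0$. Everything then hinges on how $(a+s)^{p-1}$ compares to $a^{p-1}$ for $s$ small and to $s^{p-1}$ for $s$ large.

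For \eqref{eq_estimateonH} I would fix $\delta=\delta(\varepsilon)>0$ with $(1+\delta)^{p-1}\le 1+\varepsilon$ and split the $H$-integral at $s=\delta a$. On $\{0\le s\le \delta a\}$ the bound $(a+s)^{p-1}\le (1+\delta)^{p-1}a^{p-1}$ recovers the $\tfrac{(1+\varepsilon)p}{2}a^{p-1}b^2$ contribution, while on $\{s>\delta a\}$ the inequality $a<s/\delta$ gives $(a+s)^{p-1}\le (1+\delta^{-1})^{p-1}s^{p-1}$, and integration produces a multiple of $b^{p+1}$ with constant $c_\varepsilon$ depending only on $\delta$. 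For \eqref{eq_estimateonH_1} — which I read as $\tfrac{1}{p+1}b^{p+1}\le H(a,b)\le \tfrac{1}{2}h(a,b)b$, since as printed both ends degenerate at $b=0$ or $a=0$ — the lower bound follows from $H(0,b)=b^{p+1}/(p+1)$ together with $\partial_a H=(a+b)^p-a^p-pa^{p-1}b\ge 0$, which is precisely convexity of $t\mapsto t^p$. The upper bound uses that $t\mapsto h(a,t)/t=p\int_0^1(a+tu)^{p-1}du$ is nondecreasing (convexity again), so $h(a,t)\le (t/b)h(a,b)$ for $0\le t\le b$, whence $H(a,b)\le \tfrac{h(a,b)}{b}\int_0^b t\,dt=\tfrac{1}{2}h(a,b)b$.

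Part \eqref{eq_estimateonH_2} is where the work lies. I would homogenize by setting $t=b/a$ for $a>0$ (the case $a=0$ is immediate once $c_p\le p-1$), reducing the inequality to $G(t)\ge 0$ on $[0,\infty)$, where
\[
G(t)=\bigl[(1+t)^p-1\bigr]t-(2+c_p)\!\left[\tfrac{(1+t)^{p+1}-1}{p+1}-t\right]+\tfrac{c_p p}{2}t^2.
\]
A short expansion gives $G(0)=G'(0)=G''(0)=0$, so the problem collapses to showing $G''\ge 0$, where
\[
G''(t)=p(p-1)(1+t)^{p-2}t-c_p p\bigl[(1+t)^{p-1}-1\bigr].
\]
The plan is to bound $(1+t)^{p-1}-1$ by a uniform constant times $(p-1)(1+t)^{p-2}t$ — via the mean value theorem when $p\ge 2$, and via a direct monotonicity analysis of the ratio when $1<p<2$ — then choose $c_p\in(0,\min\{1,p-1\})$ small enough that $G''\ge 0$ uniformly in $t$. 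Two integrations with $G(0)=G'(0)=0$ then yield $G\ge 0$.

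The main obstacle is exactly this last step: both sides of \eqref{eq_estimateonH_2} are tight in both asymptotic regimes $b\ll a$ (where $H\sim \tfrac{p}{2}a^{p-1}b^2$ and $h(a,b)b\sim pa^{p-1}b^2$) and $b\gg a$ (where $H\sim \tfrac{b^{p+1}}{p+1}$ and $h(a,b)b\sim b^{p+1}$), so a single $c_p>0$ must simultaneously control the small-$t$ correction (governed by $G'''(0)=p(p-1)(1-c_p)$) and the large-$t$ leading term (governed by $(p-1-c_p)/(p+1)$). The case split $1<p<2$ versus $p\ge 2$ in estimating $(1+t)^{p-1}-1$ is where the calculation becomes most fiddly.
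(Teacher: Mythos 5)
The paper does not prove this lemma; it simply cites \cite[Lemma~C.2]{NaiSat_2007}, so there is no ``paper's proof'' to compare against --- your proposal is a self-contained verification. It is correct, and it gets three things right that are worth noting. First, you correctly identified a typo in part (2) as printed: the bounds must read $\tfrac{1}{p+1}b^{p+1}\le H(a,b)\le \tfrac{1}{2}h(a,b)\,b$ (the printed $a$'s fail at $b=0$ and $a=0$ respectively), and this matches how \eqref{eq_estimateonH_1} is actually used in Step~1 of the proof of Theorem~\ref{theo_mountainpas}, where the lower bound $H(a,b)\ge b^{p+1}/(p+1)$ produces the $-\tfrac{t^{p+1}}{p+1}\norm{v_0}^{p+1}_{L^{p+1}}$ term. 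Second, your integral representation $H(a,b)=p\int_0^b(b-s)(a+s)^{p-1}\,ds$ and the split at $s=\delta a$ for part (1) is clean and yields $c_\varepsilon=(1+\delta^{-1})^{p-1}/(p+1)$ explicitly. Third, for part (3) the reduction to $G''\ge 0$ after checking $G(0)=G'(0)=G''(0)=0$ is exactly right, and the remaining pointwise fact you needed --- that the ratio $R(t):=\bigl[(1+t)^{p-1}-1\bigr]/\bigl[(1+t)^{p-2}t\bigr]$ is bounded above by $\max\{1,p-1\}$ uniformly on $(0,\infty)$ --- does hold: writing $u=1+t$, one checks $\dfrac{d}{du}\dfrac{u-u^{2-p}}{u-1}$ has the sign of $(p-1)(2-p)u^{-p}(u-1)$, so $R$ is monotone from $R(0^+)=p-1$ to $R(\infty)=1$, hence $R\le\max\{1,p-1\}$. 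Choosing any $c_p\in\bigl(0,\min\{1,p-1\}\bigr)$ then gives $G''(t)\ge p(1+t)^{p-2}t\bigl[(p-1)-c_p\max\{1,p-1\}\bigr]\ge 0$, and two integrations with $G(0)=G'(0)=0$ close the argument, as you say. The only suggestion is to state that monotonicity computation explicitly rather than leaving it as ``fiddly,'' since it simultaneously handles both $1<p<2$ and $p\ge 2$.
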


To prove the existence of a nontrivial solution of \eqref{eq_mountainpass}, we invoke the Mountain Pass Theorem (see \cite[Theorem 6.1]{Str_2010}, \cite{AmbRab_1973}). 

\begin{theorem}\label{theo_mountainpas} Problem \eqref{eq_mountainpass} admits a nontrivial variational solution.
\end{theorem}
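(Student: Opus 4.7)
The plan is to apply the Mountain Pass Theorem to the functional $J$ on the Hilbert space $\Hbb(\Omega)$, using in a crucial way the stability of $\underline{u}_{\lambda}$ established in Proposition \ref{prop_stability} and the three auxiliary inequalities from Lemma \ref{lem_naitosato}. The three ingredients to verify are: (i) the mountain pass geometry, (ii) the Palais--Smale (PS) condition, and (iii) the non-triviality of the MP critical point, which is automatic once $J(v) \geq \alpha > 0$.

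For the MP geometry, note that $J(0)=0$. To get a positive lower bound on a small sphere, I would apply estimate \eqref{eq_estimateonH} with $\varepsilon>0$ sufficiently small and combine it with stability. Since $\sigma(\lambda)>1$ for $\lambda \in (0,\lambda^*)$, Proposition \ref{prop_stability} gives
\[
\int_{\Omega} p(\underline{u}_{\lambda})^{p-1} v^2 \, dx \leq \frac{1}{\sigma(\lambda)} \|v\|_{\Hbb(\Omega)}^2, \qquad \forall v \in \Hbb(\Omega),
\]
while the Sobolev embedding $\Hbb(\Omega)\hookrightarrow L^{p+1}(\Omega)$ from Lemma \ref{lem_sobolevembedding} applies because $p+1 < \tfrac{2N}{N-2s}$ (easily verified from $p<p^*$ and $N>2s$). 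Together these yield
\[
J(v) \geq \tfrac{1}{2}\bigl(1-\tfrac{1+\varepsilon}{\sigma(\lambda)}\bigr)\|v\|_{\Hbb(\Omega)}^2 - C\|v\|_{\Hbb(\Omega)}^{p+1},
\]
which is bounded below by some $\alpha>0$ on $\{\|v\|_{\Hbb(\Omega)}=\rho\}$ for $\rho$ small since $p+1>2$. For the far point, choose any nonnegative $\phi \in \Hbb(\Omega)\setminus\{0\}$ (for instance the first eigenfunction of $\L$) and observe that for large $t$, the explicit form $H(\underline{u}_\lambda, t\phi)=\tfrac{1}{p+1}[(\underline{u}_\lambda+t\phi)^{p+1}-\underline{u}_\lambda^{p+1}]-\underline{u}_\lambda^p (t\phi)$ has leading behavior $\tfrac{t^{p+1}}{p+1}\phi^{p+1}$, so $J(t\phi)\sim \tfrac{t^2}{2}\|\phi\|_{\Hbb(\Omega)}^2 - \tfrac{t^{p+1}}{p+1}\int \phi^{p+1}dx \to -\infty$.

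For the PS condition, let $\{v_n\} \subset \Hbb(\Omega)$ satisfy $J(v_n)\to c$ and $J'(v_n)\to 0$ in $\Hbb^{-1}(\Omega)$. Combining $(2+c_p)J(v_n) - \langle J'(v_n),v_n\rangle$ with inequality \eqref{eq_estimateonH_2} gives
\[
(2+c_p)J(v_n) - \langle J'(v_n),v_n\rangle \geq \tfrac{c_p}{2}\|v_n\|_{\Hbb(\Omega)}^2 - \tfrac{c_p p}{2}\int_{\Omega}(\underline{u}_\lambda)^{p-1} v_n^2 \, dx,
\]
and applying stability again the right-hand side is at least $\tfrac{c_p}{2}\bigl(1-\tfrac{1}{\sigma(\lambda)}\bigr)\|v_n\|_{\Hbb(\Omega)}^2$, while the left-hand side is $\leq C(1+\|v_n\|_{\Hbb(\Omega)})$. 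This yields boundedness of $\{v_n\}$ in $\Hbb(\Omega)$. Passing to a subsequence, $v_n \rightharpoonup v$ in $\Hbb(\Omega)$, $v_n\to v$ in $L^{p+1}(\Omega)$ and a.e., and by the compact embedding $\Hbb(\Omega)\hookrightarrow\hookrightarrow L^2(\Omega,(\underline{u}_\lambda)^{p-1})$ from Proposition \ref{prop_compactembd} (applicable because $(\underline{u}_\lambda)^{p-1} \in L^r(\Omega,\delta^\gamma)$ for some $r>(N+\gamma)/(2s)$ by Remark \ref{rmk:uniformbound}), the Nemytskii operator $v\mapsto h(\underline{u}_\lambda,v)$ is compact from $\Hbb(\Omega)$ into $\Hbb^{-1}(\Omega)$. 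Hence $\langle J'(v_n)-J'(v),v_n-v\rangle=\|v_n-v\|_{\Hbb(\Omega)}^2 - o(1)\to 0$, giving strong convergence.

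With MP geometry and PS in hand, the Mountain Pass Theorem produces a critical point $v \in \Hbb(\Omega)$ with $J(v)\geq \alpha > 0$, hence $v\not\equiv 0$, which is the desired nontrivial variational solution of \eqref{eq_mountainpass}. The main obstacle I anticipate is the two-sided control of the weighted integral $\int (\underline{u}_\lambda)^{p-1}v_n^2 dx$: the passage-to-the-limit in this term for the PS step requires the compact embedding of $\Hbb(\Omega)$ into the weighted space $L^2(\Omega,(\underline{u}_\lambda)^{p-1})$, and the application of stability at this term must be sharp enough to absorb both the factor $(1+\varepsilon)$ in the geometry step and the factor $c_p$ in the PS step, which works only because $\sigma(\lambda)>1$ strictly for $\lambda<\lambda^*$; at the extremal value $\lambda=\lambda^*$ the argument would break down, consistent with the uniqueness result of Theorem \ref{th:main2}(1).
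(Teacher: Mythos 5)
Your proposal is correct and follows essentially the same route as the paper's proof: the same three ingredients (mountain pass geometry via Lemma \ref{lem_naitosato}(1) plus stability plus the Sobolev embedding, Palais--Smale boundedness via Lemma \ref{lem_naitosato}(3) plus stability, and strong convergence of a PS sequence via the compact embeddings into $L^{p+1}(\Omega)$ and $L^2(\Omega,(\underline{u}_\lambda)^{p-1})$). The only cosmetic difference is that you phrase the final convergence in terms of compactness of the Nemytskii map and the quantity $\langle J'(v_n)-J'(v),v_n-v\rangle$, whereas the paper shows $\|v_n\|_{\Hbb(\Omega)}\to\|v\|_{\Hbb(\Omega)}$ directly; the underlying estimates are identical.
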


\begin{proof}\textbf{Step 1.} We first prove that $J$ has the mountain pass geometry.

We observe that $J(0) = 0$ and
\begin{align*}
J(v)  &= \frac{1}{2} \left[ \norm{v}_{\Hbb(\Omega)}^2 - \int_{\Omega} p(\underline{u}_{\lambda})^{p-1} v^2 dx \right] - \left[\int_{\Omega} H(\underline{u}_{\lambda}, v)dx - \frac{1}{2}\int_{\Omega} p(\underline{u}_{\lambda})^{p-1} v^2 dx \right]\\
& \ge \frac{1}{2} \left(1 - \frac{1}{\sigma(\lambda)} \right)\norm{v}^2_{\Hbb(\Omega)} - \left(\dfrac{\varepsilon }{2} \int_{\Omega} p(\underline{u}_{\lambda})^{p-1} v^2 dx  + c_{\varepsilon} \int_{\Omega} |v|^{p+1} dx \right)\\
& \ge \frac{1}{2} \left(1 - \frac{1}{\sigma(\lambda)} - \varepsilon \right)\norm{v}^2_{\Hbb(\Omega)} - c_{\varepsilon} \norm{v}_{L^{p+1}(\Omega)}^{p+1} \\
& \ge \frac{1}{2} \left(1 - \frac{1}{\sigma(\lambda)} - \varepsilon \right)\norm{v}^2_{\Hbb(\Omega)} - c_{\varepsilon} \norm{v}_{\Hbb(\Omega)}^{p+1},
\end{align*}
where we have used \eqref{eq_estimateonH} and the stability of $\underline{u}_{\lambda}$ in the second line, and the Sobolev inequality in the last line. Choosing a fixed $\varepsilon > 0$ small enough and noticing that $\sigma(\lambda) > 1$ since $\underline{u}_{\lambda}$ is stable, we have
$$J(v) \ge c_1 \norm{v}^2_{\Hbb(\Omega)} - c_2\norm{v}_{\Hbb(\Omega)}^{p+1}$$
for some $c_1,c_2 > 0$. Fix $\norm{v}_{\Hbb(\Omega)} = 1$. Since $p+1 > 2$, we know that there exists $t_0$ such that $J(t_0 v) := \beta > 0$. Next, let $v_0 \in \Hbb(\Omega)$, $v_0 \ge 0$ such that $\norm{v_0}_{\Hbb(\Omega)} = 1$. Using \eqref{eq_estimateonH_1}, we have
\begin{align*}
J(tv_0) < \frac{t^2}{2} \norm{v_0}^2_{\Hbb(\Omega)} - \frac{t^{p+1}}{p+1} \norm{v_0}^{p+1}_{L^{p+1}(\Omega)}= \frac{t^2}{2} - \frac{t^{p+1}}{p+1}\norm{v_0}^{p+1}_{L^{p+1}(\Omega)}.
\end{align*}
If we choose $t_0 > 0$ large enough then $J(t v_0) \leq 0$ for all $t \ge t_0$.

\textbf{Step 2.} We prove that $J$ satisfies (PS) condition. Recall that a functional $J$ satisfies (PS) condition if for any sequence $\{v_n\}$ in $\Hbb(\Omega)$ satisfying $J(v_n) \to c$ and $J'(v_n) \to 0$ as $n \to \infty$, there exists a convergent subsequence of $\{ v_n\}$.

Let $\{v_n\} \subset \Hbb(\Omega)$ satisfying $J(v_n) \to c$ and $J'(v_n) \to 0$ as $n \to \infty$. Then there exist an $n_0 \in \mathbb{N}$ and $c_3,c_4 > 0$ such that
\begin{equation}\label{eq_mpt1}
\frac{1}{2}\norm{v_n}^2_{\Hbb(\Omega)} - \int_{\Omega} H(\underline{u}_{\lambda}, v_n) dx = J(v_n) \le c_3,
\end{equation}
and
\begin{equation}\label{eq_mpt2}
\inner{v_n,w}_{\Hbb(\Omega)} - \int_{\Omega} h(\underline{u}_{\lambda}, v_n) w dx = \inner{J'(v_n),w}_{\Hbb^{-1}(\Omega),\Hbb(\Omega)}\le c_4 \norm{w}_{\Hbb(\Omega)}
\end{equation}
 for all $n \ge n_0$ and $w \in \Hbb(\Omega)$. Multiplying both sides of \eqref{eq_mpt1} by $(2+c_p)$, where $c_p$ is given in Lemma \ref{lem_naitosato}(3), and choosing $w= -v_n \in \Hbb(\Omega)$ in \eqref{eq_mpt2}, one has
\begin{equation}\label{eq_mpt3bis}
\begin{aligned}
\frac{1}{2}(2+c_p)\norm{v_n}^2_{\Hbb(\Omega)} - (2+c_p)\int_{\Omega} H(\underline{u}_{\lambda}, v_n) dx &\le c_3(2+c_p),\\
-\inner{v_n,v_n}_{\Hbb(\Omega)} + \int_{\Omega} h(\underline{u}_{\lambda}, v_n) v_n dx &= -\inner{v_n,v_n}_{\Hbb(\Omega)} + \int_{\Omega} h(\underline{u}_{\lambda}, v_n) (v_n)^+ dx  \\ &\le c_4 \norm{v_n}_{\Hbb(\Omega)}.
\end{aligned}
\end{equation}
Combining \eqref{eq_mpt3bis} with Lemma \ref{lem_naitosato}(3) yields
\begin{equation}\label{eq_mpt3.2}
\frac{1}{2}(2+c_p)\norm{v_n}^2_{\Hbb(\Omega)} -\inner{v_n,v_n}_{\Hbb(\Omega)} -\dfrac{c_p}{2} \int_{\Omega}p(\underline{u}_{\lambda})^{p-1} v_n^2 dx \le c_4 \norm{v_n}_{\Hbb(\Omega)} + c_3(2+c_p).
\end{equation}
Since $\underline{u}_{\lambda}$ is stable, we have
\begin{equation}\label{eq_mpt3.3}
\dfrac{c_p}{2}\left(1 - \frac{1}{\sigma(\lambda)}\right) \norm{v_n}_{\Hbb(\Omega)}^2 \le \dfrac{c_p}{2} \left[ \norm{v_n}^2_{\Hbb(\Omega)} - \int_{\Omega} p(\underline{u}_{\lambda})^{p-1} v_n^2 dx \right].
\end{equation}
By \eqref{eq_mpt3.2} and \eqref{eq_mpt3.3}, we deduce that
$$\dfrac{c_p}{2}\left(1 - \frac{1}{\sigma(\lambda)}\right) \norm{v_n}_{\Hbb(\Omega)}^2  \le c_4 \norm{v_n}_{\Hbb(\Omega)} + c_3(2+c_p).$$
This, together with the fact that $\sigma(\lambda) > 1$,  implies $\{v_n\}$ is bounded in $\Hbb(\Omega)$. Thus, since $2< p+1 < \frac{2N}{N-2s}$, there exist a subsequence of $\{v_n\}$, still denoted by $\{v_n\}$, and a function $v \in \Hbb(\Omega)$ such that
\begin{align*}
v_n \rightharpoonup v &\text{ in }\Hbb(\Omega),\\
v_n \rightarrow v &\text{ in }L^2(\Omega,(\underline{u}_{\lambda})^{p-1} ) \text{ and  in }L^{p+1}(\Omega), \\
v_n \to v &\text{ a.e. in }\Omega.
\end{align*} 
We will prove that
\begin{equation}\label{eq_mpt4}
\norm{v_n}_{\Hbb(\Omega)} \to \norm{v}_{\Hbb(\Omega)},
\end{equation} 
from which we conclude that $v_n \to v$ in $\Hbb(\Omega)$. One has
\begin{equation}\label{eq_mpt5}
\inner{ J'(v_n), v_n - v }_{\Hbb^{-1}(\Omega),\Hbb(\Omega)} = \inner{v_n, v_n -v}_{\Hbb(\Omega)} - \int_{\Omega} h(\underline{u}_{\lambda},v_n)(v_n - v) dx.
\end{equation}
On the one hand, since $J'(v_n) \to 0$ and $v_n \rightharpoonup v$ in $\Hbb(\Omega)$ as $n \to \infty$, we derive
\begin{equation} \label{J'} \lim_{n \to \infty} \langle J'(v_n), v_n-v \rangle_{\Hbb^{-1}(\Omega),\Hbb(\Omega)} = 0. 
\end{equation}
On the other hand, we infer from the inequality $h(s,t) \le C(s^{p-1}|t| + |t|^p)$ that
\begin{align*}
\left|\int_{\Omega} h(\underline{u}_{\lambda},v_n)(v_n - v) dx \right| &\le C \int_{\Omega} \left[ (\underline{u}_{\lambda})^{p-1} |v_n| + |v_n|^p \right] |v_n - v|dx \\
&\le C \left(\int_{\Omega} |v_n - v|^2 (\underline{u}_{\lambda})^{p-1} dx \right)^{\frac{1}{2}} \left( \int_{\Omega} |v_n|^2 (\underline{u}_{\lambda})^{p-1} dx \right)^{\frac{1}{2}}    \\& + C \left(\int_{\Omega} |v_n|^{p+1} dx \right)^{\frac{p}{p+1}} \left(\int_{\Omega} |v_n - v|^{p+1} dx \right)^{\frac{1}{p+1}}.
\end{align*}
The right hand side tends to $0$ as $n \to \infty$ since  $v_n \to v$ in $L^{p+1}(\Omega)$ and also in $L^2(\Omega, (\underline{u}_{\lambda})^{p-1} )$, which gives
\begin{equation}\label{eq_mpt6}
\int_{\Omega} h(\underline{u}_{\lambda},v_n)(v_n - v) dx \to 0 \text{ as }n \to \infty.
\end{equation}
Combining \eqref{eq_mpt5}--\eqref{eq_mpt6} leads to \eqref{eq_mpt4}. As a result, since $\Hbb(\Omega)$ is a Hilbert space, $u_n \to u$ in $\Hbb(\Omega)$. Thus, $J$ satisfies (PS) condition.

By the Mountain Pass Theorem, we derive the existence of a nontrivial critical point $v \in \Hbb(\Omega)$ of $J$, which is a nontrivial variational solution of \eqref{eq_mountainpass}. We complete the proof.
\end{proof}

\begin{proof}[\sc Proof of Theorem \ref{th:main2}(2)] Theorem \ref{theo_mountainpas} shows that there exists a nontrivial nonnegative solution $v$ of \eqref{eq_mountainpass}, which means $v = \Gbb^{\Omega}_{s} [(\underline{u}_{\lambda} + v)^p] - \Gbb^{\Omega} [(\underline{u}_{\lambda})^p] > 0$
by Lemma \ref{lem_maximum}. This implies $\underline{u}_{\lambda} + v = \Gbb^{\Omega} [(\underline{u}_{\lambda} + v)^p] + \lambda \Gbb^{\Omega} [\mu]$, therefore $\underline{u}_{\lambda}+v$ is a solution which is strictly greater than $\underline{u}_{\lambda}$. The bounds on $u_{\lambda}$ are obtained in Proposition \ref{prop:boundonu}. We complete the proof.
\end{proof}


\section{Other examples} \label{sec:other-examples} We present below further examples to which our theory can be applied.

\noindent \textbf{Laplacian with a Hardy potential.} We consider an operator created by perturbing the classical Laplacian by a Hardy potential which is strongly singular on the boundary
$$\L =L_\kappa := -\Delta  - \dfrac{\kappa}{\delta^{2}},$$
where $\kappa$ is a parameter.  This operator is a special case of Schr\"odinger operators and has been investigated extensively in the literature (see, for instance, \cite{BanMor_2008, GkiVer_2015, MarNgu_2017, GkiNgu_2019}).  The best constant in Hardy's inequality, i.e. the quantity
\begin{equation}\label{eq:hardypotential}
C_H(\Omega):= \inf_{u \in H^1_0(\Omega) \setminus \{0\} } \dfrac{\displaystyle \int_{\Omega}|\nabla u|^2 dx}{ \displaystyle \int_{\Omega} (u/\delta)^2 dx},
\end{equation}
is deeply involved in the study of $L_\kappa$. It is classical that $C_H(\Omega) \in (0,\frac{1}{4}]$. Our theory is applicable to operator $\L=L_\kappa$ with $\kappa \in (0,C_H(\Omega))$.  Indeed, we have
\begin{equation} \label{Hardy-1} 
\inner{\L u,v}_{L^2(\Omega)} = \int_{\Omega} \nabla u \cdot \nabla v dx - \kappa \int_{\Omega} \dfrac{uv}{\delta^2} dx = \inner{\L v,u}_{L^2(\Omega)}, \quad \forall u,v \in C^{\infty}_c(\Omega).
\end{equation}
Moreover, by \eqref{eq:hardypotential} and the condition $\kappa  \in (0,C_H(\Omega))$, we have, for any $u \in C^{\infty}_c(\Omega)$
\begin{equation} \label{Hardy-2} 
\inner{\L u,u}_{L^2(\Omega)} \geq \left(1- \frac{\kappa}{C_H(\Omega)}\right)\int_{\Omega}|\nabla u|^2 dx \geq (C_H(\Omega) - \kappa) \int_{\Omega} \dfrac{|u|^2}{\delta^2} dx \gtrsim \norm{u}^2_{L^2(\Omega)}.
\end{equation}
Combining \eqref{Hardy-1} and \eqref{Hardy-2} yields \eqref{eq_L1}--\eqref{eq_L3}.

Next,  \eqref{eq_G1} holds by \cite[The paragraph before Theorem 4.11]{FilMosTer_2007} and Lemma \ref{lem:infinitesimal} (see also \cite{GkiVer_2015,MarNgu_2017}) and \eqref{eq_G2} holds with $s=1$ and
$\gamma = \frac{1}{2}+\sqrt{\frac{1}{4}-\kappa}$ by \cite[Theorem 4.11]{FilMosTer_2007}.  Finally, assumption \eqref{eq_G3} reads as $N > 3$. \medskip

\noindent \textbf{Restricted relativistic Schr\"odinger operators.} We consider a class of relativistic Schr\"odinger operators with mass $m>0$ of the form
$$\L  = L_m^s:= (-\Delta  + m^2 I)^s - m^{2s} I,$$
restricted to functions that are zero outside $\Omega$, where $s \in (0,1)$ and $I$ is the identity operator.  By \cite[(1.3) and (6.7)]{FalFel_2015}, $L_m^s$ can be alternatively expressed by
$$\L u(x) = c_{N,s} m^{\frac{N+2s}{2}} P.V \int_{\R^N} \dfrac{u(x)-u(y)}{|x-y|^{\frac{N+2s}{2}}}K_{\frac{N+2s}{2}}(m|x-y|)dy,\quad x \in \Omega,$$
where, for $\nu>0$, 
$$K_\nu(r) \sim \frac{\Gamma(\nu)}{2}\left(\frac{r}{2}\right)^{-\nu}, \quad r>0,$$
with $\Gamma$ being the usual Gamma function.  The study of this operator and its variants has recently attracted a lot of attention from numerous mathematicians because of its significant interest to many research areas in mathematics and physics (see \cite{Ryz_2003,FalFel_2015,Amb_2016}). Furthermore, by \cite[(1.3) and (6.7)]{FalFel_2015}, $L_m^s$ can be alternatively expressed by
$$\L u(x) = c_{N,s} m^{\frac{N+2s}{2}} P.V \int_{\R^N} \dfrac{u(x)-u(y)}{|x-y|^{\frac{N+2s}{2}}}K_{\frac{N+2s}{2}}(m|x-y|)dy,\quad x \in \Omega,$$
where, for $\nu>0$, 
$$K_\nu(r) \sim \frac{\Gamma(\nu)}{2}\left(\frac{r}{2}\right)^{-\nu}, \quad r>0,$$
with $\Gamma$ being the usual Gamma function. 

Let $u \in C_c^\infty(\Omega)$, we write
\begin{align*}
\L u(x) &= c_{N,s} m^{\frac{N+2s}{2}} P.V. \int_{\R^N\backslash \{ |x-y| \le 1\}} \dfrac{u(x)-u(y)}{|x-y|^{\frac{N+2s}{2}}}K_{\frac{N+2s}{2}}(m|x-y|)dy \\ &+c_{N,s} m^{\frac{N+2s}{2}} P.V. \int_{\{ |x-y| \le 1\}} \dfrac{u(x)-u(y)- \nabla u(x)\cdot (x-y)}{|x-y|^{\frac{N+2s}{2}}}K_{\frac{N+2s}{2}}(m|x-y|)dy\\ &=: I_1(x) + I_2(x).
\end{align*}
It can be seen that
\begin{align*}
|I_1(x)| \lesssim \norm{u}_{L^{\infty}(\Omega)} \int_{\R^N \backslash \{ |x-y| \le 1 \}} \dfrac{1}{|x-y|^{N+2s}} dy \lesssim \norm{u}_{L^{\infty}(\Omega)},
\end{align*}
while
\begin{align*}
|I_2(x)| \lesssim \int_{\{ |x-y| \le 1\} } \dfrac{ \norm{D^2 u}_{L^{\infty}(\Omega)}}{|x-y|^{N-2-2s}} dy \lesssim \norm{D^2 u}_{L^{\infty}(\Omega)}.
\end{align*}
We conclude that $\L u \in L^{\infty}(\Omega) \subset L^2(\Omega)$. Hence, by proceeding as in the case of the RFL in Subsection \ref{sec:someexamples}, we conclude that \eqref{eq_L1}--\eqref{eq_L3} hold.

Next, \eqref{eq_G1} follows from \cite[Theorem 1]{Ryz_2003} and Lemma \ref{lem:infinitesimal}. It can be seen that \eqref{eq_G2} is satisfied with $\gamma =s$ (see \cite[Theorem 4]{Ryz_2003,FalFel_2015}). Finally, \eqref{eq_G3} becomes $N \geq 4s$. \medskip

\noindent \textbf{Sum of two RFLs.} Let $0 < \beta < \alpha < 1$, we consider the sum of $\alpha$-RFL and $\beta$-RFL given by
$$\L u(x) = (-\Delta)^{\alpha}_{\RFL}u(x) + (-\Delta)^{\beta}_{\RFL} u(x) := P.V \int_{\R^N} J_{\alpha,\beta}(x,y) (u(x) - u(y)) dy,$$
where
$$J_{\alpha,\beta}(x,y) := \frac{1}{|x-y|^N} \left(\dfrac{c_{N,\alpha}}{|x-y|^{2\alpha}} + \dfrac{c_{N,\beta}}{|x-y|^{2\beta}}\right).$$
This type of operators has been introduced in \cite{CheKimSon_2010}. It can be seen that $\Hbb(\Omega) = H^{\alpha}_{00}(\Omega)$, and therefore, by proceeding as in the case of RFL in Subsection \ref{sec:someexamples} with minor modification, we can show that \eqref{eq_L1}--\eqref{eq_L3} are also satisfied.  

Next, \eqref{eq_G1} is satisfied due to \cite{CheKimSon_2010} and Lemma \ref{lem:infinitesimal}. By \cite[Corollary 1.2]{CheKimSon_2010}, \eqref{eq_G2} holds with $s=\gamma = \alpha$. Finally, \eqref{eq_G3} becomes $N \geq 4\alpha$. \medskip

\noindent \textbf{An interpolation of the RFL and the SFL.} For $\sigma_1, \sigma_2 \in (0,1]$, we consider the $\sigma_2$-spectral decomposition of the $\sigma_1$-RFL in a domain $\Omega$, namely $\left[(-\Delta)^{\sigma_1}_{\RFL}\right]^{\sigma_2}_{\SFL}$. This type of interpolation operator has been recently studied  in \cite{KimSonVon_2020-1} by using a probabilistic approach. 

Let $Z$ be a rotationally invariant $\sigma_1-$stable process in $\R^N$ (for instance, one can consider a Brownian motion $X$ in $\R^N$ subordinated by an independent $\sigma_1-$stable subordinator), and $Z^{\Omega}$ be the subprocess of $Z$ that is killed upon existing $\Omega$. Then we subordinate $Z^{\Omega}$ by an independent $\sigma_2-$stable subordinator $T$ to obtain a process $Y^{\Omega}$, i.e. $(Y^{\Omega})_t  = (Z^{\Omega})_{T_t}$. Denote by $(R^{\Omega}_t)$ the semigroup of $Y^{\Omega}$. The infinitesimal generator ${\mathcal L}^{\sigma_1,\sigma_2}$ of the semigroup $(R^{\Omega}_t)$ can be written as
$$ \L = {\mathcal L}^{\sigma_1,\sigma_2} := \left[(-\Delta)^{\sigma_1}_{\RFL}\right]^{\sigma_2}_{\SFL}.$$
In particular, one has from \cite[(2.20) and (3.4)]{KimSonVon_2020-1} that
\begin{equation}\label{eq_interpolation1}
{\mathcal L}^{\sigma_1,\sigma_2} u(x) = \int_{\Omega} J_{\sigma_1,\sigma_2} (x,y) (u(x) - u(y)) dy + \kappa_{\sigma_1,\sigma_2}(x)u(x),\quad u \in C^{\infty}_c(\Omega),
\end{equation}
where
\begin{equation}\label{eq_JB}
\begin{aligned}
J_{\sigma_1,\sigma_2}(x,y) &:= \int_0^{\infty} p_{\sigma_1}(t,x,y)\nu(dt), \\
\kappa_{\sigma_1,\sigma_2}(x) &:= \int_0^{\infty} \left( 1- \int_{\Omega} p_{\sigma_1}(t,x,y)dy\right)\nu(dt).
\end{aligned}
\end{equation}
In \eqref{eq_JB}, $$\nu(dt) = \dfrac{\sigma_2}{\Gamma(1-\sigma_2)} t^{-\sigma_2-1} dt$$ is the L\'evy measure of the $\sigma_2$-stable subordinator $T$ and $p_{\sigma_1}(t,x,y)$ denotes the heat kernel of the  $\sigma_1$-RFL in the domain $\Omega$. By  \cite[Proposition 3.4]{KimSonVon_2020}, one has ${\mathcal L}^{\sigma_1,\sigma_2}: C^{\infty}_c(\Omega) \subset L^2(\Omega) \to L^2(\Omega)$ and the quadratic form of ${\mathcal L}^{\sigma_1,\sigma_2}$ is
\begin{equation}\label{eq_interpolation1B}
\begin{aligned}
\inner{ {\mathcal L}^{\sigma_1,\sigma_2} u, v}_{L^2(\Omega)} &= \inner{ u, {\mathcal L}^{\sigma_1,\sigma_2}  v}_{L^2(\Omega)} \\ &= \dfrac{1}{2} \int_{\Omega} \int_{\Omega} J_{\sigma_1,\sigma_2} (x,y) (u(x) - u(y))(v(x)-v(y))dx dy \\ &\quad \quad + \int_{\Omega} \kappa_{\sigma_1,\sigma_2}(x)u(x)v(x) dx
\end{aligned}
\end{equation}
for every $u,v \in C^{\infty}_c(\Omega)$. Therefore ${\mathcal L}^{\sigma_1,\sigma_2}$ is positive and symmetric on $C_c^\infty(\Omega)$. As explained in Section \ref{sec:preliminaries}, it can be extended to a positive self-adjoint operator, still denoted by ${\mathcal L}^{\sigma_1,\sigma_2}$, whose quadratic form domain is denoted by $\Hbb(\Omega)$.  Note that $\Hbb(\Omega)$ is a Hilbert space  and  $C^{\infty}_c(\Omega)$ is dense in $\Hbb(\Omega)$. The reader is referred to \cite{KimSon_2011,KimSonVon_2020-1} for more details.

Alternatively, as in the case of the spectral fractional Laplacian $(-\Delta)^s_{\SFL}$, one can define the interpolation operator $\left[(-\Delta)^{\sigma_1}_{\RFL}\right]^{\sigma_2}_{\SFL}$ in term of the spectral decomposition of $(-\Delta)^{\sigma_1}_{\RFL}$. 
We follow the strategy in \cite{AbaDup_2017,CapDav_2016}. Consider the eigenproblem
$$\va{(-\Delta)^{\sigma_1}_{\RFL} u  &= \lambda u &\text{ in }\Omega,\\ u &= 0 &\text{ in }\mathbb{R}\backslash \Omega.}$$
Let $G^{\Omega}_{\sigma_1}$ be the Green kernel of $(-\Delta)^{\sigma_1}_{\RFL}$ in $\Omega$. Since $\Gbb^{\Omega}_{\sigma_1}:L^2(\Omega) \to L^2(\Omega)$ is compact, $(-\Delta)^{\sigma_1}_{\RFL}$ admits a discrete spectrum $\{ \lambda_{\sigma_1,n}\}$ such that $0<\lambda_{\sigma_1,1} < \lambda_{\sigma_1,2} < \cdots \lambda_{\sigma_1,n} \nearrow +\infty$. Let $\{\varphi_{\sigma_1,n}\}$ be the corresponding eigenfunctions. Therefore, there exists an orthonormal basis of $L^2(\Omega)$ consisting of eigenfunctions $\{\varphi_{\sigma_1,n}\}$ (see \cite{BonFigVaz_2018,SerVal_2014}). 

For a function $u \in C^{\infty}_c(\Omega)$, one has $u = \sum_{n=1}^{\infty} \widehat{u_{\sigma_1,n}}\varphi_{\sigma_1,n}$, where $\widehat{u_{\sigma_1,n}} := \int_{\Omega} u\varphi_{\sigma_1,n} dx$. Thus, one can define the $\sigma_2-$spectral decomposition of $(-\Delta)^{\sigma_1}_{\RFL}$ by
\begin{equation}\label{eq_interpolation2}
Lu(x):= \sum_{n=1}^{\infty} (\lambda_{\sigma_1,n})^{\sigma_2} \widehat{u_{\sigma_1,n}}\varphi_{\sigma_1,n}(x), \quad u \in C_c^\infty(\Omega).
\end{equation}
By density, this operator can be extended to the Hilbert space
\begin{equation} \label{spaceH} H: = \left\{ u \in L^2(\Omega): \norm{u}_{H}^2 := \sum_{n=1}^{\infty} (\lambda_{\sigma_1,n})^{\sigma_2} |\widehat{u_{\sigma_1,n}}|^2  < \infty \right\}.
\end{equation}
Thus, if $u \in H$, then $L u \in H^{-1}$, the topological dual of $H$, and
\begin{equation}\label{eq_interpolation2B}
\inner{L u, v}_{H^{-1},H} = \inner{u,v}_{H} = \sum_{n=1}^{\infty} (\lambda_{\sigma_1,n})^{\sigma_2}\widehat{u_{\sigma_1,n}}\widehat{v_{\sigma_1,n}},\quad u,v \in H.
\end{equation}

We will show that \eqref{eq_interpolation1B} and \eqref{eq_interpolation2B} are in fact equivalent up to a constant. Recall that by \cite[Theorem 3.8]{CheSon_1997}, the heat kernel for $(-\Delta)^{\sigma_1}_{\RFL}$ in $\Omega$ can be written as 
\begin{equation}
p_{\sigma_1}(t,x,y) = \sum_{n=1}^{\infty} e^{-t \lambda_{\sigma_1,n}}\varphi_{\sigma_1,
n}(x)\varphi_{\sigma_1,n}(y), \quad t > 0, x,y \in \Omega.
\end{equation}
For $u,v \in C^{\infty}_c(\Omega)$, one has
\begin{align*}
\inner{ L u, v}_{H^{-1},H} &= \sum_{n=1}^{\infty} (\lambda_{\sigma_1,n})^{\sigma_2} \widehat{u_{\sigma_1,n}}\widehat{v_{\sigma_1,n}} \\
& = \dfrac{\sigma_2}{\Gamma(1-\sigma_2)}\sum_{n=1}^{\infty} \int_0^{\infty} \left(  \widehat{u_{\sigma_1,n}}\widehat{v_{\sigma_1,n}} - e^{-t \lambda_{\sigma_1,n}} \widehat{u_{\sigma_1,n}}\widehat{v_{\sigma_1,n}}\right)\dfrac{dt}{t^{1+\sigma_2}}\\
&=\dfrac{\sigma_2}{\Gamma(1-\sigma_2)}\int_0^{\infty} \left(\sum_{n=1}^{\infty} \widehat{u_{\sigma_1,n}}\widehat{v_{\sigma_1,n}} - \sum_{n=1}^{\infty} e^{-t \lambda_{\sigma_1,n}} \widehat{u_{\sigma_1,n}}\widehat{v_{\sigma_1,n}} \right)\dfrac{dt}{t^{1+\sigma_2}}\\
&=\dfrac{\sigma_2}{\Gamma(1-\sigma_2)}\int_0^{\infty} \left[\int_{\Omega} u(y)v(y)dy - \int_{\Omega} \int_{\Omega} p_{\sigma_1}(t,x,y)u(x)v(y)dx dy \right] \dfrac{dt}{t^{1+\sigma_2}}.
\end{align*}
Here we have used the formula
$$\lambda^{\sigma} = \dfrac{\sigma}{\Gamma(1-\sigma)} \int_0^{\infty} (1-e^{-t\lambda })\dfrac{dt}{t^{1+\sigma}}$$ in the second equality and Fubini's theorem in the third equality. Thus,
\begin{equation}\label{eq_interpoeq1}
\begin{aligned}
\inner{ L u, v}_{H^{-1},H}=\,\,&\dfrac{\sigma_2}{\Gamma(1-\sigma_2)} \int_0^{\infty} \int_{\Omega} \left[ \int_{\Omega}p_{\sigma_1}(t,x,y)\left(u(y) - u(x)\right)v(y)dx\right]dy \dfrac{dt}{t^{1+\sigma_2}}\\
\quad + &\dfrac{\sigma_2}{\Gamma(1-\sigma_2)}\int_0^{\infty}\int_{\Omega} u(y)v(y)\left(1- \int_{\Omega}p_{\sigma_1}(t,x,y)dx \right) dy \dfrac{dt}{t^{1+\sigma_2}} \\
=:\,\,&I_1(u,v) + I_2(u,v).
\end{aligned}
\end{equation}
We will treat the terms $I_1(u,v)$ and $I_2(u,v)$ successively. As for $I_1$, by taking $u=v$ and by interchanging $x$ and $y$ in the formula, we obtain
$$I_1(u,u) = \dfrac{\sigma_2}{2\Gamma(1-\sigma_2)} \int_{\Omega}\int_{\Omega} \left[\int_0^{\infty} p_{\sigma_1}(t,x,y) \dfrac{dt}{t^{1+\sigma_2}} \right]\left(u(x) - u(y)\right)^2dx dy,$$
hence $0 \leq I_1(u,u) < +\infty$. Next, by interchanging $x$ and $y$ in the formula, we get
\begin{equation}\label{eq_interpoeq3}
\begin{aligned}
I_1(u,v) &= \dfrac{\sigma_2}{2\Gamma(1-\sigma_2)} \int_0^{\infty} \int_{\Omega} \left[ \int_{\Omega}p_{\sigma_1}(t,x,y)\left(u(x) - u(y)\right)\left(v(x)-v(y)\right)dx\right]dy \dfrac{dt}{t^{1+\sigma_2}}\\ 
&= \dfrac{\sigma_2}{2\Gamma(1-\sigma_2)} \int_{\Omega}\int_{\Omega} \left[\int_0^{\infty} p_{\sigma_1}(t,x,y) \dfrac{dt}{t^{1+\sigma_2}} \right]\left(u(x) - u(y)\right)\left(v(x)-v(y)\right)dx dy \\
&= \frac{1}{2}\int_{\Omega}\int_{\Omega} J_{\sigma_1,\sigma_2} (x,y)\left(u(x) - u(y)\right)\left(v(x)-v(y)\right) dx dy.
\end{aligned}
\end{equation}
Next, we see that $I_2(u,u) \geq 0$ by the fact that $0\le \int_{\Omega} p_{\sigma_1}(t,x,y) dx \le 1$. Furthermore, we have
$$I_2(u,u) = \dfrac{\sigma_2}{\Gamma(1-\sigma_2)}\int_0^{\infty}\int_{\Omega} |u(y)|^2\left(1- \int_{\Omega}p_{\sigma_1}(t,x,y)dx \right) dy \dfrac{dt}{t^{1+\sigma_2}} \le \inner{u,u}_{\Hbb(\Omega)} < +\infty.$$
Thus using Fubini's theorem and H\"older's inequality, we have
\begin{equation}\label{eq_interpoeq2}
\begin{aligned}
I_2(u,v)  &= \dfrac{\sigma_2}{\Gamma(1-\sigma_2)}\int_{\Omega} \int_0^{\infty}\left(1- \int_{\Omega}p_{\sigma_1}(t,x,y)dx \right) \dfrac{dt}{t^{1+\sigma_2}}  u(y)v(y)dy \\
&= \int_{\Omega} \kappa_{\sigma_1,\sigma_2}(y)u(y)v(y) dy.
\end{aligned}
\end{equation}
Combining \eqref{eq_interpoeq1}, \eqref{eq_interpoeq2} and \eqref{eq_interpoeq3}, we conclude that
$$\inner{Lu,v}_{H^{-1},H} =  \inner{ {\mathcal L}^{\sigma_1,\sigma_2} u,v}_{\Hbb^{-1}(\Omega),\Hbb(\Omega)}, \quad \forall u,v \in C^{\infty}_c(\Omega).$$

We verify that ${\mathcal L}^{\sigma_1,\sigma_2}$ satisfies all the assumptions in Section \ref{sec:preliminaries}. By the above spectral decomposition and the fact that the first eigenvalue of ${\mathcal L}^{\sigma_1,\sigma_2}$ is $\lambda_{\sigma_1,1}^{\sigma_2} > 0$, \eqref{eq_L1} and \eqref{eq_L2} hold.


Next we prove that \eqref{eq_L3} also holds, i.e.  $\Hbb(\Omega) = H^s_{00}(\Omega)$, provided that $\Omega$ has smooth boundary. To this end, we invoke the discrete $J-$method in \cite[Theorem 8.2]{BonSirVaz_2015} and adapt the argument in \cite[pages 5734--5735]{BonSirVaz_2015}. We include it here for the sake of convenience.

By the spectral decomposition, one has $u = \sum_{n=1}^{\infty} u_{\sigma_1,n}$, where $u_{\sigma_1,n} := \widehat{u_{\sigma_1,n}} \varphi_{\sigma_1,n}$. It can easily be seen that $\norm{u_{\sigma_1,n}}_{L^2(\Omega)} = |\widehat{u_{\sigma_1,n}}|$. On the other hand, if $2\sigma_1 \neq \frac{1}{2}$, one has
\begin{align*}
\norm{u_{\sigma_1,n}}^2_{H^{2\sigma_1}_0(\Omega)} &= |\widehat{u_{\sigma_1,n}}|^2 \norm{\varphi_{\sigma_1,n}}_{H^{2\sigma_1}_0(\Omega)} = |\widehat{u_{\sigma_1,n}}|^2 \norm{\varphi_{\sigma_1,n}}_{H^{2\sigma_1}(\R^N)} \\
&= |\widehat{u_{\sigma_1,n}}|^2\norm{(-\Delta)^{\sigma_1} \varphi_{\sigma_1,n}}^2_{L^2(\R^N)}.
\end{align*}
By noticing that
\begin{align*}
\int_{\R^N} |(-\Delta)^{\sigma_1} \varphi_{\sigma_1,n}|^2 dx = \int_{\Omega}|(-\Delta)^{\sigma_1} \varphi_{\sigma_1,n}|^2 dx + \int_{\R^N \backslash \Omega} \varphi_{\sigma_1,n}(-\Delta)^{2\sigma_1} \varphi_{\sigma_1,n} dx = \lambda_{\sigma_1,n}^2|\widehat{u_{\sigma_1,n}}|^2,
\end{align*}
we have
$$\norm{u_{\sigma_1,n}}^2_{H^{2\sigma_1}_0(\Omega)} = \lambda_{\sigma_1,n}^2|\widehat{u_{\sigma_1,n}}|^2.$$

We next check two conditions in the $J-$Method. Put
$$U_n :=\lambda_{\sigma_1,n}^{-\theta} J(\lambda_{\sigma_1,n},u_{\sigma_1,n}) = \lambda_{\sigma_1,n}^{-\theta} \max\{ \norm{u_{\sigma_1,n}}_{H^{2\sigma_1}_0(\Omega)}, \lambda_{\sigma_1,n} \norm{u_{\sigma_1,n}}_{L^2(\Omega)} \} = \lambda_{\sigma_1,n}^{1-\theta} |\widehat{u_{\sigma_1,n}}|.$$
From \eqref{spaceH}, we see that $U_n \in l^2(\mathbb{N})$ if and only if $\theta = 1 - \sigma_2/2$.

It can be easily  seen that the eigenvalues of ${\mathcal L}^{\sigma_1,\sigma_2}$ are $\{\lambda_{\sigma_1,n}^{\sigma_2} \}$, where $\{ \lambda_{\sigma_1,n} \}$ are the eigenvalues of $(-\Delta)^{\sigma_1}_{\RFL}$. Since $0< \lambda_{\sigma_1,n+1}/ \lambda_{\sigma_1,n} < \Gamma_0$ for some $\Gamma_0>0$,  it follows that $0 < \lambda_{\sigma_1,n+1}^{\sigma_2}/\lambda_{\sigma_1,n}^{\sigma_2} < \Gamma_0^{\sigma_2} $ for every $n \in \N$. We conclude from the discrete version of the $J-$Method that
 $$F = \left[H^{2\sigma_1}_0(\Omega),L^2(\Omega) \right]_{1-\frac{\sigma_2}{2}} = H^s_{00}(\Omega),$$
since $\sigma_1\sigma_2 = s$. The case $2\sigma_1 = 1$ can be proceeded similarly. Note that if $\sigma_2 = 1$, one has the RFL $(-\Delta)^s_{\RFL}$, while $\sigma_1 = 1$ gives the SFL $(-\Delta)^s_{\SFL}$. We derive that \eqref{eq_L3} holds. 

Next, \eqref{eq_G1} follows from \cite{KimSonVon_2020-1} and Lemma \ref{lem:infinitesimal}. By \cite[Theorem 6.4]{KimSonVon_2020-1},  assumption \eqref{eq_G2} is satisfied with $s=\sigma_1\sigma_2$ and $\gamma = \sigma_1$.  This operator is interesting since both cases $\gamma<2s$ and $\gamma \geq 2s$ may occur. Indeed, if $\sigma_2>\frac{1}{2}$ then $\gamma<2s$ and hence \eqref{eq_G3} becomes $N \geq 4\sigma_1\sigma_2$. If $\sigma_2 \leq \frac{1}{2}$ then $\gamma \geq 2s$ and hence \eqref{eq_G3} becomes $N \geq 4\sigma_1 \sigma_2 (1+\sigma_1) - \sigma_1$.

We conclude that assumptions, \eqref{eq_L1}--\eqref{eq_L3}, \eqref{eq_G1}--\eqref{eq_G3} hold and thus our theory can be applied to this type of operators.

\begin{appendices}
\section{Appendix}\label{sec:appendixx}

In this appendix, we provide a result which can be used to verify assumption \eqref{eq_G1} in the examples considered in Subsection \ref{sec:someexamples} and Section \ref{sec:other-examples}.

Assume $\L: \dom(\L) \subset L^2(\Omega) \to L^2(\Omega)$ is a positive, self-adjoint operator. It is well known that the spectrum $\sigma(\L)$ of $\L$ satisfies $\sigma(\L) \subset [0,\infty)$. By Hille--Yosida theorem for nonpositive self-adjoint operators, $-\L$ is the infinitesimal generator of the contraction semigroup $\{P_t = e^{-t \L}\}_{t \ge 0}$ in $L^2(\Omega)$. Furthermore, by the spectral theorem (see \cite[Theorem A.4.2]{DomIvan_2016} or \cite[Theorem 12.4]{SchiSongVodracek_2012}), there exists a unique projection-valued measure $E(\cdot)$ with support on the spectrum $\sigma(\L)$ of $\L$ such that
$$ \L = \int_{\sigma(\L)}\lambda dE(\lambda),
$$ 
and the semigroup $\{ P_t \}_{t \ge 0}$ can be written as (see e.g. \cite[page 480]{DomIvan_2016}) 
\begin{equation} \label{pvm}
P_t =  \int_{\sigma(\L)} e^{-\lambda t} dE(\lambda).
\end{equation} 

Assume that the semigroup $\{ P_t \}_{t \ge 0}$ admits a density kernel (also called heat kernel), namely there exists for every $t > 0$ a positive measurable function $p_{\Omega}(t,x,y)$ defined almost everywhere on $\Omega \times \Omega$  such that, for any $f \in L^2(\Omega)$,
$$P_t f(x) = \int_{\Omega} p_{\Omega}(t,x,y)f(y) dy,\quad x \in \Omega.$$
The Green function and Green operator associated to $\L$ are defined respectively by
\begin{align} \label{Gp1}
&G^{\Omega}(x,y)= \int_0^{\infty} p_{\Omega}(t,x,y) dt, \quad x,y \in \Omega,\\ \label{Gp2}
&\Gbb^{\Omega}[f](x) = \int_{\Omega} G^{\Omega}(x,y) f(y) dy,\quad  x \in \Omega, \quad  f\in L^2(\Omega).
\end{align}
The following result asserts that, under some additional conditions, $\Gbb^{\Omega}$ is in fact the right inverse operator of $\L$.

\begin{lemma}\label{lem:infinitesimal} Let $\L$ be a positive, self-adjoint operator. 
	Assume that $\inf \sigma(\L)>0$ and the semigroup $\{P_t\}_{t \ge 0}$ generated by $\L$ admits a heat kernel $p_{\Omega}(t,x,y)$. Then $\L^{-1}$ exists and $\L^{-1}f = \Gbb^{\Omega}[f]$ for every $f \in L^2(\Omega)$.
\end{lemma}

\begin{proof}
From \eqref{Gp1} and \eqref{Gp2}, we have, for any $f \in L^2(\Omega)$, 
\begin{equation}\label{eq:lemmaPG}
\begin{aligned}
\Gbb^{\Omega}[f](x) &= \int_{\Omega} \left( \int_0^{\infty} p_{\Omega}(t,x,y) dt \right) f(y) dy \\
&= \int_0^{\infty}  \left( \int_{\Omega} p_{\Omega}(t,x,y) f(y) dy \right) dt =\int_0^{\infty} P_t f(x) dt,\quad x \in \Omega.
\end{aligned}
\end{equation}	
This, together with \eqref{pvm} and assumption $\inf \sigma(\L)>0$, implies
	\begin{align*}
	\Gbb^{\Omega}[f] &=\int_0^{\infty} \left( \int_{\sigma(\L)} e^{-t \lambda} dE(\lambda)f \right) dt\\
	&= \int_{\sigma(\L)} \left(\int_0^{\infty} e^{-t \lambda} dt \right) dE(\lambda) f =\int_{\sigma(\L)} \lambda^{-1} d E(\lambda) f.
	\end{align*}
	Recall that for a Borel measurable function $f$ on $\sigma(\L)$, one may define the operator $f(\L)$ as
	$$f(\L) = \int_{\sigma(\L)} f(\lambda) dE(\lambda).$$
	In particular, since $\inf \sigma(\L)>0$, the inverse of $\L$ exists and
	$$\L^{-1} f  = \int_{\sigma(\L)} \lambda^{-1} d E(\lambda) f = \Gbb^{\Omega}[f],$$
	which is the desired result.
\end{proof}

\begin{remark} The assumption on the existence of the heat kernel $p_{\Omega}(t,x,y)$  in the statement of Lemma \ref{lem:infinitesimal} does not hold in general and is closely related to the ultracontractivity property of the semigroup $\{P_t\}_{t \geq 0}$. More precisely, if $\{P_t\}_{t \geq 0}$ is ultracontractive, namely for each $t > 0$, $P_t$ is a bounded operator from $L^2(\Omega)$ to $L^{\infty}(\Omega)$, then $\{P_t\}_{t \geq 0}$ has a heat kernel $p_\Omega(t,x,y)$ such that, for any $t>0$, $0 \le  p_{\Omega}(t,x,y) \le c_t$ a.e. on $\Omega \times \Omega$ (see, for instance, \cite[page 247]{SchiSongVodracek_2012}). It is also worth mentioning that, under certain conditions on the Green kernel and the semigroup, the existence and estimates of a heat kernel has been recently established by using the eigen-decomposition of $\L$ (see \cite[Sections 3 and 6]{chan2020singular} for more details).
\end{remark}

\begin{remark} We note that Lemma \ref{lem:infinitesimal} can be used to verify assumption \eqref{eq_G1} for all operators considered in Subsection \ref{sec:someexamples} and Section \ref{sec:other-examples} thanks to their probabilistic characteristic. More precisely,  properties of the Green function of $(-\Delta)^s_{\text{RFL}}$, $(-\Delta)^s_{\text{SFL}}$ and $(-\Delta)^s_{\text{CFL}}$ are given in \cite{CheSon_1998}, \cite{KimSon_2011} and \cite{CheKimSon_2009} respectively.  For other operators in Section \ref{sec:other-examples} such as the Laplacian with a Hardy potential, the restricted relativistic Schr\"odinger operator, the sum of two RFLs and the interpolation operator, the characteristic of their Green function is given in \cite{FilMosTer_2007}, \cite{Ryz_2003}, \cite{CheKimSon_2010} and \cite{KimSonVon_2020-1} respectively.
\end{remark}
\end{appendices}

\bibliographystyle{siam}

\end{document}